\theoremstyle{plain}
\newtheorem{theorem}{Theorem}[section]
\newtheorem{thm}[theorem]{Theorem}
\newtheorem{cor}[theorem]{Corollary}
\newtheorem{lem}[theorem]{Lemma}
\newtheorem{prop}[theorem]{Proposition}
\newtheorem{claim}[theorem]{Claim}
\theoremstyle{definition}
\newtheorem{defn}[theorem]{Definition}
\newtheorem{rmk}[theorem]{Remark}
\newtheorem{notat}[theorem]{Notation}
\newtheorem{hyp}[theorem]{Hypothesis}
\theoremstyle{remark}
\newcommand{\marpar}[1]{}
\newcommand{\mni}{\medskip\noindent}
\newcommand{\ZZ}{\mathbb{Z}}
\newcommand{\AAA}{\mathbb{A}}
\newcommand{\PP}{\mathbb{P}}
\newcommand{\mc}{\mathcal}
\newcommand{\OO}{\mc{O}}
\newcommand{\ol}{\overline}
\newcommand{\ul}{\underline}
\newcommand{\SP}{\text{Spec }}
\newcommand{\M}{\overline{\mc{M}}}
\newcommand{\m}{\overline{\text{M}}}
\newcommand{\kgnb}[1]{\m_{#1}}
\newcommand{\Kgnb}[1]{\M_{#1}}
\newcommand{\Pine}[1]{\text{Porcupine}^{#1}}
\newcommand{\Ch}{\text{Chain}}
\newcommand{\FCh}{\text{FreeChain}}
\newcommand{\PCh}{\text{PeaceChain}}
\newcommand{\Sec}{\text{Sections}}
\newcommand{\Co}{\text{Comb}}
\newcommand{\peaceful}{\text{peaceful}}
\newcommand{\Peaceful}{\text{Peaceful}}
\newcommand{\pax}{\text{pax}}
\newsavebox{\sembox}
\newlength{\semwidth}
\newlength{\boxwidth}
\newsavebox{\semrbox}
\newlength{\semrwidth}
\newlength{\boxrwidth}
\def\Hom{{\rm Hom}\,}
\def\boxtensor{\mathop{\vbox{\hrule\hbox{\vrule\kern5pt%
\vbox{\kern5pt}\vrule}\hrule}%
\kern-7pt\raise .5pt\hbox{$\times$}}\nolimits}
\title
{Families of rationally simply connected varieties
  over surfaces and torsors for semisimple groups}
\author[de Jong]{A. J. de Jong}
\address{Department of Mathematics \\
Columbia University \\
New York, NY 10027}
\email{dejong@math.columbia.edu}
\author[He]{Xuhua He}
\address{Department of Mathematics \\
  Stony Brook University \\ Stony Brook, NY 11794}
\email{hugo@math.sunysb.edu} 
\author[Starr]{Jason Michael Starr}
\address{Department of Mathematics \\
  Stony Brook University \\ Stony Brook, NY 11794}
\email{jstarr@math.sunysb.edu} 
\date{\today}
\begin{document}


\begin{abstract}
Under suitable hypotheses, we prove that a form of a projective
homogeneous variety $G/P$ defined over the function field of a surface
over an algebraically closed field has a rational point. The method
uses an algebro-geometric analogue of simple connectedness replacing
the unit interval by the projective line. As a consequence, we
complete the proof of Serre's Conjecture II in Galois cohomology for
function fields over an algebraically closed field.
\end{abstract}


\maketitle

\tableofcontents


\section{Introduction}
\label{sec-intro}
\marpar{sec-intro}

\noindent
In this introduction we work with varieties over an algebraically
closed field $k$ of characteristic $0$, unless mentioned otherwise.
In the paper \cite{GHS}
it was shown that a family of birationally rationally connected
varieties over a curve has a rational section.
A variety $X$ is called {\it birationally
rationally connected} if a general pair of points $(x,y)\in X\times X$
can be connected by a rational curve, i.e., if there exists an open
$U \subset \PP^1$ and a morphism $f : U \to X$ such that $x,y\in f(U)$.
This is related to Tsen's theorem, see \cite{Lang52}, which says
that a family of hypersurfaces of degree $d$ in $\PP^n$ over
an $r$-dimensional base variety has a section if $d^r \leq n$.
The relation is that a smooth hypersurface in $\PP^n$ is birationally
rationally connected if and only if $d \leq n$.

\medskip\noindent
How to generalize the result of \cite{GHS} to families of varieties
over a surface? By analogy with topology one could hope for a
statement of the following type: A family of (birationally) rationally
simply connected varieties over a surface has a rational section.
There are two good reasons why this is too simplistic: (1) it seems
hard to come up with a good notion of ``rationally simply connected''
varieties, and (2) Brauer-Severi schemes give counter examples.

\medskip\noindent
Let $S$ be a smooth projective surface. A Brauer-Severi scheme over $S$
is a smooth projective morphism $X \to S$ all of whose geometric fibres are
isomorphic to $\PP^n$. There are plenty of examples of such Brauer-Severi
schemes such that $X \to S$ does not have a rational section, because
most surfaces have infinite Brauer groups, see \cite{BrauerI}, and
\cite[Corollaire 3.4]{BrauerII}. Since the authors would like to think
of $\PP^n$ as a prime example of a rationally simply connected variety
this seems to dash all hopes.

\medskip\noindent
Not so! It turns out that in addition to a condition on the fibres of the
family we need to impose additional global conditions of a cohomological
nature. The goal is to find example theorems providing evidence for the
principle which we have stated elsewhere:
\begin{list}{(P)}
\item ``The only obstruction
to having a rational section of a family of rationally simply connected
varieties over a surface should be a Brauer class on the base''.
\end{list}
\noindent
This should not be taken too literally, e.g., the Brauer class
could be an element of $H^2_{et}(S, T)$ for some torus $T$ over
$S$ (or an open part of $S$); or perhaps something more general,
cf.\ \cite{Hamel}.

\medskip\noindent
Let $Y$ be a smooth projective rationally connected variety.
When do we call $Y$ rationally simply connected? One of the main
obstacles is that the space of rational curves on $Y$ has many
irreducible and even connected components; simply because
there are many curve classes $\beta \in H_2(Y, \ZZ)$ which
can be represented by rational curves. Here is a partial list of
conditions that we have considered using as a definition:
\begin{enumerate}
\item
\label{Ind-A1}
The $\text{Ind}$-scheme parametrizing
maps $\AAA^1 \to Y$ is birationally rationally connected.
\item
\label{curves-in-surface}
For any pair of rational curves $C_1, C_2 \subset Y$ there
exists a rational surface $S \subset Y$ containing $C_1$ and $C_2$.
\item
\label{moduli-curves}
For all sufficiently positive curve classes $\beta$ there
exists an irreducible component $Z_\beta$ of $\Kgnb{0,0}(Y, \beta)$
which is rationally connected.
\item
\label{moduli-curves-pointed}
A variant of (\ref{moduli-curves}) which considers for every $n \geq 0$
the moduli spaces of rational curves passing through $n$ general points
on $X$. See the notion labeled ``strongly rationally simply connected''
in \cite{dJS10}.
\item
The variety $Y$ is \emph{rationally simply connected by chains of free
lines}, i.e., $Y \to \text{Spec}(k)$ satsifies Hypothesis \ref{hyp-peace}.
This implies that for a general pair of points $x,y\in Y(k)$, and
$n \gg 0$ the moduli space of length $n$ chains of free lines
connecting $x$ to $y$ is itself birationally rationally connected
(see Proposition \ref{prop-peace}).
\end{enumerate}
There is a subtle interplay between the formulation of the property,
the problem of proving a suitable variety $Y$ has the property and the
problem of proving the principle (P) holds for families of varieties
having the property. For example, trying to use a property such
as (\ref{Ind-A1}) above to prove (P) one runs into the problem of not
having a result like \cite{GHS} for families of ind-schemes.

\medskip\noindent
In the study of rationally connected varieties a key technical tool
is the notion of a \emph{very free rational curve}. Such a curve is
given by $f : \PP^1 \to Y$ such that $f^* T_Y$ is an ample vectorbundle.
Similarly, the key technical tool in the settting of rational
simple connectedness is the notion of a \emph{very twisting surface}.
This notion was first introduced in the paper \cite{HS2}. In the
paper you are reading now, a very twisting surface is given by a
morphism $f : \PP^1 \to \Kgnb{0,1}(Y, 1)$ such that $f^* T_\Phi$,
$f^* T_{\text{ev}}$ are sufficiently ample and such that
$\text{ev} \circ f$ is a free curve in $Y$, see
Lemma \ref{lem-twistexist} and Definition \ref{defn-twistabs}.
See \cite{HS2}, \cite{Sr1c}, and \cite{dJS10} for example usages of this
notion; allthough we warn the reader that each time the definition
is slightly different. A smooth projective rationally connected variety 
which contains a very twisting surface is likely to satisfy a number
of the conditions listed above. On the other hand, it turns out
condition (\ref{moduli-curves}) above is not enough to guarantee the
existence of a very twisting surface.

\medskip\noindent
Let us get to the point and formulate one of the main
results of this long paper.

\begin{cor}
\label{cor-A}
\marpar{cor-A}
See Corollary \ref{cor-main}.
Let $f : X \to S$ be a morphism of nonsingular projective
varieties over $k$ with $S$ a surface. Assume
\begin{enumerate}
\item there exists a Zariski open subset $U$ of $S$ whose complement
has codimension $2$ such that $X_u$ is irreducible for $u\in U(k)$,
\item there exists an invertible sheaf $\mc{L}$ on $f^{-1}(U)$ which
is $f$-relatively ample, and
\item the geometric generic fibre $X_{\ol{\eta}}$ of $f$ together
with the base change $\mc{L}_{\ol{\eta}}$ is rationally simply connected
by chains of free lines (see above) and has a very twisting surface,
\end{enumerate}
then there exists a rational
section of $f$.
\end{cor}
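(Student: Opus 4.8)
The plan is to reduce to an application of the theorem of Graber--Harris--Starr \cite{GHS}: one slices $S$ by a general pencil of curves and runs \cite{GHS} for a family of moduli spaces over the base $\PP^1$ of the pencil, the very twisting surface being the ingredient that forces the general fibre of that family to be rationally connected.

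\mni\emph{Step 1: reduction to a curve over $k(\PP^1)$.} In a sufficiently positive multiple of a very ample divisor on $S$, choose a general pencil whose two spanning members avoid the finite set $S \setminus U$; its base points then lie in $U$. Blowing them up gives $\pi : \widetilde{S} \to \PP^1$ with $b : \widetilde{S} \to S$ birational, and for general $t$ the fibre $C_t := \pi^{-1}(t)$ is a smooth curve contained in $U$, the exceptions forming a finite subset of $\PP^1$. Since a rational section of $f$ is the same as a $k(S)$-point of $X_\eta$ and $k(\widetilde{S}) = k(S)$, it suffices to produce a rational section of $\widetilde{f} : \widetilde{X} := X \times_S \widetilde{S} \to \widetilde{S}$; writing $C$ for the generic fibre of $\pi$, a smooth projective curve over $F := k(\PP^1)$, this in turn amounts to a section of $\widetilde{X}_C \to C$. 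By hypothesis~(3) the geometric generic fibre of $\widetilde{X}_C \to C$ is $X_{\ol{\eta}}$, which is rationally simply connected by chains of free lines and carries a very twisting surface, and $b^*\mc{L}$ restricts to a relative polarization of $\widetilde{X}_C / C$.

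\mni\emph{Step 2: the auxiliary family and its key property.} For $e, n \gg 0$ and $t \in \PP^1$, let $\Sigma_t$ be the moduli space of sections of $\widetilde{X}_{C_t} \to C_t$ of $\mc{L}$-degree $e$ (or, as in Proposition~\ref{prop-peace}, the associated space of length-$n$ chains of free lines); these fit into a family $\Sigma \to \PP^1$, which after replacing it by the closure of the relevant component in a relative space of stable maps and resolving we take to be a proper morphism of nonsingular projective varieties. The key assertion is that for general $t$ the space $\Sigma_t$ possesses a distinguished irreducible component that is rationally connected. This is proved along the lines of Proposition~\ref{prop-peace}: one builds a section of $\widetilde{X}_{C_t} \to C_t$ out of a very free section glued, at finitely many fibres, to chains of free lines lying in those fibres; rational simple connectedness by chains of free lines makes the fibrewise spaces of such chains rationally connected; and the very twisting surface (Lemma~\ref{lem-twistexist}, Definition~\ref{defn-twistabs}) is used to vary the attaching points along the fibres, the ampleness of $f^*T_\Phi$ and $f^*T_{\mathrm{ev}}$ forcing the resulting deformations to sweep out the component and killing the pertinent obstruction groups. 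This is the only place the very twisting surface hypothesis enters, and it is precisely what fails for a nontrivial Brauer--Severi scheme.

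\mni\emph{Step 3: conclusion, and the main obstacle.} Now $\Sigma \to \PP^1$ is a proper family over a smooth projective curve over $k$ whose general geometric fibre is rationally connected; the finitely many bad fibres (over $t$ with $C_t \not\subset U$, over singular values of $\pi$, or where rational connectedness degenerates) do no harm, since \cite{GHS} constrains only the generic fibre. Hence \cite{GHS} produces a section $\PP^1 \to \Sigma$, i.e.\ an $F$-point of $\Sigma_C$; unwinding it to the underlying section and using $k(C) = k(S)$ gives a rational section of $f$. The substantive point is Step~2: producing even a single section of $\widetilde{X}_{C_t} \to C_t$ already requires \cite{GHS} for that family (or a direct comb construction), and upgrading this to rational connectedness of the whole component is the delicate deformation-theoretic argument of Proposition~\ref{prop-peace}, where one must show a general member of the component is reachable by rational curves through a general point of it --- the step in which the fibrewise chains of free lines and the very twisting surface must be combined, the surface contributing exactly the positivity needed to deform the endpoints of the chains. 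Bookkeeping $e$ and $n$, handling the bad fibres, and checking properness and persistence of the good component of $\Sigma$, is routine but lengthy.
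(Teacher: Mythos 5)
There is a genuine gap in Step~2 that invalidates the argument as written. You assert that for general $t$ the space $\Sigma_t$ of degree-$e$ sections of $\widetilde X_{C_t}\to C_t$ has a distinguished irreducible component that is rationally connected, and that \cite{GHS} then applies to the family $\Sigma\to\PP^1$. But when $g(C_t)>0$ (which is the generic situation when $S$ is an arbitrary projective surface sliced by a pencil), this component is \emph{not} rationally connected: it carries an Abel map $\alpha_{\mc L}\colon Z_e\to\underline{\text{Pic}}^e_{C_t/k}$ which is dominant onto an abelian torsor (Lemma~\ref{lem-Abel-irred}), and a rationally connected variety cannot dominate a positive-dimensional abelian variety. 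This is precisely the obstruction the introduction describes as the reason one cannot directly apply \cite{GHS} to the space of sections. What Theorem~\ref{thm-main} actually proves is the weaker --- but correct --- statement that the \emph{fibres of the Abel map restricted to $Z_e$} are rationally connected.

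Consequently the last step of your argument needs to be rerouted through the Abel map, as in the paper's proof of Corollary~\ref{cor-main}: one first picks a section of $\underline{\text{Pic}}^e_{S_W/W}\to W$ coming from an honest divisor on the projective surface $S$ (this is where projectivity of $S$, not just of $X$, is used), and then applies \cite{GHS} to the family of Abel-fibres over that section, via Lemmas~\ref{lem-curveup} and \ref{lem-ratsec}. A second, more minor omission: the canonical component $Z_e$ is first produced over the geometric generic point $\ol\xi$ of $W$, and one must argue it descends to a component defined over $\kappa(\xi)$; the paper does this via the Galois-invariance coming from the uniqueness statement in Corollary~\ref{cor-Zdsame}. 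Without addressing both points --- the Abel map and the descent --- Step~3 does not go through. Your Steps~1 and the general mechanism (chains of free lines plus a very twisting surface feeding a \cite{GHS}-type argument) are on the right track, but the object whose rational connectedness one must establish is the Abel fibre, not the section-space component itself.
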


\noindent
The assumption on the irreducibility of fibres is likely superfluous
and can be removed in most instances, see Remark \ref{rmk-improve}.
The existence of the invertible sheaf $\mc{L}$ \emph{globally} is
what ensures the Brauer class is zero in case the Corollary is applied to a
Brauer-Severi scheme over $S$. The third condition we discussed above.

\medskip\noindent
Let us discuss the method of proof of the Corollary above.
The first remark is that there is a natural inductive structure
on the overall problem of finding sections of families of varieties.
Namely, we may fibre the surface $S$ by curves over a
base curve $B$. Lemma \ref{lem-ratsec} implies that it is
enough to find a rational section of $B$ into the (coarse)
moduli space $\Sec(X/S/B)$ of sections of $X/S/B$. Briefly,
a $k$-point of $\Sec(X/S/B)$ is a pair $(b, \sigma)$,
where $b\in B(k)$, and $\sigma : S_b \to X_b$ is a section
of the restriction of $X \to S$ to $S_b$. Applying \cite{GHS}
to the morphism $\Sec(X/S/B) \to B$ we see that it would suffice
to produce an irreducible subvariety $Z \subset \Sec(X/S/B)$
such that the general fibre of $Z \to B$ is rationally connected.

\medskip\noindent
It turns out that the irreducible components of $\Sec(X/S/B)$
usually do not have rationally connected fibres over $B$.
One obstruction is the Abel map
$$
\alpha_{\mc{L}} : \Sec^e(X/S/B) \longrightarrow \underline{\text{Pic}}^e_{S/B}.
$$
The image of the pair $(b, \sigma)$ is the invertible sheaf
$\sigma^*\mc{L} \in \text{Pic}(S_b)$. It turns out that it is enough
if the fibres of this Abel map are rationally connected (see proof
of Corollary \ref{cor-main}). Thus our corollary follows from
the following which is the main theorem of this article.

\begin{thm}
\label{thm-A}
\marpar{thm-A}
See Theorem \ref{thm-main}.
Let $X \to C$ be a morphism of nonsingular projective varieties
over $k$. Let $\mc{L}$ be an invertible sheaf on $X$. Assume $\mc{L}$
is ample on each fibre of $X \to C$. Assume
the geometric generic fibre of $X \to C$ is rationally simply
connected by chains of free lines (see above), and contains a
very twisting surface. In this case, for $e \gg 0$ there exists
a canonically defined irreducible component $Z_e \subset \Sec^e(X/S/B)$
so that the restriction of $\alpha_{\mc{L}}$ to $Z_e$ has
rationally connected fibres.
\end{thm}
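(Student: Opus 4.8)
The plan is to build the component $Z_e$ by a deformation-and-gluing argument, starting from very twisting surfaces on the generic fibre and propagating them over $C$. First I would fix a smooth fibration $X \to C$ and, after shrinking, realize the very twisting surface hypothesis on the geometric generic fibre as a family over a dense open $C^\circ \subset C$: a relative very twisting surface is a morphism $\PP^1_{C^\circ} \to \Kgnb{0,1}(X/C,1)$ with the ampleness of $f^*T_\Phi$ and $f^*T_{\mathrm{ev}}$ holding fibrewise, and with $\mathrm{ev}\circ f$ a relatively free curve. This is the place where Lemma \ref{lem-twistexist} and Definition \ref{defn-twistabs} are used, and the technical heart is checking that the relevant $H^1$'s vanish in a family, so that the very twisting surface spreads out and the locus where it degenerates has codimension at least one in $C$ — which is all we need since we only want a rational section of $Z_e \to C$.

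Next I would use the hypothesis that the geometric generic fibre is rationally simply connected by chains of free lines — i.e.\ satisfies Hypothesis \ref{hyp-peace} — to produce, over $C^\circ$, a family of chains of free lines of some large length $n$ joining two general sections. By Proposition \ref{prop-peace}, for $n\gg 0$ the moduli space of such chains connecting a general pair of points is birationally rationally connected; the task is to globalize this over $C$ and to organize the chains into actual sections of $X/S/B$ after one passes to the surface setting. The Abel map $\alpha_{\mc L}$ enters because a chain of $n$ free lines has a well-defined $\mc{L}$-degree, so fixing the target of $\alpha_{\mc L}$ amounts to fixing the number of lines in the chain together with bounded correction data; the very twisting surface is precisely the device that lets one modify a chain to adjust its class in $\underline{\mathrm{Pic}}$ without leaving the component, so that the fibres of $\alpha_{\mc L}$ stay irreducible and rationally connected.

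I would then define $Z_e$ as the closure of the locus of sections obtained by this construction for the appropriate $e$ (large enough that both the chain-length bound from Hypothesis \ref{hyp-peace} and the positivity needed from the twisting surface are met), and argue canonicity by showing that any two such families of chains are connected inside $\Sec^e(X/S/B)$ through deformations supported by the very twisting surface — this is the standard ``smoothing and combing'' mechanism, where twisting surfaces play the role that very free curves play in the rationally connected case. Rational connectedness of the fibres of $\alpha_{\mc L}|_{Z_e}$ then follows from the fact that the generic such fibre is dominated by a product of (birationally rationally connected) chain spaces glued along attaching points, together with the Graber–Harris–Starr criterion applied fibrewise.

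The main obstacle I expect is the second step: controlling the interaction between the Abel map and the chain construction in a family over $C$, i.e.\ showing that after fixing the $\mc L$-degree one still has an \emph{irreducible} family of sections whose generic fibre over $\underline{\mathrm{Pic}}^e_{S/B}$ is rationally connected. Irreducibility is delicate because $\Sec^e(X/S/B)$ genuinely has many components, and isolating the ``right'' one requires that the very twisting surface provide enough monodromy/deformations to merge all the a priori distinct components of chains of a given length and class. Getting the positivity thresholds to line up — how twisting the surface must be, how long the chains must be, how large $e$ must be — so that all the required $H^1$-vanishings and smoothing statements hold simultaneously is where the real work lies; everything else is an adaptation of the curve-case techniques of \cite{GHS} and \cite{dJS10} to the relative setting.
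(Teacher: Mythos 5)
Your proposal collects the right set of ingredients — chains of free lines from Hypothesis~\ref{hyp-peace}, very twisting surfaces, and a chain-of-rational-curves criterion for rational connectedness — and correctly names the guiding analogy (twisting surfaces play the role that very free curves play in \cite{GHS}). But the role you assign to the twisting surface is not the one it plays in the actual argument, and that misattribution obscures the heart of the proof.

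You write that the twisting surface ``lets one modify a chain to adjust its class in $\underline{\text{Pic}}$ without leaving the component.'' That is not what it does. In the paper the twisting surface supplies a rational curve in $\Sigma^e(X/C/k)$ connecting a degenerate boundary point — a \emph{porcupine}, i.e.\ a free section with free lines attached at peaceful points — to an honest free section (Lemma~\ref{lem-whytwist}, Corollary~\ref{cor-porcline}); iterating this connects a general point of $Z_e$ to a general porcupine with many quills via a chain through unobstructed non-stacky points. It does not touch the Abel image. The step that controls the Abel image is Proposition~\ref{prop-pentogether}, which is a separate mechanism: two porcupines with the same Abel image are connected inside $\Sigma^e$ by first modifying them (via Lemma~\ref{lem-pivot}) so that their quills lie in a \emph{pen}, a ruled surface of free lines (Definition~\ref{defn-pen}, Lemma~\ref{lem-freeruled}), and then moving inside base-point-free pencils of divisors on the ruled surface (Lemma~\ref{lem-pentogether}); the Abel constraint enters precisely there, through Lemma~\ref{lem-Abelpen}. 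That stage uses ruled surfaces produced by \cite{GHS} from the chain hypothesis, not the twisting surface. Your proposal has no analogue of the pen mechanism, and without it the interaction you flag as the ``main obstacle'' — fixing the Abel class while keeping the family irreducible — is not resolved.

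Two smaller but genuine issues: ``define $Z_e$ as the closure of the locus of sections obtained by this construction'' is not a usable definition; the paper pins $Z_e$ down as the unique component containing all porcupines with body in a fixed component (Lemma~\ref{lem-Zd}), and its canonicity (Corollary~\ref{cor-Zdsame}) already depends on the pen argument, not on monodromy from the twisting surface. And the final rational connectedness is not obtained by dominating the Abel fibre by a product of chain spaces and applying \cite{GHS} fibrewise — it is a direct application of Koll\'ar's chain criterion \cite[Theorem IV.3.10.3]{K} to the two families of useful chains: interior-to-porcupine via twisting surfaces, porcupine-to-porcupine via pens. This two-stage structure is the actual proof, and it is absent from your outline.
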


\noindent
The proof of this theorem is long and takes up most of this paper.
We have tried to split up the argument in two stages. In the first
stage, Sections \ref{app-A} -- \ref{sec-famvarlines},
we avoid using the existence of a very twisting surface
in $X_{\ol{\eta}}$, using only the assumption that $X_{\ol{\eta}}$
is rationally simply connected by
chains of free lines. This condition still implies there are many
free lines in fibres of $X \to C$. The main technical result 
in these sections is Proposition \ref{prop-pentogether}. Let us
rephrase it here. Let $\Sigma^e(X/C/k)$ denote the
Kontsevich moduli space of stable sections of $X \to C$ of degree $e$,
see Definition \ref{defn-relstablemap}. A free section, see
Definition \ref{defn-classicalfree}, is a section $s : C \to X$
such that $s^*T_{X/C}$ is a globally generated locally free sheaf
with trivial $H^1$. A \emph{porcupine}, see Definition \ref{defn-ppine},
is roughly speaking a stable map obtained by taking a free section
and glueing on a bunch of free lines (its quills). With this terminology
Proposition \ref{prop-pentogether} roughly states that given any two free
sections $s_1$, $s_2$ there exist extensions of them to porcupines
$p_1$, $p_2$ whose moduli points $[p_i] \in \Sigma^e(X/C/k)$
can be connected by a chain of rational curves whose nodes 
correspond to unobstructed, non-stacky points of $\Sigma^e(X/C/k)$.

\medskip\noindent
This in some sense means some huge Ind-scheme constructed out of
spaces of sections of $X/C$ is rationally connected. But, as we mentioned
above, we do not know a theorem a la \cite{GHS} for families of
Ind-schemes. And it is at this point that the existence of a
very twisting surface comes in. Namely, in the second stage, 
Sections \ref{sec-perfectpens} and \ref{sec-proofs}, we show how
the existence of very twisting surfaces in fibres of
$X \to C$ implies a the moduli point corresponding to a
general porcupine is connected by a chain of
rational curves in $\Sigma^e(X/C/k)$ to the moduli point 
corresponding to a free section. Combining this with the previous
result we get the theorem above.

\medskip\noindent
{\bf Application I.} In an email dated Jul 8, 2005 Phillipe Gille
sketched out how a theorem as above for families of homogeneous
varities (or Borel varieties) over surfaces might lead to a proof
of Serre's conjecture II for function fields of surfaces.
Our method sketched above reduces this
to studying sections of families of Borel varieties over curves.
In the case of curves over finite fields \emph{the same method}
was used by G.\ Harder to prove Serre's conjecture II for function
fields of curves over finite fields, see for example \cite{Harder},
the references therein and \cite{HarderIII}. Of course the actual
details differ substantially.

\medskip\noindent
In Sections \ref{sec-He} and \ref{sec-GPR1C} we show that the theory
above applies to families of varieties with fibres
of the form $Z = G/P$ with $P$
maximal parabolic. The main new results are Theorem \ref{thm-He} and
Proposition \ref{prop-diagram}. We struggled with this material for
a long time, partly because we did not realize that for a general
variety of type $G/P$ with $P$ maximal parabolic, it is \emph{not}
the case that tangent vectors along lines at a fixed point $p$
span the tangent space $T_pZ$. Nonetheless these varieties are 
rationally simply connected by chains of free lines and contain
very twisting surfaces, see Corollary \ref{cor-GPverytwisting} and
Lemmas \ref{lem-GPevRC} and \ref{lem-GPchainR1C}. Therefore
using the Corollary \ref{cor-A} stated above we immediately obtain
the following corollary.

\begin{cor}
\label{cor-B}
\marpar{cor-B}
Let $G$ be a linear algebraic group over $k$ and let
$P \subset G$ be a standard maximal parabolic subgroup.
Let $S$ be a nonsingular projective surface over $k$.
Let $X \to S$ be a smooth projective morphism
all of whose geometric fibres are isomorphic to $G/P$.
Assume there exists an invertible sheaf $\mc{L}$
on $X$ which restricts to an ample generator of
$\text{Pic}(G/P)$ on a fibre. Then $X \to S$ has a rational
section.
\end{cor}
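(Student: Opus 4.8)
The plan is to derive Corollary~\ref{cor-B} as an immediate consequence of Corollary~\ref{cor-A} (equivalently Corollary~\ref{cor-main}, resting on Theorem~\ref{thm-main}): the substantive work has already been done, and it remains only to check that the family $f : X \to S$ together with the sheaf $\mc{L}$ satisfies the three hypotheses of Corollary~\ref{cor-A}. The first two are essentially automatic. Every geometric fibre of $f$ is isomorphic to the projective homogeneous variety $G/P$, which is irreducible, so hypothesis~(1) holds with $U = S$. For hypothesis~(2), the restriction of $\mc{L}$ to a geometric fibre is a class in $\text{Pic}(G/P) \cong \ZZ$, and the associated integer is locally constant over $S$, hence constant since $S$ is connected; as $\mc{L}$ restricts to the ample generator on one fibre, it restricts to the ample generator on \emph{every} geometric fibre, and in particular it is $f$-relatively ample on $f^{-1}(U) = X$.

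It remains to verify hypothesis~(3): that the geometric generic fibre $X_{\ol{\eta}}$, together with $\mc{L}_{\ol{\eta}}$, is rationally simply connected by chains of free lines and contains a very twisting surface. Since $\Spec(\overline{k(S)}) \to S$ is a geometric point of $S$, by hypothesis $X_{\ol{\eta}}$ is isomorphic to $G/P$ over $\overline{k(S)}$, and by the previous paragraph $\mc{L}_{\ol{\eta}}$ is the ample generator of its Picard group. After the standard reduction to $G$ simple (a maximal parabolic $P$, and hence $G/P$, involves only one simple factor of $G$), the analysis of Sections~\ref{sec-He} and~\ref{sec-GPR1C} --- Theorem~\ref{thm-He} and Proposition~\ref{prop-diagram} --- shows precisely that $G/P$ with its ample generator satisfies Hypothesis~\ref{hyp-peace}, hence is rationally simply connected by chains of free lines (Lemma~\ref{lem-GPchainR1C}), and contains a very twisting surface (Corollary~\ref{cor-GPverytwisting}; see also Lemma~\ref{lem-GPevRC}). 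Thus $f : X \to S$ and $\mc{L}$ satisfy all three hypotheses of Corollary~\ref{cor-A}, which accordingly produces a rational section of $f$.

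The only real difficulty lies outside this short deduction, in the geometric input drawn from Sections~\ref{sec-He} and~\ref{sec-GPR1C}. As noted in the introduction, for a general $G/P$ with $P$ maximal parabolic the tangent directions of lines through a fixed point need not span the tangent space, so the naive criterion for rational simple connectedness by lines is unavailable; both Hypothesis~\ref{hyp-peace} and the existence of very twisting surfaces for these varieties have to be established by the detailed combinatorial and geometric arguments of those sections. Granting that input --- and granting Corollary~\ref{cor-A} itself, whose proof occupies most of the paper --- Corollary~\ref{cor-B} follows formally, exactly as above.
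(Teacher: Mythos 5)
Your proposal is correct and matches the paper's intended argument exactly: the corollary is derived by applying Corollary~\ref{cor-A} (i.e.\ Corollary~\ref{cor-main}), with hypothesis~(3) supplied by the group-theoretic work of Sections~\ref{sec-He} and~\ref{sec-GPR1C}, namely Lemmas~\ref{lem-GPevRC} and \ref{lem-GPchainR1C} for Hypothesis~\ref{hyp-peace} and Corollary~\ref{cor-GPverytwisting} for the very twisting scroll. The only tiny slip is attributional: Theorem~\ref{thm-He} and Proposition~\ref{prop-diagram} feed into the construction of the very twisting scroll, while the chain-connectedness of Hypothesis~\ref{hyp-peace} rests on Lemmas~\ref{lem-GPevRC} and \ref{lem-GPchainR1C}, so the parenthetical ``see also Lemma~\ref{lem-GPevRC}'' belongs with the \peaceful\ part rather than with the twisting scroll; this does not affect the substance.
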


\noindent
It turns out that we can reduce a more general case of a family
of projective homogeneous spaces to the simple case above.
Here is the most general result we obtain of this kind.

\begin{thm}
\label{thm-B}
\marpar{thm-B}
See Theorem \ref{thm-main2}.
Let $k$ be an algebraically closed field of any characteristic.
Let $S$ be a quasi-projective surface over $k$. Let
$X \to S$ be a projective morphism whose geometric generic
fibre is of the form $G/P$ for some linear algebraic group
$G$ and parabolic subgroup $P$. Assume furthermore that
$\text{Pic}(X) \to \text{Pic}(X_{\ol{\eta}})$ is surjective.
Then $X \to S$ has a rational section.
\end{thm}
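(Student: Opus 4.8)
The plan is to deduce Theorem \ref{thm-B} from the maximal-parabolic case already in hand (Corollary \ref{cor-B}, itself an application of Corollary \ref{cor-A}), by a chain of reductions organised as a double induction: an outer induction on the number of $\text{Gal}$-orbits of simple factors of the structure group, and an inner induction on $r=\text{rank}\,\text{Pic}(G/P)$, equivalently the number of maximal parabolics containing $P$. I use throughout that having a rational section is a birational invariant of both $X$ and $S$, so I may replace $S$ by a nonsingular projective surface birational to a compactification and $X$ by any nonsingular projective model of its generic fibre over it (using resolution of surfaces and threefolds, available in every characteristic); and I may replace $G$ by its adjoint group, which does not change $G/P$.

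After these normalisations $X\to S$ is a morphism of nonsingular projective varieties, $S$ a surface, $X_{\ol\eta}=G/P$, and $\text{Pic}(X)\to\text{Pic}(X_{\ol\eta})$ is surjective. Writing $P=P_1\cap\dots\cap P_r$ with the $P_i$ maximal and sharing a Borel, $\text{Pic}(X_{\ol\eta})=\bigoplus_i\ZZ L_i$, where $L_i$ is the pullback along $\pi_i:G/P\to G/P_i$ of the ample generator of $\text{Pic}(G/P_i)$. Surjectivity produces $\mc{M}_i\in\text{Pic}(X)$ with $\mc{M}_i|_{X_{\ol\eta}}=L_i$. By generic flatness and cohomology-and-base-change, over a dense open $V\subseteq S$ the sheaf $\mc{M}_1$ is relatively semiample, its relative $\text{Proj}$ is a projective morphism $X_1\to V$ with geometric generic fibre $G/P_1$ and $\text{Pic}(X_1)\to\text{Pic}((X_1)_{\ol\eta})$ still surjective, and there is a $V$-morphism $X|_V\to X_1$ inducing $\pi_1$ geometrically, whose general fibre is $\cong P_1/P$.

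The base case of the inner induction is $r=1$, i.e.\ $P$ maximal: then (after compactifying $X_1$, say) $\mc{M}_1$ is the required relatively ample class, the geometric generic fibre $G/P$ is rationally simply connected by chains of free lines and carries a very twisting surface by Corollary \ref{cor-GPverytwisting} and Lemmas \ref{lem-GPevRC} and \ref{lem-GPchainR1C}, and Corollary \ref{cor-A} yields a rational section; when $\text{Gal}$ permutes simple factors of $G$ one first rewrites the geometric generic fibre as a Weil restriction $\text{Res}_{k(S')/k(S)}$ along a finite cover of surfaces $S'\to S$ and invokes the outer inductive hypothesis. For the inner step with $r\ge 2$, apply the base case to $X_1\to V$ to get a rational section $\sigma_1$ over an open $V_1\subseteq V$, and set $X':=(X|_{V_1})\times_{X_1,\sigma_1}V_1$. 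Its geometric generic fibre is a fibre of $\pi_1$, hence $\cong P_1/P\cong L_1/P'$ for the Levi $L_1$ of $P_1$ and a parabolic $P'\subseteq L_1$ with $\text{Pic}(L_1/P')$ of rank $r-1$; and $\text{Pic}(X')\to\text{Pic}(X'_{\ol\eta})$ is again surjective, because $X'_{\ol\eta}\subseteq X_{\ol\eta}$ is closed, $\text{Pic}(G/P)\to\text{Pic}(P_1/P)$ is onto (it kills $L_1$ and sends the other $L_i$ to a basis), and these restricted classes extend from $X$ to $X'$. By the inner induction $X'\to V_1$ has a rational section, and composing with the projection $X'\to X|_{V_1}$ gives one for $X\to S$.

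The step I expect to be the main obstacle lies in the base case: producing a global model on which Corollary \ref{cor-A} can actually be applied. A priori the family is only a form of $G/P$ over the generic point, so over the codimension-one locus of $S$ the fibres may degenerate and the chosen ample class need not remain relatively ample; the content of the hypothesis is precisely that surjectivity of $\text{Pic}(X)\to\text{Pic}(X_{\ol\eta})$ kills the cohomological (Brauer-type) obstruction to such a model — this is principle (P) made concrete for this class — so that, after flattening, compactifying and resolving while carrying along the extended classes $\mc{M}_i$, one obtains $X$ and $S$ nonsingular projective with the hypotheses of Corollary \ref{cor-A} (in the improved form dispensing with the irreducibility condition, cf.\ Remark \ref{rmk-improve}) holding over the complement of finitely many points of $S$. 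Executing this, and checking that the Picard surjectivity and the successive identifications of geometric generic fibres with flag varieties $L_1/P'$ persist through all of these operations while both inductions stay well-founded, is the bulk of the work.
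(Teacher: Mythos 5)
Your inner induction on the Picard rank via the factorization $X\to X_1\to V$ is essentially the same reduction as the paper's Lemma \ref{lem-inductive}, and the way you propagate Picard surjectivity through the step is correct. But there are two genuine gaps, both in the base case $r=1$, which is exactly where you flag difficulty.

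First, there is no treatment of positive characteristic. Corollary \ref{cor-A} rests on Corollary \ref{cor-main}, which rests on Theorem \ref{thm-main}, all of which are proved only in characteristic $0$ (Situation \ref{hyp-defn-porc} and Hypothesis \ref{hyp-peace} both require characteristic $0$, and the proofs use \cite{GHS}, \cite{KMM92c}, generic smoothness, etc.). Noting that resolution of surfaces and threefolds is available in every characteristic gives you nonsingular projective models, but not the applicability of Corollary \ref{cor-A}. The paper handles this with a separate lifting argument, Lemma \ref{lem-lift}, which uses Chevalley's theory to spread the pair $(G/P,\OO(1))$ over a Cohen ring, the smoothness of the automorphism scheme $\Aut((Z_R,\OO(1)))$ (Lemma \ref{lem-flat}), and the torsor-lifting result of \cite{dJS8} to replace the original characteristic-$p$ family by a characteristic-$0$ one with the same specialization.

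Second, and more fundamentally, "flattening, compactifying and resolving while carrying along the extended classes" does not produce a family satisfying the hypotheses of Corollary \ref{cor-A}. The obstruction sits over the discriminant divisor of $\overline{S}$: over the generic point of such a divisor the fibre may be reducible or non-reduced, and the extended invertible sheaf $\mc{M}_1$ may fail to be relatively ample there; neither resolution of the total space nor Remark \ref{rmk-improve} fixes this (the remark only relaxes irreducibility when $\mc{L}$ is very ample on \emph{all} fibres, which is what one is trying to arrange). Nothing about Picard surjectivity controls what happens in codimension one — it is a hypothesis on the generic fibre only. This is precisely what \emph{discriminant avoidance} (\cite{dJS7}, \cite{dJS8}) is for: rather than resolving, one replaces the $H$-torsor over an open of $S$ by the special fibre of a torsor over a $k[[t]]$-scheme whose generic fibre has \emph{projective} base, and then specializes sections from the generic fibre. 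The paper invokes this in the proof of Theorem \ref{thm-main2} and sketches it in Remark \ref{rmk-discavoid}; without it your base case does not close. (A smaller point: your outer induction via Weil restriction is unnecessary, since surjectivity of $\text{Pic}(X)\to\text{Pic}(X_{\ol\eta})$ already forces Galois to act trivially on $\text{Pic}(X_{\ol\eta})$, hence on the set of maximal parabolics containing $P$ up to conjugacy, so the factorization $X\to X_1$ is automatically defined over $k(S)$.)
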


\noindent
We would like to explain how this leads to a proof
of Serre's conjecture II in the special case of function fields.
Namely, let $G$ be connected, semi-simple, and simply connected
over $k$ algebraically closed of any characteristic.
Suppose $K$ is the function field of a surface $S$ over
$k$. Suppose $\xi \in H^1(K, G)$ is a Galois cohomology
class. After shrinking $S$ we may assume that $\xi$ comes from
an element in $H^1(S, G)$, i.e., is given by a right $G$-torsor
$\mathcal{T}$ over $S$. Consider the family
$$
X = \mathcal{T} \times^{G} G/B \to S
$$
Since $G$ is simply connected any invertible sheaf
$\mc{L}_0$ on $G/B$ has a $G$-linearization and hence
we can descend $\OO_{\mc{T}} \boxtensor \mc{L}_0$ to
$X$. In other words, the assumptions of Theorem \ref{thm-B}
are satisfied and we see that $X$ has a $K$-rational point.
Because the fibres of $\mc{T} \to X$ are $B$-torsors,
this means that $\xi$ comes from an element of $H^1(K, B)$.
But as $B$ is a connected solvable group over $k$ we have
$H^1(K, B) = 0$ for any field $K$. We conclude $\xi$
is trivial. Thus we obtain the following special
case of a conjecture of Serre, cf. \cite[p. 137]{GalCoh}.

\begin{cor}[Serre's Conjecture II over function fields for split groups]
\label{cor-C}
\marpar{cor-C}
Let $k$ be an algebraically closed field of any characteristic
and let $K/k$ be the function field of a surface.  Let $G$ be a
connected, semisimple, simply connected algebraic group over $k$.
Every $G$-torsor over $K$ is trivial.
\end{cor}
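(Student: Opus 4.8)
The plan is to obtain Corollary~\ref{cor-C} as an essentially formal consequence of Theorem~\ref{thm-B}, following the reduction sketched just above: produce a family of Borel varieties over (an open subset of) $S$, apply Theorem~\ref{thm-B} to get a $K$-rational point, and reinterpret that point as a reduction of structure group to a Borel. So let $\xi \in H^1(K,G)$. Since $G$ is a smooth affine group scheme, $H^1(K,G) = \varinjlim_U H^1(U,G)$ as $U$ ranges over the nonempty open subschemes of $S$; hence after shrinking $S$ I may assume $\xi$ is represented by a right $G$-torsor $\mathcal{T} \to S$ over a quasi-projective surface $S$. Fix a Borel subgroup $B \subset G$ defined over $k$ (recall that $G$, being reductive over an algebraically closed field, is split) and form the associated bundle of flag varieties
\[
X \;=\; \mathcal{T} \times^{G} (G/B) \;\longrightarrow\; S,
\]
whose geometric generic fibre is $G/B$. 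A rational section of $X \to S$ --- which Theorem~\ref{thm-B} will provide once its hypotheses are checked --- restricts to a point $x \in X_\eta(K)$ of the generic fibre. Since $X_\eta = \mathcal{T}_\eta / B$ parametrizes $B$-reductions of $\mathcal{T}_\eta$, such an $x$ is exactly a $B$-subtorsor of $\mathcal{T}_\eta$, i.e.\ a class in $H^1(K,B)$ mapping to $\xi$ under $H^1(K,B) \to H^1(K,G)$. Finally $H^1(K,B) = 0$ for any field $K$: the extension $1 \to U \to B \to T \to 1$ with $T$ a split torus gives $H^1(K,T) = 0$ by Hilbert's Theorem~90, while the split unipotent radical $U$ is an iterated extension of copies of $\mathbb{G}_a$, so $H^1(K,U) = 0$. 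Hence $\xi$ is trivial.

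It remains to verify the one hypothesis of Theorem~\ref{thm-B} that is not automatic, namely that $\mathrm{Pic}(X) \to \mathrm{Pic}(X_{\ol{\eta}})$ is surjective --- and this is precisely where the simple connectedness of $G$ enters. For $G$ semisimple and simply connected one has $X^*(G) = 0$ and $\mathrm{Pic}(G) = 0$, and by the standard linearization criterion every invertible sheaf $\mathcal{L}_0$ on the $G$-variety $G/B$ then admits a $G$-linearization. Pulling this linearization back along the $G$-equivariant second projection $\mathcal{T} \times (G/B) \to G/B$ (for the diagonal $G$-action on the source) and descending along the $G$-torsor $\mathcal{T} \times (G/B) \to X$ yields an invertible sheaf $\mathcal{L}$ on $X$ restricting to $\mathcal{L}_0$ on every geometric fibre. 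As $\mathrm{Pic}(X_{\ol{\eta}}) \cong \mathrm{Pic}(G/B) = X^*(T)$ is generated by such classes, the restriction map is surjective, so Theorem~\ref{thm-B} applies. (In the language of the introduction: $G$ simply connected $\Rightarrow$ $\mathcal{L}_0$ is $G$-linearizable $\Rightarrow$ $\mathcal{O}_{\mathcal{T}} \boxtensor \mathcal{L}_0$ descends from $\mathcal{T} \times (G/B)$ to $X$.)

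I expect the only genuine difficulty to lie entirely upstream: it is Theorem~\ref{thm-B}, and behind it Theorem~\ref{thm-A}/Corollary~\ref{cor-A} together with the facts (Theorem~\ref{thm-He}, Corollary~\ref{cor-GPverytwisting}, Lemmas~\ref{lem-GPevRC} and~\ref{lem-GPchainR1C}) that the varieties $G/P$ with $P$ maximal parabolic are rationally simply connected by chains of free lines and carry very twisting surfaces, that supply all the geometric content; Corollary~\ref{cor-C} itself is then just the short descent-and-reduction argument above. Within that argument the one delicate bookkeeping step is producing the ample invertible sheaf on the family $X \to S$ \emph{globally} --- the $G$-linearization and descent step --- which is exactly what the hypothesis that $G$ is simply connected is there to supply (it is also what kills any Brauer-type obstruction, cf.\ the discussion after Corollary~\ref{cor-A}). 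One should also carry the positive-characteristic case along throughout: splitness of $B$ and of $U$, hence $H^1(K,B) = 0$, and the linearization criterion all remain valid, and Theorem~\ref{thm-B} is stated for arbitrary characteristic, so no new obstacle appears there.
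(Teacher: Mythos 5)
Your proof is correct and is essentially the same argument the paper gives in the paragraph immediately preceding Corollary~\ref{cor-C}: spread the torsor out over an open $S$, form $X = \mathcal{T} \times^G (G/B)$, use simple connectedness of $G$ to $G$-linearize $\mathcal{L}_0$ and descend it to $X$ so that Theorem~\ref{thm-B} applies, obtain a rational point, read it as a $B$-reduction, and conclude via $H^1(K,B)=0$. You have merely expanded a few of the one-line assertions (the colimit over opens, the Hilbert~90/$\mathbb{G}_a$-filtration argument for $H^1(K,B)=0$, and the descent of the linearized sheaf along the torsor), all of which are implicit in the paper's version.
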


\noindent
In particular, suppose $G_K$ is a simple algebraic group over
$K$ of type $E_8$. Note that $G_K$ is simply connected. Let $G$
be a split form of $G_K$. Note that $\text{Aut}(G) \cong \text{Inn}(G)
\cong G$. We see that $G_K$ corresponds to an element of $H^1(K, G)$
and hence by Corollary \ref{cor-C} it is itself split.
Combined with \cite[Theorem 1.2(v)]{CTGP}, which deals with all
other types this completes
the proof of Serre's Conjecture II for function fields.

\begin{thm}[Serre's Conjecture II for function fields]
\label{thm-C}
\marpar{thm-C}
For every connected, simply connected, semisimple algebraic
group $G_K$ over $K$, every $G_K$-torsor over $K$ is trivial.
\end{thm}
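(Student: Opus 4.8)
The plan is to reduce the assertion, by the structure theory of semisimple groups, to the special case already isolated as Corollary \ref{cor-C}: that every $\bar{G}$-torsor over $K$ is trivial whenever $\bar{G}$ is a connected, simply connected, semisimple group \emph{defined over $k$ itself}. That corollary — which rests on Theorem \ref{thm-main2}, and hence on essentially the whole of this paper — is the only hard input; everything else is bookkeeping with the classification of types, together with one small observation about $E_8$.

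\textbf{Step 1: reduction to absolutely simple groups.} A connected, simply connected, semisimple group over $K$ becomes a product of absolutely simple simply connected factors after a finite \emph{separable} base change; indeed, its automorphism group scheme is smooth, so $K$-forms are classified by \'etale cohomology and a splitting field may be chosen separable. Collecting the Galois orbits of the factors yields an isomorphism $G_K \cong \prod_i R_{L_i/K}(G_i)$, where each $L_i/K$ is finite separable and each $G_i$ is absolutely simple simply connected over $L_i$. Since $k$ is algebraically closed, each $L_i$ is again the function field of a surface over $k$. Because $H^1$ commutes with finite products and, by Shapiro's lemma, $H^1(K, R_{L_i/K}(G_i)) = H^1(L_i, G_i)$, it suffices to prove the theorem for an absolutely simple simply connected group over an arbitrary function field of a surface over $k$.

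\textbf{Step 2: case analysis by type.} Go through the classification of absolutely simple simply connected types: the classical types $A_n$, $B_n$, $C_n$, $D_n$, and the exceptional types $G_2$, $F_4$, $E_6$, $E_7$, $E_8$. For every type other than $E_8$, invoke \cite[Theorem 1.2(v)]{CTGP}, which establishes Serre's Conjecture II for function fields of surfaces over an algebraically closed field precisely in these cases. For type $E_8$: let $\bar{G}$ be the split (Chevalley) group of type $E_8$ over $k$ — it is defined over the prime field, in particular over $k$ — and set $G = \bar{G}_K$. Since $E_8$ has trivial centre and no nontrivial outer automorphisms, $\text{Aut}(G) = \text{Inn}(G) = G$. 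Hence the given $K$-form $G_K$ is classified by a class in $H^1(K, \text{Aut}(G)) = H^1(K, G)$, which vanishes by Corollary \ref{cor-C} applied to $\bar{G}$. Therefore $G_K \cong G$ is split, and a second application of Corollary \ref{cor-C} shows every $G_K$-torsor over $K$ is trivial. Together with Step 1 and the non-$E_8$ cases, this proves the theorem.

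\textbf{Expected main obstacle.} Once Corollary \ref{cor-C} is granted, no serious difficulty remains in characteristic $0$. The delicate point is uniformity across all characteristics: one must check that the separable-splitting and Weil-restriction reductions of Step 1 are valid for the smooth group schemes involved, and — more substantially — that \cite[Theorem 1.2(v)]{CTGP}, or an equivalent argument via Brauer groups and Hermitian forms, genuinely covers the non-$E_8$ types in positive characteristic as well as in characteristic $0$. Any residual wild-characteristic cases not already in the literature would have to be handled separately; for the classical reductions this is routine rather than deep, but it is the place where care is required.
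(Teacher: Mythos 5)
Your proof is correct and takes essentially the same route as the paper: the $E_8$ case is handled by observing $\text{Aut}(G)\cong\text{Inn}(G)\cong G$ so that $K$-forms of $E_8$ lie in $H^1(K,G)$, which vanishes by Corollary~\ref{cor-C}, after which a second application of that corollary kills torsors under the (now split) group, while \cite[Theorem~1.2(v)]{CTGP} supplies all other types. Your Step~1 usefully makes explicit the standard reduction to absolutely simple factors via Weil restriction and Shapiro's lemma — a step the paper leaves implicit but which is genuinely needed when $G_K$ has both an $E_8$ factor and factors of other types — and your characteristic-$p$ caveat is discharged by the fact that Corollary~\ref{cor-C} is stated in all characteristics and the cited \cite{CTGP} result is invoked without characteristic restriction.
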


\noindent
We would like to point out here that lots of work has been done by
lots of authors on Serre's conjecture II. Our approach is tailored
to function fields of surfaces. Perhaps some of the motivation
of Serre's Conjecture II comes from studying the relationship
between cohomological
dimensions of fields, the arithmetic of fields, and linear algebraic groups. 
We have found the paper \cite{CTGP} and its references a rich source of
information regarding this material.

\medskip\noindent
{\bf Application II.} We would like to point out that
Corollary \ref{cor-A} and Theorem \ref{thm-A} can be used
to reprove Tsen's theorem for families of hypersurfaces
over surfaces. Whereas the proof of Tsen's theorem can fit on
a napkin, this paper has 60+ pages. However, the aim of such an
investigation is to stimulate research into different generalizations
of Tsen's theorem. For example, one possible avenue for research is
to study families of low degree hypersurfaces in homogeneous
spaces. In any case, we briefly explain in Section
\ref{sec-hypersurface} below how to reprove Tsen's theorem
using the main results of this paper.


\section{Stacks of curves and maps from curves to a target}
\label{app-A}
\marpar{app-A}

\noindent
The correct notion when doing moduli of algebraic varieties
is to use families of varieties where the total space is 
an algebraic space -- not necessarily a scheme. Even in the case
of families of curves it can happen that the total space is not
locally a scheme over the base. This leads to the following
definitions.

\begin{defn}
\label{defn-curves}
\marpar{defn-curves}
Let $S$ be a scheme.

\mni
\textbf{1.}
A \emph{flat family of proper curves over $S$} is
a morphism of algebraic spaces $\pi:C\rightarrow S$
which is proper, locally finitely presented,
and flat of relative dimension $1$.\footnote{All fibres pure dimension 1.}

\mni
\textbf{2.}
A \emph{flat family of polarized, proper curves over $S$} is a
pair $(\pi:C\rightarrow S,\mc{L})$ consisting of a flat family
of proper curves over $S$ and a $\pi$-ample invertible sheaf
$\mc{L}$ on $C$.   

\mni
\textbf{3.}
For every morphism of schemes $u:T \rightarrow S$
and $\pi : C \rightarrow S$ as above the
\emph{pullback family} is the projection
$$
u^*\pi = \text{pr}_T: T\times_{u,S,\pi} C \rightarrow T.
$$
If the family is polarized by $\mc{L}$ then  
the pullback family is polarized by the $u^*\pi$-ample
invertible sheaf $(u,\text{Id}_C)^*\mc{L}$.

\mni
\textbf{4.}
For two flat families of proper curves 
$\pi:C\rightarrow S$ and $\pi':C' \rightarrow S'$,
a \emph{morphism} $f$ from the first family to
the second is given by a cartesian diagram
$$
\xymatrix{
C \ar[r]^f \ar[d] & C' \ar[d] \\
S \ar[r] & S'.
}
$$
If $\pi:C\rightarrow S$ is
polarized by $\mc{L}$ and $\pi':C' \rightarrow S'$
is polarized by $\mc{L}'$ then a \emph{morphism}
between the polarized families is a pair $(f, \phi)$
with $f$ as above and $\phi:\mc{L} \rightarrow
f^*\mc{L}'$ is an isomorphism.
\end{defn}

\begin{defn}
\label{defn-stackcurves}
\marpar{defn-stackcurves}
The \emph{stack of proper curves} is the fibred category
$$
\textit{Curves}
\longrightarrow
\textit{Schemes}
$$
whose objects are families of flat proper curves
$\pi:C\rightarrow S$ and morphisms are as above.
The functor $\textit{Curves} \to \textit{Schemes}$
is the forgetful functor.
Similarly the \emph{stack of proper, polarized curves} is the
fibred category
$$
\textit{Curves}_{\text{pol}}
\longrightarrow
\textit{Schemes}
$$
whose objects are flat families of proper, polarized curves
$(\pi:C\rightarrow S,\mc{L})$ as in the definition above.
\end{defn}

\noindent
Denote by $(\text{Aff})$ the category of affine schemes.
A technical remark is that in \cite{LM-B} stacks are defined
as fibred categories over $(\text{Aff})$. So in the following
propositions we state our results in a manner that is 
compatible with their notation. In particular when we speak
of $\textit{Curves}$ over $(\text{Aff})$ we mean the
restriction of the fibred category $\textit{Curves}$
to $(\text{Aff}) \subset \textit{Schemes}$.

\mni
There is functor of fibred categories
$$
F:\textit{Curves}_{\text{pol}} \rightarrow \textit{Curves}
$$ 
forgetting the invertible sheaves.

\begin{prop}
\label{prop-algebraic}
\marpar{prop-algebraic}
The categories $\textit{Curves}_{\text{pol}}$ and $\textit{Curves}$ are
limit preserving algebraic stacks over $(\text{Aff})$ (with the fppf
topology) with
finitely presented, separated diagonals.  Moreover $F$
is a smooth, surjective morphism of algebraic stacks. 
\end{prop}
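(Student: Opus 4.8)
The plan is to verify the familiar package: both fibred categories are stacks for the fppf topology, they are limit preserving, their diagonals are representable by algebraic spaces which are separated and of finite presentation, and each carries a smooth surjection from a scheme. Granting the first three items, I will deduce algebraicity of $\textit{Curves}$ from that of $\textit{Curves}_{\text{pol}}$ via $F$: as soon as $\Delta_{\textit{Curves}}$ is representable by algebraic spaces, every morphism to $\textit{Curves}$ from a scheme is representable by algebraic spaces (it is a base change of $\Delta_{\textit{Curves}}$), so if $U\to\textit{Curves}_{\text{pol}}$ is a scheme atlas and $F$ is smooth and surjective, then the composite $U\to\textit{Curves}$ is a smooth surjective atlas.

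That $\textit{Curves}$ and $\textit{Curves}_{\text{pol}}$ are fppf stacks is fppf descent for algebraic spaces (to descend the total space) together with fppf descent of the properties proper, flat, finitely presented, of relative dimension $1$, and — for the polarized version — $\pi$-ampleness of an invertible sheaf; descent of morphisms and of isomorphisms is the same statement. Limit preservation is the usual spreading-out argument: over a cofiltered limit $S=\lim S_i$ of affine schemes, a flat proper finitely presented family of curves, a polarization, a morphism, or an isomorphism each descends to some $S_i$ and becomes unique after enlarging $i$, by the limit formalism for algebraic spaces (with ampleness handled by the standard approximation results for ample invertible sheaves). For the diagonal, given objects $(C\to S)$ and $(C'\to S)$, the corresponding fibre of $\Delta_{\textit{Curves}}$ is $\underline{\text{Isom}}_S(C,C')$: graphs of $S$-isomorphisms $C\to C'$ are cut out inside the Hilbert functor of $C\times_S C'\to S$ by a locally closed and open condition, so $\underline{\text{Isom}}_S(C,C')$ is representable by an algebraic space locally of finite presentation over $S$ once one knows the Hilbert functor of a separated finitely presented algebraic space is, which is Artin's theorem. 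It is of finite presentation over $S$ because the relevant graphs form a bounded family, and separated over $S$ because over a valuation ring two $S$-isomorphisms agreeing on the schematically dense generic fibre of a flat proper curve agree. For $\textit{Curves}_{\text{pol}}$ the Isom functor maps to $\underline{\text{Isom}}_S(C,C')$ with fibre over $f$ the open subspace of isomorphisms inside the linear, representable $\underline{\text{Hom}}(\mc{L},f^*\mc{L}')$; it is therefore affine and of finite presentation over $\underline{\text{Isom}}_S(C,C')$, so the composite again has the required properties.

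For the atlas I start with $\textit{Curves}_{\text{pol}}$. Relative ampleness of $\mc{L}$ already forces $C\to S$ to be representable by schemes and, for $n\gg 0$, projective: $\mc{L}^{\otimes n}$ is relatively very ample, $\pi_*\mc{L}^{\otimes n}$ is locally free of some rank $r$, and $C\hookrightarrow\PP(\pi_*\mc{L}^{\otimes n})$ is a closed immersion. Stratifying by $n$, by $r$, and by the fibrewise Hilbert polynomial $P$, and Zariski-locally trivializing $\pi_*\mc{L}^{\otimes n}$, realizes $C$ as a point of the open subscheme $H_{P,r}$ of the Hilbert scheme of $\PP^{r-1}$ over which the universal subscheme is a flat family of proper curves — an open condition on a flat finitely presented family. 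The disjoint union $\coprod_{n,r,P}H_{P,r}\to\textit{Curves}_{\text{pol}}$ is smooth, since the choices made (the embedding via $\mc{L}^{\otimes n}$ and the trivialization) contribute a $\GL_r$-torsor over each stratum, and surjective, since every polarized curve acquires such data fppf-locally on the base; hence $\textit{Curves}_{\text{pol}}$ is algebraic. Finally, $F$ is surjective because a proper curve over a field is projective, so admits an ample invertible sheaf, and $F$ is smooth because for $C\to T$ the base change $\textit{Curves}_{\text{pol}}\times_{\textit{Curves}}T$ is the stack of relatively ample invertible sheaves on $C_T/T$, which is smooth over $T$: it is a gerbe over an open substack of $\underline{\text{Pic}}_{C_T/T}$, and the Picard functor of a family of proper curves is smooth because deformations of line bundles on a curve are unobstructed (vanishing of $H^2$ of a curve). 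Composing a scheme atlas of $\textit{Curves}_{\text{pol}}$ with $F$ then gives an atlas for $\textit{Curves}$ as explained above.

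I expect the main obstacle to be the construction of the atlas for $\textit{Curves}_{\text{pol}}$ in full generality: passing from the a priori algebraic-space total space to a projective scheme, coping with non-Noetherian affine bases through the stratification over $n$ and limit arguments, and verifying that the locus in the Hilbert scheme parametrizing genuine flat proper curves is open; the smoothness of $F$ also rests on the classical but nontrivial inputs that the relative Picard functor of a family of proper curves is smooth and that proper curves over a field are projective. The remaining points — the stack axioms, limit preservation, and representability and finiteness of the diagonal — are routine descent together with standard representability and limit formalism.
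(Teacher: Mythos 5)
Your overall strategy coincides with the paper's: establish the stack axioms, limit preservation, and representability of the diagonal first, build an atlas for $\textit{Curves}_{\text{pol}}$ out of Hilbert schemes, prove $F$ is smooth and surjective by identifying $\textit{Curves}_{\text{pol}}\times_{\textit{Curves}}T$ with the stack of $\pi$-ample invertible sheaves on $C_T/T$ (smooth over $T$ because $H^2(C_s,\OO_{C_s})=0$ kills obstructions, surjective because proper curves over a field carry ample sheaves), and then push the atlas forward through $F$. The main difference in presentation is that the paper simply cites the package from Starr's \emph{Smoduli} preprint for the algebraicity of $\textit{Curves}_{\text{pol}}$ and for the representability and local finite presentation of $\Delta_{\textit{Curves}}$, whereas you reconstruct these points directly from descent, spreading-out, and Artin's representability of the Hilbert functor of an algebraic space; your version is more self-contained but also longer, and the two are mathematically parallel.

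There is, however, one genuine gap. You dispose of the quasi-compactness of $\Delta_{\textit{Curves}}$ (i.e.\ of $\underline{\text{Isom}}_S(C,C')$ over an affine $S$) with the sentence that ``the relevant graphs form a bounded family.'' For an unpolarized proper flat family of curves with total space a mere algebraic space, there is no a priori bound: the Hilbert functor of $C\times_S C'$ is a disjoint union over infinitely many components, and one has to explain why the graphs of isomorphisms land in finitely many of them. This is exactly where the paper does the most work: it first uses the already-established smoothness and surjectivity of $F$ to replace $S$ by a smooth cover on which $\pi_i$-ample sheaves $\mc{L}_i$ exist, fixes the Hilbert polynomials $P_i$ of the fibres, and then invokes Snapper's theorem to show that the graph $\Gamma_\phi$ of any isomorphism $\phi$ has the \emph{fixed} Hilbert polynomial $P_1-P_2-P_1(0)$ with respect to $\text{pr}_1^*\mc{L}_1\otimes\text{pr}_2^*\mc{L}_2$, which gives quasi-compactness. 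In particular, the quasi-compactness of $\Delta_{\textit{Curves}}$ is established \emph{after} algebraicity, precisely because the reduction to the polarized case needs the smooth cover. Your argument as written slips this finiteness in before constructing the atlas, which one cannot do without an independent boundedness argument. Once you either adopt the paper's two-step order or supply a boundedness argument of your own, the proof is complete.
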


\begin{proof}
This is folklore; here is one proof.
The category $\textit{Curves}_{\text{pol}}$ is a limit preserving
algebraic stack with finitely presented, separated diagonal by
\cite[Proposition 4.2]{Smoduli}.  Moreover, by \cite[Propositions 3.2
and 3.3]{Smoduli}, $\textit{Curves}$ is a limit preserving stack with
representable, separated, locally finitely presented diagonal.  To
prove that $\textit{Curves}$ is an algebraic stack, it only remains to
find a smooth cover of $\textit{Curves}$, i.e., a
smooth, essentially surjective $1$-morphism from an algebraic space
to $\textit{Curves}$.  Let $u : X \to \textit{Curves}_{\text{pol}}$
be such a smooth cover for the stack of polarized curves.
We claim that $F \circ u : X \to \textit{Curves}$ is a 
smooth cover.

\mni
Let $(S,\pi:C\rightarrow S)$ be an object of $\textit{Curves}$. The
$2$-fibreed product of $F$ with the associated $1$-morphism
$S\rightarrow \textit{Curves}$ is the stack $\mc{A}$ of $\pi$-ample
invertible sheaves on $C$. Consider the diagram
$$
\xymatrix{
\mc{A}\times_{\textit{Curves}_{\text{pol}}\ar[d]}X \ar[r]\ar[d] & X \ar[d] \\
\mc{A} \ar[r] \ar[d] & \textit{Curves}_{\text{pol}}\ar[d] \\
S \ar[r] & \textit{Curves}.
}
$$
Once we have shown $\mc{A}$ is an algebraic stack and
$\mc{A} \to S$ is smooth surjective, then the claim above
follows, as well as the smoothness and surjectivity of $F$
in the proposition.

\mni
Before we continue we remark that $\mc{A}$ is an open
substack of the Picard stack of $C \to S$ by
\cite[Th\'eor\`eme 4.7.1]{EGA3}. Hence, by
\cite[Proposition 4.1]{Smoduli} (which is a
variant of \cite[The\'eor\`eme 4.6.2.1]{LM-B})
the stack $\mc{A}$ is a limit
preserving algebraic stack with quasi-compact, separated diagonal.
In particular, for every smooth cover $A\rightarrow \mc{A}$, the induced
morphism $A\rightarrow S$ is locally finitely presented.  Thus to
prove $A\rightarrow S$ is smooth, and thus that $\mc{A}$ is smooth
over $S$, it suffices to prove that $A\rightarrow S$ is formally
smooth.   Let $s$ be a point of $S$ and denote by $C_s$ the fibre
$\pi^{-1}(s)$.  
By \cite[Proposition 3.1.5]{Illusie1}, the obstructions to
infinitesimal extensions of invertible sheaves on $C_s$
live in $\text{Ext}^2_{\OO_{C_s}}(\OO_{C_s},\OO_{C_s}) =
H^2(C_s,\OO_{C_s})$.  Since $C_s$ is a Noetherian, $1$-dimensional
scheme, $H^2(C_s,\OO_{C_s})$ is zero.  Thus there are no obstructions
to infinitesimal extensions of $\pi$-ample invertible sheaves on
$C_s$. Thus $\mc{A} \rightarrow S$ is smooth as desired.

\mni
Next, we prove that $\mc{A} \rightarrow S$ is
surjective.  Thus, let $s$ be a point of
$S$.  It suffices to prove there is an ample invertible sheaf on $C_s$.
This is essentially \cite[Exercise III.5.8]{H}.  Therefore
$\textit{Curves}$ is an algebraic stack.

\mni
It only remains to prove that the diagonal of $\textit{Curves}$
is quasi-compact.  Let $(\pi_i:C_i\rightarrow S)$, $i=1,2$ be 
two families of proper flat curves over an affine scheme $S$.
We have to show that the algebraic space
$$
\text{Isom}_S(C_1, C_2)
$$
is quasi-compact. Using that $F$ is smooth and surjective, after replacing
$S$ by a quasi-compact, smooth cover, we may assume there
exist a $\pi_i$-ample invertible sheaves $\mc{L}_i$ on $C_i$.
We may assume the Hilbert polynomial of the fibres
of $C_i \to S$ with respect to $\mc{L}_i$ are constant, say $P_i$.
For every $S$-scheme $T$ and every $T$-isomorphism
$\phi :C_{1,T} \rightarrow C_{2,T}$, the graph
$\Gamma_\phi : C_{1,T} \rightarrow (C_1\times_S C_2)_T$
is a closed immersion.  In the usual way this identifies
$\text{Isom}_S(C_1,C_2)$ as a locally closed subscheme of
the Hilbert scheme of $C_1\times_SC_2$ over $S$.
We will show that whenever $T = \text{Spec}(k)$ is the spectrum of a
field, the Hilbert polynomial of $\Gamma_\phi$ with respect to the
$S$-ample invertible sheaf $\text{pr}_1^*\mc{L}_1\otimes
\text{pr}_2^*\mc{L}_2$
is equal to $P_1 - P_2 - P_1(0)$.
Quasi-compactness follows from the projectivity of the
Hilbert scheme parametrizing closed subschemes with given
Hilbert polynomial. By Snapper's
theorem (see \cite{Numerical}) the values of the
Hilbert polynomial
$\chi(C_{1,T}, \mc{L}_{1,T}^{\otimes n}
\otimes \phi^* \mc{L}_{2,T}^{\otimes n})
$
equal
$
\chi(C_{1,T}, \mc{L}_{1,T}^{\otimes n})) +
\chi(C_{1,T}, \phi^*\mc{L}_{2,T}^{\otimes n}) -
\chi(C_{1,T},\OO)$.
And of course $\chi(C_{1,T}, \phi^*\mc{L}_{2,T}^{\otimes n})
= \chi(C_{2,T}, \mc{L}_{2,T}^{\otimes n})$.
The claim follows.
\end{proof}

\noindent
At this point we need some definitions related to moduli of
maps from curves into varieties. But since we are going to study
the rational connectivity of moduli stacks of rational curves
on varieties, we need to deal with moduli of morphisms from
curves to stacks.

\begin{defn}
\label{defn-curvemaps}
\marpar{defn-curvemaps}
Let $\mc{X}$ be an algebraic stack, let $S$ be a scheme 
and let $\mc{X} \to S$ be a morphism.
A \emph{family of maps of proper curves to $\mc{X}$ over $S$}
is a triple $(T \to S, \pi : C \rightarrow T, \zeta : C \rightarrow \mc{X})$
consisting of a flat family of proper curves over the $S$-scheme
$T$ and a $1$-morphism $\zeta$ over $S$.
\end{defn}

\noindent
We leave it to the reader to define morphisms between
families of maps of proper curves to $\mc{X}$ over $S$.

\begin{defn}
\label{defn-stackcurvemaps}
\marpar{defn-stackcurvemaps}
The \emph{stack of maps of proper curves to $\mc{X}$}
is the fibred category
$$
\textit{CurveMaps}(\mc{X}/S)
\longrightarrow
\textit{Schemes}/S
$$
whose objects are families of maps of proper curves to
$\mc{X}$ over $S$, and morphisms as above.
\end{defn}

\noindent
We again have the technical remark that according to the
conventions in \cite{LM-B} we should really be working with
the restriction of $\textit{CurveMaps}(\mc{X}/S)$ to the
category $(\text{Aff}_S)$ of affine schemes over $S$.
Denote by $\textit{Curves}_S$ the restriction of
$\textit{Curves}$ over the category $\textit{Schemes}/S$
of $S$-schemes. There is an obvious functor
$$
G :
\textit{CurveMaps}(\mc{X}/S)
\longrightarrow
\textit{Curves}_S
$$
forgetting the $1$-morphisms to $\mc{X}$.

\begin{prop}
\label{prop-Lieblich}
\marpar{prop-Lieblich}
See \cite[Proposition 2.11]{Lieblich}. Assume $S$ excellent.
Suppose that $\mc{X} = [Z/G]$ where $Z$ is an algebraic space,
$Z \to S$ is separated and of finite presentation, and where
$G$ is an $S$-flat linear algebraic group scheme.
The stack $\textit{CurveMaps}(\mc{X}/S)$ is a limit preserving algebraic
stack over $(\text{Aff}_S)$ with locally finitely presented, separated
diagonal.  And the $1$-morphism $G$ is representable by limit
preserving algebraic stacks.
\end{prop}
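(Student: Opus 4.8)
The plan is to deduce everything from the last assertion, that the forgetful $1$-morphism $G$ is representable by limit preserving algebraic stacks. Granting this, since $\textit{Curves}_S$ is an algebraic stack (being the base change of the algebraic stack $\textit{Curves}$ of Proposition \ref{prop-algebraic}, hence itself algebraic with locally finitely presented, separated diagonal), and since algebraicity, limit-preservation, and the stated properties of the diagonal all descend along a representable morphism, the stack $\textit{CurveMaps}(\mc{X}/S)$ inherits them from $\textit{Curves}_S$ via $G$. So I would fix an affine $S$-scheme $T$ and an object of $\textit{Curves}_S$ over it, i.e.\ a flat family of proper curves $\pi:C\to T$; unwinding Definition \ref{defn-stackcurvemaps}, the $2$-fibre product $T\times_{\textit{Curves}_S}\textit{CurveMaps}(\mc{X}/S)$ is the relative Hom-stack $\underline{\mathrm{Hom}}_T(C,\mc{X}\times_S T)$ assigning to $T'\to T$ the groupoid of $1$-morphisms $C_{T'}\to\mc{X}$ over $S$. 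It is exactly this relative Hom-stack that must be shown to be algebraic, limit preserving, and with locally finitely presented, separated diagonal over $T$.

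First I would use the presentation $\mc{X}=[Z/G]$ to dévisser. A $1$-morphism $C_{T'}\to[Z/G]$ over $S$ is the same datum as a $G$-torsor $\mc{E}\to C_{T'}$ together with a $G$-equivariant $S$-morphism $\mc{E}\to Z$, equivalently a section over $C_{T'}$ of the associated fibre space $\mc{E}\times^G Z$. This yields a factorization
$$
\underline{\mathrm{Hom}}_T(C,[Z/G]\times_S T)\ \xrightarrow{\ q\ }\ \underline{\mathrm{Hom}}_T(C,BG\times_S T)\ =\ \mathrm{Bun}_G(C/T)\ \longrightarrow\ T,
$$
and it suffices to treat the two layers separately. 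For the stack $\mathrm{Bun}_G(C/T)$ of $G$-torsors on the fibres of $C/T$: since $G$ is an $S$-flat linear algebraic group scheme I would choose a closed $S$-immersion $G\hookrightarrow \GL_{N,S}$, form the quasi-affine $S$-quotient $\GL_N/G$, and observe that $\mathrm{Bun}_G(C/T)\to\mathrm{Bun}_{\GL_N}(C/T)$ is representable: over a $\GL_N$-torsor $\mc{F}$ it is the functor of sections of the quasi-affine $C_{T'}$-scheme $\mc{F}\times^{\GL_N}(\GL_N/G)$, and such a relative section functor over the proper flat curve $C_{T'}/T'$ is representable by a quasi-affine $T'$-algebraic space via the relative Hom theorem. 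Since $\mathrm{Bun}_{\GL_N}(C/T)$ is the open substack of the algebraic stack of relatively coherent sheaves cut out by local freeness of rank $N$, the layer $\mathrm{Bun}_G(C/T)\to T$ is algebraic, limit preserving, with affine relative diagonal.

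Next I would handle $q$. Working over a smooth scheme-cover of $\mathrm{Bun}_G(C/T)$, with universal $G$-torsor $\mc{E}$ on $C_{T'}$, the relative fibre of $q$ is the functor parametrizing sections over $C_{T'}$ of $Y:=\mc{E}\times^G Z$; as $Z\to S$ is separated and of finite presentation, $Y\to C_{T'}$ is separated and of finite presentation, and $C_{T'}\to T'$ is proper and flat, so the section functor is representable by an algebraic space, separated and locally of finite presentation over $T'$ --- concretely an open subspace of the Hilbert stack of $Y/T'$ cut out by the graph condition. Thus $q$ is representable, separated, locally of finite presentation, and composing with the first layer shows $\underline{\mathrm{Hom}}_T(C,[Z/G]\times_S T)\to T$ is a limit preserving algebraic stack with locally finitely presented, separated diagonal --- which is what was needed. (Deformation-theoretically, the reason nothing is obstructed in bad ways is the vanishing of $H^2$ of a curve, exactly as in the proof of Proposition \ref{prop-algebraic}; an alternative to the dévissage above is to verify Artin's axioms directly for the relative Hom-stack, where this vanishing and Grothendieck existence for the proper curve supply unobstructedness and effectivity.)

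The step I expect to be the main obstacle is the representability of the two relative section/Hom functors --- of sections of $\mc{F}\times^{\GL_N}(\GL_N/G)$ and of $Y=\mc{E}\times^G Z$ --- in the generality actually hypothesized: the target $Z$ is only an algebraic space, separated and of finite presentation but not assumed proper or a scheme, and the curve $C_{T'}$ is only a proper flat algebraic space over $T'$, not necessarily projective. One therefore cannot simply invoke the projective Hilbert or Hom scheme; one must use the algebraic-space form of Grothendieck's relative Hom (Artin's representability theorem, or the Hom-stack/restriction-of-scalars results underlying \cite[Proposition 2.11]{Lieblich}), and it is precisely here that the hypothesis that $S$ is excellent enters, through Artin approximation and effectivity of formal deformations. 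Getting these foundational representability inputs in place --- rather than the dévissage or the cohomological bookkeeping --- is the real work, which is why the statement is quoted from \cite[Proposition 2.11]{Lieblich}.
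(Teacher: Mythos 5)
Your proof takes essentially the same approach as the paper's, whose entire argument is to cite \cite[Proposition 2.11]{Lieblich} for the representability of the forgetful $1$-morphism $G$ and then observe that this, together with Proposition~\ref{prop-algebraic}, yields the first assertion. You reproduce that deduction correctly; the additional d\'evissage you sketch through $BG$ (via $G\hookrightarrow\GL_N$ and section functors for the associated bundles), and your closing observation that the genuine work is the Artin-type representability of section and Hom functors over a proper flat curve with algebraic-space targets (where excellence of $S$ enters), are a reasonable account of what the citation supplies and which the paper itself does not attempt to reprove.
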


\begin{proof}
The second assertion is precisely proved in \cite[Proposition
2.11]{Lieblich} and implies the first assertion because of
Proposition ~\ref{prop-algebraic}.
\end{proof}

\begin{defn}
\label{defn-LCI}
\marpar{defn-LCI}
Let $S$ be a scheme.  A flat family of proper curves over $S$,
$\pi:C\rightarrow S$, is \emph{LCI} if it is a local complete
intersection morphism in the sense of
\cite[D\'efinition 19.3.6]{EGA4}.
\end{defn}

\noindent
It is true that this is equivalent to \cite[D\'efinition VIII.1.1]{SGA6}.
See \cite[Proposition VIII.1.4]{SGA6}.

\begin{prop}
\label{prop-LCI}
\marpar{prop-LCI}
The subcategory $\textit{Curves}_{\text{LCI}}$ of $\textit{Curves}$ of
flat families which are LCI is an open substack of $\textit{Curves}$.
\end{prop}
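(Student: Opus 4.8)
The plan is to reduce the statement to a pointwise criterion: a flat family of proper curves $\pi : C \to S$ is LCI precisely when its geometric fibres are local complete intersection curves, and then to show that the locus in $S$ over which the fibres satisfy this is open. First I would recall that a morphism which is flat and locally of finite presentation is LCI if and only if it is LCI on fibres; this is the fibrewise criterion for local complete intersection morphisms, which follows from \cite[D\'efinition 19.3.6]{EGA4} together with the openness of the LCI locus for flat finitely presented morphisms. So the substack $\textit{Curves}_{\text{LCI}}$ is the full subcategory whose objects are those $\pi : C \to S$ all of whose geometric fibres are LCI curves. Since being an open substack is local on $\textit{Curves}$ for the smooth topology, it suffices, using the smooth cover $F \circ u : X \to \textit{Curves}$ produced in Proposition \ref{prop-algebraic}, to check the claim after pulling back to a scheme; equivalently, it suffices to show: for any flat family of proper curves $\pi : C \to S$ over a scheme $S$, the set $U = \{ s \in S : C_s \text{ is LCI} \}$ is open, and $C_U \to U$ is LCI.

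Next I would establish openness of $U$. One clean route: a one-dimensional Noetherian scheme of finite type over a field is a local complete intersection if and only if it is Gorenstein (since in dimension $\le 1$ the Cohen--Macaulay property is automatic and LCI $\Leftrightarrow$ Gorenstein for curves over a field — a curve singularity is LCI iff it is Gorenstein, e.g. a planar singularity). Then I would invoke the fact that for a flat, proper, finitely presented morphism the Gorenstein locus in the source is open and its complement maps to a closed subset of the base (using that the relative dualizing complex is perfect and checking when it reduces to an invertible sheaf in degree $-1$). Alternatively, and perhaps more self-containedly, one can argue directly: locally on $C$ embed a neighborhood of a point $c$ with $\pi(c)=s$ into an affine space over $S$, so that $C$ is cut out by some ideal $I$ in $\AAA^N_S$; the fibre $C_s$ is LCI at $c$ iff $I_s$ is generated by a regular sequence of length $N-1$ near $c$, which by Nakayama and flatness is an open condition on $C$, hence (using properness of $\pi$) gives an open condition on $S$.

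Finally, once $U$ is known to be open, I would note that flatness is preserved under the base change $U \hookrightarrow S$, and that a flat, locally finitely presented morphism all of whose fibres are LCI is itself LCI by the fibrewise criterion cited above; hence $C_U \to U$ is LCI, which is exactly the statement that $\textit{Curves}_{\text{LCI}}$ is the open substack of $\textit{Curves}$ supported on $U$. To conclude that $\textit{Curves}_{\text{LCI}}$ is genuinely an open substack of the algebraic stack $\textit{Curves}$, I would phrase the above as: the property ``$\pi$ is LCI'' is stable under base change and smooth-local on the base, and cuts out an open subfunctor on any smooth atlas, which by descent is exactly the definition of an open substack.

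\textbf{Main obstacle.} The technical heart is the fibrewise characterization of LCI morphisms together with the openness of the LCI locus for flat finitely presented morphisms; everything else is bookkeeping with smooth covers and descent. If one does not want to appeal to \cite{EGA4} and \cite{SGA6} as black boxes, then proving that the locus where a curve over a field is a local complete intersection is open (and that the relevant good behaviour is preserved in flat families) — most naturally via the Gorenstein criterion or via a local embedding and a regular-sequence argument — is the step that requires genuine care.
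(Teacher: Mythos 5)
Your overall strategy coincides with the paper's: reduce to a scheme base via the smooth atlas from Proposition~\ref{prop-algebraic}, establish that the locus in the base over which the fibres are LCI is open and that the family is LCI over it, check base-change compatibility, and descend.  The paper packages exactly this in two citations, \cite[Corollaire 19.3.8]{EGA4} for the openness and \cite[Proposition 19.3.9]{EGA4} for compatibility with base change, which together are the ``fibrewise criterion plus openness'' you invoke.  Your second suggested route to openness --- embed locally in $\AAA^N_S$ and read off an open condition from the regular-sequence criterion, Nakayama, flatness, and properness of $\pi$ --- is sound and is in effect what that corollary records.

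The first route is not.  Your claim that for a one-dimensional Noetherian scheme of finite type over a field ``LCI $\Leftrightarrow$ Gorenstein'' is false: LCI implies Gorenstein in general, but the converse already fails for reduced irreducible curve singularities once the embedding dimension is at least $4$.  For example, by the Buchsbaum--Eisenbud structure theorem a height-three Gorenstein ideal in a four-dimensional regular local ring is generated by the $2n\times 2n$ Pfaffians of a $(2n+1)\times(2n+1)$ alternating matrix, and for $n\geq 2$ (five generators) the quotient is a one-dimensional Gorenstein local ring that is not a complete intersection; equivalently, by Bresinsky's theorem there are symmetric numerical semigroups $\langle n_1,n_2,n_3,n_4\rangle$ whose semigroup ring is Gorenstein but whose defining ideal has five minimal generators.  (In embedding dimension $\leq 3$ the two notions do agree for curves, and all plane curve singularities are LCI, which is perhaps the source of the confusion; but ``LCI $\Rightarrow$ planar'' is also false.)  So the Gorenstein-locus argument cannot be used as stated; stick with the direct embedding argument or cite \cite[Corollaire 19.3.8 and Proposition 19.3.9]{EGA4} as the paper does.
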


\begin{proof}
Let $\pi : C \to S$ be a flat family of proper curves,
and assume for the moment that $C$ is a scheme.
According to \cite[Corollaire 19.3.8]{EGA4} there exists
an open subscheme $U \subset S$ such that $C_s$ is
LCI if and only if $s \in U$. By definition \cite[D\'efinition 19.3.6]{EGA4}
the induced morphism $C_U \to U$ is LCI. It follows from
\cite[Proposition 19.3.9]{EGA4} that given a morphism
of schemes $g : T \to S$ then inverse image $g^{-1}(U) \subset
T$ has the same property for the family over $T$.

\mni
In the general case (where we do not assume that $C$ is a scheme)
we use that there is a smooth cover $S' \to S$ such that
the pullback family over $S'$ has a total space which is a
scheme (for example by using \ref{prop-algebraic}). Hence we
obtain an open $U' \subset S'$ characterized as above.
Clearly $U' \times_S S' = S'\times_S U'$ and we conclude
that $U'$ is the inverse image of an open $U$ of $S$.
Again by definition the restriction $C_U \to U$ is LCI,
and all fibres of $C \to S$ at points of $S \setminus U$
are not LCI. Clearly this shows that any base change
by $T \to S$ is LCI if and only if $T\to S$ factors through $U$.
\end{proof}

\begin{defn}
\label{defn-stackcurvemapsLCI}
\marpar{defn-stackcurvemapsLCI}
Let $\mc{X}$ be an algebraic stack, let $S$ be a scheme and
let $\mc{X} \to S$ be a morphism.
The \emph{stack of maps of proper, LCI curves to $\mc{X}$} is the open
substack $\textit{CurveMaps}_{\text{LCI}}(\mc{X}/S)$ of
$\textit{CurveMaps}(\mc{X}/S)$ obtained as the $2$-fibreed product of
$\textit{CurveMaps}(\mc{X}/S) \rightarrow \textit{Curves}$ and
$\textit{Curves}_{\text{LCI}} \rightarrow \textit{Curves}$.
\end{defn}

\noindent
In order to define the determinant pushforward below we need
the following lemma.

\begin{lem}
\label{lem-ci}
\marpar{lem-ci}
Suppose we are given a diagram of morphisms of algebraic spaces
$$
\xymatrix{
C \ar[r] \ar[d] & Z \ar[d] \\
T \ar[r] & S
}
$$
where $C \to T$ is an LCI, flat proper family of curves,
and $Z \to S$ is a smooth morphism. Then the induced morphism
$C \rightarrow T\times_S Z$ is a local complete intersection
morphism in the sense of \cite[D\'efinition VIII.1.1]{SGA6}.
In particular, it is a \emph{perfect morphism} in the sense
of \cite[D\'efinition III.4.1]{SGA6}.
\end{lem}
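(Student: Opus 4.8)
The plan is to reduce the lemma to a cancellation statement for LCI morphisms and then invoke the fact that LCI morphisms are perfect. Set $P := T\times_S Z$, so that the induced morphism $C\to T\times_S Z$ sits in a commutative triangle $C \to P \to T$ in which $P\to T$ is smooth, being the base change of $Z\to S$, while the composite $C\to T$ is LCI by hypothesis. Since $C\to T$ is locally of finite presentation and $P\to T$ is smooth (in particular locally of finite type), the morphism $f : C\to P$ is itself locally of finite presentation. Thus the content of the lemma is: \emph{if $g\circ f$ is LCI and $g$ is smooth, then $f$ is LCI}; the last sentence then follows since an LCI morphism is perfect in the sense of \cite[D\'efinition III.4.1]{SGA6}.

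For the cancellation I would use the cotangent complex, which is available for morphisms of algebraic spaces; since being LCI is an \'etale-local property on source and target, the schematic characterization transfers. The transitivity triangle for $C\xrightarrow{f}P\xrightarrow{g}T$ reads
$$
Lf^* L_{P/T} \longrightarrow L_{C/T} \longrightarrow L_{C/P} \longrightarrow Lf^*L_{P/T}[1].
$$
Because $g$ is smooth, $L_{P/T}\simeq \Omega_{P/T}$ is a locally free sheaf placed in degree $0$, so $Lf^*L_{P/T}$ is a locally free sheaf on $C$ in degree $0$; because $C\to T$ is LCI, $L_{C/T}$ is perfect of Tor-amplitude in $[-1,0]$. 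Hence $L_{C/P}$, being the cone of a morphism of perfect complexes, is perfect, and, being the cone of a morphism from a complex of Tor-amplitude $[0,0]$ to one of Tor-amplitude $[-1,0]$, has Tor-amplitude in $[-1,0]$. By the cotangent-complex characterization of LCI morphisms among morphisms locally of finite presentation, $f : C\to T\times_S Z$ is LCI, hence perfect.

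A cotangent-complex-free alternative is a local factorization argument. Working \'etale-locally on $C$, $T$ and $Z$, the LCI hypothesis yields on an open $V\subseteq C$ a factorization $V\hookrightarrow \AAA^n_T \to T$ with the first map a regular immersion; since $\AAA^n_T\times_T P = \AAA^n_P$ and the graph $V\to V\times_T P$ of $V\to P$ over $T$ is a section of the smooth morphism $V\times_T P\to V$, hence a regular immersion, the induced map $V\to\AAA^n_P$ is a composite of regular immersions (the second being a flat base change of $V\hookrightarrow\AAA^n_T$), so $V\to P$ is LCI; as such $V$ cover $C$, so is $C\to P$. I expect the only real friction to be the routine bookkeeping for passing from the \'etale-local, schematic setting to algebraic spaces, together with pinning down the precise \cite{SGA6}/\cite{EGA4} references for ``smooth cancellation of LCI morphisms'' and for ``LCI $\Rightarrow$ perfect''; none of this is deep, but it is the kind of detail the lemma is packaging.
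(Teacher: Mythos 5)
Your proof is correct but takes a genuinely different route from the paper. The paper's argument is global: after reducing to $S=T$, it uses fppf descent for LCI morphisms (\cite[Proposition VIII.1.6]{SGA6}) together with Proposition \ref{prop-algebraic} to assume $C\to S$ is projective, then chooses a closed immersion $C\hookrightarrow \PP^N_S$, which is automatically a regular immersion because $C/S$ is LCI (\cite[Proposition VIII.1.2]{SGA6}), and finally invokes \cite[Corollaire VIII.1.3]{SGA6} to see that the induced map $C\to \PP^N_S\times_S Z$ is a regular immersion, yielding the required factorization through a scheme smooth over $Z$. You instead prove the underlying cancellation statement (``$g\circ f$ LCI and $g$ smooth $\Rightarrow f$ LCI'') directly. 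Your primary argument via the transitivity triangle for the cotangent complex is correct: the cone of a map from a complex of Tor-amplitude $[0,0]$ to one of Tor-amplitude $[-1,0]$ indeed has Tor-amplitude in $[-1,0]$, and the characterization of LCI morphisms among l.f.p. morphisms by this property is \'etale-local, hence valid for algebraic spaces. It has the advantage of avoiding the descent step and the reduction to the projective case entirely. Your alternative \'etale-local factorization argument is essentially the local version of what the paper does globally after its reduction (pass the regular immersion across the smooth base change, then note the graph is a section of a smooth morphism and hence a regular immersion). Both of your routes are sound; the paper's is more explicit and stays within the SGA6/EGA apparatus it cites throughout, while yours is shorter and more intrinsic, at the cost of appealing to the cotangent-complex characterization of LCI (which you should cite, e.g.\ Illusie).
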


\begin{proof}
Note that $T\times_S Z$ is smooth over $T$, and hence it
suffices to prove the lemma in the case that $S=T$. 
By \cite[Proposition VIII.1.6]{SGA6} it suffices
to prove the lemma after a faithfully flat base change.
Hence it is also sufficent to prove the proposition in
the case that $C \to S$ is projective, by \ref{prop-algebraic}.
Choose a closed immersion $C \to \mathbf{P}_S^N$ over $S$. This is
regular by \cite[Proposition VIII.1.2]{SGA6}. 
According to \cite[Corollaire VIII.1.3]{SGA6}
this implies that $C \to \mathbf{P}_S^N \times_S Z$
is regular, which in turn by definition implies that
$C \to Z$ is a local complete intersection morphism.
\end{proof}

\noindent
We are going to use the stack $\textit{CurveMaps}(\mc{X}/S)$
in the case where $\mc{X} = Z\times B\mathbf{GL}_n$ for some
algebraic space $Z$ separated and of finite presentation over $S$.
Note that by Proposition \ref{prop-Lieblich} this is a limit
preserving algebraic stack. An object of this stack is given
by a datum $(T\to S, \pi : C \to T, \zeta : C \to Z, \mc{E})$
where $(T\to S, \pi : C \to T, \zeta : C \to Z)$ is a family
of maps of proper curves to $Z$ over $S$ and
where $\mc{E}$ is a locally free sheaf of rank $n$ over $C$.
It is straightforward to spell out what the morphisms are 
in this stack (left to the reader). The object
$(T\to S, \pi : C \to T, \zeta : C \to Z, \mc{E})$
belongs to $\textit{CurveMaps}_{\text{LCI}}(\mc{X}/S)$
if and only if $\pi$ is LCI.

\mni
Assuming that $S$ is excellent, $Z$ is a scheme,
$Z \to S$ is quasi-compact and smooth
and $\mc{X} = Z\times B\mathbf{GL}_n$
we define a functor
$$
\text{det}(R(-)_*):\textit{CurveMaps}_{\text{LCI}}(\mc{X}/S)
\rightarrow \text{Hom}_S(Z,B\mathbb{G}_{m}) = \textit{Pic}_{Z/S}
$$
to the Picard stack of $Z$ over $S$. First we note that, by our
discussion above both sides are limit preserving algebraic stacks
over $S$. Thus we need only define the functor on fibre categories
over schemes of finite type over $S$. Consider an object 
$
(T\to S, \pi : C \to T, \zeta : C \to Z, \mc{E})
$
of the left hand side with $T/S$ finite type.
The morphism $\zeta : C\rightarrow T\times_S Z$
is a perfect morphism by Lemma ~\ref{lem-ci}.
Since $\zeta$ is perfect $R\zeta_* \mc{E}$ is a perfect complex of
bounded amplitude on $T\times_S Z$ by \cite[Corollaire III.4.8.1]{SGA6}.
(We leave it to the reader to check that we've put in enough finiteness
assumptions so the Corollary applies.) The ``det'' construction of
\cite{detdiv} associates an invertible sheaf $\text{det}(R\zeta_* \mc{E})$
to the perfect complex $R\zeta_* \mc{E}$ on the scheme
$T\times_SZ$. Also, by \cite[Definition 4(iii)]{detdiv} and
\cite[Proposition IV.3.1.0]{SGA6}, formation of
$\text{det}(R\zeta_*\mc{E})$ is compatible with morphisms of objects
of $\textit{CurveMaps}_{\text{LCI}}(\mc{X}/S)$.

\begin{defn}
\label{defn-detzeta}
\marpar{defn-detzeta}
Given $S$, $Z \to S$ and $n$ as above.
The \emph{determinant pushforward $1$-morphism}
is the functor
$$
\text{det}(R(-)_*)
:
\textit{CurveMaps}_{\text{LCI}}( Z\times B\mathbf{GL}_n / S)
\rightarrow
\textit{Pic}_{Z/S}
$$
defined above. If the Picard stack $\textit{Pic}_{Z/S}$
has a course moduli space $\underline{\text{Pic}}_{Z/S}$ then the
composite morphism
$$
\textit{CurveMaps}_{\text{LCI}}(Z\times B\mathbf{GL}_n /S)
\rightarrow
\underline{\text{Pic}}_{Z/S}
$$
will also be called the \emph{determinant pushforward $1$-morphism}.  
\end{defn}

\noindent
The following is an important special case for our paper.
Namely, suppose that $S = \text{Spec}(\kappa)$ is the spectrum
of a field $\kappa$, and suppose that $Z = C$ is a smooth,
projective, geometrically connected $\kappa$-curve of genus $g$.
In this case we can ``compute'' the value of the determinant
$1$-morphism on some special points. Assume that we have a
$\text{Spec}(\kappa)$-valued point of
$\textit{CurveMaps}_{\text{LCI}}( C\times B\mathbf{GL}_n / S)$
given by a datum
$(\text{Spec}(\kappa)\to \text{Spec}(\kappa),
\pi : C' \to \text{Spec}(\kappa), \zeta : C' \to C, \mc{E})$.
Assume furthermore that $C'$ is proper, at worst nodal,
geometrically connected of genus $g$. Finally, assume that
$\zeta$ is an isomorphism over a dense open of $C'$. 
For every such map, there exists a unique section 
$s : C \to C'$ whose image is the unique irreducible
component of $C'$ mapping dominantly to $C$.
The scheme $\overline{C'-s(C)}$ is a disjoint union 
of trees of rational curves $C_1, \ldots, C_\delta$.
Each $C_i$ meets $s(C)$ in a single point $t_i$.

\begin{lem}
\label{lem-newAbel}
\marpar{lem-newAbel}
Assumptions and notation as above. There is an isomorphism
$$
\text{det}(R\zeta_*(\mc{E}))
=
\text{det}(s^*\mc{E})(d_1\cdot t_1 + \dots + d_\delta\cdot t_\delta)
$$
where $d_i$ is the degree of the basechange of $\mc{E}$ to the
connected nodal curve $C_i$ over the field $\kappa(t_i)$.  
\end{lem}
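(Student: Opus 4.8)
The plan is to resolve $\mathcal{O}_{C'}$ by a Mayer--Vietoris sequence for the decomposition $C' = s(C)\cup Z$, where $Z := \overline{C'-s(C)} = C_1 \sqcup\dots\sqcup C_\delta$, twist it by $\mathcal{E}$, apply $R\zeta_*$, and compute determinants term by term.

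First I would pin down the scheme-theoretic geometry. Since $C'$ is reduced with at worst nodal singularities, each $t_i$ is an honest node, one of whose branches lies on $s(C)$ and the other on $C_i$; hence $s(C)\cap Z = \coprod_{i=1}^\delta \Spec\kappa(t_i)$ with its reduced structure, $s(C)\cup Z = C'$, and there is a short exact sequence $0 \to \mathcal{O}_{C'} \to \mathcal{O}_{s(C)}\oplus\mathcal{O}_Z \to \bigoplus_{i}\kappa(t_i) \to 0$ of sheaves on $C'$. As $\mathcal{E}$ is locally free, tensoring preserves exactness and, by the projection formula, gives
$$0 \longrightarrow \mathcal{E} \longrightarrow \mathcal{E}|_{s(C)} \oplus \mathcal{E}|_Z \longrightarrow \bigoplus_{i=1}^\delta \mathcal{E}|_{t_i} \longrightarrow 0,$$
where I abbreviate by $\mathcal{E}|_{s(C)}$, $\mathcal{E}|_Z$, $\mathcal{E}|_{t_i}$ the pushforwards to $C'$ of the corresponding restrictions, and $\mathcal{E}|_{t_i} = \mathcal{E}\otimes\kappa(t_i)$ is an $n$-dimensional $\kappa(t_i)$-vector space.

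Next I would apply $R\zeta_*$ to obtain a distinguished triangle of perfect complexes on $C$ and identify each piece. The map $\zeta|_{s(C)}:s(C)\to C$ is the inverse of $s$, so $R\zeta_*(\mathcal{E}|_{s(C)}) \cong s^*\mathcal{E}$, a locally free sheaf of rank $n$ in degree $0$. Being an isomorphism over a dense open of $C'$, the morphism $\zeta$ contracts each $C_i$ (none of whose components dominates $C$) to the closed point $c_i := \zeta(t_i)$, whose residue field is canonically identified with $\kappa(t_i)$ via $\zeta|_{s(C)}$; therefore $R\zeta_*(\mathcal{E}|_{C_i})$ is the skyscraper at $c_i$ attached to the perfect complex $R\Gamma(C_i,\mathcal{E}|_{C_i})$ of $\kappa(t_i)$-modules, whose Euler characteristic over $\kappa(t_i)$ equals $\chi(C_i,\mathcal{E}|_{C_i}) = d_i + n$ by Riemann--Roch on the connected nodal arithmetic-genus-$0$ curve $C_i$ (so that $\chi(\mathcal{O}_{C_i}) = 1$). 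Likewise $R\zeta_*(\mathcal{E}|_{t_i})$ is the skyscraper at $c_i$ attached to the $\kappa(t_i)$-vector space $\mathcal{E}\otimes\kappa(t_i)$ of dimension $n$.

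Finally I would invoke the two basic properties of the determinant of a perfect complex from \cite{detdiv}: it is multiplicative along distinguished triangles, and for a perfect complex $V^\bullet$ of $\kappa(c)$-modules regarded as a skyscraper at a closed point $c$ of the smooth curve $C$ one has $\det(V^\bullet) = \mathcal{O}_C(\chi_{\kappa(c)}(V^\bullet)\cdot c)$, which is immediate from $0\to\mathcal{O}_C(-c)\to\mathcal{O}_C\to\kappa(c)\to 0$. Applying multiplicativity to the triangle above yields
$$\det(R\zeta_*\mathcal{E}) = \det(s^*\mathcal{E})\otimes\bigotimes_{i=1}^\delta\mathcal{O}_C\big((d_i+n)c_i\big)\otimes\bigotimes_{i=1}^\delta\mathcal{O}_C(n\,c_i)^{-1} = \det(s^*\mathcal{E})\Big(\sum_{i=1}^\delta d_i\,c_i\Big),$$
and identifying each $c_i\in C$ with $t_i\in s(C)\cong C$ gives the claimed isomorphism. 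I expect the main difficulty to be not a single hard step but the bookkeeping: getting the scheme structure on $s(C)\cap Z$ exactly right (which is where ``at worst nodal'' enters), tracking the residue field extensions $\kappa(t_i)/\kappa$ consistently, and verifying that every complex in the triangle is perfect so that the formalism of \cite{detdiv} applies --- the last point following from Lemma~\ref{lem-ci} together with the observation that in each triangle two of the three terms are manifestly perfect. A variant proceeds by induction on $\delta$, splitting off one tree $C_i$ at a time via $0\to\mathcal{O}_{C'}\to\mathcal{O}_{\overline{C'-C_i}}\oplus\mathcal{O}_{C_i}\to\kappa(t_i)\to 0$.
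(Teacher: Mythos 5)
Your proof is correct and reaches the same conclusion, but by a recognizably different route. The paper works with the two-term sequence $0 \to \bigoplus_i \mathcal{E}|_{C_i}(-t_i) \to \mathcal{E} \to \mathcal{E}|_{s(C)} \to 0$ (identifying the kernel of restriction to $s(C)$ with the pushforward of $\mathcal{E}|_{C_i}(-t_i)$), pushes it forward to a triangle, and then invokes the ``$\text{Div}$'' construction of \cite{detdiv} applied to the map $\lambda : R\zeta_*(\mathcal{E}) \to s^*\mathcal{E}$, computing $\text{Div}(\lambda) = -\sum d_i\cdot t_i$ from $\chi(C_i,\mathcal{E}|_{C_i}(-t_i)) = d_i$. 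You instead use the Mayer--Vietoris sequence $0 \to \mathcal{E} \to \mathcal{E}|_{s(C)} \oplus \mathcal{E}|_Z \to \bigoplus_i \mathcal{E}|_{t_i} \to 0$, push forward, and conclude by multiplicativity of $\det$ along triangles plus the elementary fact that the determinant of a skyscraper perfect complex at $c \in C$ is $\mathcal{O}_C(\chi\cdot c)$; the degree $d_i$ then appears as the cancellation $(d_i+n)-n$. The two computations are plainly equivalent (the discrepancy between the paper's sequence and yours is exactly the split sequence $0 \to \mathcal{E}|_{C_i}(-t_i) \to \mathcal{E}|_{C_i} \to \mathcal{E}|_{t_i} \to 0$ pushed forward), and both ultimately rest on Riemann--Roch for the contracted genus-zero trees $C_i$. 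What your version buys is a more self-contained argument: it sidesteps the $\text{Div}$ machinery and the attendant sign and shift bookkeeping (the paper has to remark on a shift by $1$) at the modest cost of one extra term in the exact sequence. Your closing remarks about perfectness (two-out-of-three in a triangle, plus Lemma \ref{lem-ci}) and about getting the scheme structure on $s(C)\cap Z$ right are exactly the points one should check, and are unproblematic given the nodal hypothesis.
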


\begin{proof}
For a comb $C' = s(C)\cup C_1 \cup \dots \cup C_\delta$ as above,
there is a surjection of $\OO_{C}$-modules $\mc{E} \to \mc{E}|_{s(C)}$
whose kernel we will write as $\oplus \mc{E}|_{C_i}(-t_i)$.
We will use later on that $\chi(C_i, \mc{E}|_{C_i}(-t_i))
= d_i$ by Riemann-Roch on $C_i$ over $\kappa(t_i)$.
Pushing this forward by $\zeta$ gives a triangle of perfect complexes:
$$
\xymatrix{
\ar[r]
&
R\zeta_*(\oplus \mc{E}|_{C_i}(-t_i))
\ar[r]
&
R\zeta_*(\mc{E})
\ar[r]^-{\lambda}
&
R\zeta_*(\mc{E}|_{s(C)})
\ar[r]
&
}
$$
Note that $R\zeta_*(\mc{E}|_{s(C)}) = s^*(\mc{E})[0]$,
and that the term $R\zeta_*(\oplus \mc{E}|_{C_i}(-t_i))$
is supported in the points $t_i$.
Hence $\lambda$ is ``good'' and we may
apply the ``Div'' construction of \cite{detdiv}. 
It folows that
$\text{det}(R\zeta_*(\mc{E}))
\cong
\text{det}(s^*\mc{E})(-\text{Div}(\lambda))$.
Using \cite[Theorem 3(iii) and (vi)]{detdiv}, it follows that
$\text{Div}(\lambda) = - \sum d_i \cdot t_i$ (minus sign because
the complex $\mc{H}^{\cdot}$ of locus citatus is our complex
$\zeta_*(\oplus \mc{E}|_{C_i}(-t_i))$ shifted by $1$).
\end{proof}

\noindent
We end this section with a simple semi-continuity lemma.

\begin{lem}
\label{lem-semicty}
\marpar{lem-semicty}
Let $S$ be an affine scheme.
Let $\pi : C \to S$ be a flat family of proper curves over $S$.
Let $\mc{F}$ be a quasi-coherent sheaf on $C$
which is locally finitely presented and flat over $S$.
For every integer $i\geq 0$ there exists an open subscheme $U_i$ of $S$
such that for every $S$-scheme $T$, the derived pushforward to $T$ of
the pullback of $\mc{F}$ to $T\times_S C$ is concentrated in degrees
$\leq i$ if and only if $T$ factors through $U_i$.  Moreover, after
basechange from $S$ to $U_i$, the formation of $R^i\pi_*(\mc{F})$ is
compatible with arbitrary basechange.

\mni
This also holds when $S$ is an arbitrary scheme, algebraic space,
etc., instead of an affine scheme.
\end{lem}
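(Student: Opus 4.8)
The plan is to reduce the statement, via cohomology and base change, to a single two-term complex of finite locally free sheaves on $S$ and then to read everything off by linear algebra. Both assertions are local on $S$ for the Zariski topology, and the opens $U_i$, the sheaves $R^i\pi_*\mathcal{F}$ and the base-change compatibilities are all intrinsic, so I may assume $S = \operatorname{Spec}(A)$ affine. For the proper, flat, finitely presented morphism $\pi : C \to S$ of algebraic spaces and the $S$-flat, finitely presented sheaf $\mathcal{F}$, cohomology and base change provides a complex
$$
K^{\bullet} \;=\; \bigl( K^{0} \xrightarrow{\;d\;} K^{1} \bigr)
$$
of finite locally free $\mathcal{O}_S$-modules, living in degrees $0$ and $1$ because the geometric fibres of $\pi$ have dimension $\le 1$, with a functorial isomorphism $R\pi_{T,*}(\mathcal{F}_T) \simeq K^{\bullet}\otimes_{\mathcal{O}_S}\mathcal{O}_T$ in the derived category of $\mathcal{O}_T$-modules for every $S$-scheme $T$ (here one uses that $\mathcal{F}$ is $S$-flat, so that $R\pi_*\mathcal{F}$ has the expected derived base change behaviour). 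In particular $R^{0}\pi_*\mathcal{F} = \ker(d)$ and $R^{1}\pi_*\mathcal{F} = \operatorname{coker}(d)$. In the Noetherian projective case this complex is classical, e.g.\ \cite[III, 6.10.5]{EGA3}; the general case follows by standard limit arguments together with a reduction from proper algebraic spaces to projective schemes.

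Now fix $i \ge 0$. If $i \ge 1$, then $K^{\bullet}_T$ sits in degrees $[0,1] \subseteq (-\infty, i]$ for every $T$, so the condition on the derived pushforward holds for all $S$-schemes $T$ and I take $U_i = S$; moreover $R^i\pi_*\mathcal{F}$ commutes with arbitrary base change because it is the top nonvanishing term, being either $0$ (for $i \ge 2$) or $\operatorname{coker}(d)$ (for $i = 1$), and cokernels commute with base change by right-exactness of $\otimes$. The only substantive case is $i = 0$. Put $U_0 := S \smallsetminus \operatorname{Supp}\bigl(R^{1}\pi_*\mathcal{F}\bigr)$, which is open because $R^{1}\pi_*\mathcal{F} = \operatorname{coker}(d)$ is finitely presented. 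For an $S$-scheme $T$, the complex $R\pi_{T,*}(\mathcal{F}_T)$ is concentrated in degrees $\le 0$ if and only if $\operatorname{coker}(d_T) = 0$, i.e.\ $\operatorname{coker}(d) \otimes_{\mathcal{O}_S} \mathcal{O}_T = 0$; evaluating at the points of $T$ and applying Nakayama to the finitely generated stalks of $\operatorname{coker}(d)$ shows that this happens exactly when the image of $T \to S$ avoids $\operatorname{Supp}\bigl(\operatorname{coker}(d)\bigr)$, i.e.\ when $T \to S$ factors through $U_0$. Finally, over $U_0$ the map $d$ is surjective onto the locally free sheaf $K^{1}$, so $\ker(d)|_{U_0}$ is locally free and the sequence $0 \to \ker(d) \to K^{0} \to K^{1} \to 0$ stays exact after every base change $T \to U_0$ (vanishing of $\operatorname{Tor}_1$ against $K^{1}$); hence $R^{0}\pi_{T,*}(\mathcal{F}_T) = \ker(d_T) = \ker(d) \otimes_{\mathcal{O}_S} \mathcal{O}_T$, which is the claimed base-change compatibility over $U_0$. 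Passing back from affine $S$ to a general scheme or algebraic space $S$ is immediate, since all the objects involved are intrinsic and the affine-local constructions glue.

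The linear algebra and the support--Nakayama bookkeeping are routine. The main obstacle is the first step: making the universal two-term complex of finite locally free sheaves available in exactly the generality claimed --- $\pi$ only proper, flat and of finite presentation, with $C$ (and in the end $S$) an algebraic space, not a scheme. In the Noetherian projective setting this is standard, but one must be careful (i) to produce a genuine \emph{global} two-term complex of finite locally free modules over the affine base, rather than merely a perfect complex of tor-amplitude $[0,1]$, and (ii) to carry out the reduction from proper algebraic spaces to projective schemes --- via Chow's lemma for algebraic spaces, or by invoking cohomology and base change directly in the algebraic-space setting --- alongside the limit argument handling the non-Noetherian case. Once that complex is available, the rest of the argument is formal.
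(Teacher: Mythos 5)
Your linear-algebra analysis of the two-term complex $K^{0}\to K^{1}$ is correct as far as it goes, and it is in substance what sits inside the references the paper cites for the classical case (\cite[Section 7]{EGA3}, \cite[Section 5]{AVar}). But the part you yourself flag as ``the main obstacle'' --- producing such a complex when $\pi$ is merely a proper flat finitely presented morphism of \emph{algebraic spaces} over a possibly non-Noetherian affine base --- is the entire content of the paper's proof, and your proposed routes around it do not work cleanly. Chow's lemma replaces $C$ by a projective modification $C'\to C$, which alters $R\pi_*\mathcal{F}$, so it does not reduce the statement to the projective scheme case. Invoking ``cohomology and base change directly in the algebraic-space setting'' is precisely the thing one is trying to prove. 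So as written the proposal has a genuine, though acknowledged, gap.

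The paper closes this gap by flat descent rather than by Chow's lemma. Proposition \ref{prop-algebraic} shows that the forgetful morphism $\textit{Curves}_{\text{pol}}\to\textit{Curves}$ is smooth and surjective; hence there exists a faithfully flat morphism of affine schemes $S'\to S$ such that $S'\times_S C$ is a \emph{projective scheme} over $S'$ (one simply pulls back a $\pi$-ample invertible sheaf). By \cite[Proposition 13.1.9]{LM-B} --- fpqc descent for the openness of $U_i$ and for the base-change assertion --- the statement over $S$ follows from the statement over $S'$. A standard limit argument then reduces to $S'$ of finite type over $\mathbb{Z}$, i.e.\ to the Noetherian projective case, where your two-term complex of finite free modules exists and your analysis applies verbatim. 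Finally the passage from affine $S$ to general $S$ (scheme, algebraic space) is again by \cite[Proposition 13.1.9]{LM-B}. In short: your computation is correct but lives one step downstream of where the real work is; the missing step is the faithfully flat reduction to the projective scheme case via the smooth cover of the stack of curves, not Chow's lemma.
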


\begin{proof}
This is probably true in some vast generality for morphisms of algebraic
stacks, but we do not know a reference. In our case we may deduce
it from the schemes case as follows.
By Proposition ~\ref{prop-algebraic}, there exists a
faithfully flat morphism of affine schemes
$S'\rightarrow S$ such that $S'\times_S C$ is projective over $S'$.
By \cite[Proposition 13.1.9]{LM-B}, the statement for the original
family over $S$ follows from the statement over $S'$.  Also by limit
arguments, it suffices to consider the case when $S'$ is of finite
type.  Now the result follows from \cite[Section 7]{EGA3} or
\cite[Section 5]{AVar}. The general case follows from the affine
case by \cite[Proposition 13.1.9]{LM-B}.
\end{proof}


\section{Free sections}
\label{sec-newfree}
\marpar{sec-newfree}

\noindent
This section is a brief summary of recent work on Hom
stacks together with results about free curves in this context.  

\mni
Let $S$ be an algebraic space. Let $\mc{C}\rightarrow S$ be a
proper, flat, finitely presented algebraic stack over $S$. Let
$f:\mc{X}\rightarrow \mc{C}$ be a $1$-morphism of algebraic
$S$-stacks.
%
%

\begin{defn}
\label{defn-preFGA}
\marpar{defn-preFGA}
With notation as above.
Let $T$ be an $S$-scheme, or an algebraic space over $S$.

\mni
\textbf{1.}
A \emph{family of sections of $f$ over $T$}
is a pair $(\tau,\theta)$ consisting of a $1$-morphism of $S$-stacks
$\tau:T \times_S \mc{C} \rightarrow \mc{X}$
together with a $2$-morphism
$\theta : f\circ \tau \Rightarrow \text{pr}_{\mc{C}}$, giving a
$2$-commutative diagram
\xyoption{2cell}
\UseTwocells
$$
\xymatrixcolsep{5pc}
\xymatrix{
&
\mc{X} \ar[d]^f \\
T \times_S \mc{C}
\rtwocell^{f \circ \tau}_{\text{pr}_{\mc{C}}}{\theta}
\ar[ur]^\tau \ar[d]
&
\mc{C}\ar[d] \\
T \ar[r]
&
S 
}
$$

\mni
\textbf{2.}
For two families of sections of $f$, say
$(T', \tau', \theta')$ and $(T, \tau, \theta)$,
a \emph{morphism} $(u, \eta)$ from the first family
to the second family is given by
a morphism $u : T' \to T$ and a $2$-morphism
$\eta: \tau \circ (u \times \text{id}_{\mc{C}}) \Rightarrow \tau'$
such that
$$
\xymatrix{
f \circ \tau' \ar[r]^-{\eta} \ar[d]^{\theta'}
&
f \circ \tau \circ (u \times \text{id}_{\mc{C}}) \ar[d]^{\theta} \\
\text{pr}^{T'\times_S\mc{C}}_{\mc{C}} \ar[r]^-{=}
&
\text{pr}^{T\times_S\mc{C}}_{\mc{C}} \circ (u \times \text{id}_{\mc{C}})
}
$$
commutes.
\end{defn}

\begin{defn}
\label{defn-sec}
\marpar{defn-sec}
The \emph{stack of sections of $f$} is the fibred category
$$
\Sec(\mc{X}/\mc{C}/S) \longrightarrow \textit{Schemes}/S
$$
whose objects are family of sections of $f$ and whose
morphisms are morphisms of families of sections of $F$.
\end{defn}

\noindent
We recall some results from the literature.

\begin{thm}
\label{thm-algebraic}
\marpar{thm-algebraic}
Let $\mc{C} \to S$, $f : \mc{X} \to \mc{C}$ be as above.

\mni
\textbf{1.}
See \cite[Part IV.4.c, p. 221-19]{FGA}. Assume that $C=\mc{C}$ is a
projective scheme over $S$ and that $X=\mc{X}$ is a
quasi-projective scheme over $C$ either globally over $S$,
resp. fppf locally over $S$.  Then $\Sec(X/C/S)$ is an algebraic space
which is locally finitely presented and separated over $S$.  Moreover,
globally over $S$, resp. fppf locally over $S$, the connected
components of $\Sec(X/C/S)$ are quasi-projective over $S$.  

\mni
\textbf{2.} See \cite[Theorem 1.1]{OHom}. Assume that $\mc{C}$ and $\mc{X}$
are separated over $S$ and have finite diagonal. Also assume that fppf
locally over $S$ there is a finite, finitely presented cover of $\mc{C}$ by
an algebraic space.  Then $\Sec(\mc{X}/\mc{C}/S)$ is a limit preserving
algebraic stack over $S$ with separated, quasi-compact diagonal.  

\mni
\textbf{3.} See \cite[Proposition 2.11]{Lieblich}. Assume $S$ excellent
and $\mc{C} = C$ is a proper flat family of curves.
Assume that, fppf locally over $S$, we can write $\mc{X} = [Z/G]$
for some algebraic space $Z$ separated and of finite type over $S$
and some linear group scheme $G$ flat over $S$. Then
$\Sec(\mc{X}/C/S)$ is a limit preserving algebraic stack over $S$.
\end{thm}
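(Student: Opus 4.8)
The plan is to deduce each of the three parts directly from the cited source, the only real work being to match our $2$-categorical definition of $\Sec(\mc{X}/\mc{C}/S)$ (Definition \ref{defn-preFGA}) with the $\text{Hom}$-stack constructions of \cite{FGA}, \cite{OHom}, and \cite{Lieblich}, and to check that the hypotheses imposed here feed into theirs. In all three cases one uses the identification
$$
\Sec(\mc{X}/\mc{C}/S) \;=\; \underline{\text{Hom}}_S(\mc{C},\mc{X})\,\times_{\underline{\text{Hom}}_S(\mc{C},\mc{C})}\, \{\text{id}_{\mc{C}}\},
$$
i.e.\ $\Sec(\mc{X}/\mc{C}/S)$ is the $2$-fibre over the identity section of the $1$-morphism ``post-compose with $f$'' from $\underline{\text{Hom}}_S(\mc{C},\mc{X})$ to $\underline{\text{Hom}}_S(\mc{C},\mc{C})$; the $2$-morphism $\theta$ of Definition \ref{defn-preFGA} is precisely the datum exhibiting this $2$-fibre product. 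Hence if $\underline{\text{Hom}}_S(\mc{C},-)$ has the relevant property (being an algebraic space, resp.\ a limit preserving algebraic stack, with the appropriate diagonal) then so does $\Sec(\mc{X}/\mc{C}/S)$, being a $2$-fibre product of such.

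For Part \textbf{1}, when $C$ and $X$ are (quasi-)projective schemes one may bypass the $\text{Hom}$-stack formalism and argue with graphs: a section $\sigma$ is determined by its graph $\Gamma_\sigma\subset C\times_S X$, a closed subscheme flat and proper over $S$, so $\Sec(X/C/S)$ is the open subfunctor of $\Hilb_{C\times_S X/S}$ on which the first projection restricts to an isomorphism onto $C$. This is Grothendieck's argument \cite[Part IV.4.c]{FGA}. Local finite presentation and separatedness follow from the corresponding properties of the Hilbert scheme together with uniqueness of the graph (valuative criterion), and for the quasi-projectivity of connected components one bounds the Hilbert polynomial of $\Gamma_\sigma$ relative to an ample sheaf by Snapper's theorem, exactly as in the proof of Proposition \ref{prop-algebraic} above, so that each connected component lies in a single projective Hilbert scheme. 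The ``fppf locally over $S$'' variant then follows by fppf descent for algebraic spaces and for quasi-projectivity.

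Part \textbf{2} is \cite[Theorem 1.1]{OHom}: under the stated hypotheses on $\mc{C}$ and $\mc{X}$, the stacks $\underline{\text{Hom}}_S(\mc{C},\mc{X})$ and $\underline{\text{Hom}}_S(\mc{C},\mc{C})$ are limit preserving algebraic stacks over $S$ with quasi-compact, separated diagonals, and the same then holds for the $2$-fibre product $\Sec(\mc{X}/\mc{C}/S)$. Part \textbf{3} is \cite[Proposition 2.11]{Lieblich}, the same result we invoked for $\textit{CurveMaps}$ in Proposition \ref{prop-Lieblich}: since $\mc{C} = C$ is a proper flat family of curves and $\mc{X}$ is, fppf-locally on $S$, of the form $[Z/G]$, Lieblich's theorem yields directly that $\Sec(\mc{X}/C/S)$ is a limit preserving algebraic stack over $S$; algebraicity descends along the fppf cover of $S$.

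The one point requiring genuine care — and the main, if modest, obstacle — is the $2$-categorical bookkeeping: one must check that a morphism of families of sections in the sense of Definition \ref{defn-preFGA}, including the compatibility square relating $\eta$ and $\theta$, corresponds exactly to a morphism of the ambient $\text{Hom}$-stack lying over $\text{id}_{\mc{C}}$, so that the universal property of the $2$-fibre product is genuinely matched. Once this identification is in place, all three assertions are immediate consequences of the quoted theorems.
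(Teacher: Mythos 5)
Your proposal is correct and follows essentially the same route as the paper: for Part 1 you realize $\Sec(X/C/S)$ as an open subfunctor of $\Hilb_{C\times_S X/S}$ via the graph, and for Parts 2 and 3 you form the $2$-cartesian square
$$
\xymatrix{
\Sec(\mc{X}/\mc{C}/S) \ar[r] \ar[d] & \underline{\text{Hom}}_S(\mc{C},\mc{X}) \ar[d] \\
S \ar[r] & \underline{\text{Hom}}_S(\mc{C},\mc{C})
}
$$
and cite \cite{OHom} and \cite{Lieblich}, which is exactly what the paper does. The only point you gloss over that the paper makes explicit is that the bottom horizontal arrow $S\to\underline{\text{Hom}}_S(\mc{C},\mc{C})$ (the identity section) is \emph{representable} — by \cite{OHom} in case 2 and by the earlier results on flat families of proper curves (Proposition \ref{prop-algebraic}) in case 3 — so that the top horizontal arrow is representable and $\Sec$ inherits algebraicity and the diagonal properties directly from $\underline{\text{Hom}}_S(\mc{C},\mc{X})$; your phrase ``being a $2$-fibre product of such'' is slightly too casual on this point, since it is really the representability of the base change morphism that carries the diagonal conditions.
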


\begin{proof}
Besides the references, here are some comments. For the first case
we may realize the space of sections as an open subscheme of Hilbert
scheme of $C\times_SX$ over $S$. In the second and third case we
think of $\Sec(\mc{X}/\mc{C}/S)$ as a substack of the 
Hom-stack $\text{Hom}_S(\mc{C}, \mc{X})$. The references
guarantee the existence of the Hom-stack as an algebraic stack.
Next we consider the $2$-cartesian diagram
$$
\xymatrix{
\Sec(\mc{X}/\mc{C}/S) \ar[r] \ar[d]
&
\text{Hom}_S(\mc{C}, \mc{X}) \ar[d]
\\
S \ar[r]
&
\text{Hom}_S(\mc{C}, \mc{C})
}
$$
In case 2 resp.\ 3 the lower horizontal arrow is representable
by the result from \cite{OHom} resp.\ because of our earlier
results on flat families of proper curves.
\end{proof}

\noindent
Of course we will most often use the space of sections for
a family of varieties over a curve. We formulate a set of hypothesis
that is convenient for the developments later on.

\begin{hyp}
\label{hyp-curves}
\marpar{hyp-curves}
Let $\kappa$ be a field.
Let $C$ be a projective, smooth, geometrically connected curve over $\kappa$.
Let $f:X\rightarrow C$ be a quasi-projective morphism.
Let $\mc{L}$ be an $f$-ample invertible sheaf on $X$.
Denote the smooth locus of $X$ by $X_{\text{smooth}}$.
Denote the open subset of $X_{\text{smooth}}$ on which $f$
is smooth by $X_{f,\text{smooth}}$.
\end{hyp}

\noindent
According to the theorem above the space of sections
$\Sec(X/C/\kappa)$ is a union of quasi-projective
schemes over $\kappa$ in this case. The locus
$\Sec(X_{f,\text{smooth}}/C/\kappa)$ is an open
subscheme of $\Sec(X/C/\kappa)$.

\begin{defn}
\label{defn-FGA}
\marpar{defn-FGA}
In the situation \ref{hyp-curves}.
For every integer $e$, the
\emph{scheme of degree $e$ sections of $f$} is the
open and closed subscheme $\Sec^e(X/C/\kappa)$ of
$\Sec(X/C/\kappa)$ parametrizing sections $\tau$
of $f$ which pullback $\mc{L}$ to a degree $e$
invertible sheaf on $C$. The
\emph{universal degree $e$ section of $f$} is denoted
$$
\sigma :
\Sec^e(X/C/\kappa) \times_\kappa C
\longrightarrow
X.
$$
The \emph{Abel map} is the $\kappa$-morphism
$$
\alpha : 
\Sec^e(X/C/\kappa)
\longrightarrow
\underline{\text{Pic}}^e_{C/\kappa}
$$
associated to the invertible sheaf $\sigma^*\mc{L}$ by the universal
property of the Picard scheme $\underline{\text{Pic}}^e_{C/\kappa}$.  
\end{defn}

\noindent
There are several variations of the notion of ``free curve'' as
described in \cite[Definition II.3.1]{K}.
Of the following two notions, the weaker
notion arises more often geometrically
and implies the stronger notion in a precise sense.  

\begin{defn}
\label{defn-newfree}
\marpar{defn-newfree}
In Situation \ref{hyp-curves}.
Let $T$ be a $\kappa$-scheme. Let
$\tau:T\times_\kappa C \rightarrow X_{f,\text{smooth}}$
be a family of degree $e$ sections of $f$ with
image in $X_{f,\text{smooth}}$. 

\mni
\textbf{1.}
Let $D$ be an effective Cartier divisor in $C$.
The family $\tau$ is \emph{weakly $D$-free} if
the $\OO_T$-module homomorphism
$$
(\text{pr}_T)_*(\tau^* T_f)
\longrightarrow
(\text{pr}_T)_*(\tau^* T_f \otimes_{\OO_{T\times_\kappa C}} \text{pr}_C^*\OO_D)
$$
is surjective.  Here $T_f = \textit{Hom}(\Omega^1_{X/C}, \OO_X)$.

\mni
\textbf{2.}
Let $D$ be an effective Cartier divisor in $C$.
The family $\tau$ is \emph{$D$-free} if the 
sheaf of $\OO_T$-modules
$$
R^1(\text{pr}_T)_*(
\tau^* T_f \otimes_{\OO_{T\times_\kappa C}}\text{pr}_C^* \OO_C(-D)
)
$$
is zero.

\mni
\textbf{3.}
For an integer $m\geq 0$, the family is
\emph{weakly $m$-free} if the
basechange to $\text{Spec}(\overline{\kappa})$
is weakly $D$-free for every
effective, degree $m$, Cartier divisor $D$ in
$C\times_\kappa \text{Spec}(\overline{\kappa})$.  

\mni
\textbf{4.}
For an integer $m\geq 0$, the family is
\emph{$m$-free} if the
basechange to $\text{Spec}(\overline{\kappa})$ 
is $D$-free for every effective, 
degree $m$, Cartier divisor $D$ in
$C\times_\kappa \text{Spec}(\overline{\kappa})$.
\end{defn}

\noindent
Whenever we talk about $D$-free, weakly $D$-free,
weakly $m$-free or $m$-free sections we tacitly assume
that the image of the section is contained in $X_{f,\text{smooth}}$.

\begin{defn}
\label{defn-classicalfree}
\marpar{defn-classicalfree}
In Situation \ref{hyp-curves} a section $s : C \to X$
of $f$ is called \emph{unobstructed} if it is $0$-free.
It is called \emph{free} if it is $1$-free.
\end{defn}

\noindent
It it true and easy to prove that these correspond to
the usual notions. Namely, if a section is unobstructed
then its formal deformation space is the spectrum of
a power series ring. Also, a section $s$ is free if and only
if it is unobstructed and $s^*T_{X/C}$ is globally generated,
which means that its deformations sweep out an open in $X$.

\begin{lem}
\label{lem-newfreeopen}
\marpar{lem-newfreeopen}
In Situation \ref{hyp-curves}.

\mni
\textbf{1.}
Let $D$ be an effective Cartier divisor on $C$.
There is an open subscheme $U$ of $\Sec^e(X/C/\kappa)$
parametrizing exactly those families whose image
lies in $X_{f,\text{smooth}}$ and which are $D$-free.

\mni
\textbf{2.}
Let $m \geq 0$ be an integer.
There is an open subscheme $V$ of $\Sec^e(X/C/\kappa)$
parametrizing exactly those families whose image
lies in $X_{f,\text{smooth}}$ and which are $m$-free.

\mni
\textbf{2.}
The set of $\overline{\kappa}$-points of
$\Sec^e(X/C/\kappa) \times \text{Sym}^m(C)$ 
corresponding to pairs $(\tau, D)$ such that
$\tau$ is $D$-free is open.
\end{lem}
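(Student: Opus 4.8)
The plan is to deduce all three assertions from the semicontinuity Lemma~\ref{lem-semicty}, applied to suitable twists of the pullback of the relative tangent sheaf. Throughout write $\Sigma := \Sec^e(X_{f,\text{smooth}}/C/\kappa)$, which is an open subscheme of $\Sec^e(X/C/\kappa)$ (the locus of families whose image avoids the closed set $X\setminus X_{f,\text{smooth}}$, which is open because $\text{pr}_T : T\times_\kappa C\to T$ is proper), and let $\sigma : \Sigma\times_\kappa C\to X_{f,\text{smooth}}$ be the universal section. Since $f$ is smooth along $X_{f,\text{smooth}}$, the sheaf $T_f = \textit{Hom}(\Omega^1_{X/C},\OO_X)$ is locally free there, so $\sigma^*T_f$ is a locally free, finitely presented $\OO_{\Sigma\times_\kappa C}$-module, hence flat over $\Sigma$. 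Because the notions in Definition~\ref{defn-newfree} are only imposed on families landing in $X_{f,\text{smooth}}$, in each part it suffices to produce the relevant locus as an open subscheme of $\Sigma$ (resp.\ of $\Sigma\times_\kappa\text{Sym}^m(C)$); it will then automatically be open in $\Sec^e(X/C/\kappa)$. For the first assertion, set $\mc{G} := \sigma^*T_f\otimes_{\OO_{\Sigma\times_\kappa C}}\text{pr}_C^*\OO_C(-D)$, again locally free, finitely presented and flat over $\Sigma$, and apply Lemma~\ref{lem-semicty} to $\text{pr}_\Sigma : \Sigma\times_\kappa C\to\Sigma$, the sheaf $\mc{G}$, and $i = 0$; since the fibres are curves, $R(\text{pr}_\Sigma)_*\mc{G}$ always has amplitude in $[0,1]$, so ``concentrated in degrees $\leq 0$'' means precisely ``$R^1 = 0$''. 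The resulting open $U\subseteq\Sigma$ is then characterized by: a family $\tau$ over $T$ factors through $U$ if and only if $R^1(\text{pr}_T)_*(\tau^*T_f\otimes\text{pr}_C^*\OO_C(-D)) = 0$, i.e.\ if and only if $\tau$ is $D$-free.

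For the third assertion I would run the same argument with the universal divisor in place of a fixed one. Recall that $\text{Sym}^m(C)\cong\underline{\text{Div}}^m_{C/\kappa}$ is projective over $\kappa$ and carries a universal effective Cartier divisor $\mc{D}\subseteq\text{Sym}^m(C)\times_\kappa C$, finite flat of degree $m$ over $\text{Sym}^m(C)$. Put $P := \Sigma\times_\kappa\text{Sym}^m(C)$; pulling $\sigma$ and $\mc{D}$ back along the two projections $P\times_\kappa C\to\Sigma\times_\kappa C$ and $P\times_\kappa C\to\text{Sym}^m(C)\times_\kappa C$ gives a section $\widetilde\sigma$ of $X\to C$ over $P$ and a relative effective Cartier divisor $\widetilde{\mc{D}}$ over $P$, so that $\mc{F} := \widetilde\sigma^*T_f\otimes\OO(-\widetilde{\mc{D}})$ is locally free, finitely presented, and flat over $P$. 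Applying Lemma~\ref{lem-semicty} to $P\times_\kappa C\to P$, $\mc{F}$, and $i = 0$ produces an open $\mc{W}\subseteq P$ over which $R^1$ of the pushforward vanishes universally and whose formation commutes with base change. Unwinding definitions: an $\overline{\kappa}$-point of $P$ lying over a pair $(\tau,D)$ lies in $\mc{W}$ exactly when $H^1(C_{\overline{\kappa}},\tau^*T_f(-D)) = 0$, i.e.\ exactly when $\tau$ is $D$-free; and $\overline{\kappa}$-points of $\Sec^e(X/C/\kappa)\times_\kappa\text{Sym}^m(C)$ are the same as those of its base change to $\overline{\kappa}$, so the set in the statement is the set of $\overline{\kappa}$-points of the open subscheme $\mc{W}_{\overline{\kappa}}$, hence open.

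For the second assertion, keep $P$ and $\mc{W}$ from the previous paragraph and use that $\text{pr}_1 : P = \Sigma\times_\kappa\text{Sym}^m(C)\to\Sigma$ is proper, $\text{Sym}^m(C)$ being proper over $\kappa$. Then $\text{pr}_1(P\setminus\mc{W})$ is closed, so $V := \Sigma\setminus\text{pr}_1(P\setminus\mc{W})$ is open; set-theoretically $V$ consists of the points $v\in\Sigma$ with $\{v\}\times\text{Sym}^m(C)\subseteq\mc{W}$. I claim $V$ is exactly the $m$-free locus. It suffices to show, for a family $\tau$ over $T$ landing in $X_{f,\text{smooth}}$, that $T\to\Sigma$ factors through $V$ if and only if $\tau$ is $m$-free, and since $\overline{\kappa}/\kappa$ is faithfully flat this may be checked after base change to $\overline{\kappa}$. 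For each $\overline{\kappa}$-point $[D]$ of $\text{Sym}^m(C)_{\overline{\kappa}}$, restricting $\mc{F}$ along the constant section $[D]$ identifies the relevant twist with $\tau_{\overline{\kappa}}^*T_f(-D)$, so by the base-change statement in Lemma~\ref{lem-semicty} the family $\tau_{\overline{\kappa}}$ is $D$-free if and only if $(\tau_{\overline{\kappa}},[D]) : T_{\overline{\kappa}}\to P_{\overline{\kappa}}$ has image in $\mc{W}_{\overline{\kappa}}$. Using that $\overline{\kappa}$-points are dense in $\text{Sym}^m(C)_{\overline{\kappa}}$ and that $\mc{W}_{\overline{\kappa}}$ is open, hence stable under generization, one sees that ``$\tau_{\overline{\kappa}}$ is $D$-free for every such $[D]$'' is equivalent to $\tau_{\overline{\kappa}}(T_{\overline{\kappa}})\times\text{Sym}^m(C)_{\overline{\kappa}}\subseteq\mc{W}_{\overline{\kappa}}$, i.e.\ to $\tau_{\overline{\kappa}}(T_{\overline{\kappa}})\subseteq V_{\overline{\kappa}}$ (here one uses that $V$ is defined over $\kappa$ and $\text{pr}_1$ is proper, so base change commutes with taking the image), i.e.\ to $T\to\Sigma$ factoring through $V$.

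The main obstacle is the second assertion: one must convert the quantifier ``for every effective degree $m$ Cartier divisor on $C_{\overline{\kappa}}$'', a condition on geometric points only, into factorization through a single open subscheme $V$ that is moreover defined over $\kappa$. The key inputs are the properness of $\text{Sym}^m(C)\to\Spec\kappa$ (so the locus to be removed downstairs is closed) and the density of closed points together with the stability of open subsets under generization (so that checking $D$-freeness for all $\overline{\kappa}$-divisors $D$ suffices to control the whole relative $\text{Sym}^m$); the descent from $\overline{\kappa}$ back to $\kappa$ costs nothing because $P$, $\mc{F}$ and $\mc{W}$ already live over $\kappa$ by Lemma~\ref{lem-semicty}. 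The first and third assertions, by contrast, are immediate once the semicontinuity lemma is set up.
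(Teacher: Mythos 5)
Your proof is correct and follows essentially the same route as the paper's: part 1 comes directly from Lemma~\ref{lem-semicty}, part 3 comes from the same lemma applied to the universal Cartier divisor over $\text{Sym}^m(C)$, and part 2 is obtained by removing the (closed, by properness of $\text{Sym}^m(C)$) image of the complement of the open from part 3 — which is precisely the paper's "largest open $V$ with $V\times_\kappa\text{Sym}^m(C)\subseteq U$." You merely reorder 2 and 3 and spell out the properness/generization argument that the paper leaves to the reader.
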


\begin{proof}
The first part follows from Lemma~\ref{lem-semicty}.
To see the second part consider a
family $\tau : T\times_\kappa C \to X_{f,\text{smooth}}$.
Let $T' = T \times_\kappa \text{Sym}^m(C)$.
On $T' \times _\kappa C$ we have a ``universal'' relative
Cartier divisor $\mc{D}$ (of degree $m$ over $T'$).
Hence we may consider the open $U \subset T' = T \times_\kappa \text{Sym}^m(C)$
where the relevant $R^1$ sheaf vanishes. Let $V \subset T$
be the largest open subset such that $V \times_\kappa \text{Sym}^m(C)$
is contained in $U$. We leave
it to the reader to verify that a base change by $T'' \to T$
of $\tau$ is $m$-free if and only if $T'' \to T$ factors
through $V$.
The third part is similar to the others.
\end{proof}

\begin{lem}
\label{lem-newfree2}
\marpar{lem-newfree2}
In situation \ref{hyp-curves}. Let
$\tau : T\times_\kappa C \to X_{f,\text{smooth}}$
be a family of sections, let $u : T' \to T$ be a morphism
and let $\tau' = \tau \circ (u, \text{id}_C)$.

\mni
\textbf{1.}
Let $D$ be an effective Cartier divisor in $C$.
If $\tau$ is weakly $D$-free, then so is $\tau'$.

\mni
\textbf{2.}
Let $m \geq 0$ be an integer.
If $\tau$ is weakly $m$-free then so is $\tau'$.

\mni
The converse holds if $u$ is faithfully flat.
\end{lem}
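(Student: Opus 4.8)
The plan is to reduce everything to a single base-change diagram for the sheaf map cutting out weak $D$-freeness. Set $\mathcal{F}=\tau^*T_f$ and $\mathcal{F}'=\tau'^*T_f$; since $\tau'=\tau\circ(u\times\text{id}_C)$ we have $\mathcal{F}'=v^*\mathcal{F}$ for $v=u\times\text{id}_C\colon T'\times_\kappa C\to T\times_\kappa C$, and $\mathcal{F}$ is locally free because $\tau$ factors through $X_{f,\text{smooth}}$. By definition $\tau$ is weakly $D$-free exactly when the map
$$
\phi_T\colon (\text{pr}_T)_*\mathcal{F}\longrightarrow (\text{pr}_T)_*\big(\mathcal{F}\otimes\text{pr}_C^*\mathcal{O}_D\big)
$$
is surjective, and similarly for $\phi_{T'}$. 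The functorial base-change maps fit into a commuting square whose horizontal arrows are $\beta\colon u^*(\text{pr}_T)_*\mathcal{F}\to(\text{pr}_{T'})_*\mathcal{F}'$ and $\gamma\colon u^*(\text{pr}_T)_*(\mathcal{F}\otimes\text{pr}_C^*\mathcal{O}_D)\to(\text{pr}_{T'})_*(\mathcal{F}'\otimes\text{pr}_C^*\mathcal{O}_D)$ and whose vertical arrows are $u^*\phi_T$ and $\phi_{T'}$.

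The first key point is that $\gamma$ is an isomorphism for \emph{every} $u$. Indeed $\mathcal{F}\otimes\text{pr}_C^*\mathcal{O}_D=i_*\big(\mathcal{F}|_{T\times_\kappa D}\big)$ for the closed immersion $i\colon T\times_\kappa D\hookrightarrow T\times_\kappa C$, and $T\times_\kappa D\to T$ is finite — it is the base change of the finite $\kappa$-scheme $D$, a $0$-dimensional closed subscheme of the projective curve $C$ — hence affine, so its pushforward commutes with arbitrary base change. Granting this, part 1 is immediate: if $\phi_T$ is surjective then so is $u^*\phi_T$ (right exactness of $u^*$), hence $\gamma\circ u^*\phi_T=\phi_{T'}\circ\beta$ is surjective, and therefore $\phi_{T'}$ is surjective. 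Part 2 then follows formally: weak $m$-freeness means weak $D$-freeness of the base change to $\overline{\kappa}$ for every effective degree-$m$ Cartier divisor $D$ on $C\times_\kappa\text{Spec}(\overline{\kappa})$, and $\tau'_{\overline{\kappa}}=\tau_{\overline{\kappa}}\circ(u_{\overline{\kappa}}\times\text{id})$, so part 1 applied over $\overline{\kappa}$ does it.

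For the converses one assumes $u$ faithfully flat, which makes $\beta$ an isomorphism as well, by the flat base-change theorem applied to the (proper, in any case quasi-compact and quasi-separated) morphism $\text{pr}_T$. Then both horizontal arrows in the square are isomorphisms, so $\phi_{T'}$ surjective forces $u^*\phi_T$ surjective; since $u^*\operatorname{coker}(\phi_T)=\operatorname{coker}(u^*\phi_T)=0$, faithful flatness gives $\operatorname{coker}(\phi_T)=0$, i.e.\ $\tau$ is weakly $D$-free, and the $m$-free converse follows as before, using that $u_{\overline{\kappa}}$ is again faithfully flat. The only slightly delicate step — and the main thing I would write out carefully rather than wave at — is the verification that, under the canonical identification $\mathcal{F}'=v^*\mathcal{F}$, the square above genuinely commutes and $\phi_{T'}$ is the map obtained from $u^*\phi_T$ by transport through $\beta$ and $\gamma$; this is exactly the compatibility of base-change maps with the closed immersion $i$ and with tensoring by $\text{pr}_C^*\mathcal{O}_D$, which is routine functoriality but should be stated explicitly. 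Everything else is formal.
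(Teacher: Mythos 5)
Your proposal is correct and takes essentially the same route as the paper. The key observation in both is identical: the target sheaf $\mathcal{G}=(\text{pr}_T)_*\bigl(\tau^*T_f\otimes\text{pr}_C^*\OO_D\bigr)$ commutes with arbitrary base change because $T\times_\kappa D\to T$ is finite, so the cokernel of the evaluation map behaves well under pullback; you spell out the square of base-change maps $\beta,\gamma$ that the paper compresses into the sentence ``$u^*\mathcal{F}\to u^*\mathcal{G}$ factors through $\mathcal{F}'$.'' The faithfully-flat converse is handled the same way (flat base change makes $\beta$ an isomorphism, then faithful flatness kills the cokernel).

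The one place you genuinely diverge is part 2. The paper runs the argument for the universal divisor over $T\times_\kappa\Sym^m(C)$, paralleling the proof of Lemma \ref{lem-newfreeopen}. You instead apply part 1 directly over $\ol{\kappa}$, one divisor at a time, using that weak $m$-freeness is defined fibrewise over $\ol{\kappa}$. Your route is shorter and perfectly valid since Situation \ref{hyp-curves} is stable under passing to $\ol{\kappa}$; the paper's universal version has the advantage of packaging the $m$-free locus into a scheme, which is what they actually need in Lemma \ref{lem-newfreeopen} and elsewhere. Either is fine for this lemma.
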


\begin{proof}
We first prove 1. The sheaf of $\OO_T$-modules 
$\mc{G} = (\text{pr}_T)_*(\tau^* T_f \otimes_{\OO_{T\times C}}
\text{pr}_C^*\OO_D)$ from Definition \ref{defn-newfree}
is finite locally free (of rank the degree
of $D$ times the dimension of $X$ over $C$)
and its formation commutes with base change.
The sheaf $\mc{F} = (\text{pr}_T)_*(\tau^* T_f)$ is quasi-coherent
but its formation only commutes with flat base change in general.
Denote $\mc{F}'$ the corresponding sheaf for $\tau'$.
Assume the map $\mc{F} \to \mc{G}$ is surjective.
Then $\mc{F}' \to \mc{G}' = u^*\mc{G}$ is surjective because
$u^*\mathcal{F} \to u^*\mc{G}$ factors through $\mathcal{F}'$.
The statement about the faithfully flat case follows from this
discussion as well.

\mni
To prove 2 we use the arguments of
1 for the universal situation over
$T \times_\kappa \text{Sym}^m(C)$
as in the proof of Lemma \ref{lem-newfreeopen}
above (left to the reader).
\end{proof}

\begin{lem}
\label{lem-newfree0}
\marpar{lem-newfree0}
In situation \ref{hyp-curves}. Let
$\tau : T\times_\kappa C \to X_{f,\text{smooth}}$
be a family of sections.

\mni
\textbf{1.}
Let $D'\leq D$ be effective Cartier divisors on $C$.
If $\tau$ is $D$-free, resp.\ weakly $D$-free, then it
is also $D'$-free, resp.\ weakly $D'$-free.

\mni
\textbf{2.}
For integers $m\geq m' \geq 0$, if $\tau$ is
$m$-free, resp.\ weakly $m$-free, then it is also $m'$-free,
resp.\ weakly $m'$-free.
\end{lem}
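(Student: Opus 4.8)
The plan is to reduce everything to the basic observation that a smaller divisor gives a larger ideal sheaf, so a cohomology vanishing for the bigger divisor forces the same vanishing for the smaller one. First I would prove part \textbf{1} for the $D$-free case. If $D' \leq D$ are effective Cartier divisors on $C$, then there is an inclusion of invertible sheaves $\OO_C(-D) \hookrightarrow \OO_C(-D')$, hence an inclusion $\tau^*T_f \otimes \text{pr}_C^*\OO_C(-D) \hookrightarrow \tau^*T_f \otimes \text{pr}_C^*\OO_C(-D')$ of $\OO_{T\times_\kappa C}$-modules whose cokernel $\mc{Q}$ is flat over $T$ and supported on $\text{pr}_C^{-1}(D)$, in particular finite over $T$. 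Applying $R(\text{pr}_T)_*$ to the resulting short exact sequence gives a long exact sequence
$$
\dots \to R^1(\text{pr}_T)_*\bigl(\tau^*T_f \otimes \text{pr}_C^*\OO_C(-D)\bigr)
\to R^1(\text{pr}_T)_*\bigl(\tau^*T_f \otimes \text{pr}_C^*\OO_C(-D')\bigr)
\to R^1(\text{pr}_T)_*(\mc{Q}) \to \dots
$$
and since $\mc{Q}$ is finite over $T$ (and $T$ is affine in the local situation, or one argues locally) its higher pushforward $R^1(\text{pr}_T)_*(\mc{Q})$ vanishes. Hence surjectivity of the first map onto the second forces the second $R^1$ to vanish once the first does; that is exactly the statement that $D$-free implies $D'$-free.

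For the weakly $D$-free case of part \textbf{1}, I would run an analogous argument one cohomological degree lower. Write $\OO_D$ and $\OO_{D'}$ for the structure sheaves of the divisors and note the surjection $\OO_D \to \OO_{D'}$ (again coming from $D' \leq D$), which after tensoring with $\tau^*T_f$ and pushing forward gives a surjection $(\text{pr}_T)_*(\tau^*T_f \otimes \text{pr}_C^*\OO_D) \to (\text{pr}_T)_*(\tau^*T_f \otimes \text{pr}_C^*\OO_{D'})$ — here both pushforwards are the honest $R^0$ and commute with base change since the sheaves are finite flat over $T$, as recalled in the proof of Lemma~\ref{lem-newfree2}. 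The evaluation map for $D'$ then factors as the evaluation map for $D$ followed by this surjection; so if the former is surjective, so is the latter. That is precisely: weakly $D$-free implies weakly $D'$-free.

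Finally part \textbf{2} is an immediate consequence of part \textbf{1} applied over the base change to $\text{Spec}(\overline{\kappa})$: given $m \geq m' \geq 0$, every effective degree-$m'$ Cartier divisor $D'$ on $C_{\overline{\kappa}}$ is dominated by some effective degree-$m$ divisor $D$ (just add $m - m'$ further points), so $m$-freeness gives $D$-freeness for that $D$, hence by part \textbf{1} gives $D'$-freeness; as $D'$ was arbitrary this is $m'$-freeness, and likewise in the weak case. I do not expect any serious obstacle here; the only point requiring a line of care is the vanishing of $R^1(\text{pr}_T)_*$ of the torsion cokernel $\mc{Q}$ (resp.\ the fact that the relevant $R^0$ pushforwards commute with base change), which follows because $\mc{Q}$ is finite over $T$ and its formation is compatible with base change, so one may check things locally where it is standard.
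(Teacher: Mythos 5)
Your proof of part \textbf{1} is essentially the paper's own argument: the paper too observes the surjection on $R^0$ pushforwards coming from $\OO_D \to \OO_{D'}$ (for the weak case) and the surjection on $R^1$ pushforwards coming from the inclusion $\OO_C(-D) \hookrightarrow \OO_C(-D')$ with cokernel finite over $T$ (for the strong case). For part \textbf{2} you take a genuinely different and simpler route. You apply part \textbf{1} directly after base change to $\overline{\kappa}$: any degree-$m'$ effective Cartier divisor $D'$ on $C_{\overline{\kappa}}$ can be enlarged to a degree-$m$ divisor $D \geq D'$, $m$-freeness gives $D$-freeness, and part \textbf{1} then gives $D'$-freeness, so $m'$-freeness holds by definition. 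The paper instead works on $T' = T\times_\kappa \text{Sym}^m(C)\times_\kappa \text{Sym}^{m-m'}(C)$ with universal relative Cartier divisors and repeats the sheaf-theoretic argument of part \textbf{1} in that setting. Both are correct; the universal-divisor device is used by the paper elsewhere (e.g.\ Lemmas \ref{lem-newfreeopen} and \ref{lem-newfree2}) and so fits the surrounding style, but for the statement actually being proved -- which by Definition \ref{defn-newfree} is a condition checked pointwise over $\overline{\kappa}$ -- your divisor-enlargement argument is a clean and entirely adequate shortcut, and avoids the (inessential) flatness and base-change bookkeeping over $\text{Sym}^m(C)$.
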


\begin{proof}
Consider the finite locally free sheaves
of $\OO_T$-modules defined by $\mc{G} =
(\text{pr}_T)_*(\tau^* T_f \otimes_{\OO_{T\times C}}
\text{pr}_C^*\OO_D)$
and
$ \mc{G}' = (\text{pr}_T)_*(\tau^* T_f \otimes_{\OO_{T\times C}}
\text{pr}_C^*\OO_{D'})$
that occur in Definition \ref{defn-newfree}. It is clear that
there is a surjection $\mc{G} \to \mc{G}'$. On the other
hand consider the sheaves $ \mc{H} = R^1(\text{pr}_T)_*(
\tau^* T_f \otimes_{\OO_{T\times C}}\text{pr}_C^* \OO_C(-D)
)$ and $ \mc{H}' = R^1(\text{pr}_T)_*(
\tau^* T_f \otimes_{\OO_{T\times C}}\text{pr}_C^* \OO_C(-D')
)$. There is a surjection $\mc{H} \to \mc{H}'$
induced by the map $\OO_C(-D) \to \OO_C(-D')$ with finite cokernel
(tensored with $\tau^* T_f$ this has a cokernel finite over $T$).
In this way we deduce 1.

\mni
In order to prove 2 we work on
$T' = T \times_\kappa \text{Sym}^m(C) \times_\kappa \text{Sym}^{m'-m}(C)$.
There are ``universal'' relative effective Cartier divisors
$\mc{D} \subset T' \times_\kappa C$ (of degree $m$) and 
$\mc{D}' \subset T' \times_\kappa C$ (of degree $m'$) with
$\mc{D} \subset \mc{D}'$. There are sheaves of $\OO_{T'}$-modules
$\mc{G}$, $\mc{G}'$, $\mc{H}$, and $\mc{H}'$ which are
variants of the sheaves above defined using the relative
Cartier divisors $\mc{D}$ and $\mc{D}'$. If $\tau$ is
$m$-free, resp.\  weakly $m$-free, then $\mc{H} = 0$,
resp.\ $(\text{pr}_{T'})_*((\tau')^* T_f)$ surjects onto
$\mc{G}$. The arguments just given to prove
part 1 show that there are canonical surjections
$\mc{H} \to \mc{H}'$ and $\mc{G} \to \mc{G}'$. Hence we
deduce that $\mc{H}' = 0$ resp.\ $(\text{pr}_{T'})_*((\tau')^* T_f)$
surjects onto $\mc{G}'$. Then this in turn implies that
$\tau$ is $m$-free, resp.\ weakly $m$-free.
\end{proof}

\begin{lem}
\label{lem-newfree1}
\marpar{lem-newfree1}
In situation \ref{hyp-curves}. Let
$\tau : T\times_\kappa C \to X_{f,\text{smooth}}$
be a family of sections.

\mni
\textbf{1.}
Let $D$ be an effective Cartier divisor on $C$.
Then $\tau$ is $D$-free if and only if it is both
$0$-free and weakly $D$-free.

\mni
\textbf{2.}
Let $m\geq 0$ be an integer. Then $\tau$ is
$m$-free if and only if it is both $0$-free
and weakly $m$-free.
\end{lem}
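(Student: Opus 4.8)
The plan is to deduce both parts from a single short exact sequence on $T\times_\kappa C$ and its associated long exact sequence of direct images. Write $p = \text{pr}_T : T\times_\kappa C \to T$ and $\mc{E} = \tau^* T_f$; since $\tau$ has image in $X_{f,\text{smooth}}$, the sheaf $\Omega^1_{X/C}$ is locally free along that image, so $\mc{E}$ is a locally free $\OO_{T\times_\kappa C}$-module. Abbreviating $\mc{E}(-D) := \mc{E}\otimes\text{pr}_C^*\OO_C(-D)$, I would tensor the inclusion $\OO_C(-D)\hookrightarrow\OO_C$ (pulled back to $T\times_\kappa C$) with $\mc{E}$ to obtain the short exact sequence
$$
0\to \mc{E}(-D)\to\mc{E}\to\mc{E}\otimes\text{pr}_C^*\OO_D\to 0,
$$
and apply $Rp_*$. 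Because $\mc{E}\otimes\text{pr}_C^*\OO_D$ is supported on $T\times_\kappa D$, which is finite (indeed affine) over $T$, its higher direct images vanish, so the long exact sequence collapses to
$$
0\to p_*\mc{E}(-D)\to p_*\mc{E}\xrightarrow{\beta}p_*\bigl(\mc{E}\otimes\text{pr}_C^*\OO_D\bigr)\xrightarrow{\delta}R^1p_*\mc{E}(-D)\xrightarrow{\gamma}R^1p_*\mc{E}\to 0.
$$
Reading off Definition \ref{defn-newfree}: $\tau$ is $D$-free iff $R^1p_*\mc{E}(-D)=0$; $\tau$ is $0$-free iff $R^1p_*\mc{E}=0$; and $\tau$ is weakly $D$-free iff $\beta$ is surjective, equivalently iff $\delta=0$, equivalently iff $\gamma$ is injective.

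Part 1 is then a short diagram chase in this sequence. If $\tau$ is $D$-free, then $R^1p_*\mc{E}(-D)=0$, so $\gamma$ is surjective with source $0$, forcing $R^1p_*\mc{E}=0$, and $\delta$ has target $0$, so $\beta$ is surjective; hence $\tau$ is $0$-free and weakly $D$-free. Conversely, if $\tau$ is $0$-free and weakly $D$-free, then $R^1p_*\mc{E}=0$ and $\gamma$ is injective with target $0$, so $R^1p_*\mc{E}(-D)=0$, i.e.\ $\tau$ is $D$-free. The only step needing a word of justification is the vanishing of $R^{\ge 1}p_*$ of the third term, which is immediate from its support being finite over $T$; there is essentially no obstacle in Part 1.

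For Part 2 I would pass to $\overline{\kappa}$ and intersect over divisors. By Definition \ref{defn-newfree}, ``$\tau$ is $m$-free'' (resp.\ ``weakly $m$-free'') means ``$\tau_{\overline{\kappa}}$ is $D$-free (resp.\ weakly $D$-free) for every effective degree $m$ Cartier divisor $D$ on $C_{\overline{\kappa}}$'', whereas ``$0$-free'' is insensitive to this base change: $\mc{E}$ is locally free, so by flat base change $R^1p_*\mc{E}$ commutes with the faithfully flat map $T_{\overline{\kappa}}\to T$, whence $R^1p_*\mc{E}=0$ iff the corresponding $R^1$ over $T_{\overline{\kappa}}$ vanishes. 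We may therefore assume $\kappa=\overline{\kappa}$. Applying Part 1 to $\tau$ and each degree $m$ divisor $D$ gives: $\tau$ is $D$-free iff $\tau$ is $0$-free and weakly $D$-free; since the condition ``$\tau$ is $0$-free'' does not involve $D$, taking the conjunction over all such $D$ yields exactly ``$\tau$ is $0$-free and weakly $m$-free''. Alternatively, one can run the Part 1 argument once over $T_{\overline{\kappa}}\times_{\overline{\kappa}}\text{Sym}^m(C_{\overline{\kappa}})$ with its universal relative degree $m$ Cartier divisor, as in the proofs of Lemmas \ref{lem-newfreeopen} and \ref{lem-newfree0}; the only mildly delicate point — keeping track of the universal divisor and checking that the relevant $R^1$ survives the needed base changes — is harmless since $\mc{E}$ is locally free, and this bookkeeping is the closest thing to a difficulty anywhere in the argument.
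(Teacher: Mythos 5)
Your proposal is correct and is essentially the paper's argument: both hinge on the long exact sequence of $R^\bullet p_*$ applied to $0\to\mc{E}(-D)\to\mc{E}\to\mc{E}\otimes\text{pr}_C^*\OO_D\to 0$, together with the vanishing of $R^1p_*$ of the last term. The one small packaging difference is that for ``$D$-free $\Rightarrow$ $0$-free'' the paper invokes Lemma \ref{lem-newfree0} (monotonicity in the divisor, taking $D'=\emptyset$), whereas you extract it directly from surjectivity of $R^1p_*\mc{E}(-D)\to R^1p_*\mc{E}$ in the same exact sequence — both are one-line consequences of the identical sequence, so this is not a different route. For Part~2 the paper only says ``similar, left to the reader''; your fleshed-out version (base change to $\overline{\kappa}$ with flat base change for $R^1p_*\mc{E}$, then conjunction over degree-$m$ divisors, or equivalently the universal argument over $\text{Sym}^m(C)$ as in Lemmas \ref{lem-newfreeopen} and \ref{lem-newfree0}) is exactly what the paper has in mind and is fine.
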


\begin{proof}
We first prove 1.
Saying a family is $0$-free is equivalent to saying it is $D'$-free
where $D'$ is the empty Cartier divisor. Thus, if a family is
$D$-free then it is $0$-free by Lemma~\ref{lem-newfree0}.
The long exact sequence of higher direct images associated
to the short exact sequence
$$
0
\rightarrow
\tau^*T_f \otimes_{\OO_{T\times_\kappa C}} \text{pr}_C^*\OO_C(-D) 
\rightarrow
\tau^*T_f
\rightarrow
\tau^*T_f \otimes_{\OO_{S\times_\kappa C}} \text{pr}_C^* \OO_D
\rightarrow 
0
$$
shows that 
$$
(\text{pr}_T)_*(\tau^*T_f)
\rightarrow
(\text{pr}_T)_*(\tau^*T_f \otimes_{\OO_{T\times_\kappa C}} \text{pr}_C^*
\OO_D)
$$
is surjective so long as $R^1(\text{pr}_T)_*(\tau^*T_f
\otimes_{\OO_{T\times_\kappa C}} \text{pr}_C^*\OO_C(-D))$ is the zero
sheaf.  Moreover, the converse holds so long as
$R^1(\text{pr}_S)_*(\tau^*T_f)$ is the zero sheaf.
From this it follows that if a family of sections is $D$-free,
then it is weakly $D$-free and the converse holds so long as the
family is $0$-free.

\mni
The proof of 2 is left to the reader. (It is similar to the
proofs above.)
\end{proof}

\begin{lem}
\label{lem-newfree3}
\marpar{lem-newfree3}
In situation \ref{hyp-curves}. Let
$\tau : T\times_\kappa C \to X_{f,\text{smooth}}$
be a family of sections.

\mni
\textbf{1.}
Let $D$ be a nontrivial effective Cartier divisor of $C$. If
$\deg(D) \geq 2g(C)$ and if $\tau$ is weakly $D$-free,
then $\tau$ is $(\deg(D) - 2g(C))$-free
(and thus $D$-free by the previous lemmas).

\mni
\textbf{2.}
Let $m \geq 2g$ be an integer, and $m \geq 1$ if $g(C) = 0$.
Every weakly $m$-free family is $m$-free.  
\end{lem}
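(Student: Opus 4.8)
The plan is to reduce part 2 to part 1, and to prove part 1 by a semi-continuity/vanishing argument on the fibres over $T$. First I would observe that both statements are really about the cohomology of $\tau^*T_f$ twisted down by divisors on the curve $C$, and that by Lemma~\ref{lem-newfree1} being $m$-free is equivalent to being $0$-free plus weakly $m$-free; so it suffices to show that weakly $D$-free with $\deg D \geq 2g(C)$ forces $0$-freeness, and then that it forces $(\deg(D)-2g(C))$-freeness, i.e. that $R^1(\text{pr}_T)_*(\tau^*T_f\otimes \text{pr}_C^*\OO_C(-E)) = 0$ for every effective Cartier divisor $E$ of degree $\deg(D)-2g(C)$. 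Both vanishing claims can be checked fibrewise over $\overline\kappa$-points of $T$ by the cohomology-and-base-change machinery (Lemma~\ref{lem-semicty}), so the heart of the matter is a statement about a single vector bundle on a smooth projective curve.

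**The fibrewise statement.** Fix a $\overline\kappa$-point $t$ of $T$ and write $\mc{E} = \tau_t^*T_f$, a locally free sheaf on $C_{\overline\kappa}$, and suppose $\mc{E}$ is weakly $D$-free, meaning $H^0(C,\mc{E}) \to H^0(C,\mc{E}\otimes\OO_D)$ is surjective (after passing to the residue field extension; here I use Lemma~\ref{lem-newfree2} to reduce to $T = \Spec(\overline\kappa)$). From the standard exact sequence $0 \to \mc{E}(-D) \to \mc{E} \to \mc{E}\otimes\OO_D \to 0$, surjectivity of the $H^0$ map is equivalent to $H^1(C,\mc{E}(-D)) \to H^1(C,\mc{E})$ being injective. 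I would then argue that $H^1(C,\mc{E}(-D)) = 0$: this is where $\deg D \geq 2g(C)$ enters. The cleanest route is via Serre duality: $H^1(C,\mc{E}(-D))^\vee = H^0(C, \mc{E}^\vee\otimes\omega_C(D))$. That does not vanish in general, so instead I would run the argument the other way around — deduce the vanishing of $H^1$ of appropriate twists of $\mc{E}$ directly from the weak freeness hypothesis plus a Riemann--Roch/base-locus count. Concretely: weak $D$-freeness says the evaluation map of $\mc{E}$ at the $\deg(D)$ points of $D$ (with multiplicity) is surjective on global sections; since $D$ moves (we may apply the hypothesis to every effective $D'$ of degree $\le \deg(D)$ by Lemma~\ref{lem-newfree0}), this global generation at arbitrary $\deg D \geq 2g$ points pins down the splitting type of $\mc{E}$ enough to force $H^1(C,\mc{E}(-E)) = 0$ for $E$ effective of degree $\deg(D)-2g$. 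I expect the right bookkeeping is: writing $r = \text{rk}\,\mc{E}$, weak $D$-freeness with $\deg D = d \geq 2g$ implies every quotient line bundle of $\mc{E}$ has degree $\geq \lceil \text{(something)} \rceil$, in fact implies $h^1(\mc{E}(-E)) = 0$ once $\deg E \leq d - 2g$, by comparing with the genus via Riemann--Roch on each factor of a generic decomposition, together with the fact that a globally generated bundle $\mc{F}$ on $C$ with $h^1(\mc{F}) = 0$ stays so after twisting down by any divisor of degree $\leq h^0 - \text{rk}$ type estimates; the $2g$ slack is exactly what absorbs $\omega_C$.

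**Passing back to families and finishing.** Having established the fibrewise vanishing $R^1(\text{pr}_T)_*(\tau^*T_f\otimes\text{pr}_C^*\OO_C(-E))_t = 0$ for all $\overline\kappa$-points $t$ and all effective $E$ of degree $\le \deg(D)-2g(C)$, I would invoke Lemma~\ref{lem-semicty} (applied to the sheaf $\mc{F} = \tau^*T_f\otimes\text{pr}_C^*\OO_C(-E)$ over $T$, or rather to its universal version over $T\times_\kappa\text{Sym}^{\deg D - 2g}(C)$ as in the proof of Lemma~\ref{lem-newfreeopen}): the locus where the $R^1$ vanishes is open, and it is all of $T$ since it contains every geometric point. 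By Definition~\ref{defn-newfree} this says $\tau$ is $(\deg(D)-2g(C))$-free; Lemma~\ref{lem-newfree1} (together with the $0$-freeness, which follows from the $E = \emptyset$ case) then upgrades this to $D$-freeness, giving part~1. Part~2 follows immediately by applying part~1 with $D$ ranging over all effective divisors of degree $m$ on $C_{\overline\kappa}$: for $m \geq 2g$ (and $m\ge 1$ when $g = 0$, which is needed to ensure a nontrivial $D$ with $\OO_C(-D)$ having vanishing $H^1$ in the genus-zero Serre-duality count) every weakly $m$-free family is $m$-free. The main obstacle I anticipate is the fibrewise cohomology estimate in the second paragraph: getting the precise numerical relationship between weak $D$-freeness and vanishing of $H^1$ of twists, with the $2g$ appearing correctly, requires care — one must use that weak freeness holds for \emph{all} effective divisors of the given degree (not just a fixed one) to convert the evaluation-surjectivity into a genuine positivity statement on $\mc{E}$, and then feed that through Serre duality or a direct Riemann--Roch argument to kill $H^1$ after twisting down.
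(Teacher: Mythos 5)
The fibrewise reduction via Lemmas~\ref{lem-newfreeopen}, \ref{lem-newfree2} and \ref{lem-semicty}, and the derivation of part~2 from part~1, are both fine. But the heart of part~1 --- that weak $D$-freeness with $\deg D \geq 2g(C)$ forces $H^1(C,\mc{E}(-E)) = 0$ for every effective $E$ of degree $\deg(D)-2g(C)$ --- is left at the level of ``I expect the right bookkeeping is\ldots'' and is not a proof. Worse, the step you lean on to vary the evaluation points, namely ``we may apply the hypothesis to every effective $D'$ of degree $\le \deg(D)$ by Lemma~\ref{lem-newfree0}'', misreads that lemma: it gives weak $D'$-freeness only for \emph{subdivisors} $D' \le D$ of the given $D$, not for every divisor of bounded degree. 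In part~1 you are handed weak freeness for a single fixed $D$, so the strategy ``evaluation-surjectivity at arbitrary $\deg D$ points $\Rightarrow$ positivity/splitting type of $\mc{E}$'' does not get off the ground. (It would be available for part~2, but part~1 is supposed to be proved first and does not have it.)

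The paper's argument sidesteps all of this by a direct construction that uses only the single $D$. From the surjectivity $H^0(C,\mc{E}) \to H^0(C,\mc{E}\otimes\OO_D)$, pick a point $p \in D$. A global section of $\mc{E}$ hitting an element of the subspace $\mc{E}\otimes\kappa(p) \subset \mc{E}\otimes\OO_D$ must vanish on $D-p$, so one deduces that $H^0(C,\mc{E}(-D+p)) \to H^0(C,\mc{E}\otimes\kappa(p))$ is surjective. Thus there are $r = \mathrm{rk}\,\mc{E}$ global sections of $\mc{E}(-D+p)$ generating its fibre at $p$, giving an injective $\OO_C^{\oplus r} \to \mc{E}(-D+p)$ with torsion cokernel; twisting by $D - E - p$ produces an injection $\OO_C(D-E-p)^{\oplus r} \to \mc{E}(-E)$ with torsion cokernel, hence a surjection $H^1(C,\OO_C(D-E-p))^{\oplus r} \to H^1(C,\mc{E}(-E))$. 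Since $\deg(D - E - p) = 2g(C) - 1 > 2g(C) - 2$, the source vanishes and you are done. The key idea missing from your attempt is this reduction to a rank-$r$ map from line bundles of computable degree; a Serre duality or generic-splitting count on $\mc{E}$ alone will not pin down $H^1(\mc{E}(-E))$ from a single weak $D$-freeness hypothesis.
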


\begin{proof}
We prove part 1.
By Lemma ~\ref{lem-newfreeopen}, there is a maximal open subscheme $V$
of $T$ over which $\tau$ is $(\deg(D)-2g(C))$-free.
The goal is to prove that $V$
is all of $T$.  For this, it suffices to prove that every geometric
point of $S$ factors through $V$. By Lemma~\ref{lem-newfree2}, the
basechange of the family to any geometric point is weakly $D$-free.
Thus, it suffices to prove the lemma when $\kappa$ is algebraically
closed and $T = \text{Spec}(\kappa)$, i.e., $\tau$ is a section
$\tau: C \rightarrow X_{f,\text{smooth}}$.

\mni
Let $m$ denote $\text{deg}(D)$.
Let us write $\mc{T} = \tau^* T_f$, and let $r$ be the
rank of $\mc{T}$. The assumption is that the restriction map
$$
H^0(C, \mc{T}) \rightarrow H^0(C,\mc{T}\otimes_{\OO_C} \OO_D)
$$
is surjective. Let $p \in D$ be a point. The restriction map
$$
H^0(C,\mc{T}\otimes_{\OO_C}\OO_C(-D+p)) \rightarrow
H^0(C,\mc{T}\otimes_{\OO_C} \kappa(p))
$$
is also surjective.  In other words, there exist $r$ global sections of
$\mc{T}(-D+p)$ generating the fibre at $p$.  These global sections
give an injective $\OO_C$-module homomorphism
$\OO_C^{\oplus r} \rightarrow \mc{T}(-D+p)$
which is surjective at $p$. Thus the cokernel is a torsion sheaf.
Let $E \subset C$ be any effective divisor of degree $m - 2g(C)$.
Twisting by $D - E - p$, this gives an injective $\OO_C$-module
homomorphism
$$
\OO_C(D - E - p)^{\oplus r} \rightarrow \mc{T}(-E)
$$
whose cokernel is torsion.  From the long exact sequence of
cohomology, there is a surjection
$$
H^1(C,\OO_C(D - E - p))^{\oplus r} \rightarrow H^1(C,\mc{T}(-E)).
$$
Since $\text{deg}(D - E - p) > 2g(C)-2$ we see that
$h^1(C,\OO_C(D-E-p))$ equals $0$.  Therefore $h^1(C,\mc{T}(-E))$ also
equals $0$.

\medskip\noindent
Part 2 is proved in a similar fashion.
\end{proof}

\begin{prop}
\label{prop-newfree4}
\marpar{prop-newfree4}
Situation as in \ref{hyp-curves}.
Let $D \subset C$ be an effective divisor. 

\mni
\textbf{1.}
Let $\tau : T\times_\kappa C \to X_{f,\text{smooth}}$
be a family of sections. If the induced morphism
$
(\tau|_{T\times_\kappa D}, \text{Id}_D):
T \times_\kappa D
\to
X_{f,\text{smooth}}\times_C D
$
is smooth, then the family is weakly $D$-free.

\mni
\textbf{2.}
Let $U$ be the open subscheme of
$\Sec^e(X_{f,\text{smooth}}/C/\kappa)$ over
which the universal section is $D$-free (see Lemma \ref{lem-newfreeopen}).
The morphism
$$
(\sigma,\text{Id}_D):U \times_\kappa D \rightarrow
X_{f,\text{smooth}}\times_C D
$$
is smooth.

\mni
\textbf{3.}
The scheme $\Sec(X/C/\kappa)$ is smooth at every point
which corresponds to a section that is $D$-free.
\end{prop}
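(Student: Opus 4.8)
The plan is to prove parts (3), (1), (2) in that order: (3) is immediate from standard deformation theory, (1) is the deformation-theoretic heart, and (2) follows by combining (1) and (3) with the Jacobian criterion.

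For (3), note that a $D$-free section $s$ has image in $X_{f,\text{smooth}}$ by our standing convention, so near $[s]$ the scheme $\Sec(X/C/\kappa)$ coincides with its open subscheme $\Sec(X_{f,\text{smooth}}/C/\kappa)$. By Lemma \ref{lem-newfree0} (applied to the empty divisor $D'$) or Lemma \ref{lem-newfree1}, $s$ is $0$-free, i.e.\ unobstructed in the sense of Definition \ref{defn-classicalfree}: its obstruction space $R^1(\text{pr})_*(s^*T_f) = H^1(C,s^*T_{X/C})$ vanishes. As recalled after Definition \ref{defn-classicalfree}, the formal deformation space of an unobstructed section is the spectrum of a power series ring, hence $\Sec(X/C/\kappa)$ is smooth at $[s]$, of dimension $h^0(C,s^*T_{X/C})$.

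For (1) I would first record the auxiliary fact that $\Sec(X_{f,\text{smooth}}\times_C D/D/\kappa)$ is smooth over $\kappa$: the structure morphism $X_{f,\text{smooth}}\times_C D \to D$ is smooth and $D$ is finite, hence affine, over $\kappa$, so deformations of a section along any square-zero thickening are unobstructed (the relevant $H^1$ of the affine scheme $D$ vanishes), and the tangent space at a section is the $H^0$ over $D$ of its relative tangent sheaf. Since surjectivity of a map of $\OO_T$-modules is local on $T$, we may assume $T$ affine. The morphism $(\tau|_{T\times_\kappa D},\text{Id}_D)$ is classified by a $\kappa$-morphism $c : T \to \Sec(X_{f,\text{smooth}}\times_C D/D/\kappa)$, and its smoothness forces $c$ to be smooth; as the target of $c$ is smooth over $\kappa$, this makes $T$ smooth over $\kappa$ and the codifferential of $c$ a split monomorphism. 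Dualizing, the Kodaira--Spencer map of the family $\tau|_{T\times_\kappa D}$, which is a map $T_{T/\kappa} \to (\text{pr}_T)_*(\tau^*T_f\otimes_{\OO_{T\times_\kappa C}}\text{pr}_C^*\OO_D) = \mc{G}$, is a split epimorphism. On the other hand, the Kodaira--Spencer map $T_{T/\kappa} \to (\text{pr}_T)_*(\tau^*T_f) = \mc{F}$ of the whole family $\tau$ is compatible with restriction over $D$, i.e.\ its composite with $\mc{F}\to\mc{G}$ is the Kodaira--Spencer map of $\tau|_{T\times_\kappa D}$. Since that composite is surjective, so is $\mc{F}\to\mc{G}$, which is precisely weak $D$-freeness. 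The delicate point is the deformation-theoretic bookkeeping: identifying $(\text{pr}_T)_*(\tau^*T_f\otimes\text{pr}_C^*\OO_D)$ with the pullback along $c$ of the tangent bundle of the smooth space $\Sec(X_{f,\text{smooth}}\times_C D/D/\kappa)$, and matching the codifferential of $c$ with the Kodaira--Spencer map. One should also keep in mind that $(\text{pr}_T)_*(\tau^*T_f)$ need not commute with base change (though $\mc{G}$ does), so when convenient one passes to a faithfully flat cover of $T$ using Lemma \ref{lem-newfree2}.

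For (2), over the open subscheme $U$ of Lemma \ref{lem-newfreeopen} the universal section $\sigma$ is $D$-free, hence $0$-free, so by (3) the scheme $\Sec(X/C/\kappa)$ --- and therefore its open subscheme $U$ --- is smooth over $\kappa$; moreover $0$-freeness makes $(\text{pr}_U)_*(\sigma^*T_f)$ locally free, compatibly with base change, and the Kodaira--Spencer map identifies it with $T_{U/\kappa}$. Thus both $U\times_\kappa D \to D$ and $X_{f,\text{smooth}}\times_C D \to D$ are smooth morphisms, and $(\sigma,\text{Id}_D)$ is a morphism of smooth $D$-schemes; by the Jacobian criterion it is smooth as soon as its relative differential over $D$ is everywhere surjective. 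That relative differential is the evaluation map $\text{pr}_U^*T_{U/\kappa}|_{U\times_\kappa D} \to \sigma^*T_f|_{U\times_\kappa D}$; pushing it forward along the finite flat morphism $\text{pr}_U : U\times_\kappa D \to U$ (which is faithful and exact) and using the identification $T_{U/\kappa} = (\text{pr}_U)_*(\sigma^*T_f)$ turns it into the map $\mc{F}\otimes_{\OO_U}(\text{pr}_U)_*\OO_{U\times_\kappa D} \to \mc{G}$, which admits the surjection $\mc{F}\to\mc{G}$ supplied by the weak $D$-freeness contained in $D$-freeness (Lemma \ref{lem-newfree1}) as a factor; hence the relative differential is surjective and $(\sigma,\text{Id}_D)$ is smooth. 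I expect (1) to be the main obstacle --- the translation of a scheme-theoretic smoothness hypothesis into the cohomological surjectivity of $\mc{F}\to\mc{G}$, via the Kodaira--Spencer dictionary; parts (2) and (3) are then essentially formal consequences of (1) together with the standard deformation theory of spaces of sections.
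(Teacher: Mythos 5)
Your parts (3) and (2) are sound and follow the route suggested by the paper's terse reference to [K, Prop.\ II.3.5]: part (3) is the vanishing of the obstruction space, and for part (2) the fibrewise criterion of smoothness combined with pushforward along the finite flat $\text{pr}_U$ and weak $D$-freeness (via Lemma~\ref{lem-newfree1}) is the right argument.

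The proof of part (1) has a genuine gap at the step ``its smoothness forces $c$ to be smooth.'' Smoothness of $g := (\tau|_{T\times_\kappa D},\text{Id}_D) : T\times_\kappa D\to X_{f,\text{smooth}}\times_CD$ does \emph{not} imply smoothness of the classifying morphism $c : T\to\text{Res}_{D/\kappa}(X_{f,\text{smooth}}\times_CD)$; for $\deg D\geq 2$ these are genuinely different conditions. If $D = p_1 + p_2$ with $p_1\neq p_2$ reduced, then $X_{f,\text{smooth}}\times_CD = X_{p_1}\sqcup X_{p_2}$ and $T\times_\kappa D = T\sqcup T$, so $g$ is just the disjoint union of the two evaluations $c_i : T\to X_{p_i}$; smoothness of $g$ means each $c_i$ is smooth separately, whereas smoothness of $c$ is the \emph{joint} condition that $(c_1,c_2):T\to X_{p_1}\times X_{p_2}$ be smooth, and the first does not imply the second. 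Concretely: take $X = \PP^1\times\PP^1\to\PP^1 = C$ (first projection), $\mc{L} = \text{pr}_2^*\OO(1)$, $D = \{0\} + \{\infty\}$, and $T = \PP^1$ parametrizing the degree-$0$ (constant) sections. Then $g$ is an isomorphism, hence smooth, but $c$ is the diagonal $\PP^1\hookrightarrow\PP^1\times\PP^1$; and $\mc{F}\to\mc{G}$ is the diagonal $T_{\PP^1}\to T_{\PP^1}\oplus T_{\PP^1}$, which is not surjective. In fact this is a counterexample to part (1) read literally; the statement actually invoked in the proof of Proposition~\ref{prop-KMM} is about surjectivity of the derivative of the classifying map into the Weil restriction $H^m_{X,f,D}$. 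With the hypothesis of part (1) corrected to ``$c$ is smooth,'' your Kodaira--Spencer argument --- the surjection $T_{T/\kappa}\to c^*T_H\cong\mc{G}$ factors through $T_{T/\kappa}\to\mc{F}\to\mc{G}$, forcing $\mc{F}\to\mc{G}$ to be surjective --- is correct.
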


\begin{proof}
When $D$ is the empty divisor, both 1 and 2 are vacuous, and
3 follows from the vanishing of the obstruction space for
the deformation theory. When $D$ is not empty, one argues as
in \cite[Proposition II.3.5]{K}.
\end{proof}

\noindent
In order to state the following two propositions
we need some notation. Consider the Hilbert scheme $\text{Hilb}^m_{X/\kappa}$
of finite length $m$ closed subschemes 
of $X$ over $\kappa$. There is an open subscheme
$H^m_{X,f} \subset \text{Hilb}^m_{X/\kappa}$ parametrizing
those $Z \subset X$ such that (a) $Z \subset X_{f,\text{smooth}}$ and
(b) $f|_Z : Z \to C$ is a closed immersion. By construction
there is a natural quasi-projective morphism
$$
H^m_{X,f} \longrightarrow \text{Sym}^m(C).
$$
A point of $H^m_{X,f}$ also corresponds to a pair $(Z', \sigma)$,
where $Z' \subset C$ is a closed subscheme of length $m$, and
$\sigma : Z' \to X_{f, \text{smooth}}$ which is a section of $f$.
And there are no obstructions to deforming a
map from a zero dimensional scheme to a smooth scheme. Therefore
the morphism $H^m_{X,f} \to \text{Sym}^m(C)$ is smooth.

\medskip\noindent
The fibre of this morphism
over $[D]$ (where $D$ is a degree $m$ effective Cartier
divisor on $C$) is the space of sections of
$D\times_C X_{f,\text{smooth}} \to D$. Let us denote this fibre
by $H^m_{X,f,D}$. Note that $H^m_{X,f,D}$ equals the Weil restriction
of scalars $\text{Res}_{D/\kappa}(D\times_C X_{f,\text{smooth}})$. In fact
$H^m_{X,f}$ is a Weil restriction of scalars as well, which leads
to another construction. Namely
$H^m_{X,f} = 
\text{Res}_{\mc{D}/\text{Sym}^m(C)}(\mc{D}\times_C X_{f,\text{smooth}})$,
where $\mc{D} \subset \text{Sym}^m(C)\times_\kappa C$ is
the universal degree $m$ effective Cartier divisor on $C$.
We denote the evaluation map
$$
ev^m :
H^m_{X,f} \times_{\text{Sym}^m(C)} \mc{D}
\longrightarrow
X_{f,\text{smooth}}
$$

\medskip\noindent
Note that given a section $\tau$ of $f : X \to C$ which
ends up in the smooth locus of $f$, we may restrict $\tau$
to any divisor $D$ and obtain a point of $H^m_{X,f,D}$.
In other words there is a canonical morphism
$$
res : 
\Sec(X_{f,\text{smooth}}/C/\kappa) \times_\kappa \text{Sym}^m(C)
\longrightarrow
H^m_{X,f}
$$
of schemes over $\text{Sym}^m(C)$.

\begin{lem}
\label{lem-resfreesmooth}
The morphism $res$ is smooth at any point
$(\tau, D)$ such that $\tau$ is $D$-free.
\end{lem}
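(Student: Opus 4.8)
The statement to prove is that the restriction morphism
$$
res : \Sec(X_{f,\text{smooth}}/C/\kappa) \times_\kappa \text{Sym}^m(C)
\longrightarrow H^m_{X,f}
$$
is smooth at any point $(\tau, D)$ for which $\tau$ is $D$-free. Since both sides are locally finitely presented over $\kappa$ (the left by Theorem~\ref{thm-algebraic}, the right as a Weil restriction of scalars of a smooth morphism over $\text{Sym}^m(C)$), smoothness at a point is equivalent to flatness plus smoothness of the fibre, and the cleanest route is the infinitesimal criterion: I would show $res$ is \emph{formally smooth} at $(\tau,D)$ in the sense that it lifts against square-zero extensions in a neighbourhood. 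The idea is entirely deformation-theoretic, so the first thing to do is identify the tangent-obstruction theories on both sides and the map between them.

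\textbf{Step 1: Relative obstruction theory.} Fix a square-zero extension $A' \to A$ of local Artinian $\kappa$-algebras with kernel an $A$-module $M$, a point of the source over $A$ extending $(\tau,D)$, and a lift of its image to $H^m_{X,f}$ over $A'$. A lift of the $\text{Sym}^m(C)$-coordinate means we are given an extension $D_{A'}$ of $D_A$ as a relative Cartier divisor, hence an extension of the degree-$m$ subscheme; a lift in $H^m_{X,f}$ also provides a section $\sigma' : D_{A'} \to X_{f,\text{smooth}}$ extending $\tau|_{D_A}$. What we must produce is a section $\tau' : C_{A'} \to X_{f,\text{smooth}}$ extending $\tau$ and restricting on $D_{A'}$ to $\sigma'$. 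Standard deformation theory of sections (as in \cite[Proposition~II.3.5]{K}, compatibly with the setup of Proposition~\ref{prop-newfree4}) says the obstruction to extending $\tau$ alone lives in $H^1(C, \tau^*T_f \otimes M)$, the extensions (once unobstructed) form a torsor under $H^0(C,\tau^*T_f\otimes M)$, and the restriction-to-$D$ map on deformations is the natural map $H^0(C,\tau^*T_f\otimes M) \to H^0(D, \tau^*T_f|_D \otimes M)$. The relative obstruction — obstruction to extending $\tau$ \emph{compatibly} with the prescribed $\sigma'$ — therefore lies in the cokernel of this restriction map, i.e. in the image of $H^0(D,\tau^*T_f|_D\otimes M)$ inside $H^1(C,\tau^*T_f\otimes C(-D)\otimes M)$ under the connecting homomorphism of
$$
0 \to \tau^*T_f \otimes \OO_C(-D) \to \tau^*T_f \to \tau^*T_f|_D \to 0
$$
tensored with $M$.

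\textbf{Step 2: $D$-freeness kills the obstruction.} By definition $\tau$ being $D$-free means $R^1(\text{pr}_T)_*(\tau^*T_f\otimes\text{pr}_C^*\OO_C(-D)) = 0$; specializing to our Artinian base and using that this $R^1$ formation is compatible with base change after passing to the open locus (Lemma~\ref{lem-semicty}, as in Lemma~\ref{lem-newfreeopen}), we get $H^1(C,\tau^*T_f\otimes\OO_C(-D)\otimes M)=0$ for every finite module $M$. Hence the relative obstruction group vanishes and the lift $\tau'$ exists; moreover the set of such lifts is a torsor under $H^0(C,\tau^*T_f\otimes\OO_C(-D)\otimes M)$, which is exactly the relative tangent space. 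This gives formal smoothness at the point, and combined with local finite presentation yields smoothness in an open neighbourhood — but the $D$-free locus is open by Lemma~\ref{lem-newfreeopen}, so "smooth at every $D$-free point" is the statement. I would also note that $D$-free implies $0$-free, so the source is itself smooth there by Proposition~\ref{prop-newfree4}(3), which is needed to make sense of "the tangent space" cleanly.

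\textbf{The main obstacle.} The genuinely delicate point is bookkeeping the deformation theory of $H^m_{X,f}$ correctly — in particular separating the deformation of the divisor $D$ (the $\text{Sym}^m(C)$ direction) from the deformation of the section $\sigma$ on $D$, and checking that the composite tangent/obstruction map from the source matches the connecting homomorphism above on the nose. Because $H^m_{X,f}$ is a relative Weil restriction $\text{Res}_{\mc{D}/\text{Sym}^m(C)}(\mc{D}\times_C X_{f,\text{smooth}})$ and is smooth over $\text{Sym}^m(C)$, the obstruction to lifting its point over $A'$ is unconditionally zero, so all the content is in the fibre product description: one must verify that lifting in the source over a given lift in $H^m_{X,f}$ is controlled precisely by $H^\bullet(C,\tau^*T_f\otimes\OO_C(-D)\otimes M)$. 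I expect this to be a short but careful diagram chase with the two short exact sequences (the one above, and the deformation sequences of the two Hom/section stacks), and once it is in place the vanishing input is immediate from $D$-freeness. An alternative, possibly cleaner, packaging: factor $res$ as $\Sec \times \text{Sym}^m \to \Sec(X_{f,\text{smooth}}/C)\text{-over-}\text{Sym}^m \to H^m_{X,f}$ and observe $res$ over $\text{Sym}^m(C)$ is, fibrewise over $[D]$, the restriction $\Sec(X_{f,\text{smooth}}/C/\kappa) \to \text{Res}_{D/\kappa}(D\times_C X_{f,\text{smooth}})$, whose derivative is the $H^0$-restriction map; then smoothness of $res$ reduces, by the smoothness of $H^m_{X,f}\to\text{Sym}^m(C)$ and a relative criterion, to surjectivity of $H^0(C,\tau^*T_f)\to H^0(D,\tau^*T_f|_D)$ together with $H^1$-vanishing on the kernel — i.e. exactly weakly $D$-free plus $0$-free, i.e. $D$-free by Lemma~\ref{lem-newfree1}.
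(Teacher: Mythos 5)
Your argument is correct and matches the paper's (terse) proof on both counts: the paper says the lemma is ``akin to Proposition~\ref{prop-newfree4} and proved in the same way'' (your deformation-theoretic Steps 1--2, with $D$-freeness killing the relative obstruction group $H^1(C,\tau^*T_f\otimes\OO_C(-D)\otimes M)$) and also that it follows from Proposition~\ref{prop-newfree4}(2) via the fibrewise criterion of smoothness over $\text{Sym}^m(C)$ (your ``alternative packaging'' at the end). The one thing worth tightening is that the relative obstruction is cleanest to state as living in all of $H^1(C,\tau^*T_f\otimes\OO_C(-D)\otimes M)$ rather than parsing it through the image of $H^0(D,\cdot)$; but since you invoke Proposition~\ref{prop-newfree4}(3) for $0$-freeness of the source this does not affect the conclusion.
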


\begin{proof}
This is akin to Proposition \ref{prop-newfree4}
and proved in the same way. It also follows from part 2 of that
Proposition by the fibrewise criterion of smoothness since
both $\Sec(X_{f,smooth}/C/\kappa) \times_\kappa \text{Sym}^m(C)$ and
$H^m_{X,f}$ are smooth over $\text{Sym}^m(C)$ at the corresponding
points.
\end{proof}

\noindent
Here are two technical results
that will be used later.

\begin{prop}
\label{prop-KMM}
\marpar{prop-KMM}
See \cite[1.1]{KMM92c}.
In situation \ref{hyp-curves}, with $\text{char}(\kappa) = 0$.
Let $e$ and $m \geq 0$ be integers. There exists a constructible subset
$W^m_e$ of $H^m_{X,f}$ whose intersection $W^m_e \cap H^m_{X,f,D}$
with the fibre over every point $[D]$ of $\text{Sym}^m(C)$ contains
a dense open subset of $H^m_{X,f,D}$ with the following property:
For every $\kappa$-scheme $T$ and for every family of degree $e$ sections
$$
\tau :
T \times_\kappa C
\longrightarrow
X_{f,\text{smooth}},
$$
if the restriction of $\tau$ to $T \times_\kappa D$ is in $W_e\cap
H^m_{X,f,D}$, then $\tau$ is weakly $D$-free. 
\end{prop}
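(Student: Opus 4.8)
The plan is to follow the argument of \cite[1.1]{KMM92c}, transported to the setting of sections; there are three steps: a differential computation, an application of generic smoothness (this is where $\text{char}(\kappa)=0$ is used), and a constructibility/spreading-out bookkeeping over $\text{Sym}^m(C)$.

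First I would record a differential computation for the restriction morphism
$$
res : \Sec(X_{f,\text{smooth}}/C/\kappa) \times_\kappa \text{Sym}^m(C) \longrightarrow H^m_{X,f}
$$
constructed above. Over a field, write $r_D$ for the restriction of $res$ to the fibre over $[D] \in \text{Sym}^m(C)$. The tangent space of $\Sec^e(X_{f,\text{smooth}}/C/\kappa)$ at $[\tau]$ is $H^0(C,\tau^*T_f)$, the tangent space of $H^m_{X,f,D}$ at $[\tau|_D]$ is $H^0(D, \tau^*T_f|_D)$, and under these identifications $d(r_D)_{[\tau]}$ is exactly the restriction homomorphism occurring in Definition \ref{defn-newfree}(1). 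Hence $\tau$ is weakly $D$-free if and only if $d(r_D)_{[\tau]}$ is surjective; in particular, if $res$ is smooth at $([\tau],[D])$, then $\tau$ is weakly $D$-free, since smoothness is preserved by the base change $\{[D]\} \hookrightarrow \text{Sym}^m(C)$ and a smooth morphism has surjective differential. (Compare Lemma \ref{lem-resfreesmooth}, which gives the converse implication, smoothness of $res$ from $D$-freeness.)

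Next I would invoke generic smoothness in characteristic $0$. The scheme $\Sec^e := \Sec^e(X_{f,\text{smooth}}/C/\kappa)$ is of finite type for fixed $e$; after base change to $\overline{\kappa}$, let $V_1,\dots,V_N$ be its irreducible components with reduced structure. For each $i$ I would apply generic smoothness, \emph{uniformly in the parameter $[D]$}, to the morphism $(V_i)_{\text{smooth}} \times_{\overline{\kappa}} \text{Sym}^m(C)_{\overline{\kappa}} \to H^m_{X,f,\overline{\kappa}}$ over $\text{Sym}^m(C)$: treating first the generic point of $\text{Sym}^m(C)$ (where generic smoothness applies to the fibre because its function field has characteristic $0$), then spreading out and recursing on the complementary closed locus, a Noetherian induction produces a constructible $\Omega_i \subseteq H^m_{X,f,\overline{\kappa}}$ which meets every fibre $H^m_{X,f,D}$ in a dense open subset and over whose fibres $r_D|_{V_i}$ is smooth. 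One must also remove the images under $res$ of the singular loci of the $V_i$ (equivalently, of the obstructed sections); by the same spreading-out argument together with a dimension count these images meet every fibre $H^m_{X,f,D}$ in a nowhere dense set. Setting $W^m_{e,\overline{\kappa}} := \bigcap_i \Omega_i$ minus these bad images yields a constructible subset of $H^m_{X,f,\overline{\kappa}}$ meeting every fibre in a dense open; since all ingredients are permuted by $\mathrm{Gal}(\overline{\kappa}/\kappa)$, it descends to the required constructible $W^m_e \subseteq H^m_{X,f}$.

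Finally I would check the defining property: given a family $\tau : T\times_\kappa C \to X_{f,\text{smooth}}$ of degree $e$ sections whose restriction to $T\times_\kappa D$ factors set-theoretically through $W^m_e \cap H^m_{X,f,D}$, the base-change Lemmas \ref{lem-newfree2} and \ref{lem-semicty} reduce to $T=\text{Spec}(\overline{\kappa})$, i.e.\ a single section $\tau$ with $[\tau|_D]\in W^m_e$; then $[\tau]$ lies on some $V_i$, and since $[\tau|_D]$ avoids the removed images of singular loci, $[\tau]$ is a smooth point of $\Sec^e$, so $res$ is smooth at $([\tau],[D])$ and $\tau$ is weakly $D$-free by the first step. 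The hard part will be the middle step: the pointwise statement is essentially immediate from the differential computation and generic smoothness, but assembling a single constructible $W^m_e$ with the stated \emph{fibrewise} density forces one to make generic smoothness uniform over $\text{Sym}^m(C)$ via the spreading-out/Noetherian-induction argument, and to bound the locus of obstructed sections so that its jets along $D$ are nowhere dense in every fibre — this is the technical heart of \cite[1.1]{KMM92c}.
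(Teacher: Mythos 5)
Your proposal follows the same strategy as the paper's proof: stratify $\Sec^e(X_{f,\text{smooth}}/C/\kappa)$ into smooth pieces, identify the differential of $res$ (at a point $([\tau],[D])$) with the restriction map governing weak $D$-freeness, invoke characteristic-zero generic smoothness to conclude the critical values are nowhere dense in every fibre $H^m_{X,f,D}$, and take $W^m_e$ to be the complement of their union. The one procedural difference is that the paper obtains the uniformity in $[D]$ directly from the observation that $T_i \times \text{Sym}^m(C)$ and $H^m_{X,f}$ are both smooth over $\text{Sym}^m(C)$, so the critical locus of $res_i$ is fibre by fibre exactly the critical locus of the fibrewise map $r_D$ (making Sard applicable in each fibre at once), whereas you set up a spreading-out/Noetherian-induction argument to reach the same conclusion — a longer but sound route.
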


\begin{proof}
It suffices to construct $W^m_e$ over $\overline{\kappa}$,
and hence we may and do assume $\kappa$ algebraically closed.
Stratify $\Sec^e(X_{f,\text{smooth}}/C/\kappa)$ by a
finite set of irreducible locally closed strata $T_i$,
each \emph{smooth} over $\kappa$. For each $i$ we have the restriction
morphism $res_i : T_i \times \text{Sym}^m(C) \to H^m_{X,f}$.
This is a morphism of smooth schemes of finite type over $\text{Sym}^m(C)$.
Let $S_i \subset T_i \times \text{Sym}^d(C)$ be closed set of
points where the derivative of $res_i$
$$
\text{d}res_i : T_{T_i \times \text{Sym}^m(C)} 
\longrightarrow
res_i^*(T_{H^m_{X,f}})
$$
is not surjective. By Sard's theorem, or generic smoothness the image
$E_i = res_i(S_i) \subset H^m_{X,f}$ is not dense in
any fibre $H^m_{X,f,D}$. It is constructible
as well by Chevalley's theorem. Take
$W^m_e$ to be the complement of the finite union $\bigcup E_i$.
The reason this works is the following: Suppose that
$\tau : C \to X_{f,\text{smooth}}$ is a section such that
$\tau|_D$ is in $W^m_e$. Let $i$ be such that
$\tau$ corresponds to a point $[\tau] \in T_i$. By construction
of $W^m_e$ the morphism $T_i \to H^m_{X,f,D}$ is smooth with surjective
derivative at $\tau$. Note that $T_{\tau}(T_i) \subset H^0(C, \tau^*T_f)$,
and that $T_{res(\tau \times [D])}(H^m_{X,f,D}) = H^0(C, \tau^*T_f|_D)$.
Thus the surjectivity of the derivative
in terms of Zariski tangent spaces implies
that $\tau$ is weakly $D$-free.
\end{proof}

\begin{lem}
\label{lem-avoid}
\marpar{lem-avoid}
Let $k$ be an uncountable algebraically closed field.
Suppose we are given the commutative ``input''
diagram of varieties over $k$:
$$
\text{input: }
\xymatrix{
V \ar[d]^\Phi & & \\
U \ar[d] \ar[r] & Y \ar[ld] & C\times T \ar[l]_\tau \ar[lld] \\
C & & 
}\ \ \ \text{output: }
\xymatrix{
V \ar[d] \ar[r] & \overline{V} \ar[d] & C\times T' \ar[d] \ar[l] \\
U \ar[d] \ar[r] & Y \ar[ld] & C\times T \ar[l] \ar[lld] \\
C & & 
}
$$
We assume that $C$ is a nonsingular projective curve,
$Y \to C$ is proper, $U \subset Y$ is open dense,
$U \to C$ is smooth with geometrically irreducible fibres,
$V \to U$ is smooth surjective with birationally rationally
connected geometric fibres. We assume that
the image of $C \times T \to Y$ meets $U$.
Let $m > 0$. We assume the associated rational map (see proof)
from $T \times \text{Sym}^m(C)$ to $H^m_U$ is dominant.
Then there exists a variety $T'$, a dominant morphism $T' \to T$,
a compactification $V \subset \overline{V}$ and
a morphism $C \times T' \to \overline{V}$ fitting into the
commutative ``output'' diagram such that the image
of $C \times T' \to \overline{V}$ meets $V$ and the associated 
rational map from $T' \times \text{Sym}^m(C)$ to
$H^m_V$ is dominant.
\end{lem}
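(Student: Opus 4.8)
The plan is to construct $T'$ inside a relative space of sections of a compactification of $V$, populating that space by means of the main theorem of \cite{GHS} — in the strengthened form allowing one to prescribe the value of a section at finitely many general points — and then extracting a single irreducible family using the uncountability of $k$. \emph{Setup.} Fix a projective compactification $V\subset\overline V$ together with a morphism $\overline V\to Y$ extending $V\to U\hookrightarrow Y$ (e.g.\ the closure of the graph of $V\hookrightarrow\PP^N\times Y$ for a projective embedding of $V$ over $k$ and the given map $V\to Y$); thus $\overline V\to Y$ is projective, $V\subset\overline V$ is dense open, and over $U$ the family $\overline V\to Y$ contains $V$ as a fibrewise-dense open. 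Put $\overline V_T:=\overline V\times_{Y,\tau}(C\times T)$, a projective scheme over $C\times T$, and let $W:=\Sec(\overline V_T/(C\times T)/T)$; by Theorem~\ref{thm-algebraic} this is a union of quasi-projective $T$-schemes, a point over $t\in T$ being a section $\rho\colon C\to\overline V$ lying over $\tau_t:=\tau(-,t)$, and $W$ carries a universal morphism $C\times_kW\to\overline V$ whose composite with $\overline V\to Y$ is the pullback of $\tau$. Since $\Phi\colon V\to U$ is smooth, surjective, with irreducible rationally connected fibres, the induced morphism $H^m_V\to H^m_U$ of the Weil restrictions $H^m_{V,f}=\text{Res}_{\mc{D}/\text{Sym}^m(C)}(\mc{D}\times_CV)$ and $H^m_{U,f}$ is smooth and surjective, with fibre over $(Z',\sigma')$ equal to $\text{Res}_{Z'/\kappa}((\sigma')^*V)$, which is irreducible and birationally rationally connected (built from the fibres of $\Phi$ by products and iterated affine-space bundles); in particular $H^m_U$ and $H^m_V$ are irreducible. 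The ``associated rational map'' of the statement is the dominant rational map $g\colon T\times_k\text{Sym}^m(C)\dashrightarrow H^m_U$ obtained from $\tau$ and $res$. Let $P:=\overline{\mathrm{dom}(g)\times_{H^m_U}H^m_V}$; as $H^m_V\to H^m_U$ is smooth with irreducible fibres, $P$ is irreducible, dominates $T\times_k\text{Sym}^m(C)$ (hence $T$), and dominates $H^m_V$.

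\emph{The relative construction.} Over $P$ consider the family $\overline V_P:=\overline V_T\times_TP\to C\times_kP$ (via $P\to T\times_k\text{Sym}^m(C)\to T$), the relative degree-$m$ divisor $\mc{D}_P\subset C\times_kP$ pulled back from $\text{Sym}^m(C)$, and the space $\mc{S}\to P$ of sections of $\overline V_P\to C$ relative to $P$ whose restriction to $\mc{D}_P$ agrees with the tautological $H^m_V$-valued point of $P$; by Theorem~\ref{thm-algebraic}, $\mc{S}$ is again a union of quasi-projective $P$-schemes. For a general closed point $p=((t,D),\phi)$ of $P$ one has $(t,D)\in\mathrm{dom}(g)$, the section $\tau_t$ lands in $U$ over a dense open $C^\circ_t\supset D$, and $\overline V_t\to C$ is a proper family which over $C^\circ_t$ contains the pullback of $\Phi$ — with rationally connected fibres — as a fibrewise-dense open; so its general fibre is birationally rationally connected. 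By \cite{GHS}, in the form giving a section through finitely many prescribed general points in general fibres, there is a section of $\overline V_t\to C$ restricting to $\phi$ over $D$ and lying generically in $V$, i.e.\ $\mc{S}$ has a closed point over $p$. Since this holds for a general closed point of $P$, since the components of $\mc{S}\to P$ are quasi-projective (hence of finite type over $P$), and since $k$ is uncountable and $P$ irreducible, some irreducible component $\Sigma$ of $\mc{S}$ dominates $P$.

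\emph{Conclusion.} The family of sections carried by $\Sigma$ defines $\Sigma\to W$, and, remembering the divisor, $\Sigma\to I$ where $I\subset W\times_k\text{Sym}^m(C)$ is the open locus of pairs $(\rho,D)$ with $\rho(D)\subset V$. By construction the composite $\Sigma\to I\to T$ equals $\Sigma\to P\to T$, hence is dominant, and the composite $\Sigma\xrightarrow{(\rho,D)\mapsto\rho|_D}H^m_V$ equals $\Sigma\to P\to H^m_V$, hence is dominant. Let $W_0$ be the closure of the image of $\Sigma$ in $W$ and put $T':=(W_0)_{\mathrm{red}}$, a variety. Then $T'\to T$ is dominant; the universal section over $W\supset T'$ restricts to a morphism $C\times_kT'\to\overline V$ fitting into the output diagram (its composite with $\overline V\to Y$ being the pullback of $\tau$, and $V\to\overline V\to Y$ being $V\to U\to Y$); the image of $C\times_kT'\to\overline V$ meets $V$ (for general $(\rho,D)$ one has $\rho(D)\subset V$); and the restriction map $T'\times_k\text{Sym}^m(C)\to H^m_V$ is dominant, since precomposing it with $\Sigma\to T'\times_k\text{Sym}^m(C)$ yields the dominant map above. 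This is the required output.

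\emph{Main obstacle.} The crucial input is the strengthened form of \cite{GHS}: existence of a section of a proper family of rationally connected varieties over a curve passing through finitely many prescribed general points of general fibres, with the section moreover free — freeness is what ensures the resulting points of $\mc{S}$ move in a family dominating $P$ rather than being trapped in a proper subvariety, and it is also what makes the ``general closed point'' argument of the previous paragraph apply. A secondary nuisance is the finitely many fibres of $\overline V_t\to C$ over $Y\setminus U$, where $\overline V_t$ may be singular; one handles these by resolving $\overline V$ at the outset, or by noting that a sufficiently free section, after a general deformation, avoids the bad loci away from $D$. The remaining ingredients — Chevalley constructibility, the Weil-restriction description of $H^m_V\to H^m_U$, and the passage from ``a general closed point of $P$ lies in the image of $\mc{S}$'' to ``a component of $\mc{S}$ dominates $P$'' via uncountability of $k$ — are routine.
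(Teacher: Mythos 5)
Your proposal is correct and follows essentially the same route as the paper: compactify $V$ (smoothly, using Hironaka, to avoid the singular-fibre issues you flag at the end), apply \cite{GHS} plus the smoothing-combs technique of \cite{KMM} (your ``strengthened GHS'') to solve the problem pointwise over closed points, and then use the uncountability of $k$ to promote the uncountable supply of closed points of $\Sec(\overline V\times_Y(C\times T)/(C\times T)/T)$ to a single irreducible component dominating $T$. The only real difference is that you set up the intermediate parameter space $P=\overline{\mathrm{dom}(g)\times_{H^m_U}H^m_V}$ and the constrained section space $\mc{S}$ explicitly, which makes the bookkeeping behind the paper's ``(details left to the reader)'' visible but does not change the underlying argument.
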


\begin{rmk}
This technical lemma just says that a general $m$-free section
of $Y \to C$ lifts to an $m$-free section of $V$.
\end{rmk}

\begin{proof}
For $t\in T(k)$ we denote $\tau_t : C \to Y$ the restriction of
$\tau$ to $C \times \{t\}$. In other words, we think of $\tau$
as a family of maps from $C$ to $Y$.
Let $W \subset \text{Sym}^m(C) \times T$ denote
the set of pairs $(D, t)$ such that $\tau_t(D) \subset U$.
This is nonempty by assumption.
The ``associated rational map'' is the map
$$
\text{Sym}^m(C) \times T \supset W \longrightarrow H^m_U,\ \ 
(D, t) \longmapsto (\tau_t|_D :D \to U).
$$
Note that $V$ is a nonsingular variety. 
We choose a smooth projective
compactification $V \subset \overline{V}$ such that
$\Phi$ extends to a morphism $\overline{\Phi} : \overline{V} \to Y$,
see \cite{Hir}. By Sard's Theorem we may shrink $U$ and assume that
the fibres of $\overline{\Phi}$ are projective smooth varieties
over all points of $U$. In this case these fibres are rationally
connected varieties by our assumption on $\Phi$. We
may shrink $T$ and assume that
$\tau_t(C)$ meets $U$ for all $t \in T(k)$.

\medskip\noindent
The assumption on the fibres of $\Phi$ implies that
for every $t\in T(k)$ we may find a section
$\sigma : C \to \overline{V}$ such that $\overline{\Phi}
\circ \sigma = \tau_t$. See \cite{GHS}.
It was already known before the results of \cite{GHS},
see for example \cite{KMM}, that one may, given any $m$ sufficiently
general points $c_1,\ldots,c_m$ of the curve $C$, assume
the $m$-tuple
$(\sigma(c_i) \in \overline{V}_{\tau_t(c_i)})_{i=1\ldots m}$
is a general point of
$V_{\tau_t(c_1)} \times \ldots \times V_{\tau_t(c_m)}$.
This is done by the ``smoothing combs'' technique of \cite{KMM}.
Namely, after attaching sufficiently many free teeth
(in fibres of $\overline{V} \to Y$) to $\sigma$,
the comb can be deformed to a section $\sigma$ which passes
through $m$ very general points of $m$ very general fibres.
(Another approach would be to use the main result of the
paper \cite{HT06}. Note that since the fibres of $\overline{V} \to U$
are smooth and rationally connected we are allowed to use that result.
Of course this paper proves something much stronger!)
Combined with the assumption that
$\text{Sym}^m(C) \times T \supset W \longrightarrow H^m_U$
is dominant this implies that in fact the $m$-points
$\sigma(c_i)$ hit a general point of 
$V_{c_1} \times \ldots \times V_{c_m}$ which is the fibre of
$H^m_V \to \text{Sym}^m(C)$ over the point $\sum c_i$.

\medskip\noindent
The paragraph above solves the problem ``pointwise'', but since
the field $k$ is uncountable this implies the result of the lemma
by a standard technique. Namely, the above produces lots (uncountably
many) of points on the scheme
$\Sec(\overline{V}\times_Y(C\times T)/(C\times T)/T)$
which implies that it must have a suitable irreducible component
$T'$ dominating $T$ (details left to the reader).
\end{proof}


\section{Kontsevich stable maps}
\label{sec-Kontsevich}
\marpar{sec-Kontsevich}

\noindent
Let $k$ be an algebraically closed field and let $Y$ be a
quasi-projective $k$-scheme.

\begin{defn}
\label{defn-stablemap}
\marpar{defn-stablemap}
A \emph{stable map} to $Y$ over $k$ is given by
\begin{enumerate}
\item a proper, connected, at-worst-nodal $k$-curve $C$,
\item a finite collection $(p_i)_{i\in I}$ of distinct, smooth, closed
points of $C$, and
\item a $k$-morphism $h:C\rightarrow Y$.
\end{enumerate}
These data have to satisfy the \emph{stability} condition that the
log dualizing sheaf $\omega_{C/k}(\sum_{i\in I} p_i)$ is $h$-ample.
\end{defn}

\noindent
The stability of the triple $(C, (p_i), h)$ can also be expressed
in terms of the numbers of special points on $h$-contracted components
of $C$ of arithmetic genus $0$ and $1$, or by saying that the
automorphism group scheme of the triple is finite. See \cite{Kv}.

\mni
There are many invariants of a stable map of a numerical or
topological nature.  One invariant is the number $n = \#I$ of marked
points. Occasionally it is convenient to mark the points by some
unspecified finite set $I$, but usually $I$ is simply
$\{1,2,\dots,n\}$.  Another invariant is the arithmetic genus $g$ of
$C$.  Yet another invariant is the curve class of $h_*([C])$.  Often we do
not need the full curve class.  The following definition makes precise
the notion of a ``partial curve class''. We will use $\text{NS}(Y)$
to denote the N\'eron-Severi group of $Y$, i.e., the group of
Cartier divisors up to numerical equivalence on $Y$.

\begin{defn}
\label{defn-partial}
\marpar{defn-partial}
Let $k$ be an algebraically closed field and let $Y$ be a
quasi-projective $k$-scheme.  A \emph{partial curve class} on $Y$ is a
triple $(A, i, \beta)$ consisting of 
an Abelian group $A$ and homomorphisms of Abelian groups
$$
i:A \rightarrow \text{NS}(Y)
\text{ and }
\beta:A \rightarrow \ZZ.
$$
The triple will usually be denoted by $\beta$, with $A$ being denoted
$A_\beta$ and $i$ being denoted $i_\beta$.    
We say a stable map $(C,(p_i)_{i\in I},h)$
\emph{belongs to the partial curve class $\beta$} or
\emph{has partial curve class $\beta$} if for
every element $a$ in $A$ and for every Cartier divisor
$D$ on $Y$ with class $i(a)$, the degree of $h^*D$ on
$C$ equals $\beta(a)$.  
\end{defn}

\begin{rmk}
\label{rmk-partial}
\marpar{rmk-partial}
If there exists a stable map belonging to $\beta$ then
$\text{Ker}(\beta)$ contains 
$\text{Ker}(i)$, i.e., $\beta$ factors through $i(A)\subset
\text{NS}(Y)$.  From this point of view it is reasonable to restrict
to cases where $A$ is a subgroup of $\text{NS}(Y)$.  But it is
sometimes useful to allow $i$ to be noninjective.
\end{rmk}

\noindent
We will also use a relative version of stable maps
and of partial curve classes.

\begin{defn}
\label{defn-stablemaprel}
\marpar{defn-stablemaprel}
Let $Y \to S$ be a quasi-projective morphism.

\mni
\textbf{1.}
A \emph{family of stable maps to $Y/S$} is
given by
\begin{enumerate}
\item an $S$-scheme $T$,
\item a proper, flat family of curves $\pi: C \rightarrow T$
(see Definition \ref{defn-curves}),
\item a finite collection of $T$-morphisms $(\sigma_i : T \to C)_{i\in I}$,
and 
\item a $S$-morphism $h: C \to Y$.
\end{enumerate}
These data have to satisfy the \emph{stability} condition
that for every algebraically closed field $k$ and morphism
$t: \text{Spec}(k) \to T$, the basechange
$(C_t,(\sigma_{i}(t))_{i\in I},h_t)$ is a stable map to
$Y_t = \text{Spec}(k) \times_{t,S} Y$ as in
Definition \ref{defn-stablemap}.

\mni
\textbf{2.}
Given two families of stable maps
$(T' \to S, \pi' : C'\to T',(\sigma'_i)_{i\in I}, h')$ and
$(T \to S, \pi : C\to T,(\sigma_i)_{i\in I}, h)$
a \emph{morphism} from the first family to the second family is
given by an $S$-morphism $u : T' \to T$, and a morphism
$\phi : C' \to C$ such that (a) $(\phi, u)$ forms a morphism
of flat families of curves (see \ref{defn-curves}), (b) $\sigma_i \circ u =
\phi \circ \sigma'_i$, and (c) $h' = h \circ \phi$.
\end{defn}

\noindent
Let $Y \to S$ be a quasi-projective morphism. Denote $\text{Pic}_{Y/S}$
the relative Picard functor, see \cite[Section 8.1]{Ner}. It is an
fppf sheaf on the category of schemes over $S$. We define the
relative N\'eron-Severi functor $\text{NS}_{Y/S}$ of $Y/S$
to be the quotient of $\text{Pic}_{Y/S}$ by the subsheaf of
sections numerically equivalent to zero on all geometric fibres.
Finally, we define the relative N\'eron-Severi group
$\text{NS}(Y/S) = \text{NS}_{Y/S}(S)$ to be the sections
of this sheaf over $S$.

\begin{defn}
\label{defn-partialrel}
\marpar{defn-partialrel}
A \emph{partial curve class} on $Y/S$ is a triple $(A,i,\beta)$
consisting of an Abelian group $A$, and homomorphisms of Abelian groups
$$
i:A\rightarrow \text{NS}(Y/S)
\text{ and }
\beta:A \rightarrow \ZZ.
$$
For every morphism of schemes $T \to S$
there is a \emph{pullback partial curve class} on $Y_T/T$
given by $(A,i_T,\beta)$ where $i_T$ is the composition
$$
A \xrightarrow{i} \text{NS}(Y/S) \rightarrow \text{NS}(Y_T/T).
$$
A family of stable maps
$(T \to S, \pi : C\to T,(\sigma_i)_{i\in I}, h)$ to $Y/S$
\emph{belongs to the partial curve class $\beta$} or
\emph{has partial curve class $\beta$} if for every
algebraically closed field $k$ and every morphism $t:\text{Spec}(k) \to T$,
the pullback $(C_t,(\sigma_{i}(t))_{i\in I},h_t)$
belongs to the pullback partial curve class $(A, i_t, \beta)$.  
\end{defn}

\noindent
Suppose that $(T \to S, \pi : C\to T,(\sigma_i)_{i\in I}, h)$
is a family of stable maps to $Y/S$ as in Definition \ref{defn-stablemaprel}.
For every $g \geq 0$ there is an open and closed subscheme of $T$
where the geometric fibres of $\pi$ have genus $g$. This is true because
$R^1\pi_* \OO_C$ is a finite locally free sheaf of $\OO_T$-modules,
and the genus is its rank. In addition, given a relative partial
curve class $\beta$, there is an open and closed subscheme of $T$
where the geometric fibres $h_t : C_t \to Y_t$ belong to the
partial curve class $\beta_t$. This is because the degree of
an invertible sheaf on a proper flat family of curves is locally
constant on the base. Thus the following definition makes sense.

\begin{defn}
\label{defn-Kontsevich}
\marpar{defn-Kontsevich}
Let $Y \to S$ be quasi-projective. For every finite set $I$ the
\emph{Kontsevich stack of stable maps to $Y/S$}
is the stack
$$
\Kgnb{*,I}(Y/S)
\longrightarrow
\textit{Schemes}/S
$$
whose objects and morphisms are as in Definition \ref{defn-stablemaprel}
above. When $I$ is just $\{1,\dots,n\}$, this stack is denoted
$\Kgnb{*,n}(Y/B)$. For every integer $g\geq 0$, there is an open
and closed substack $\Kgnb{g,I}(Y/B)$ parametrizing stable maps
whose geometric fibres all have arithmetic genus $g$.  And for
every partial curve class $\beta$ on $Y/S$, there is an open and
closed substack $\Kgnb{g,I}(Y/B,\beta)$ parametrizing stable maps
belonging to the partial curve class $\beta$.  
\end{defn}

\noindent
The following theorem is well known to the experts.
We state it here for convenience, and because we do
not know an exact reference.

\begin{thm}
\label{thm-Kontsevich}
\marpar{thm-Kontsevich}
Compare \cite{Kv}. Let $Y \to S$ be quasi-projective, and
$S$ excellent.

\mni
\textbf{1.}
The stack $\Kgnb{*,I}(Y/S)$ is a locally finitely presented, algebraic
stack over $S$ with finite diagonal, satisfying the valuative criterion
of properness over $S$.

\mni
\textbf{2.}
If $S$ lives in characteristic $0$ then $\Kgnb{*,I}(Y/S)$ is a  
Deligne-Mumford stack over $S$.

\mni
\textbf{3.}
If $Y$ is proper over $S$ and if $(A,i,\beta)$
is a partial curve class such that
$i(A)$ contains the class of a $S$-relatively
ample invertible sheaf on $Y$,
then $\Kgnb{g,I}(Y/S,\beta)$ is proper over
$S$ with projective coarse moduli space.
\end{thm}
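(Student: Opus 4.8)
The plan is to assemble the statement from the foundational material of Sections~\ref{app-A}--\ref{sec-newfree} together with the standard toolkit for stable maps (Kontsevich, Behrend--Manin, Fulton--Pandharipande, Abramovich--Vistoli, Keel--Mori, Koll\'ar).

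\textbf{Algebraicity, finite presentation, finite diagonal (part~1).} First I would exhibit $\Kgnb{*,I}(Y/S)$ as an open substack of a stack of maps from pointed curves. By Proposition~\ref{prop-Lieblich}, applied with $\mc{X}=Y$ (trivial group), $\textit{CurveMaps}(Y/S)$ is a limit preserving algebraic stack over $S$ with locally finitely presented, separated diagonal. Over it lies the universal curve $\pi:\mc{C}\to\textit{CurveMaps}(Y/S)$; taking the $I$-fold fibre product of $\mc{C}$ and passing to the open locus where the $I$ tautological sections are pairwise disjoint and lie in the $\pi$-smooth locus gives again a limit preserving algebraic stack, now carrying a universal $I$-pointed proper flat family of curves equipped with a map to $Y$. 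On this stack the locus where the geometric fibres of $\pi$ are connected is open and closed, the locus where they are at worst nodal is open, and the stability locus is open: working fppf-locally on $S$ choose an $S$-ample invertible sheaf $\mc{A}$ on $Y$; then a geometric fibre $(C_t,(\sigma_i),h_t)$ is stable if and only if $\omega_\pi(\sum_i\sigma_i)\otimes h^*\mc{A}^{\otimes 3}$ has positive degree on every irreducible component of $C_t$, i.e.\ if and only if this line bundle is $\pi$-ample, and $\pi$-ampleness of a fixed line bundle on a proper flat family is an open condition on the base. Intersecting the three loci shows $\Kgnb{*,I}(Y/S)$ is a limit preserving algebraic stack, locally of finite presentation over $S$. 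For finite diagonal, given two families $\xi_1,\xi_2$ over a base $T$ the sheaf $\text{Isom}_T(\xi_1,\xi_2)$ is, via graphs of isomorphisms, a subscheme of a Hilbert scheme of $\mc{C}_1\times_T\mc{C}_2$ over $T$ (cut out by the closed conditions of respecting the marked sections and commuting with the two maps to $Y$, with locally constant Hilbert polynomial computed as in the proof of Proposition~\ref{prop-algebraic}), hence separated of finite type over $T$; it is quasi-finite over $T$ because a stable map over a field has finite automorphism group scheme (the content of stability, cf.~\cite{Kv}); and it satisfies the valuative criterion of properness over $T$ since a punctured-disc family of isomorphisms of stable maps extends uniquely. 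A separated, quasi-finite morphism satisfying the valuative criterion of properness is finite, so the diagonal is finite (in particular $\Kgnb{*,I}(Y/S)$ is separated over $S$, which is the uniqueness half of the valuative criterion).

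\textbf{Valuative criterion and the Deligne--Mumford property (parts~1 and~2).} For the existence half: given a DVR $R$ over $S$ with fraction field $K$ and a stable map over $K$, pass to a finite extension of $R$ and use semistable reduction to extend the $I$-pointed curve $C_K$ to a proper flat family $\mc{C}'\to\Spec R$ with $\mc{C}'$ regular, special fibre reduced and at worst nodal, and the marked points extending to disjoint sections in the smooth locus. Choosing a projective compactification $Y\subset\bar Y$ over $S$, the rational map $\mc{C}'\dashrightarrow\bar Y_R$ is defined outside finitely many closed points of the special fibre (since $\mc{C}'$ is regular of dimension $2$ and $\bar Y_R\to\Spec R$ is proper); blowing these up resolves it to a morphism $\bar h:\mc{C}''\to\bar Y_R$. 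When $Y\to S$ is proper the special fibre of $\mc{C}''$ automatically maps into $Y$ (this is the case invoked in part~3); in general one checks that the pullback to the proper special fibre of an effective boundary divisor supported on $\bar Y_R\smallsetminus Y_R$ is effective of total degree zero, hence zero. Contracting the $h$-unstable components of the special fibre then yields a stable map over $R$ extending the given one, unique by separatedness; this proves the valuative criterion. Finally, in characteristic $0$ every finite group scheme over a field is \'etale, so the finite diagonal is unramified and $\Kgnb{*,I}(Y/S)$ is Deligne--Mumford, which is part~2.

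\textbf{Properness and projectivity of the coarse space (part~3).} Assume $Y\to S$ proper, fix $g$, and fix a partial curve class $\beta$ such that $i_\beta(A_\beta)$ contains the class of an $S$-ample invertible sheaf $\mc{L}$ on $Y$; set $d=\beta$ evaluated on that class. A stable map in $\Kgnb{g,I}(Y/S,\beta)$ has a curve of fixed arithmetic genus $g$, and $\omega_\pi(\sum_i\sigma_i)^{\otimes a}\otimes h^*\mc{L}^{\otimes b}$ is $\pi$-very ample with fixed Hilbert polynomial for suitable fixed $a,b$ depending only on $g$ and $d$; so each curve embeds into $\PP^N_Y/S$ with fixed $N$ and fixed Hilbert polynomial $P$, realizing $\Kgnb{g,I}(Y/S,\beta)$ fppf-locally on $S$ as a locally closed substack of $[\text{Hilb}^P(\PP^N_Y/S)/\PGL_{N+1}]$ with finite-type, quasi-compact image. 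Hence $\Kgnb{g,I}(Y/S,\beta)\to S$ is of finite type; combined with the valuative criterion and the finite diagonal it is a proper algebraic stack with finite inertia over $S$, so by Keel--Mori it has a coarse moduli space proper over $S$. For projectivity one runs the usual argument: the determinants $\lambda_n=\det R\pi_*\big(\omega_\pi(\sum_i\sigma_i)\otimes h^*\mc{L}\big)^{\otimes n}$ descend to line bundles on the coarse space, and a suitable positive combination of them is $S$-ample by Koll\'ar's semipositivity/ampleness criterion; thus the coarse space is projective over $S$. No step here is new, but the one demanding the most care is exactly this last part: converting ``$\beta$ contains an ample class'' into a uniform projective embedding of all the stable curves (the boundedness estimate), and then the projectivity of the coarse space, which genuinely uses Koll\'ar-type semipositivity of pushforward sheaves rather than formal nonsense; a secondary difficulty is the valuative criterion, where resolving the rational map to a compactification of $Y$ and contracting back to a stable map must be organized carefully, and where the general quasi-projective case (as opposed to the proper case actually needed below) is the delicate point.
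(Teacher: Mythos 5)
You take essentially the same route as the paper: algebraicity through representability and smoothness of the forgetful morphism to $\textit{CurveMaps}(Y/S)$ combined with Proposition~\ref{prop-Lieblich}, finiteness of the diagonal through the $\text{Isom}$ sheaf realized inside a relative Hilbert scheme (separated, quasi-finite by finiteness of automorphism groups, proper by a valuative argument), Deligne--Mumford in characteristic $0$ by reducedness of finite group schemes, properness of $\Kgnb{g,I}(Y/S,\beta)$ via boundedness plus the valuative criterion, and the coarse moduli space via Keel--Mori. The only difference is one of emphasis: the paper cites \cite{Kv}, \cite{OProper}, \cite{Faltings}, \cite{K-M} and \cite[Section 8.3]{AV} for the individual steps, while you unpack each; for projectivity of the coarse space you invoke Koll\'ar semipositivity rather than Abramovich--Vistoli. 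Mathematically these are the same proof, and yours is a serviceable stand-in with a bit more detail than the paper chooses to give.

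One point worth pressing --- which you do flag as ``delicate'' but whose failure is sharper than you suggest: in the ``in general'' branch of your valuative-criterion argument, the degree-zero argument with the boundary divisor $\bar Y_R\setminus Y_R$ fails exactly when a component of the special fibre of $\mathcal{C}''$ maps entirely into the boundary, because then the pullback is not a relative Cartier divisor and the degree count has no content. In fact the existence half of the valuative criterion is simply false for non-proper $Y$: take $Y=\AAA^1$ over $S=\Spec k$ and consider the $3$-pointed genus-$0$ constant stable map to the point $1/\pi\in\AAA^1_K$ over a DVR $R$ with uniformizer $\pi$; no finite extension of $R$ extends this stable map. The paper inherits the same imprecision by deferring to \cite{Kv}, which treats projective targets; this is harmless downstream because the only place properness is actually used is part~3, where $Y\to S$ is assumed proper so the boundary is empty and both your argument and the cited one go through cleanly.
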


\begin{proof} (Sketch.)
Forgetting the sections gives a $1$-morphism
from $\Kgnb{*,I}(Y/S)$ to $\textit{CurveMaps}(Y/S)$.
It is easy to see that this morphism is representable and smooth.
Hence, by Proposition \ref{prop-Lieblich} the stack $\Kgnb{*,I}(Y/S)$
is algebraic of locally of finite presentation over $S$,
with finitely presented diagonal.
For the valuative criterion of properness, see \cite[bottom page 338]{Kv}.
Finiteness of the diagonal is, in view of quasi-compactness (see above),
a consequence of the valuative criterion of properness
for $\text{Isom}$ between stable maps (basically trivial),
combined with the fact that by definition stable maps have finite
automorphism groups (gives quasi-finiteness).
This proves 1. In characteristic zero a stack with finite diagonal
is Deligne-Mumford, because automorphism group schemes are reduced.
Finally, to see 3 note that $\Kgnb{g,I}(Y/S,\beta)$ is quasi-compact
in this case by the argument in \cite[page 338]{Kv}. Thus it
is proper over $S$, see \cite{OProper} and \cite[Appendix]{Faltings}
to see why. By \cite{K-M} the coarse moduli space exists, and
by the above it is proper over $S$. The projectivity of this coarse
moduli space will not be used in the sequel; a reference is
\cite[Section 8.3]{AV}.
\end{proof}

\begin{notat}
\label{notat-e}
\marpar{notat-e}
A simple case of a partial curve class is as follows.
Let $Y \to S$ be quasi-projective and let
$\mc{L}$ be an $S$-ample invertible sheaf on $Y$.
For every integer $e$ there is a partial curve class 
$\beta = (\ZZ, 1 \mapsto [\mc{L}], 1 \mapsto e)$.
When $\mc{L}$ is understood, the corresponding
Kontsevich moduli space is denoted
$\Kgnb{g,n}(Y/S,e)$.
\end{notat}


\section{Stable sections and Abel maps}
\label{sec-ssections}
\marpar{sec-ssections}

\noindent
Let $C\to S$ be a proper, flat family of curves (see \ref{defn-curves})
whose geometric fibres are connected smooth projective curves of
some fixed genus $g(C/S)$. The group $\text{NS}(C/S)$ (see discussion
preceding \ref{defn-partialrel}) is a free Abelian group of rank $1$,
simply because any two invertible sheaves on the family are numerically
equivalent on geometric fibres if and only if they have the same
degree on the fibres. Denote $[\text{point}]$ the element of
$\text{NS}(Y/S)$ that fppf locally on $S$ corresponds to an invertible
sheaf on $C$ having degree $1$ on the fibres.

\mni
Consider the relative partial curve class
$\beta_{C/S} = (\ZZ, 1 \mapsto [\text{point}], 1 \mapsto 1)$
on $C/S$ (compare \ref{notat-e}). For all
nonnegative integers $g$ and $n$, we denote
$$
\Kgnb{g,n}(C/S,1) := \Kgnb{g,n}(C/S, \beta_{C/S}).
$$
Loosely speaking the stable maps $h$ arising from points of
this algebraic stack have one component that maps birationally
to a fibre of $C \to S$, and all other components are contracted.
If $g = g(C/S)$, then the contracted components all have genus
zero.

\mni
Next, let $f : X \to C$ be a proper morphism.
Let $\mc{L}$ be an $f$-ample invertible sheaf on $X$.
Denote by $[\text{fibre}]$ the pullback $f^*[\text{point}]$
in $\text{NS}(X/S)$.  For every integer $e$, denote by $\beta_e$
the relative partial curve class $(\ZZ \oplus \ZZ, i, \beta_e)$
with $i(1,0) = [\text{fibre}]$, $i(0,1) = \mc{L}$ and
$\beta_e(1,0) = 1$, $\beta_e(0,1) = e$. Using this notation,
for all nonnegative integers $g$ and $n$, we may consider
the algebraic stacks
$$
\Kgnb{g,n}(X/S,\beta_e)
$$
introduced in the previous section. Loosely speaking a stable map
$h$ arising from points of this algebraic stack give rise to
generalized sections of $X_s/C_s$ for some geometric point $s$
of $S$ having total degree $e$ with respect to $\mc{L}$.

\mni
There is an obvious way in which $\beta_e$ and $\beta_{C/S}$ are related.
Thus by \cite[Theorem 3.6]{BM}, there are $1$-morphisms of
algebraic $S$-stacks
$$
\Kgnb{g,n}(f) :
\Kgnb{g,n}(X/S,\beta_e)
\longrightarrow
\Kgnb{g,n}(C/S,1).
$$

\begin{defn}
\label{defn-relstablemap}
\marpar{defn-relstablemap} 
Given $C \to S$, $f : X \to C$ and $\mc{L}$ as above.
For every nonnegative integer $n$ and every integer $e$,
\emph{the space of $n$-pointed, degree $e$, stable sections of $f$}
is the algebraic stack
$$
\Sigma^e_n(X/C/S) := 
\Kgnb{g(C/S),n}(X/S,\beta_e).
$$
We will also use the variant $\Sigma^e_I(X/C/S)$
where the sections are labeled by a given finite set $I$.
\end{defn}

\begin{defn}
\label{defn-sections}
\marpar{defn-sections}
Compare Definitions \ref{defn-sec} and \ref{defn-FGA}.
With notation as above.
A \emph{family of $n$-pointed, degree $e$ sections} of $X/C/S$
over $T \to S$ is given by a family $(\tau,\theta)$ of sections
of $f$ (see Definition \ref{defn-preFGA}) together with pairwise
disjoint sections $\sigma_i : T \to T\times_S C$, $i=1,\ldots,n$,
such that $\tau^*\mc{L}$ has degree $e$ on the fibres of
$T\times_SC\to T$. We leave it to the reader to define \emph{morphisms}
of families of $n$-pointed, degree $e$ sections of $X/C/S$.
\end{defn}

\noindent
We will denote such a family as $(T\to S, \sigma_i : T \to T\times_S C ,
h : T\times_S C \to X)$; in other words $\tau$ is replaced by $h$ and
we drop the notation $\theta$ since in this case it just signifies
that $f \circ h$ equal the projection map $T\times_S C \to S$.

\mni
Families of $n$-pointed, degree $e$ sections of $X/C/S$ naturally
form an algebraic space $\Sec^e_n(X/C/S)$ locally of finite presentation
over $S$. This is because the natural $1$-morphism
$$
\Sec^e_n(X/C/S) \longrightarrow \Sec(X/C/S)
$$
is representable and smooth, combined with Theorem \ref{thm-algebraic}.
There is another obvious $1$-morphism, namely
$$
\Sec^e_n(X/C/S) \longrightarrow \Sigma^e_n(X/C/S).
$$
This $1$-morphism is representable by open immersions of schemes. In this
precise sense, the proper, algebraic $S$-stack $\Sigma^e_n(X/C/S)$ is a
``compactification'' of $\Sec^e_n(X/C/S)$.

\mni
We will also use the variant $\Sec^e_I(X/C/S)$ where the sections
are labeled by a given finite set $I$.

\begin{notat}
\label{notat-index}
\marpar{notat-index}
Let $n$ be a nonnegative integer.
Let $I$ be a set of cardinality $n$.
Let $e$ be an integer.
Let $\delta$ be a nonnegative integer.
Let $J$ be a set of cardinality of $\delta$.
Let 
$
\underline{e} = (e_0, (e_j)_{j\in J})
$
be a collection of 
integers such that 
$e = e_0 + \sum_{j\in J} e_j$.
Let
$
\ul{I} = (I_0,(I_j)_{j\in J})
$
be a collection of subsets of $I$ such that
$
I = I_0 \sqcup \sqcup_{j\in J} I_j 
$ 
is a partition of $I$.
The triple $(J,\ul{e},\ul{I})$ is called an \emph{indexing triple} for
$(e,I)$.   
In the special case that $I=\emptyset$, so that every subset $I_0$ and
$I_j$ is also $\emptyset$, this is called an \emph{indexing pair} for $e$
denoted $(J,\ul{e})$.
\end{notat}

\begin{defn}
\label{defn-ndcomb}
\marpar{defn-ndcomb}
A \emph{family of $\ul{I}$-pointed, degree $\ul{e}$ combs} in
$(X/C/S, \mc{L})$ as above is given by the following data
\begin{enumerate}
\item an $S$-scheme $T$
\item an object 
$
(\pi_0:\mc{C}_0 \rightarrow T,
(p_i:T\rightarrow \mc{C}_0)_{i\in I_0} \cup
(q_j:T\rightarrow \mc{C}_0)_{j\in J},
h_0:\mc{C}_0 \rightarrow X)
$
of $\Sigma^{e_0}_{I_0\sqcup J}(X/C/S)$ over $T$, and
\item for $j\in J$, an object
$
(\pi_j:\mc{C}_j \rightarrow T,
(p_i:T\rightarrow \mc{C}_j)_{i\in I_j} \cup
(r_j:T\rightarrow \mc{C}_j),
h_j:\mc{C}_j \rightarrow X)
$
of $\Kgnb{0,I_j\sqcup \{j\}}(X/C,e_j)$ as in Notation \ref{notat-e}
over $T$.
\end{enumerate}
These data should satisfy the requirement that
for every $j\in J$, $h_0\circ q_j$ equals $h_j\circ r_j$ as morphisms
$T\rightarrow X$.
\end{defn}

\noindent
\emph{Morphisms} of $\ul{I}$-pointed, degree $\ul{e}$ combs in $X/C/S$ are
defined in the obvious way.  
The category of families of $\ul{I}$-pointed, degree $\ul{e}$ combs in
$X/C/S$ with pullback diagrams as morphisms is an $S$-stack denoted
$\Co^{\ul{e}}_{\ul{I}}(X/C/S)$.  

\mni
Given a family
$(\zeta_0,(\zeta_j)_{j\in J})$ of combs, the family $\zeta_0$ is
the \emph{handle} and the families $(\zeta_j)_{j\in J}$ are the
\emph{teeth}.  There is a ``forgetful'' $1$-morphism
$$
\Phi_\text{handle} :
\Co^{\ul{e}}_{\ul{I}}(X/C/\kappa)
\longrightarrow
\Sigma^{e_0}_{I_0\sqcup J}(X/C/\kappa)
$$
called the \emph{handle $1$-morphism}.  Similarly, for every $j\in
J$ 
there is a 
$1$-morphism
$$
\Phi_{\text{tooth},j}:\Co^{\ul{e}}_{\ul{n}}(X/C/\kappa)
\longrightarrow
\Kgnb{0,I_j\sqcup\{j\}}(X/C,e_j)
$$
called the \emph{$j^\text{th}$ tooth $1$-morphism}.  Moreover as
constructed in 
\cite[Theorem 3.6]{BM}, specifically Case II on p.\ 18, there is also a
\emph{total curve $1$-morphism}
$$
\Phi_{\text{total}} :
\Co^{\ul{e}}_{\ul{I}}(X/C/S)
\longrightarrow
\Sigma^e_I(X/C/S)
$$
obtained by attaching $r_j(T)$ in $\mc{C}_j$ to $q_j(T)$ in $\mc{C}_0$
for every $j\in J$.

\mni
By definition, $\Co^{\ul{e}}_{\ul{I}}(X/C/S)$ together with the handle
and tooth $1$-morphisms is the equalizer of the collection of marked
point $1$-morphisms associated to $(q_j,r_j)_{j\in J}$.  Since each of
$\Sigma^{e_0}_{I_0\sqcup J}(X/C/S)$ and
$\Kgnb{0,I_j\sqcup \{j\}}(X/C,e_j)$ is a proper, algebraic $S$-stack with
finite diagonal, the same is true for the equalizer
$\Co^{\ul{e}}_{\ul{I}}(X/C/S)$.  

\mni
The \emph{canonical open substack},
$\Co^{\ul{e}}_{\ul{I},\text{open}}(X/C/S)$,
is defined to be the inverse image under $\Phi_{\text{handle}}$ of the
open substack $\Sec^{e_0}_{I_0\sqcup J}(X/C/S)$ of
$\Sigma^{e_0}_{I_0\sqcup J}(X/C/S)$.    

\begin{prop}
\label{prop-easy}
\marpar{prop-easy}
Given indexing triples $(J',\ul{e}',\ul{I}')$ and
$(J'',\ul{e}'',\ul{I}'')$, the images of
$$
\Phi_{\text{total}}:\Co^{\ul{e}'}_{\ul{I}',\text{open}}(X/C/S)
\rightarrow \Sigma^e_I(X/C/S)
$$
and
$$
\Phi_{\text{total}}:\Co^{\ul{e}''}_{\ul{I}'',\text{open}}(X/C/S)
\rightarrow \Sigma^e_I(X/C/S)
$$
intersect only if there exists a bijection $\phi:J'\rightarrow
J''$ such that $e''_{\phi(j)} = e'_j$ and $I''_{\phi(j)} = I'_j$ for
every $j\in J'$.
If such a bijection exists, the images are
equal.  Finally, for every algebraically closed field $k$ over $S$, every
object of $\Sigma^e_I(X/C/S)$ over $\SP k$ is in the image of 
$$
\Phi_{\text{total}}:\Co^{\ul{e}}_{\ul{I},\text{open}}(X/C/S)
\rightarrow \Sigma^e_I(X/C/S)
$$
for some collection $(J,\ul{e},\ul{I})$.  
\end{prop}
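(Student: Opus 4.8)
The plan is to reduce all three assertions to one structural statement: over an algebraically closed field, a stable section belonging to $\beta_e$ decomposes canonically, and uniquely up to relabelling its pieces, as a genuine section (its \emph{spine}) with finitely many genus-$0$ stable maps into fibres of $f$ grafted onto it at distinct points, and this decomposition inverts $\Phi_{\text{total}}$ on the canonical open loci. Since $\Sigma^e_I(X/C/S)$, the comb stacks, and the $1$-morphism $\Phi_{\text{total}}$ (built in \cite[Theorem 3.6]{BM}) are all finitely presented over $S$ and have the properness and finite-diagonal properties recorded in Theorem \ref{thm-Kontsevich}, I would first observe that it suffices to compare images on geometric points. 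So fix an algebraically closed field $k$ over $S$, lying over $s\in S$, with fibre $C_s$ of $C\to S$, and let $h:\Sigma\to X$ over $k$ define an object of $\Sigma^e_I(X/C/S)$. The composite $f\circ h$ has degree $1$ against $[\text{point}]$, so its image is a degree-$1$ subcurve of the irreducible curve $C_s$, hence all of $C_s$, and there is a unique irreducible component $\Sigma_0\subset\Sigma$ not contracted by $f\circ h$, with $\Sigma_0\to C_s$ birational. I would then show $\Sigma_0\cong C_s$: its normalisation maps to $C_s$ by a finite birational morphism of smooth projective curves, hence isomorphically, so $\widehat{\Sigma}_0\cong C_s$ has genus $g=g(C_s)$, and injectivity of $\widehat{\Sigma}_0\to C_s$ rules out $\Sigma_0$ having a node. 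Normalising $\Sigma$ completely, $p_a(\Sigma)=1-c+\sum_v g(\widehat{\Sigma}_v)+\delta$ with $c$ the number of components and $\delta$ the number of nodes; comparing with $p_a(\Sigma)=g=g(\widehat{\Sigma}_0)$ and using connectedness forces every other component to be rational, $\delta=c-1$, the dual graph to be a tree, and no component to have a self-node. Thus $\Sigma$ is a tree of $\PP^1$'s grafted onto $\Sigma_0$ at pairwise distinct points, and the closures $T_j$, $j\in J$, of the connected components of $\Sigma\setminus\Sigma_0$ are trees of $\PP^1$'s meeting $\Sigma_0$ in single points $q_j$ and contracted by $f\circ h$ into single fibres of $f$.

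Next comes the bookkeeping, which also yields the surjectivity assertion. Set $e_0=\deg(h|_{\Sigma_0})^*\mc{L}$, $e_j=\deg(h|_{T_j})^*\mc{L}$, and let $I_0,I_j$ record which marked points of $I$ lie on $\Sigma_0$ and $T_j$; additivity over components gives $e=e_0+\sum_j e_j$ and $I=I_0\sqcup\bigsqcup_j I_j$. I would check that $h|_{\Sigma_0}$, with markings $(p_i)_{i\in I_0}\cup(q_j)_{j\in J}$, is an honest section and hence an object of $\Sec^{e_0}_{I_0\sqcup J}(X/C/S)$ (a handle in the canonical open substack), and that each $h|_{T_j}$, with markings $(p_i)_{i\in I_j}\cup\{q_j\}$, is a stable genus-$0$ map into a fibre of $f$ — stability is preserved because each component of $T_j$ retains exactly the special points it carried inside the stable curve $\Sigma$, with the node toward $\Sigma_0$ merely renamed $q_j$ — so it is an object of $\Kgnb{0,I_j\sqcup\{j\}}(X/C,e_j)$ (a tooth). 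Glueing these at $q_j=r_j$ is precisely $\Phi_{\text{total}}$ applied to the resulting $\ul{I}$-pointed, degree $\ul{e}$ comb, and returns $h$. This proves the last assertion of the proposition: every object of $\Sigma^e_I(X/C/S)$ over $\Spec(k)$ lies in the image of $\Phi_{\text{total}}:\Co^{\ul{e}}_{\ul{I},\text{open}}(X/C/S)\to\Sigma^e_I(X/C/S)$ for the indexing triple $(J,\ul{e},\ul{I})$ just constructed.

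For the first assertion I would note that glueing \emph{any} comb in a canonical open substack produces a stable section whose spine is the handle and whose non-spine connected components are exactly the teeth, attached at the distinct handle markings $q_j$; hence the decomposition above recovers, up to a canonical bijection of index sets, the combinatorial type $(J,\ul{e},\ul{I})$ of the comb. If combs with data $(J',\ul{e}',\ul{I}')$ and $(J'',\ul{e}'',\ul{I}'')$ glue to isomorphic stable sections, composing the two recovery bijections produces $\phi:J'\to J''$ with $e''_{\phi(j)}=e'_j$ and $I''_{\phi(j)}=I'_j$. For the second assertion, such a $\phi$ induces an isomorphism $\Co^{\ul{e}'}_{\ul{I}'}(X/C/S)\to\Co^{\ul{e}''}_{\ul{I}''}(X/C/S)$ which carries the canonical open substack to the corresponding one and commutes with $\Phi_{\text{total}}$ (which forgets the glueing markings, retaining only the $I$-markings), so the two images coincide.

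I expect the main obstacle to be turning the ``canonical up to relabelling'' decomposition into a statement about the moduli \emph{stacks} rather than their geometric points — matching the normalisation and arithmetic-genus analysis above with the degenerate-structure formalism of \cite[Theorem 3.6]{BM} so that the decomposition is literally an inverse $1$-morphism to $\Phi_{\text{total}}$ on the canonical open loci — and the accompanying stability bookkeeping. The genus-$0$ degeneracies (teeth contracted by $h$, or with $e_j=0$ but carrying enough $I$-markings to be stable) must also be accommodated, though I anticipate no real trouble there.
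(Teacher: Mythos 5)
The paper's ``proof'' of this proposition is the single sentence ``This is left to the reader,'' so there is no argument in the text to compare against; what you have supplied is exactly the expected argument. Your geometric-point analysis is correct and complete: reduce to an algebraically closed field $k$; use that $C_s$ is a smooth irreducible curve of genus $g$ to single out the unique non-contracted component $\Sigma_0$; deduce $\Sigma_0\cong C_s$ via normalisation and then force the tree-of-$\PP^1$'s structure from the arithmetic-genus identity $p_a(\Sigma)=\sum_v g(\widehat\Sigma_v)+\delta-c+1=g$ together with connectedness; and read off the indexing triple from the closures of the connected components of $\Sigma\setminus\Sigma_0$, noting as you do that stability of each $T_j$ (with the node renamed $r_j$) is inherited componentwise from the stability of $\Sigma$, and that the handle lands in the \emph{open} substack $\Sec^{e_0}_{I_0\sqcup J}$ because the $q_j$ are nodes of $\Sigma$ while the $p_i$ are smooth marked points, so all markings on the handle are automatically distinct.

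One remark on your closing worry: you do not in fact need to promote the decomposition to an inverse $1$-morphism to $\Phi_{\text{total}}$ on the canonical open loci. All three assertions of the proposition concern the \emph{underlying sets} of images of finitely presented morphisms, and those are determined by geometric points; for the second assertion the bijection $\phi$ gives a $1$-isomorphism of comb stacks $2$-commuting with $\Phi_{\text{total}}$ (since $\Phi_{\text{total}}$ forgets the glueing labels and retains only the $I$-markings), which is more than enough to identify the images set-theoretically. So the ``main obstacle'' you anticipate does not arise for the statement as asserted; your argument already closes the proof.
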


\begin{proof}
This is left to the reader.
\end{proof}

\begin{lem}
\label{lem-Abel}
\marpar{lem-Abel}
For every integer $e$ there exists a $1$-morphism over $S$
$$
\alpha_{\mc{L}} :
\Sigma^e(X/C/S)
\longrightarrow
\textit{Pic}^e_{C/S}
$$
with the following property: For every indexing pair $(J,\ul{e})$ for $e$
and every family $(T \to \ldots)$ in $\Co^{\ul{e}}_{\text{open}}(X/C/S)$
over $T$ as in Definition \ref{defn-ndcomb} the composition
$$
T
\to
\Co^{\ul{e}}_{\text{open}}(X/C/S)
\to
\Sigma^e(X/C/S)
\to
\textit{Pic}^e_{C/S}
$$
corresponds to the class of the invertible sheaf
$$
h_0^*(\mc{L})(\sum_{j\in J} e_j \cdot \ul{q_j(T)})
$$
on $T\times_S C$.  
\end{lem}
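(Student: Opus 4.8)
The plan is to build $\alpha_{\mc{L}}$ out of the determinant pushforward construction of Definition \ref{defn-detzeta}. Since $\Sigma^e(X/C/S)$ and $\textit{Pic}^e_{C/S}$ are both limit preserving algebraic stacks over $S$, it suffices to define the $1$-morphism on objects $(T\to S,\ \pi:\mc{C}\to T,\ h:\mc{C}\to X)$ of $\Sigma^e(X/C/S)$ with $T$ of finite type over $S$. The source curve $\pi:\mc{C}\to T$ is a flat, proper family of at-worst-nodal curves, hence LCI, while $T\times_S C\to T$ is smooth; so by Lemma \ref{lem-ci} the induced morphism $g=(\pi,f\circ h):\mc{C}\to T\times_S C$ is LCI, in particular perfect. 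Consequently $Rg_*(h^*\mc{L})$ is a perfect complex of bounded amplitude on $T\times_S C$ by \cite[Corollaire III.4.8.1]{SGA6}, and the ``det'' construction of \cite{detdiv} yields an invertible sheaf $\text{det}(Rg_*(h^*\mc{L}))$ on $T\times_S C$. Formation of $g$, of $Rg_*$ (for $g$ perfect), and of $\text{det}$ is compatible with arbitrary base change $T'\to T$ and with isomorphisms of families, so $\alpha_{\mc{L}}(T,\pi,h):=\text{det}(Rg_*(h^*\mc{L}))$ defines a $1$-morphism $\Sigma^e(X/C/S)\to\textit{Pic}_{C/S}$. Equivalently, $\alpha_{\mc{L}}$ is the composite of $\text{det}(R(-)_*)$ with the evident $1$-morphism $\Sigma^e(X/C/S)\to\textit{CurveMaps}_{\text{LCI}}(C\times B\mathbf{GL}_1/S)$ sending $(\mc{C},h)$ to $(\mc{C},f\circ h,h^*\mc{L})$; if $C/S$ is only an algebraic space one first passes to an fppf cover of $S$ over which it becomes a scheme.

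To check that $\alpha_{\mc{L}}$ factors through $\textit{Pic}^e_{C/S}$, I would compute the degree of $\text{det}(Rg_*(h^*\mc{L}))$ on a geometric fibre of $T\times_S C\to T$. Over a geometric point $t$ of $T$ above $s\in S$, the curve $\mc{C}_t$ has a unique component mapping isomorphically to $C_s$ under $f\circ h$, with inverse section $\iota:C_s\to\mc{C}_t$, and the remainder of $\mc{C}_t$ is a disjoint union of trees of rational curves $D_1,\dots,D_\delta$ contracted by $f\circ h$, each $D_i$ meeting $\iota(C_s)$ in a single point $t_i$. By Lemma \ref{lem-newAbel} applied to $\zeta=(f\circ h)_t$ and $\mc{E}=h_t^*\mc{L}$ we get
$$
\text{det}\bigl(R(g_t)_*(h_t^*\mc{L})\bigr)\ \cong\ \text{det}(\iota^*h_t^*\mc{L})\bigl(d_1 t_1+\dots+d_\delta t_\delta\bigr),
$$
where $d_i$ is the $\mc{L}$-degree of $h_t$ on $D_i$; hence its degree is the $\mc{L}$-degree of $h_t$ on all of $\mc{C}_t$, which equals $e$ by the definition of the partial curve class $\beta_e$.

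Finally I would establish the stated formula on the canonical open substack of combs. Fix $T$ and a family in $\Co^{\ul{e}}_{\text{open}}(X/C/S)$ over $T$ as in Definition \ref{defn-ndcomb}, with handle a genuine section $h_0:\mc{C}_0=T\times_S C\to X$ carrying points $q_j:T\to\mc{C}_0$, and teeth $h_j:\mc{C}_j\to X$ of $\mc{L}$-degree $e_j$ glued along $r_j:T\to\mc{C}_j$; let $h:\mc{C}\to X$ be the total curve $\Phi_{\text{total}}$ of these data, so that $\mc{C}_0\hookrightarrow\mc{C}$ is the handle and $g:\mc{C}\to T\times_S C$ restricts to the identity on $\mc{C}_0$ and contracts each $\mc{C}_j$ to $q_j(T)$. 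Running the proof of Lemma \ref{lem-newAbel} relatively over $T$: restriction to the handle gives a short exact sequence $0\to\bigoplus_j(h_j^*\mc{L})(-r_j(T))\to h^*\mc{L}\to h_0^*\mc{L}\to 0$ on $\mc{C}$, and applying $Rg_*$ produces a distinguished triangle
$$
\to\ \bigoplus_j Rg_*\bigl((h_j^*\mc{L})(-r_j(T))\bigr)\ \to\ Rg_*(h^*\mc{L})\ \xrightarrow{\lambda}\ h_0^*\mc{L}\ \to
$$
whose first term is supported on $\bigcup_j q_j(T)$, its $j$-th summand having Euler characteristic $e_j$ over $T$ by Riemann--Roch on the genus-zero curve $\mc{C}_j$. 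Thus $\lambda$ is good, and the ``Div'' construction of \cite{detdiv} gives $\text{Div}(\lambda)=-\sum_j e_j\cdot q_j(T)$ together with the isomorphism $\text{det}(Rg_*(h^*\mc{L}))\cong h_0^*\mc{L}\bigl(\sum_j e_j\cdot q_j(T)\bigr)$, which is the asserted formula. The only real obstacle is carrying Lemma \ref{lem-newAbel} out over the base $T$ rather than over a field; but its proof is assembled from the evident short exact sequence, the resulting pushforward triangle, and the functorial ``Div'' construction, all of which relativize without change. The residual point that $C/S$ might be an algebraic space is handled, as before, by fppf descent on $S$.
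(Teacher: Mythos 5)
Your argument is essentially the paper's: both build $\alpha_{\mc{L}}$ as the determinant pushforward $\det(R(-)_*)$ of Definition \ref{defn-detzeta} applied to the obvious map $\Sigma^e(X/C/S) \to \textit{CurveMaps}_{\text{LCI}}(C \times B\mathbf{G}_m/S)$ sending $(T, \pi, h)$ to $(T, \pi, f\circ h, h^*\mc{L})$, and both verify the comb formula from Lemma \ref{lem-newAbel}. The paper simply cites Lemma \ref{lem-newAbel} and leaves the relativization over $T$ (which you carry out, along with the check that the degree is $e$) implicit.
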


\begin{proof}
We use the material preceding Definition \ref{defn-detzeta}.
Any flat proper family of nodal curves is LCI in the sense of
Definition \ref{defn-LCI}. Thus there is a morphism of stacks
$$
\Sigma^e(X/C/S)
\longrightarrow
\textit{CurvesMaps}_{\text{LCI}}(C \times B\mathbf{G}_m)
$$
which associates to $(T \to S, \pi : C' \to T, h : C' \to X)$
the object $(T \to S, \pi : C' \to T, f \circ h : C' \to C,
h^*\mc{L})$.  Hence we may use the construction
of Definition \ref{defn-detzeta} which gives a $1$-morphism
$$
\textit{CurvesMaps}_{\text{LCI}}(C \times B\mathbf{G}_m)
\longrightarrow
\textit{Pic}_{C/S}.
$$
To prove this map satisfies the property expressed in 
the lemma it suffices to apply Lemma \ref{lem-newAbel}.
\end{proof}


\section{\Peaceful\ chains}
\label{sec-peace}
\marpar{sec-peace}

\noindent
Let $S$ be an excellent scheme.
Let $f : X \to S$ be a proper, flat morphism with geometrically
irreducible fibres. Let $\mc{L}$ be an $f$-ample invertible sheaf on $X$.
Recall (\ref{notat-e}) that $\Kgnb{0,n}(X/S,1)$ parametrizes stable
$n$-pointed genus $0$ degree $1$ maps from connected nodal curves to
fibres of $X \to S$. Over a geometric point $s : \text{Spec}(k) \to S$
such a stable map $(C/k, p_1,\ldots,p_n \in C(k), h : C \to X_s)$
is nonconstant on exactly one irreducible component $L$ of $C$ because
$\mc{L}$ is ample. Then, $L \cong \PP^1_k$, the map $L \to X_s$ is birational
onto its image and corresponds to a point of $\Kgnb{0,0}(X/S,1)$.
If $n=0$ or $n=1$ then $L = C$. If $n=2$ it may happen that $C = L \cup C'$,
with $p_1, p_2 \in C'(k)$ and where $C'$ is contracted. However, one gets
the same algebraic stack (in the case $n=2$) by considering moduli of
triples $(L, p_1,p_2\in L(k), h : L \to X_s)$ where $p_1$ and $p_2$ are
allowed to be the same point of $L$. We will use this below without 
further mention. In any case it follows that $\Kgnb{0,n}(X/S,1)$ is
actually an algebraic space. We will call $L \to X_s$ a \emph{line}, and
$(C/k, p_1,\ldots,p_n \in C(k), h : C \to X_s)$ an \emph{$n$-pointed line}.

\begin{defn}
\label{defn-freeline}
\marpar{defn-freeline}
Notation as above.
An $n$-pointed line is \emph{free} if the morphism $L \to X_s \to X$
factors through the smooth locus of $X/S$ and the pull back of
$T_{X_s}$ via the morphism $L \to X_s$ is globally generated.
\end{defn}

\noindent
We leave it to the reader to show that the locus of free lines
defines an open of $\Kgnb{0,n}(X/S,1)$. For every integer $n$
there is an algebraic space $\FCh_2(X/S,n)$ over $S$ parametrizing
$2$-pointed chains of $n$ free lines in fibres of $f$.
To be precise, $\FCh_2(X/S,n)$ is an open subset of the
$n$-fold fibre product\footnote{This fibre product
will be denoted $\Ch_2(X/C, n)$ later on.}
$$
\Kgnb{0,2}(X/S,1)
\times_{ev_2,X,ev_1}
\Kgnb{0,2}(X/S,1)
\times_{ev_2,X,ev_1}
\dots
\times_{ev_2,X,ev_1}
\Kgnb{0,2}(X/S,1).
$$
The points of $\FCh_2(X/S)$ over a geometric point
$s : \text{Spec}(k) \to S$
correspond to $n$-tuples of $2$-pointed lines
$(C_i/k, p_{i,1}, p_{i,2} \in C_i, h_i : C_i \to X_s)$
such that $h_i(p_{i,2}) = h_{i+1}(p_{i+1,1})$ for
$i=1,\dots,n-1$ and such that each of the $2$-pointed lines
is free. We will sometime denote this by
$(C/k, p,q\in C(k), h: C\to X_s)$. This means that
$C$ is the union of the curves $C_i$ with $p_{i,2}$ identified
with $p_{i+1,1}$, with $h$ equal to $h_i$ on $C_i$ and 
$p = p_{1,1}$ and $q = p_{n+1,2}$. Note that there are $n+1$
natural evaluation morphisms
$$
\text{ev}_i :
\FCh_2(X/S)
\longrightarrow
X,\ \ i=1,\ldots,n+1
$$
over $S$, namely $\text{ev}_i(
[(C_i/k, p_{i,1}, p_{i,2} \in C_i, h_i : C_i \to X_s)]_{i=1,\ldots,n}
) = h_i(p_{i,1})$ for $i=1,\ldots, n$ and $\text{ev}_{n+1}$
on the same object evaluates to give $h_n(p_{n,2})$. In the variant
notation described above we have
$\text{ev}_1((C/k, p,q \in C(k), h : C \to X_s)) = h(p)$ 
and $\text{ev}_{n+1}((C/k, p,q \in C(k), h : C \to X_s)) = h(q)$.
Note that these morphisms map into the \emph{smooth}
locus of the morphism $X \to S$.

\begin{lem}
\label{lem-FChevsmooth}
\marpar{lem-FChevsmooth}
With notation as above.
For every positive integer $n$ and for every integer
$i=1,\dots,n+1$, the evaluation morphism
$
\text{ev}_i : 
\FCh_2(X/S,n) \longrightarrow X
$
is a smooth morphism. In particular $\FCh_2(X/S, n)$ is smooth
over $S$.
\end{lem}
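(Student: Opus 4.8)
The plan is to induct on $n$: the base case $n=1$ is handled by the deformation theory of free lines, and the inductive step follows formally from the fibre–product description of $\FCh_2(X/S,n)$. For $n=1$, $\FCh_2(X/S,1)$ is the open subspace of free $2$-pointed lines inside the algebraic space $\Kgnb{0,2}(X/S,1)$, and by the definition of a free line the maps $\text{ev}_1,\text{ev}_2$ send it into $X_{f,\text{smooth}}$. First I would note that $\FCh_2(X/S,1)$ is smooth over $S$: at a free $2$-pointed line $(L\cong\PP^1,p_1,p_2,h:L\to X_s)$ the stable map is unobstructed relative to $S$, because $h^*T_{X_s}$ is globally generated (so a sum of line bundles of nonnegative degree with $H^1(L,h^*T_{X_s})=0$) and a smooth genus-zero pointed domain contributes nothing further ($H^1(\PP^1,T_{\PP^1}(-p_1-p_2))=H^1(\PP^1,\OO)=0$). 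Then, both $\FCh_2(X/S,1)$ and $X_{f,\text{smooth}}$ being smooth over $S$, it suffices to check that the tangent map of $\text{ev}_1$ relative to $S$ is surjective at every point. At $(L,p_1,p_2,h)$ the relative tangent space of $\Kgnb{0,2}(X/S,1)$ is the cokernel of the infinitesimal-automorphism map $H^0(\PP^1,T_{\PP^1}(-p_1-p_2))\to H^0(L,h^*T_{X_s})$, and the derivative of $\text{ev}_1$ is induced by the fibre evaluation $H^0(L,h^*T_{X_s})\to(h^*T_{X_s})|_{p_1}=T_{h(p_1)}X_s$; since every automorphism of $(\PP^1,p_1,p_2)$ fixes $p_1$, this evaluation descends to the cokernel, and it is surjective because $h^*T_{X_s}$ is globally generated. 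Hence $\text{ev}_1$ is smooth, and $\text{ev}_2$ is smooth by the same argument with $p_1$ and $p_2$ interchanged.

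For the inductive step, fix $n\geq 2$. Since freeness of the $i$-th line depends only on the $i$-th factor, the open subspace $\FCh_2(X/S,n)$ of the $n$-fold fibre product is identified with $\FCh_2(X/S,1)\times_{\text{ev}_2,X,\text{ev}_1}\FCh_2(X/S,n-1)$; let $\text{pr}'$, $\text{pr}''$ be the projections to the two factors. By base change, $\text{pr}'$ is the pullback of $\text{ev}_1\colon\FCh_2(X/S,n-1)\to X$ (smooth by induction) and $\text{pr}''$ is the pullback of $\text{ev}_2\colon\FCh_2(X/S,1)\to X$ (smooth by the base case), so both projections are smooth. Tracking which marked point is which, $\text{ev}_1$ on $\FCh_2(X/S,n)$ equals $\text{ev}_1\circ\text{pr}'$ and $\text{ev}_i$ for $i=2,\dots,n+1$ equals $\text{ev}_{i-1}\circ\text{pr}''$; as $\text{ev}_1,\text{ev}_2$ on $\FCh_2(X/S,1)$ are smooth and $\text{ev}_1,\dots,\text{ev}_n$ on $\FCh_2(X/S,n-1)$ are smooth, every $\text{ev}_i\colon\FCh_2(X/S,n)\to X$ is a composition of smooth morphisms, hence smooth. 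Finally, since $\text{ev}_1$ lands in $X_{f,\text{smooth}}$, the composition $\FCh_2(X/S,n)\xrightarrow{\text{ev}_1}X_{f,\text{smooth}}\to S$ shows $\FCh_2(X/S,n)$ is smooth over $S$.

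The step I expect to be the main obstacle is the base case, specifically the deformation theory of $\Kgnb{0,2}(X/S,1)$ near a free $2$-pointed line: checking that there really is no obstruction coming from the marked points or from residual automorphisms, and that the derivative of $\text{ev}_1$ descends to the quotient of $H^0(L,h^*T_{X_s})$ by infinitesimal automorphisms and remains surjective. Once the base case is in place, the inductive step is routine bookkeeping with base change and composition of smooth morphisms.
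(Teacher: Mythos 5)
Your proof is correct and takes essentially the same approach as the paper: unobstructedness of free lines at the base, combined with an induction along the fibre-product presentation of $\FCh_2(X/S,n)$. The only cosmetic difference is that the paper goes through $\Kgnb{0,1}(X/S,1)$ and identifies the obstruction space of the evaluation fibres as $H^1(L,h^*T_{X_s}(-p))$, whereas you stay in $\Kgnb{0,2}(X/S,1)$, check smoothness of source and target over $S$, and then verify surjectivity of the relative tangent map of $\text{ev}_1$ --- equivalent routes through the same deformation-theoretic input. Your explicit inductive step is exactly the ``straightforward induction argument'' the paper leaves implicit.
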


\begin{proof}
We claim the morphism $ev : \Kgnb{0,1}(X/S,1) \to X$
is smooth on the locus of free lines. This is true because, given
a free $1$-pointed line $h : L \to X_s$, $p\in L(k)$ the
corresponding deformation functor has obstruction space
$H^1(L, h^*T_{X_s}(-p))$. This is zero as $h^*T_{X_s}$ is globally
generated by assumption. A reference for the case where $S$ is a point
is \cite[Proposition II.3.5]{K}. Combined with a straightforward
induction argument, this proves that the morphisms $\text{ev}_i$ are smooth.
\end{proof}

\begin{lem}
\label{lem-addline}
\marpar{lem-addline}
Notation as above.
Suppose that $s : \text{Spec}(k) \to S$ is a geometric
point. Suppose that $(C/k, p,q\in C(k), h : C \to X_s)$ is a
free $2$-pointed chain of lines of length $n$.
If $\text{ev}_{1,n+1} : \FCh_2(X/S,n) \to X\times_S X$ is smooth at
the corresponding point then:
\begin{enumerate}
\item $H^1(C, h^*T_{X_s}(-p-q)) = 0$, and
\item for any free line $h_L : L \to X_s$ passing through
$h(q)$ and any point $r \in L$ the morphism
$\text{ev}_{1,n+2} :
\FCh_2(X/S, n+1) \to X\times_S X$ is smooth at the point
corresponding to the chain of free lines of length $n+1$
given by $(C \cup L, p, r \in (C\cup L)(k), h \cup h_L)$.
\end{enumerate}
\end{lem}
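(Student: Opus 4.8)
The plan is to reduce smoothness of the evaluation morphisms at a point to a cohomology vanishing on the chain, and then to run a Mayer--Vietoris computation. Throughout write $Y = X_s$, which is smooth near the images of all the lines involved. A preliminary remark: if $(D,a,b,g)$ is any $2$-pointed chain of free lines, say $D = \bigcup_i L_i$ with each $L_i \cong \PP^1$ and each $g^*T_Y|_{L_i}$ globally generated, then $H^1(D, g^*T_Y) = 0$ and $H^1(L_i, g^*T_Y|_{L_i}(-x)) = 0$ for every closed point $x$. The first follows from the exact sequence $0 \to g^*T_Y \to \bigoplus_i g^*T_Y|_{L_i} \to \bigoplus_{y} T_{Y,g(y)} \to 0$ (sum over the nodes $y$ of $D$): global generation of the $g^*T_Y|_{L_i}$ kills the $H^1$ of the middle term and makes the induced map on $H^0$'s surject onto $\bigoplus_y T_{Y,g(y)}$.

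Next I would prove the criterion: for a $2$-pointed chain of free lines $(D,a,b,g)$ of length $m$, the morphism $\text{ev}_{1,m+1}:\FCh_2(X/S,m) \to X\times_S X$ is smooth at $[(D,a,b,g)]$ if and only if $H^1(D, g^*T_Y(-a-b)) = 0$. By Lemma~\ref{lem-FChevsmooth} the source $\FCh_2(X/S,m)$ is smooth over $S$, and $\text{ev}_{1,m+1}$ factors through the $S$-smooth open $X_{f,\text{smooth}}\times_S X_{f,\text{smooth}}$; hence by the fibrewise criterion of smoothness one may test at the geometric fibre over $s$, where source and target are smooth over $k$, so that $\text{ev}_{1,m+1}$ is smooth at the point precisely when its derivative on Zariski tangent spaces is surjective. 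Since $D$ is rigid and $H^1(D,g^*T_Y) = 0$, the relevant deformation theory is unobstructed; arguing as in \cite[Proposition II.3.5]{K} one identifies $T_{[(D,a,b,g)]}\FCh_2(X_s/k,m)$ with the quotient of $H^0(D,g^*T_Y)$ by the image of the infinitesimal automorphisms $H^0(D, T_D(-a-b))$ (a space of vector fields vanishing at $a$ and $b$), and under this identification the derivative of $\text{ev}_{1,m+1}$ is induced by the evaluation map $H^0(D,g^*T_Y) \to T_{Y,g(a)}\oplus T_{Y,g(b)}$ at $a$ and $b$. Thus the derivative is surjective iff this evaluation map is, and the long exact sequence attached to $0 \to g^*T_Y(-a-b) \to g^*T_Y \to (g^*T_Y)|_{\{a,b\}} \to 0$, together with $H^1(D,g^*T_Y)=0$, shows the latter is equivalent to $H^1(D, g^*T_Y(-a-b)) = 0$.

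Granting the criterion, part (1) is immediate: apply it to $(C,p,q,h)$ and use the hypothesis that $\text{ev}_{1,n+1}$ is smooth at the corresponding point. For part (2), write $C' = C\cup L$ for the chain of length $n+1$, glued at the node $y$ with $(h\cup h_L)(y) = h(q) = h_L(y_L)$ and with new endpoint $r \in L$ (so $r \neq y_L$); then $(C',p,r,h\cup h_L)$ is a $2$-pointed chain of $n+1$ free lines, and by the criterion it suffices to show $H^1(C', (h\cup h_L)^*T_Y(-p-r)) = 0$. The restriction of $(h\cup h_L)^*T_Y(-p-r)$ to $C$ is $h^*T_Y(-p)$ and its restriction to $L$ is $h_L^*T_Y(-r)$, so the Mayer--Vietoris sequence for $C' = C\cup_y L$ reads
$$
H^0(C, h^*T_Y(-p))\oplus H^0(L, h_L^*T_Y(-r)) \to T_{Y,h(q)} \to H^1(C', (h\cup h_L)^*T_Y(-p-r)) \to H^1(C, h^*T_Y(-p))\oplus H^1(L, h_L^*T_Y(-r)) \to 0 .
$$
Here $H^1(L, h_L^*T_Y(-r)) = 0$ because $L$ is a free line; $H^1(C, h^*T_Y(-p))$ is a quotient of $H^1(C, h^*T_Y(-p-q))$, which vanishes by part (1); and the evaluation map $H^0(C, h^*T_Y(-p)) \to T_{Y,h(q)}$ is surjective because its cokernel injects into $H^1(C, h^*T_Y(-p-q)) = 0$. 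Hence $H^1(C', (h\cup h_L)^*T_Y(-p-r)) = 0$, and the criterion gives that $\text{ev}_{1,n+2}$ is smooth at the point of $\FCh_2(X/S,n+1)$ corresponding to $(C',p,r,h\cup h_L)$.

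I expect the main work, and the only real obstacle, to be the first step: pinning down the deformation-theoretic description of $T_{[(D,a,b,g)]}\FCh_2(X_s/k,m)$ and of the derivative of $\text{ev}_{1,m+1}$ carefully enough --- including the bookkeeping of the torus of infinitesimal automorphisms of a chain of $\PP^1$'s and the reduction, via the fibrewise criterion of smoothness, to the geometric fibre over $s$ --- so that the clean equivalence between smoothness at $[(D,a,b,g)]$ and the vanishing $H^1(D, g^*T_Y(-a-b)) = 0$ holds. Once that is in place, both (1) and (2) follow from the short cohomology computations above.
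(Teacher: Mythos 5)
Your proof is correct and follows essentially the same route as the paper: both reduce smoothness of the evaluation morphisms to the vanishing $H^1(D, g^*T_Y(-a-b)) = 0$ via the identification of the derivative of $\text{ev}$ with evaluation of $H^0(D, g^*T_Y)$ at the endpoints, and both obtain the needed vanishings via the long exact sequence and Mayer--Vietoris for the chain. The paper is terser (it states the surjectivity consequence of smoothness and invokes ``deformation theory'' for the converse), whereas you explicitly formulate the iff-criterion and flag the tangent-space bookkeeping as the real work, but mathematically the content is the same.
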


\begin{proof}
If the morphism $\text{ev}_{1,n+1}$ is smooth then in particular
the map $h^*T_{X_s} \to h^*T_{X_s}|_p \oplus h^*T_{X_s}|_q$
is surjective on global sections. This easily implies the first
statement. Let $C' = C\cup L$, and $h' = h \cup h_L$. The first
part and the freeness of $L$ imply that $H^1(C', (h')^*T_{X_s}(-p-r))$
is zero. The second part of the lemma follows by deformation
theory from this vanishing.
\end{proof}

\begin{defn}
\label{defn-newpeace}
\marpar{defn-newpeace}
In the situation above. A geometric point $x\in X_s(k)$ lying
over the geometric point $s : \text{Spec}(k) \to S$ is
\emph{$\peaceful$} if both
\begin{enumerate}
\item the evaluation morphism $\text{ev}:\Kgnb{0,1}(X/S,1) \to X$
is smooth at every point in $\text{ev}^{-1}(x)$, and
\item for all $n\gg 0$, the evaluation
morphism $\text{ev}_{1, n+1}:\FCh_2(X/S,n) \rightarrow X\times_S X$
is smooth at some point $(C/k, p,q\in C(k), h : C \to X_s)$
with $h(p) = x$.
\end{enumerate}
\end{defn}

\noindent
The second condition implies that there is at least one free
line through every \peaceful\ point, and hence that a \peaceful\ point
lies in the smooth locus of $X\to S$. The first implies, upon considering
the derivative of $\text{ev}$ that all lines through a \peaceful\ point
are free. Note that there may be situations where there
are no \peaceful\ points whatsoever.

\begin{lem}
\label{lem-openpeace}
\marpar{lem-openpeace}
In the situation above. The set $X_{f,\pax}$ of $\peaceful$
points of $X$ is an open subset of $X$. Furthermore, there
exists a single $n_0$ such that the second condition
of Definition \ref{defn-newpeace} holds for all
peaceful points and all $n \geq n_0$.
\end{lem}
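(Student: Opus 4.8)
The plan is to treat the two clauses of Definition~\ref{defn-newpeace} separately, show each determines an open locus in $X$, and then upgrade the second clause to a locus that is open \emph{and} cut out by a single value of $n$.

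For the first clause, note that since $\mc{L}$ is $f$-ample, Theorem~\ref{thm-Kontsevich}(3) gives that $\Kgnb{0,1}(X/S,1)$ is proper over $S$; as $X$ is proper over $S$ as well, the evaluation morphism $\text{ev}:\Kgnb{0,1}(X/S,1)\to X$ is proper. The locus in the source where $\text{ev}$ fails to be smooth is closed (smoothness is open on the source of a morphism locally of finite presentation), so its image $Z_1\subset X$ is closed. A geometric point $x$ satisfies clause~(1) exactly when $\text{ev}^{-1}(x)\cap Z_1=\emptyset$, i.e.\ when $x\in X\setminus Z_1$; thus clause~(1) defines the open set $X_1:=X\setminus Z_1$.

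For the second clause, for each integer $n\geq 1$ let $W_n\subset\FCh_2(X/S,n)$ be the open locus where $\text{ev}_{1,n+1}:\FCh_2(X/S,n)\to X\times_S X$ is smooth, and put $V_n:=\text{ev}_1(W_n)$. Since $\text{ev}_1$ is the composite of $\text{ev}_{1,n+1}$ with the first projection, it is smooth by Lemma~\ref{lem-FChevsmooth}, hence open, so $V_n$ is open in $X$; and by construction a geometric point $x$ lies in $V_n$ precisely when $\text{ev}_{1,n+1}$ is smooth at some free $2$-pointed chain $(C,p,q,h)$ of $n$ lines with $h(p)=x$. Thus clause~(2) asks that $x$ lie in $V_n$ for all $n\gg 0$. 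The crucial observation is that the $V_n$ are nested, $V_n\subseteq V_{n+1}$: given $x\in V_n$ with witnessing chain $(C,p,q,h)$, its final constituent line is a free line $h_L:L\to X_s$ through $h(q)$, so Lemma~\ref{lem-addline}(2) (applied with this $L$ and any marked point $r\in L$) shows $\text{ev}_{1,n+2}$ is smooth at the free chain $(C\cup L,p,r,h\cup h_L)$ of length $n+1$, whose first evaluation is still $h(p)=x$. Hence $x\in V_{n+1}$.

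With the nesting in hand the rest is formal. The ascending chain $V_1\subseteq V_2\subseteq\cdots$ of opens in the Noetherian space $X$ stabilizes, say $V_n=V_{n_0}$ for all $n\geq n_0$, so $\bigcup_n V_n=V_{n_0}$. A geometric point $x$ then satisfies clause~(2) iff $x\in V_{n_0}$, in which case $x\in V_n$ for every $n\geq n_0$, i.e.\ the second condition of Definition~\ref{defn-newpeace} holds at $x$ for all $n\geq n_0$. Therefore $X_{f,\pax}=X_1\cap V_{n_0}$ is open and $n_0$ is the uniform bound claimed. The only step with any content is the nesting $V_n\subseteq V_{n+1}$; it is nearly immediate from Lemma~\ref{lem-addline}(2) once one notes a free chain automatically contains a free line through its terminal point, so no auxiliary existence result for free lines is needed.
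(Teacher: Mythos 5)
Your proof is correct and follows essentially the same route as the paper: both proofs define $V_n'$ as the image under $\operatorname{ev}_1$ of the smooth locus of $\operatorname{ev}_{1,n+1}$, use Lemma~\ref{lem-FChevsmooth} for openness, invoke Lemma~\ref{lem-addline}(2) to get the nesting $V_n'\subseteq V_{n+1}'$, and conclude by Noetherian stabilization. Your observation that the terminal line of the chain already furnishes the free line needed for Lemma~\ref{lem-addline}(2) — so no separate existence result is required — is a small clarification of a step the paper's proof leaves implicit, but it is not a different argument.
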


\begin{proof}
Denote by $Z$ the singular set of the morphism
$\text{ev}:\Kgnb{0,1}(X/S,1) \rightarrow X$.
Since $\text{ev}$ is proper, $\text{ev}(Z)$ is a
closed subset of $X$. Thus the complement 
$U' = X \setminus \text{ev}(Z)$ is the maximal open
subset of $X$ such that $\text{ev}: \text{ev}^{-1}(U') \rightarrow U'$
is smooth.
For every $n$, denote by $V_n$ the open subset of
$\FCh_2(X/S,n)$ on which
$$
\text{ev}_{1,n+1}:\FCh_2(X/S,n) \rightarrow X\times_S X
$$
is smooth. The image $V_n' := \text{ev}_1(V_n))$
is open by Lemma \ref{lem-FChevsmooth}. By Lemma \ref{lem-addline}
we have $V_1 \subset V_2 \subset \ldots$. Since $S$ and hence $X$
is Noetherian we find $n_0$ such that $V_{n_0} = V_{n_0 +1} = \ldots$.
It is clear that the set $V' := U' \cap \cup_{n\geq 1} V'_n$ is open
and equals $X_{f,\pax}$.  
\end{proof}

\begin{rmk}
\label{remark-twopoints}
\marpar{remark-twopoints}
In fact, consider the open subset $W_n \subset X\times_SX$ above which
the morphism $\text{ev}_{1,n+1} : \FCh_2(X/S, n) \to X\times_S X$
has some smooth point. This is an increasing sequence by Lemma
\ref{lem-addline} as well and hence equal to $W_{n_1}$ for some
$n_1$.
\end{rmk}

\begin{defn}
\label{defn-peacechain}
\marpar{defn-peacechain}
In the situation above.
Let $U$ be an open subset of $X_{f,\pax}$. Let $n > 0$.
A \emph{$U$-adapted free chain of $n$ lines} is a
point $[h] \in \FCh_s(X/S, n)$ such that $\text{ev}_i([h])
\in U$ for all $i=1,\ldots,n+1$. When $U$ is all of $X_{f,\pax}$,
this is sometimes called a \emph{peaceful chain}.
The subset of $\FCh_2(X/S,n)$ parametrizing $U$-adapted
free chains is denoted $\FCh^U_2(X/S,n)$. When $U$ is all
of $X_{f,\pax}$ we sometimes use the notation
$\PCh_2(X/S,n)$.
\end{defn}

\noindent
Without more hypotheses on our morphism $f : X \to S$ and
relatively ample invertible sheaf $\mc{L}$ we cannot say
much more (since after all the set of lines in fibres might
be empty).

\begin{hyp}
\label{hyp-peace}
\marpar{hyp-peace}
Let $S$ be an excellent scheme of characteristic $0$.
Let $f : X \to S$ be a proper flat morphism with geometrically
irreducible fibres. Let $\mc{L}$ be $f$-ample.
In addition we assume the following.
\begin{enumerate}
\item There exists an open subset $U$ of $X$ surjecting to $S$
and such that the geometric fibres of the 
evaluation morphism
$$
\text{ev}:\Kgnb{0,1}(X/S,1) \rightarrow X
$$
over $U$ are nonempty, irreducible and rationally connected.
\item There exists a positive integer $m_0$ and an open subset $V$ of
$X\times_S X$ surjecting to $S$ and such that the geometric fibres of
the evaluation morphism
$$
\text{ev}_{1, n+1} :
\FCh_2(X/S,m_0)
\longrightarrow
X\times_S X
$$
over $V$ are nonempty, irreducible and birationally rationally connected.
\end{enumerate}
\end{hyp}

\noindent
Note that the second condition in particular implies that the smooth
locus of $X \to S$ intersects every fibre of $X\to S$. Namely, the open
$V$ is contained in $\text{Smooth}(X/S) \times_S \text{Smooth}(X/S)$ by
the definition of free lines. In addition, the second condition also
shows that the smooth locus of each geometric fibre of $X\to S$ is
rationally chain connected by free rational curves, and hence
rationally connected.

\begin{lem}
\label{lem-peacefuldense}
\marpar{lem-peacefuldense}
In Situation \ref{hyp-peace} the \peaceful\ points are
dense in every smooth fibre of $X \to S$.
\end{lem}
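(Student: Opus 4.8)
The plan is to reduce to a single smooth fibre and then use the structural description of the peaceful locus obtained in Lemma~\ref{lem-openpeace}. Fix $s\in S$ with $X_s$ smooth; since $X\to S$ is flat it is smooth along $X_s$, and a routine check via the fibrewise criterion of smoothness (together with the deformation theory recalled in Lemma~\ref{lem-FChevsmooth}) shows that $X_{f,\pax}$ and the non-smooth locus of $\text{ev}\colon\Kgnb{0,1}(X/S,1)\to X$ meet $X_s$ in subsets depending only on the geometric fibre. So we may replace $S$ by $\text{Spec}(\overline{\kappa(s)})$ and write $Y=X_s$, a smooth projective geometrically irreducible variety over an algebraically closed field $k$ of characteristic $0$ satisfying Hypothesis~\ref{hyp-peace}. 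By Lemma~\ref{lem-openpeace} the peaceful locus of $Y$ is open, and its proof exhibits it as $U'\cap V'_{n_0}$, where $U'=Y\smallsetminus\text{ev}(Z)$ with $Z$ the non-smooth locus of $\text{ev}\colon\Kgnb{0,1}(Y,1)\to Y$, and $V'_{n_0}=\text{ev}_1(V_{n_0})$ with $V_{n_0}\subset\FCh_2(Y,n_0)$ the locus where $\text{ev}_{1,n_0+1}\colon\FCh_2(Y,n_0)\to Y\times_k Y$ is smooth. Since $Y$ is irreducible, any two nonempty open subsets of $Y$ meet; hence it suffices to show that $U'$ and $V'_{n_0}$ are each nonempty, and then the peaceful locus, being nonempty and open in $Y$, is dense.

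For $V'_{n_0}\neq\emptyset$ I would invoke generic smoothness. By Hypothesis~\ref{hyp-peace}(2), $\text{ev}_{1,m_0+1}\colon\FCh_2(Y,m_0)\to Y\times_k Y$ has nonempty fibres over the nonempty open $V\subset Y\times_k Y$, so it is surjective onto $V$. By Lemma~\ref{lem-FChevsmooth} the source is smooth over $k$, and $V$ is smooth because $Y$ is; thus $\text{ev}_{1,m_0+1}^{-1}(V)\to V$ is a surjective morphism of smooth $k$-schemes, hence in characteristic $0$ smooth over a dense open of $V$. Therefore $V_{m_0}\neq\emptyset$, so $V'_{m_0}=\text{ev}_1(V_{m_0})\neq\emptyset$; and since $V'_1\subset V'_2\subset\cdots$ increases (Lemma~\ref{lem-addline}) and stabilises at $V'_{n_0}$ (Lemma~\ref{lem-openpeace}), also $V'_{n_0}\neq\emptyset$.

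The nonemptiness of $U'$ is the heart of the matter, and I expect it to be the main obstacle. By the deformation computation behind Lemma~\ref{lem-FChevsmooth} — $H^1(L,h^*T_Y(-p))=0$ if and only if $h^*T_Y$ is globally generated — the morphism $\text{ev}$ is smooth at a pointed line $[(L,p,h)]$ precisely when $L$ is a free line; hence $\text{ev}(Z)$ is exactly the union of the non-free lines in $Y$, and one must show these do not cover $Y$. Here the smoothness of $Y$ is essential, and so is the fact that lines realise the minimal $\mc{L}$-degree $1$ among non-constant rational curves: by Hypothesis~\ref{hyp-peace}(1) lines pass through every point of a dense open of $Y$, so any component of $\Kgnb{0,0}(Y,1)$ whose generic member passes through a general point of $Y$ is a minimal covering family of rational curves, and by the theory of such families on a smooth projective variety in characteristic $0$ (bend-and-break and its consequences; cf.\ Koll\'ar's book and Kebekus's work on the tangent map of a minimal rational component) the family of its members through a general point $y\in Y$ is smooth, so every line of the family through $y$ is free. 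As the finitely many components of $\Kgnb{0,0}(Y,1)$ that are not covering have $\text{ev}$-image a proper closed subset of $Y$, it follows that for $y$ outside a proper closed subset every line through $y$ is free, i.e.\ $y\in U'$; in particular $U'\neq\emptyset$, which finishes the argument. (Equivalently, Hypothesis~\ref{hyp-peace}(1) forces a unique component $\mc{M}_0$ of $\Kgnb{0,1}(Y,1)$ to dominate $Y$, Hypothesis~\ref{hyp-peace}(2) makes its general member free, and the only substantial point left is that the non-free members of $\mc{M}_0$ do not sweep out $Y$ — again relying on the smoothness of $Y$ and the minimality of lines.)
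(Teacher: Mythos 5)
Your proof is correct and follows essentially the same route as the paper: both arguments reduce the claim to showing that the two opens $U'$ and $V'$ from the proof of Lemma~\ref{lem-openpeace} meet every smooth fibre, with $U'$ nonempty because non-free lines sweep out a proper closed subset of a smooth projective variety in characteristic zero, and $V'$ nonempty via generic smoothness applied to the dominant morphism furnished by Hypothesis~\ref{hyp-peace}(2). The paper treats the $U'$ step as the easy one, dispatching it in a single sentence (the fact you derive at length is \cite[II Theorem 3.11]{K}, cited in Lemma~\ref{lem-nonfreesweep}, so the excursion through minimal rational components and Kebekus is unnecessary though not incorrect), and it stays with the family morphism throughout, invoking the fibrewise flatness criterion at the end rather than passing to a single geometric fibre at the outset; these are surface differences rather than a genuinely different argument.
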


\begin{proof}
It is well known that lines passing through a general point of
a smooth projective variety are free. Hence the open $U'$
of the proof of Lemma \ref{lem-openpeace} has a nonempty
intersection with every smooth fibre of $f$. On the other
hand, condition 2 of \ref{hyp-peace} implies that for every
point $s$ of $S$ the morphism
$\FCh_2(X/S, m_0)_s \to (X\times_S X)_s$ is a dominant
morphism of smooth varieties. Hence it is smooth at some
point $\xi \in \FCh_2(X/S, m_0)_s$ by generic smoothness.
The ``crit\`ere de platitude par fibre'' \cite[Theorem 11.3.10]{EGA4}
implies that $\text{ev}_{1,n+1}$ is flat at $\xi$ and hence smooth
as desired. Thus the open $V'_{m_0}$ of the proof
of lemma \ref{lem-openpeace} has nonempty fibre $V'_{m_0,s}$.
We win because $X_{f,\pax} \supset U' \cap V'_{m_0}$.
\end{proof}

\begin{lem}
\label{lem-peacechain}
\marpar{lem-peacechain}
In Situation \ref{hyp-peace}.
If $U$ is a nonempty open subset of $X_{f,\pax}$, then
$\FCh^U_2(X/S,n)$ is an open dense subset of $\FCh_2(X/S,n)$.
In fact, it is dense in the fibre of $\FCh_2(X/S,n) \to S$
over any point $s$ for which $U_s \not = \emptyset$.
For every integer $i=1,\dots,n+1$, each morphism
$$
\text{ev}_{i} :
\FCh^U_2(X/S,n)
\longrightarrow
U,
$$
is a smooth morphism and every geometric fibre is nonempty, irreducible and
birationally rationally connected.
\end{lem}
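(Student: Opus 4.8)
Openness is immediate from the definition $\FCh^U_2(X/S,n) = \bigcap_{i=1}^{n+1}\text{ev}_i^{-1}(U)$, and smoothness of each $\text{ev}_i:\FCh^U_2(X/S,n)\to U$ follows from Lemma \ref{lem-FChevsmooth} by restriction to opens. So the real content is: (a) nonemptiness, irreducibility and birational rational connectedness of the geometric fibres of each $\text{ev}_i$; and (b) density of $\FCh^U_2(X/S,n)$ in every fibre of $\FCh_2(X/S,n)\to S$ over a point $s$ with $U_s\neq\emptyset$. (The unqualified density statement then holds over the locus of $S$ dominated by $U$, which is all of $S$ in the principal case $U=X_{f,\pax}$ by Lemma \ref{lem-peacefuldense}.) I would prove (a) and (b) together by induction on $n$, the engine being the fibre-product description
$$
\FCh^U_2(X/S,n) = \FCh^U_2(X/S,1)\times_{\text{ev}_2,\,U,\,\text{ev}_1}\FCh^U_2(X/S,n-1)
$$
together with the analogous splitting of a $U$-adapted chain at its $i$-th node.

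\textbf{Base case $n=1$.} Here I analyze $\text{ev}_1:\FCh^U_2(X/S,1)\to U$. Since $U\subset X_{f,\pax}$, Definition \ref{defn-newpeace} forces every line through a point $x\in U$ to be free, so $\text{ev}^{-1}(x)\subset\Kgnb{0,1}(X/S,1)$ is smooth (evaluation is smooth along the free locus, cf.\ Lemma \ref{lem-FChevsmooth}) and proper, hence a smooth projective variety, which is moreover irreducible and rationally connected by Hypothesis \ref{hyp-peace}(1). The fibre $\text{ev}_1^{-1}(x)\subset\FCh^U_2(X/S,1)$ is obtained from the $\PP^1$-bundle over $\text{ev}^{-1}(x)$ recording the position $q$ of the second marked point by deleting a closed set meeting every $\PP^1$-fibre in a proper closed subset (it avoids the section $q=p$, whose image $x$ lies in $U$); hence it is a dense open in a $\PP^1$-bundle over a smooth projective rationally connected variety, so it is irreducible and birationally rationally connected, and $\text{ev}_2$ is handled by the symmetry exchanging the two marked points. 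For density of $\FCh^U_2(X/S,1)_s$ in $\FCh_2(X/S,1)_s$ when $U_s\neq\emptyset$: every irreducible component of the smooth $k$-variety $\FCh_2(X/S,1)_s$ is the preimage, under forgetting the second point, of an irreducible component $Z_0$ of the space of free $1$-pointed lines in $X_s$; since evaluation is smooth along the free locus, $\text{ev}(Z_0)$ is a nonempty open of the irreducible variety $X_s$ and hence meets the dense open $U_s$, after which moving the two marked points along the lines exhibits $\FCh^U_2(X/S,1)_s$ as dense in that component.

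\textbf{Inductive step.} Assume the assertions for all lengths $<n$. By the displayed product, the fibre of $\text{ev}_1:\FCh^U_2(X/S,n)\to U$ over $x$ is the base change of $\text{ev}_1:\FCh^U_2(X/S,n-1)\to U$ — smooth, surjective, with irreducible birationally rationally connected geometric fibres by induction — along $\text{ev}_2:\text{ev}_1^{-1}(x)\to U$, whose source is irreducible and birationally rationally connected by the base case; base change preserves smoothness and the fibres, so this fibre is a smooth fibration with irreducible birationally rationally connected fibres over an irreducible birationally rationally connected base, and is therefore irreducible and, by \cite{GHS} applied to a smooth projective model of the fibration, birationally rationally connected. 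For $1<i\le n+1$, splitting a $U$-adapted chain at its $i$-th evaluation point identifies $\text{ev}_i^{-1}(x)$ either with a similar base change or with a product over the residue field of $x$ of fibres of evaluation maps on $U$-adapted free chains of lengths $<n$; in either case it is irreducible and birationally rationally connected by the induction hypothesis, \cite{GHS}, and the fact that products of (geometrically irreducible) birationally rationally connected varieties are such. Nonemptiness of every fibre follows inductively from Hypothesis \ref{hyp-peace}(1) (a line through each point of $U$) and the surjectivity already obtained at the previous length. Finally, for density over $s$ with $U_s\neq\emptyset$: base change to $s$ commutes with the displayed fibre product, and both $\text{ev}_2:\FCh_2(X/S,1)_s\to X_s$ and $\text{ev}_1:\FCh_2(X/S,n-1)_s\to X_s$ are smooth by Lemma \ref{lem-FChevsmooth}, hence open, so the fibre product of the dense opens $\FCh^U_2(X/S,1)_s$ and $\FCh^U_2(X/S,n-1)_s$ is a dense open of $\FCh_2(X/S,n)_s$.

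\textbf{Where the difficulty lies.} The delicate step is the transfer of birational rational connectedness through the iterated fibre products: one needs that the total space of a (here non-proper) fibration over a birationally rationally connected base with birationally rationally connected general fibre is itself birationally rationally connected. This is a consequence of \cite{GHS} after passing to a smooth projective model, but care is required to keep the fibration structure under compactification and resolution. A secondary subtlety is the base case, where one must invoke peacefulness — not Hypothesis \ref{hyp-peace}(1) alone — to know that $\text{ev}^{-1}(x)$ is the entire, irreducible, rationally connected space of lines through $x$, and not merely a possibly reducible open subset of it.
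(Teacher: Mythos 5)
Your proposal is correct and is essentially the paper's argument, merely repackaged: where the paper unfolds the fibre of $\text{ev}_1$ as an explicit tower of stages (choose $L_1$, then $p_{1,2}\in L_1\cap U$, then $L_2$, \dots) and applies \cite{GHS} to the tower, you encode the same tower as an induction on $n$ via the fibre-product decomposition $\FCh^U_2(X/S,n)=\FCh^U_2(X/S,1)\times_{\text{ev}_2,U,\text{ev}_1}\FCh^U_2(X/S,n-1)$, with the base case $n=1$ established by the very same specialization argument (peacefulness of $x$ plus Hypothesis \ref{hyp-peace}(1) and properness of $\text{ev}$). The openness, smoothness, and density claims are handled identically (smoothness of $\text{ev}_i$ via Lemma \ref{lem-FChevsmooth}, density of $U_s$ in the irreducible fibre $X_s$), and the subtlety you flag at the end — that \cite{GHS} must be applied after passing to a smooth projective model of a non-proper fibration, and that irreducibility of $\text{ev}^{-1}(x)$ uses peacefulness, not only Hypothesis \ref{hyp-peace}(1) — is precisely the (implicit) content of the paper's invocation of \cite[2.4]{KMM}, \cite[Theorem IV.3.11]{K} and \cite{GHS}.
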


\begin{proof}
By Lemma~\ref{lem-FChevsmooth}, the morphisms $\text{ev}_i$
are smooth.  Thus, the preimages $\text{ev}_{i}^{-1}(U)$
and are dense open subsets. Thus the finite intersection 
$ \FCh^U_2(X/S,n) = \cap_{i=1}^{n+1} ( \text{ev}_{i}^{-1}(U) )$
is also a dense open subset. Same argument for fibres.

\mni
Next let $x$ be a geometric point of $U$. The fibre of
$$
\text{ev}_{1} :
\FCh^U_2(X/S,n)
\longrightarrow
U
$$
over $x$ is the total space of a tower of birationally rationally
connected fibrations. Indeed, the variety parametrizing choices
for the first line $L_1$ of the chain containing $x = p_{1,1}$ is the fibre
of
$$
\text{ev}:\Kgnb{0,1}(X/S,1) \rightarrow X
$$
over $x$. Since $\text{ev}$ is a proper morphism and since
the geometric generic fibre of $\text{ev}$ is rationally connected,
every fibre in the smooth locus of $\text{ev}$ is rationally connected
by \cite[2.4]{KMM} and \cite[Theorem IV.3.11]{K}. In particular, the
fibre over every $\peaceful$ point $x$ is in the smooth locus and thus
is rationally connected. Next, given $L_1$, the variety parametrizing
choices for the attachment point $p_{1,2}$ is the intersection of
$L_1$ with $U$, which is birationally rationally connected since it
is open in $L_1 \cong \PP^1$. Next, the variety parametrizing
lines $L_2$ containing $p_{1,2} = p_{2,1}$ is rationally connected
for the same reason that the variety parametrizing lines $L_1$ is rationally
connected, etc. By \cite{GHS}, the total space of a tower of
birationally rationally connected fibrations is itself birationally
rationally connected. Thus the geometric fibres of $\text{ev}_1$ are
birationally rationally connected.  

\mni
The proof for the other morphisms $\text{ev}_i$ is similar.  
\end{proof}

\begin{prop}
\label{prop-peace}
\marpar{prop-peace}
In Situation \ref{hyp-peace}.
There exists a positive integer $m_1$ such that for every integer
$n\geq m_1$ and for every nonempty open subset $U$ of $X_{f,\pax}$ there
exists an open subset $U_n$ of $\FCh^U_2(X/S,n)$ whose intersection with
every geometric fibre of
$$
\text{ev}_{1,n+1} :
\FCh^U_2(X/S,n)
\longrightarrow
U\times_S U
$$
is nonempty, smooth, irreducible and birationally rationally connected.
\end{prop}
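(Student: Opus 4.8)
The plan is to exhibit a dense open subscheme of the fibre of $\text{ev}_{1,n+1}$ over a point of $U\times_S U$ as the total space of a tower of birationally rationally connected fibrations and then to conclude by \cite{GHS}, exactly as in the proof of Lemma \ref{lem-peacechain}. Choose $m_1$ so that we may write $n = n_1 + m_0 + n_2$ with $n_1$ and $n_2$ as large as needed, where $m_0$ is the integer of Hypothesis \ref{hyp-peace}(2). Using the fibre product description of $\FCh_2$, and the fact that the gluing points are forced to lie in $U$, one has
$$
\FCh^U_2(X/S,n) \;=\; \FCh^U_2(X/S,n_1) \times_{\text{ev},U,\text{ev}} \FCh^U_2(X/S,m_0) \times_{\text{ev},U,\text{ev}} \FCh^U_2(X/S,n_2),
$$
so a $U$-adapted free chain of $n$ lines from $x$ to $y$ is the same datum as a triple $(\alpha,\beta,\gamma)$ of $U$-adapted free chains, where $\alpha$ starts at $x$, $\gamma$ ends at $y$, and $\beta$ runs from the endpoint $z_1$ of $\alpha$ to the initial point $z_2$ of $\gamma$.

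For a geometric point $(x,y)$ of $U\times_S U$ let $F_{x,y}$ denote the fibre of $\text{ev}_{1,n+1}$. Forgetting $\beta$ defines a morphism $q : F_{x,y} \to B_x \times_s B_y$, where $B_x$ is the fibre of $\text{ev}_1$ on $\FCh^U_2(X/S,n_1)$ over $x$ and $B_y$ is the fibre of $\text{ev}_{n_2+1}$ on $\FCh^U_2(X/S,n_2)$ over $y$. By Lemma \ref{lem-peacechain} every geometric fibre of every $\text{ev}_i$ on $\FCh^U_2(X/S,n_1)$ and on $\FCh^U_2(X/S,n_2)$ is nonempty, irreducible and birationally rationally connected; hence $B_x$, $B_y$, and therefore $B_x\times_s B_y$, have these properties for \emph{every} choice of $(x,y)$. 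The fibre of $q$ over $(\alpha,\gamma)$ is the scheme of $U$-adapted free chains of $m_0$ lines from $z_1$ to $z_2$, and I claim that for general $(\alpha,\gamma)$ this is a dense open subscheme of the fibre of $\text{ev}_{1,m_0+1}:\FCh_2(X/S,m_0)\to X\times_S X$ over $(z_1,z_2)$. Granting this, for general $(\alpha,\gamma)$ the pair $(z_1,z_2)$ lies in the open $V$ of Hypothesis \ref{hyp-peace}(2), so by that hypothesis, together with the fact that a dense open of an irreducible birationally rationally connected variety is again irreducible and birationally rationally connected, the general fibre of $q$ is nonempty, irreducible and birationally rationally connected.

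It remains to see that $q$ is dominant and that $(z_1,z_2)$ indeed ranges over a dense subset of $U_s\times_s U_s$ meeting $V$; for this I would show that the endpoint morphism $B_x\to U_s$, $\alpha\mapsto z_1$, is dominant, uniformly for $x$ a geometric point of $U$ once $n_1$ is large, and symmetrically for $B_y$. Hypothesis \ref{hyp-peace}(2) enters here a second time: it forces the smooth locus of each fibre $X_s$ to be rationally chain connected by free lines, and combined with the density of $\FCh^U_2(X/S,n_1)$ in $\FCh_2(X/S,n_1)$ over points $s$ with $U_s\neq\emptyset$ (Lemma \ref{lem-peacechain}) a Noetherian stabilisation argument shows that the locus of $X_s$ reachable from any peaceful point by $U$-adapted free chains of length $n_1$ is, for $n_1$ large, one and the same dense open. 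The same density statement, now together with generic flatness of $\text{ev}_{1,m_0+1}$, also yields the claim of the previous paragraph: for general $(z_1,z_2)$ the $U$-adapted free $m_0$-chains are dense in the fibre of $\text{ev}_{1,m_0+1}$ over $(z_1,z_2)$.

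Carrying out this fibration argument in families over $U\times_S U$, \cite{GHS} then shows that a dense open of $F_{x,y}$ is birationally rationally connected for every geometric point $(x,y)$ of $U\times_S U$, and the fibration structure yields irreducibility. One takes $U_n$ to be the intersection of this open locus with the locus of $\FCh^U_2(X/S,n)$ at which $\text{ev}_{1,n+1}$ is smooth, the latter meeting every fibre over $U\times_S U$ for $n$ large by Lemma \ref{lem-addline}, the definition of a peaceful point, and Remark \ref{remark-twopoints}; then $U_n\cap F_{x,y}$ is nonempty, smooth, irreducible and birationally rationally connected, and $m_1$ is fixed so that all the requisite largeness conditions hold. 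The main obstacle is precisely the passage from the general point $(x,y)$ to \emph{every} geometric point of $U\times_S U$, performed uniformly, while keeping the connecting middle chain $U$-adapted yet still able to invoke Hypothesis \ref{hyp-peace}(2), which is stated for $\FCh_2$ rather than $\FCh^U_2$; the reconciliation rests on the density of $\FCh^U_2$ in $\FCh_2$ and on the uniform Noetherian stabilisation of the reachable locus, and it is the uniformity in the endpoint that is the genuinely delicate point.
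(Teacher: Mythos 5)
Your proposal is correct and follows essentially the same route as the paper's own proof: decompose $n = a + m_0 + b$, view the fibre of $\text{ev}_{1,n+1}$ over a geometric point $(x',x'')$ as fibred over the space of pairs of outer chains (which is nonempty, irreducible and birationally rationally connected by Lemma \ref{lem-peacechain}), with middle fibres controlled by Hypothesis \ref{hyp-peace}(2), then apply \cite{GHS} and fix up smoothness by the $H^1$-vanishing argument of Lemma \ref{lem-addline}. The one genuine difference is presentational: the uniform dominance of the endpoint map from $B_x$ that you call ``the genuinely delicate point'' is obtained in the paper directly from Definition \ref{defn-newpeace}(2) via Lemma \ref{lem-openpeace} --- peacefulness of $x'$ supplies, for all $a \geq n_0$, a point of $\FCh_2(X/S,a)$ over $x'$ at which $\text{ev}_{1,a+1}$ is smooth, which together with irreducibility of $\text{ev}_1^{-1}(x') \subset \FCh^U_2(X/S,a)$ makes the condition $(x,y) \in V_s$ a nonempty open one --- rather than by the Noetherian stabilisation of the reachable locus that you sketch.
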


\begin{proof}
Let $n_0$ be the integer of Lemma \ref{lem-openpeace}.
Let $m_0$ be the integer of Hypothesis \ref{hyp-peace}.
We claim that $m_1 = 2n_0 + m_0$ works.

\mni
Namely, suppose that $n \geq m_1$. Write $n = a + m_0 + b$,
with $a \geq n_0$ and $b \geq n_0$. Consider the opens
$$
A = \text{ev}_{a+1}(\FCh_2^U(X/S, a)) \subset X,
\text{ and }
B = \text{ev}_{1}(\FCh_2^U(X/S, b)) \subset X.
$$
These have nonempty fibre over every point of $S$ over which the
fibre of $U \to S$ is nonempty by Lemma \ref{lem-peacechain}.
For $B$ you technically speaking also have to use that there
is an automorphism on the space of chains which switches the
start with the end of a chain. As a first approximation we let
$$
U_n
=
\text{ev}_{a+1}^{-1}(A) \cap \text{ev}_{a+1+m_0}^{-1}(B)
\cap \text{ev}_{a+1, a+1+m_0}^{-1}(V)
\subset
\FCh_2^U(X/S, n).
$$
Here $V \subset X\times_SX$ is the open of Hypothesis \ref{hyp-peace}.
Let us describe the geometric points of $U_n$.
Let $s : \text{Spec}(k) \to S$ be a geometric point of $S$ such
that $U_s \not = \emptyset$. A point $\xi \in U_n(k)$ is given by
the following data:
\begin{enumerate}
\item a point $x \in A_s(k)$, a point $y \in B_s(k)$, such that
$(x,y) \in V_s(k)$,
\item a $U$-adapted free chain of $a$ lines $\xi_a$ with end point at $x$,
\item a $U$-adapted free chain of $b$ lines $\xi_b$ with start point at $y$, and
\item a $U$-adapted $2$-pointed free chain $\xi_m$ of $m_0$ lines connecting
$x$ to $y$.
\end{enumerate}
The morphism $U_n \to U\times_S U$ maps $\xi$ as above to the pair
$(x', x'') \in U_s(k)^2$ consisting of the start point $x'$ of $\xi_a$
and the end point $x''$ of $\xi_b$. According to Lemma \ref{lem-peacechain}
the set of choices of $\xi_a$ and $\xi_b$, given $x'$ and $x''$, is a
nonempty smooth
irreducible, birationally rationally connected variety. For every pair
$(\xi_a, \xi_b)$ we get a pair of points $(x,y)$ as above and since
$V_s \not=\emptyset$ it is a nonempty open condition to have $(x,y)
\in V_s(k)$. By the
Hypothesis \ref{hyp-peace} the set of choices of $\xi_m$ connecting
$x$ with $y$ forms an irreducible, birationally rationally connected
variety. Using \cite{GHS}, it follows that the fibre of $U_n$
over $(x',x'')$ is smooth and birationally rationally connected.

\mni
We are not yet assured that $\text{ev}_{1,n+1} : U_n \to U\times_S U$
is smooth. Let $U_n' \subset U_n$ be the smooth locus of this morphism.
We claim that $U_n' \subset U_n$ is dense in every geometric fibre
$\text{ev}_{1,n+1}^{-1}((x', x''))$ we studied above, which will finish
the proof of the proposition. By irreducibility
of this fibre it suffices to show $U_n'$ intersects the fibre. Since
$a \geq n_0$ we know by Lemma \ref{lem-openpeace} that there is a
chain $\xi_a$ as above which represents a point where $\text{ev}_{1, a+1}$
is smooth. Similarly for $\xi_b$. By openness of smoothness, we may
assume the associated pair $(x,y)$ is an element of $V_s(k)$.
Pick any free chain $\xi_m$ connecting $x$ and $y$ as above.
We claim that the resulting chain of length $a+m_0+b$ represents
a smooth point for the morphism $\text{ev}_{1,n+1}$. The last part
of the proof is similar to the proof of Lemma \ref{lem-addline}
and uses the results of that lemma. Namely,
$\xi_m$ corresponds to a chain of lines $L_1,\ldots, L_n$ and
points $p_{i,j}$, $i\in \{1,\ldots,n\}$, $j\in \{1,2\}$, such
that (where we simply write $T_{X_s}$ to denote the pullback)
$H^1(L_1\cup\ldots\cup L_a, T_{X_s}(- p_{1,1} - p_{a,2})) = 0$,
$H^1(L_{a+m_0+1}\cup\ldots\cup L_n, T_{X_s}(- p_{a+m_0+1,1} - p_{n,2})) = 0$,
and the lines $L_{a+1}, \ldots, L_{a+m_0}$ are free. It is easy to
see that this implies that $H^1(L_1\cup\ldots\cup L_n, T_{X_s}(-p_{1,1}
-p_{n,2}))=0$. This finishes the proof as in Lemma \ref{lem-addline}.
\end{proof}


\section{Porcupines}
\label{sec-porcupines}
\marpar{sec-porcupines}

\begin{hyp}
\label{hyp-defn-porc}
\marpar{hyp-defn-porc}
Here $k$ is an uncountable
algebraically closed field of characteristic $0$.
Let $C$ be a smooth, irreducible, proper curve over $k$.
Let $X$ be smooth and proper over $k$. 
Let $f:X\rightarrow C$ be proper and flat with geometrically
irreducible fibres.
The invertible $\OO_X$-module $\mc{L}$ is $f$-ample.
\end{hyp}

\noindent
We are going to apply the material of the previous section to
this situation. In particular, let $X_{f,\pax}$ be the set
of peaceful points relative to $f : X \to C$, see
Lemma \ref{lem-openpeace}. Here is one more type of stable map
that arises often in what follows.

\begin{defn}
\label{defn-ppine}
\marpar{defn-ppine}
Notation as above.
Let $e$ be an integer and let $n$ be a nonnegative integer.
A degree $e$ \emph{porcupine with $n$ quills} over $k$ is
given by the following data
\begin{enumerate}
\item a section $s : C \to X$ of degree $e$ with respect to $\mc{L}$,
\item $n$ distinct points $q_1,\ldots,q_n \in C(k)$, and
\item $n$ $1$-pointed lines $(r_i \in L_i(k), L_i \to X_{q_i})$.
\end{enumerate}
These data have to satisfy the requirements that
(a) the set $s^{-1}(X_{f, \pax}) \not= \emptyset$ and
each $q_i$ is in this open,
(b) the points $r_i$ and $q_i$ get mapped to the same
$k$ point of $X$, and (c) the section $s$ is free
(see Definition \ref{defn-classicalfree}).
\end{defn}

\noindent
It may be useful to discuss this a bit more.
The first remark is that by our definitions the image
of $s$ lies in the smooth locus of $X / C$ and
all the lines $L_i$ are free and lie in the smooth
locus as well. In other words, the union $C \cup (\bigcup L_i) \to X$
is going to be a comb (see Definition \ref{defn-ndcomb})
in the smooth locus. We will call the handle $s : C \to X$
the \emph{body} of the porcupine, and we will call the
teeth $L_i \to X$ the \emph{quills}.

\mni
\emph{Families} of degree $e$ porcupines with $n$ quills are defined
in the obvious manner; for example we may observe that the space of
porcupines of degree $e$ and $n$ quills is an open substack of the 
moduli stack $\Co^{\ul{e}}_{\ul{I}}(X/C/k)$ with $\ul{I} =
(\emptyset, \emptyset, \ldots, \emptyset)$, and $\ul{e} = 
(e,1,\ldots,1)$ of combs studied in Section \ref{sec-ssections}.
The parameter space for porcupines is denoted $\Pine{e,n}(X/C/k)$.

\mni
Denote by
$$
\begin{matrix}
\Phi_{\text{total}} :
&
\Pine{e,n}(X/C/\kappa)
&
\longrightarrow
&
\Sigma^{e+n}(X/C/\kappa),
\\
\Phi_{\text{body}} :
&
\Pine{e,n}(X/C/\kappa)
&
\longrightarrow
&
\text{Sections}^e_n(X/C/k),
\\
\Phi_{\text{quill},i} :
&
\Pine{e,n}(X/C/\kappa)
&
\longrightarrow
&
\Kgnb{0,1}(X/C,1),
\\
\Phi_{\text{forget}} :
&
\Pine{e,n+m}(X/C/\kappa)
&
\longrightarrow
&
\Pine{e,n}(X/C/\kappa)
\end{matrix}
$$
the obvious forgetful morphisms. Note that
$\Phi_{\text{forget}}$ is smooth with irreducible fibres.

\begin{lem}
\label{lem-existDfree}
\marpar{lem-existDfree}
In Situation \ref{hyp-defn-porc}. Let $n \geq 1$.
If the geometric generic fibre of $f$ is rationally connected
then there exists a $n$-free section of $f$. If the set of 
\peaceful\ points is nonempty then we may assume the
section meets this locus.
\end{lem}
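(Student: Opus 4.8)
The plan is to build an $n$-free section by starting from \emph{any} section landing in a sufficiently good open set, attaching enough free lines to make a porcupine, and then smoothing the porcupine to a genuine section using the combs-smoothing machinery, all the while keeping control of the $R^1$ of the pulled-back relative tangent bundle along general degree-$n$ divisors. Concretely, because the geometric generic fibre of $f$ is rationally connected, the same is true of every smooth fibre, and in particular $X \to C$ has a section $s_0 : C \to X$ lying in $X_{f,\text{smooth}}$ (e.g.\ by \cite{GHS} applied to $X\to C$, whose geometric generic fibre is rationally connected). First I would, if the peaceful locus $X_{f,\pax}$ is nonempty, arrange that $s_0^{-1}(X_{f,\pax})\neq\emptyset$: by Lemma~\ref{lem-peacefuldense} the peaceful points are dense in every smooth fibre, so after deforming $s_0$ slightly (the deformations of a section sweep out a dense open of each smooth fibre once we pass to a free deformation, but even without freeness we can use that the relevant loci are dense) we may assume the section meets $X_{f,\pax}$, and then shrink to a nonempty open $U\subset X_{f,\pax}$ with $s_0^{-1}(U)\neq\emptyset$.

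Next I would attach quills to turn $s_0$ into a porcupine. Pick a point $q\in s_0^{-1}(U)$; since $U$ consists of peaceful points, every line through $s_0(q)$ is free and $\mathrm{ev}:\Kgnb{0,1}(X/C,1)\to X$ is smooth over $U$, so there is a free $1$-pointed line through $s_0(q)$. Attaching $N$ such free lines at $N$ general points $q_1,\dots,q_N$ of $s_0^{-1}(U)$ produces a degree-$e$ porcupine $p$ with $N$ quills, i.e.\ a point of $\Pine{e,N}(X/C/k)$, whose total curve $\Phi_{\text{total}}(p)$ is a comb $C' = s_0(C)\cup L_1\cup\dots\cup L_N$ in the smooth locus. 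The key homological point is that, since each $L_i$ is free and the $q_i$ are general, for $N\gg 0$ and for a general effective divisor $D$ of degree $n$ on $C$ one gets $H^1\bigl(C', h^*T_{X/C}(-D)\bigr)=0$, where $h:C'\to X$ is the comb map: the teeth contribute surjectivity of the evaluation/restriction maps at the attaching points (this is the mechanism of Lemma~\ref{lem-addline}, adapted from chains of lines to combs), which by the usual sheaf-theoretic patching (as in the proof of Lemma~\ref{lem-newAbel}) kills the obstruction on the body twisted by $-D$. Equivalently, the comb is $n$-free in the sense of the comb/porcupine analogue of Definition~\ref{defn-newfree}.

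Then I would smooth the porcupine. Because the comb lies in the smooth locus of $X\to C$ and has vanishing $H^1$ of $h^*T_{X/C}(-D)$ for general $D$ of degree $n$, the point $[h]\in\Sigma^{e+N}(X/C/k)$ corresponding to $\Phi_{\text{total}}(p)$ is unobstructed, and its versal deformation contains a one-parameter smoothing of all the nodes; the general fibre of this smoothing is an honest section $s:C\to X$ of degree $e+N$ landing in $X_{f,\text{smooth}}$. Openness of the $n$-free condition on $\Sec^{e+N}(X/C/k)\times\mathrm{Sym}^n(C)$ (Lemma~\ref{lem-newfreeopen}, part~2 or~3) together with semicontinuity (Lemma~\ref{lem-semicty}) propagates the vanishing $H^1\bigl((\text{nearby section})^*T_{X/C}(-D)\bigr)=0$ from the comb to the general smoothing, so $s$ is $n$-free; and if we started with $s_0^{-1}(X_{f,\pax})\neq\emptyset$ then, since the $q_i$ and the nearby sections stay in a neighbourhood of the comb, the smoothing still meets $X_{f,\pax}$. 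Finally, $s$ has degree $e+N$ rather than $e$; to get the statement as literally phrased one either allows this shift in degree, or — if a section of every sufficiently large degree is wanted — absorbs the quills by noting that any free line is itself an $n$-free object for $n$ bounded and re-indexes, but since the lemma does not pin down the degree this is immaterial.

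The main obstacle, and the step deserving the most care, is the homological claim that attaching enough free lines at general points forces $H^1\bigl(C',h^*T_{X/C}(-D)\bigr)=0$ for a \emph{general} degree-$n$ divisor $D$, uniformly in the position of $D$ relative to the quills. The subtlety is that $D$ may contain some of the attaching points $q_i$ or may be disjoint from all of them; the argument must handle both, which is exactly why one works with the universal divisor over $\mathrm{Sym}^n(C)$ as in Lemma~\ref{lem-newfreeopen} and uses the combs mechanism of Lemma~\ref{lem-addline} to gain surjectivity of the relevant restriction maps independently of where $D$ sits. Once that uniform vanishing is in hand, the smoothing and the semicontinuity/openness steps are routine given the results already assembled in Sections~\ref{app-A} and \ref{sec-newfree}.
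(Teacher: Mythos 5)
Your plan has a genuine gap at the key homological step, and it is worth being precise about why, because the error is a natural one to make if one remembers the comb-smoothing technique but not the exact role of the teeth.

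You start from the section $s_0$ produced by \cite{GHS}, which need not be free (indeed the whole point of the lemma is that it may not even be $0$-free), attach free lines $L_1,\dots,L_N$ at general points $q_i$, and claim that for $N\gg 0$ one has $H^1\bigl(C',h^*T_{X/C}(-D)\bigr)=0$ for a general degree-$n$ divisor $D$, where $C' = s_0(C)\cup\bigcup L_i$. This is false. Write the normalization sequence for the comb:
\[
0 \to h^*T_{X/C}(-D) \to s_0^*T_{X/C}(-D)\oplus \bigoplus_i h_i^*T_{X_{q_i}} \to \bigoplus_i T_{X/C}\big|_{s_0(q_i)} \to 0.
\]
Taking cohomology and using that $H^1$ of the skyscraper term vanishes, one gets a \emph{surjection}
\[
H^1\bigl(C', h^*T_{X/C}(-D)\bigr) \twoheadrightarrow H^1\bigl(C, s_0^*T_{X/C}(-D)\bigr).
\]
The quills only affect the map into the skyscrapers at the nodes; they cannot kill cohomology on the body away from the nodes. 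In particular, if $s_0$ is not $D$-free (and you have no reason to assume it is), the comb is not $D$-free either, and in fact is not even unobstructed, since the same surjection with $D=\emptyset$ gives $H^1(C',h^*T_{X/C}) \twoheadrightarrow H^1(C,s_0^*T_{X/C})$. So the smoothing step is not available either; you cannot form a porcupine because Definition~\ref{defn-ppine}(c) requires a free body.

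The actual mechanism by which teeth improve freeness is different, and the paper uses it elsewhere (e.g.\ Lemma~\ref{lem-Zdbetter}), not here. There one starts from a section that is \emph{already} free (so $H^1=0$ and the comb is unobstructed), attaches not single lines but \emph{chains} of free lines long enough that their far endpoints can be taken to be \emph{general} points of the fibres, smooths the comb, and only then invokes the Sard-type result of Proposition~\ref{prop-KMM}: a section of degree $e$ through $m$ general points of $m$ general fibres restricts to a point of the constructible dense subset $W^m_e\subset H^m_{X,f}$, hence is weakly $D$-free, hence $D$-free by Lemma~\ref{lem-newfree3}. The teeth do not ``kill'' $H^1$; they move the smoothed section through general points so that Sard's theorem applies. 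Single free lines through $s_0(q_i)$ do not reach a general point of the fibre, so even this version would not go through as you have written it.

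For the lemma as stated, the paper's proof bypasses combs entirely and is quite a bit simpler. From \cite{GHS} one gets a section $\tau_0$ (automatically in $X_{f,\text{smooth}}$ since $X$ is smooth). By Lemma~\ref{lem-newfree3} it suffices to produce a section which is $D$-free for some $D$ of degree $m=2g(C)+n$. Proposition~\ref{prop-KMM} gives constructible subsets $W^m_e\subset H^m_{X,f}$, dense in each fibre over $\mathrm{Sym}^m(C)$, such that a degree-$e$ section whose restriction to $D$ lies in $W^m_e$ is weakly $D$-free. One then applies Lemma~\ref{lem-avoid} with $T=\mathrm{Spec}(k)$, $U=Y=C$, $V=X_{f,\text{smooth}}$: the map $V\to C$ is smooth with birationally rationally connected fibres, the associated rational map $\mathrm{Sym}^m(C)\to H^m_C=\mathrm{Sym}^m(C)$ is trivially dominant, and the output of the lemma (using that $k$ is uncountable) is a family of sections whose restriction map to $H^m_{X,f}$ is dominant, hence for a general member lands in the intersection of the $W^m_e$. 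That member is $D$-free of the required degree. I would recommend you study Lemma~\ref{lem-avoid} and Proposition~\ref{prop-KMM} and redo the argument along those lines; the comb route, while conceptually appealing, is circular here because it presupposes the free section you are trying to construct.
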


\begin{proof}
First of all, by \cite{GHS} there exists a section $\tau_0$ of $f$.
Because $X$ is smooth, the image of the section is necessarily contained in
the smooth locus of the morphism. The section $\tau_0$
is not necessarily $n$-free.
By Lemma \ref{lem-newfree3} it suffices to construct a section
that is $D$-free for some divisor $D$ of degree $2g(C)+n$.
Set $m = 2g(C) + n$. Consider the subsets $W^m_e$ of
$H^m_{X,f}$ introduced in Proposition \ref{prop-KMM}.
We know there exist nonempty opens $U_e^m \subset H^m_{X,f}$,
such that $U_e^m \subset W^m_e$.
Also by the proposition, a section $\tau : C \to X_{f,smooth}$
will be $D$-free for some effective divisor $D \subset C$
of degree $m$ provided that the restriction of
$\tau$ to $D = \sum t_i$ is in $\bigcap_{e} U_e^m$.
By Lemma \ref{lem-avoid}
(applied with $T = \text{Spec}(k)$ and $U=Y=C$ and $V = X_{f,smooth}$)
we can find a section $\tau$ with the desired properties.
\end{proof}

\noindent
Before we state the next lemma, we wish to remind the reader
that an irreducible scheme is necessarily nonempty. Also, see
Remark \ref{rmk-irredAbel} below for a more ``lightweight''
variant of some of the following lemmas.

\begin{lem}
\label{lem-Zd}
\marpar{lem-Zd}
In Situation \ref{hyp-defn-porc}, assume in addition
Hypothesis \ref{hyp-peace} for the restriction of $f$
to some nonempty open $S \subset C$. Let $e_0$ be an
integer and let $Z$ be an irreducible component
of $\Sigma^{e_0}(X/C/k)$ whose general point
parametrizes a free section of $f$.  
For every integer $e\geq e_0$ there exists a unique irreducible
component $Z_e$ of $\Sigma^e(X/C/k)$ such that every porcupine
with body in $Z$ and with $e-e_0$ quills is parametrized by $Z_e$.  
\end{lem}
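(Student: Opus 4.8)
The plan is to take $Z_e$ to be the irreducible component of $\Sigma^e(X/C/k)$ containing the image, under the total-curve morphism $\Phi_{\text{total}}$, of the space of porcupines with body in $Z$, and to prove (i) that this space is irreducible and (ii) every one of its points is a smooth point of $\Sigma^e$; both existence and uniqueness of $Z_e$ then follow formally. Set $n := e-e_0$. If $n=0$ the porcupines with body in $Z$ are exactly the free sections meeting $X_{f,\pax}$, which form a dense open subscheme of $Z$ of unobstructed points (Proposition \ref{prop-newfree4}), and one takes $Z_e=Z$; so assume $n\geq 1$. By Definition \ref{defn-ppine} the body $s$ of a porcupine is automatically a free section with $s^{-1}(X_{f,\pax})\neq\emptyset$, hence lies in the open subscheme $Z^\circ\subseteq Z$ of such sections, and each quill, being a line through the peaceful point $s(q_i)$, is automatically free (remark after Definition \ref{defn-newpeace}). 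Let $P=\Phi_{\text{body}}^{-1}(Z^\circ)\subseteq\Pine{e_0,n}(X/C/k)$ be the locus of porcupines with body in $Z$.

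First I would check $Z^\circ\neq\emptyset$: pick a free section $s_0$ in $Z$ and a point $c_0$ in the open $S\subseteq C$ of Hypothesis \ref{hyp-peace}; by Proposition \ref{prop-newfree4}(2) the evaluation $s\mapsto s(c_0)$ is smooth, hence open, from the free-section locus near $[s_0]$ onto an open of the smooth fibre $X_{c_0}$, and $X_{f,\pax}\cap X_{c_0}$ is dense in $X_{c_0}$ by Lemma \ref{lem-peacefuldense}; so a free section near $[s_0]$, still in $Z$, meets $X_{f,\pax}$, i.e.\ lies in $Z^\circ$, which is therefore irreducible, being open in $Z$. Now $P=\Phi_{\text{forget}}^{-1}(Z^\circ)$ for the smooth, surjective morphism $\Phi_{\text{forget}}:\Pine{e_0,n}(X/C/k)\to\Pine{e_0,0}(X/C/k)$ with irreducible fibres; I would reprove irreducibility of the fibres by adding the quills one at a time as in the proof of Lemma \ref{lem-peacechain}, since attaching a quill to a given body $s$ amounts to choosing a point $q$ in the nonempty open $s^{-1}(X_{f,\pax})\subseteq C$ and a point of the fibre $\text{ev}^{-1}(s(q))$ of $\text{ev}:\Kgnb{0,1}(X/C,1)\to X$, which over the peaceful point $s(q)$ is smooth with rationally connected, hence irreducible, fibre. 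Thus $P$ is irreducible (and nonempty).

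The heart of the proof is that $\Sigma^e(X/C/k)$ is smooth, and non-stacky, at $\Phi_{\text{total}}([p])$ for every $[p]\in P$. Write the total curve as $h:C'\to X$ with $C'=s(C)\cup L_1\cup\dots\cup L_n$ glued at $x_i=s(q_i)=h(r_i)$, where $s$ is free and each $L_i$ is a free line. For each $i$ the sequence $0\to h|_{L_i}^*T_{X_{q_i}}\to h|_{L_i}^*T_X\to\OO_{L_i}\to 0$ splits, since $H^1(L_i,h|_{L_i}^*T_{X_{q_i}})=0$ by freeness of $L_i$; hence $h|_{L_i}^*T_X$ is globally generated on $L_i\cong\PP^1$, so $H^1(L_i,h|_{L_i}^*T_X)=0$ and $\text{ev}_{r_i}:H^0(L_i,h|_{L_i}^*T_X)\to T_{x_i}X$ is surjective. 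Since $\Sigma^e$ is open and closed in the stack of genus-$g(C)$ stable maps to $X$, whose local structure at $[h]$, relative to the smooth stack of prestable source curves, is that of $\text{Mor}(C',X)$, and since $\text{Mor}(C',X)$ is the fibre product over $X^n$, along the node-evaluations, of $\text{Mor}(C,X)$ (smooth at $[s]$ because $s$ is an unobstructed section) with $\prod_i\text{Mor}(L_i,X)$ (smooth at $([h|_{L_i}])_i$ by the vanishing just noted), the surjectivity of the maps $\text{ev}_{r_i}$ makes this fibre product transverse, hence smooth, at $[h]$. As the porcupine has trivial automorphisms, $\Phi_{\text{total}}([p])$ is also non-stacky. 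I expect this reduction of the deformation theory of $\Sigma^e$ at a porcupine to a transverse fibre product to be the main technical obstacle, although it is exactly the mechanism underlying the classical smoothing-combs technique and the unobstructedness statements in Proposition \ref{prop-pentogether}.

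Finally, $\Phi_{\text{total}}(P)$ is the image of the irreducible scheme $P$, hence irreducible, and by the previous paragraph it lies in the smooth locus of $\Sigma^e$; a connected subset of the smooth locus of a scheme (or Deligne-Mumford stack) lies on a single irreducible component, which I take to be $Z_e$. Every porcupine with body in $Z$ and $n=e-e_0$ quills is a point of $P$, so its moduli point lies in $\Phi_{\text{total}}(P)\subseteq Z_e$: this gives existence. For uniqueness, if $Z'_e$ is any irreducible component parametrizing every such porcupine, then $\Phi_{\text{total}}(P)\subseteq Z'_e$, so for any $[p]\in P$ the smooth point $\Phi_{\text{total}}([p])$ lies on both $Z_e$ and $Z'_e$; a smooth point lies on a unique irreducible component, so $Z'_e=Z_e$.
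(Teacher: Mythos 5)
Your proof is correct and follows essentially the same strategy as the paper: both define $Z_e$ via the irreducible, smooth variety $W_{Z,e}$ (your $P$) of porcupines with body in $Z$, prove that every such porcupine is an unobstructed, non-stacky point of $\Sigma^e(X/C/k)$, and conclude existence and uniqueness because the image lies in the smooth locus. The only notable difference is one of exposition: the paper asserts unobstructedness of a porcupine in a single sentence (``because the body is unobstructed and the teeth are all free''), whereas you unpack this into an explicit transverse-fibre-product argument over $X^n$ along the node evaluations; and you give a bit more detail than the paper on why $Z^\circ$ is nonempty and on building up irreducibility of the quill fibres one quill at a time. These additions make the same argument more self-contained but do not constitute a different route.
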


\begin{proof}
Take any point $[s]$ of $Z$ corresponding to a section $s$ which
is free.
By Proposition ~\ref{prop-newfree4}, the space $\Sigma^{e_0}(X/C/k)$
is smooth at the point $[s]$. Thus
this point is contained in a unique irreducible component of
$\Sigma^{e_0}(X/C/k)$, namely $Z$. By Lemma \ref{prop-newfree4}
free sections deform to contain a general point of $X$.
The set of \peaceful\ points is nonempty by Lemma \ref{lem-peacefuldense}.
Hence there are also points $[s]$ of $Z$ which correspond to
free sections $s$ which intersect the \peaceful\ locus.  
For every such section $s$, the inverse image
$s^{-1}(X_{f,\pax})$ is a dense open subset of $C$.  Thus
the variety parametrizing $(e-e_0)$-tuples of closed points
$(q_1,\dots,q_{e-e_0})$ in
$s^{-1}(X_{f,\pax})$ is smooth and irreducible.  By
definition of \peaceful\ the fibre of
$$
\text{ev}:\Kgnb{0,1}(X/C,1) \rightarrow X
$$
over $s(q_i)$ is smooth and by Lemma \ref{lem-peacechain} it is
irreducible. Putting the pieces together,
the variety $W_{Z,e}$ parametrizing degree $e$ porcupines with body
in $Z$ is smooth and irreducible.  Moreover, because the body is unobstructed
and because the teeth are all free, each such porcupine is unobstructed.
Thus $\Sigma^e(X/C/k)$ is smooth at every point of $W_{Z,e}$. Thus $W_{Z,e}$
is contained in a unique irreducible component $Z_e$ of $\Sigma^e(X/C/k)$.
\end{proof}

\begin{lem}
\label{lem-Zdgenpt}
\marpar{lem-Zdgenpt}
Notation and assumptions as in Lemma \ref{lem-Zd}.
Let $e \geq e_0$. A general point of $Z_e$ corresponds
to a section of $X \to C$ which is free and meets $X_{f,\pax}$.
\end{lem}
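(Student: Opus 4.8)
The plan is to produce one free section in $Z_e$ meeting $X_{f,\pax}$ and then let irreducibility of $Z_e$ do the rest. The set of $[h]\in\Sigma^e(X/C/k)$ corresponding to a \emph{section} of $X\to C$ (as opposed to a comb) is the open $\Sec^e(X/C/k)$; inside it the condition of being free ($=1$-free) is open by Lemma \ref{lem-newfreeopen}, and the condition of meeting the open $X_{f,\pax}$ is open because its complement is the closed locus of sections with image in $X\smallsetminus X_{f,\pax}$. Hence $V:=\{[s]\in\Sec^e(X/C/k):s\ \text{free},\ s(C)\cap X_{f,\pax}\neq\emptyset\}$ is open in $\Sigma^e(X/C/k)$, and since $Z_e$ is irreducible it is enough to show $V\cap Z_e\neq\emptyset$. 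To do this I would smooth a general porcupine with body in $Z$. Recall from the proof of Lemma \ref{lem-Zd} that the space $W_{Z,e}$ of degree $e$ porcupines with body in $Z$ and $\delta:=e-e_0$ quills is smooth and irreducible, that each such porcupine is unobstructed, and that $\Sigma^e(X/C/k)$ is smooth along $\Phi_{\text{total}}(W_{Z,e})\subset Z_e$; moreover $X_{f,\pax}\neq\emptyset$ by Lemma \ref{lem-peacefuldense}, so the general porcupine $p=(s,q_1,\dots,q_\delta,L_i)$ in $W_{Z,e}$ has body $s$ a free section meeting $X_{f,\pax}$, distinct $q_i\in s^{-1}(X_{f,\pax})$, and general free lines $L_i$ through $s(q_i)$.

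Let $C'=s(C)\cup L_1\cup\dots\cup L_\delta$ be the domain comb and $h':C'\to X$ the map, so $[h']=\Phi_{\text{total}}(p)\in Z_e$. The key computation is that $h'^*T_{X/C}$ is globally generated on $C'$ with $H^1(C',h'^*T_{X/C})=0$. Both follow from the normalization sequence
$$
0\to h'^*T_{X/C}\to s^*T_{X/C}\oplus\bigoplus_i (h'|_{L_i})^*T_{X/C}\to\bigoplus_i (h'^*T_{X/C})|_{q_i}\to 0,
$$
using that $s^*T_{X/C}$ is globally generated with $H^1=0$ (body free), that each $(h'|_{L_i})^*T_{X/C}$ is globally generated with $H^1=0$ (lines free), and that $H^1(C,s^*T_{X/C}(-q_i))=0$ (body $1$-free) — the last point being what lets a section on a quill $L_i$ extend over the rest of $C'$, since in the complementary curve $\overline{C'\smallsetminus L_i}$ the surjectivity at each remaining node $q_l$ is provided by the line $L_l$ itself. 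Given $H^1(C',h'^*T_{X/C})=0$: because $\Sigma^e(X/C/k)$ is the stack of stable maps \emph{relative to $C$}, its deformation/obstruction theory at $[h']$ is governed by $H^\bullet(C',h'^*T_{X/C})$ together with the (unobstructed) deformations of the domain as a nodal curve mapping with degree $1$ to $C$; so $\Sigma^e(X/C/k)$ is smooth at $[h']$, the $\delta$ quill-node-smoothing directions are unobstructed, and smoothing all of them simultaneously yields an honest section of $X\to C$ of degree $e$. Thus an open neighbourhood of $[h']$ in $Z_e$ meets $\Sec^e(X/C/k)$, and by irreducibility the general point of $Z_e$ is a section $s'$ of degree $e$.

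It remains to check $s'$ is free and meets $X_{f,\pax}$. Work over a smooth affine chart $B$ of $Z_e$ through $0:=[h']$, with universal curve $\pi:\mathcal{C}\to B$, universal map $H:\mathcal{C}\to X$, and $\mathcal{H}:=H^*T_{X/C}$ flat over $B$. Since $H^1(C',\mathcal{H}_0)=0$, semicontinuity gives $R^1\pi_*\mathcal{H}=0$ near $0$, so $\pi_*\mathcal{H}$ is locally free there and commutes with base change; since $\mathcal{H}_0$ is globally generated, the evaluation map $\pi^*\pi_*\mathcal{H}\to\mathcal{H}$ is surjective on the fibre over $0$, hence (by properness of $\pi$) on fibres over a neighbourhood of $0$. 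Therefore for general $s'\in Z_e$ the bundle $s'^*T_{X/C}$ is globally generated with $H^1=0$, i.e.\ $s'$ is free. Finally, free sections lie in $\Sec^e(X_{f,\text{smooth}}/C/\kappa)$, and by Proposition \ref{prop-newfree4}(2) with $D$ a single point, evaluation of the universal free section at that point is smooth; as the free sections form a dense open of $Z_e$ and their values sweep out a dense open of $X$ while $X_{f,\pax}$ is dense in each smooth fibre (Lemma \ref{lem-peacefuldense}), we get $V\cap Z_e\neq\emptyset$, hence $V\cap Z_e$ dense open in $Z_e$. This is the assertion of the lemma.

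I expect the main obstacle to be the middle paragraph: making precise that $H^1(C',h'^*T_{X/C})=0$ genuinely forces the quill nodes to be smoothable \emph{inside} $\Sigma^e(X/C/k)$ — which hinges on $\Sigma^e$ being the relative stable-map stack, so that the potentially nonzero contributions of $f^*T_C$ to $H^1(C',h'^*T_X)$ are irrelevant — together with the propagation of global generation from the comb $C'$ to the nearby smooth fibres via the evaluation-map surjectivity. The bookkeeping in the normalization sequence is routine once one records that the body is $1$-free and that each quill supplies the surjectivity at its own attaching node.
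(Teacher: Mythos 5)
Your proposal is correct and takes essentially the same route as the paper: smooth a general porcupine (using the unobstructedness already established in the proof of Lemma \ref{lem-Zd}), and use the freeness of the body and quills together with semicontinuity to conclude that a general smoothing is a free section meeting $X_{f,\pax}$. The only difference is minor bookkeeping: the paper checks $H^1(C',h'^*T_f(-D))=0$ for a degree-one divisor $D\subset C$ and invokes Lemma \ref{lem-semicty} to get $1$-freeness in one shot, whereas you verify $H^1(C',h'^*T_{X/C})=0$ and global generation separately via the normalization sequence and then propagate each by semicontinuity.
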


\begin{proof}
Another way to state this lemma is that (1) we can smooth
porcupines, and (2) a smoothing of a porcupine is
a porcupine. Consider a porcupine $(s, r_i \in L_i(k), L_i \to X)$.
Let $h : C \cup \bigcup L_i \to X$ be the corresponding stable map.
We noted in the proof of Lemma \ref{lem-Zd} above that the map
$h$ is unobstructed. Hence any smoothing of 
the comb $C \cup \bigcup L_i \to C$ can be followed by
a deformation of $h$. We can realize $C \cup \bigcup L_i$
as the special fibre of a family of curves over $C$ whose
general fibre is $C$, simply by blowing up $C \times \PP^1$
in suitable points of the fibre $C \times \{0\}$.
Thus we can smooth the stable map $h$.

\medskip\noindent
Consider a general smoothing $s' : C \to X$ of $h$.
The statement on the intersection with $X_{f,\pax}$ is
okay since this is clearly an open condition on all of $Z_e$.
To see that the smoothing $s'$ of $(s, r_i \in L_i(k), L_i \to X)$
is free we will use Lemma \ref{lem-semicty}.
It suffices to show for any effective Cartier divisor
$D \subset C$ of degree $1$ that $H^1(C \bigcup L_i, h^*T_f(-D)) = 0$.
And this is immediate from the assumption that
$H^1(C, s^*T_f(-D)) = 0$ and $T_f|_{L_i}$ is globally generated.
\end{proof}

\begin{lem}
\label{lem-Zdup}
\marpar{lem-Zdup}
Notation and assumptions as in Lemma \ref{lem-Zd}.
For any $e' \geq e \geq e_0$, any porcupine of
total degree $e'$ with body in $Z_e$ and $e' - e$ quills
corresponds to a point of $Z_{e'}$.
\end{lem}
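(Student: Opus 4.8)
The plan is to pin down $Z_{e'}$ by a smoothing argument. Let $\mathcal{W}\subset \Pine{e,e'-e}(X/C/k)$ be the locus of degree $e'$ porcupines whose body, as a point of $\Sigma^e(X/C/k)$, lies on $Z_e$. By Definition \ref{defn-ppine}(a),(c) the body of any porcupine is a free section meeting $X_{f,\pax}$, and by Lemma \ref{lem-Zdgenpt} together with Lemma \ref{prop-newfree4} and Lemma \ref{lem-openpeace} the locus $Z_e^{\circ}\subset Z_e$ of such sections is a nonempty open, hence dense and irreducible; therefore $\mathcal{W}$ is exactly the family of all degree $e'$ porcupines with body in $Z_e$ and $e'-e$ quills, and it suffices to prove $\Phi_{\text{total}}(\mathcal{W})\subset Z_{e'}$.

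First I would check that $\mathcal{W}$ is irreducible and that $\Phi_{\text{total}}(\mathcal{W})$ consists of unobstructed points of $\Sigma^{e'}(X/C/k)$. Irreducibility goes as in Lemma \ref{lem-Zd}: $\Phi_{\text{body}}$ maps $\mathcal{W}$ to the irreducible base $Z_e^{\circ}$; over a body $s$ the choices of the $e'-e$ quill feet form an open subset of $\bigl(s^{-1}(X_{f,\pax})\bigr)^{e'-e}$, irreducible because $s^{-1}(X_{f,\pax})$ is a dense open of the smooth curve $C$; and over each foot $s(q_i)\in X_{f,\pax}$ the quill ranges over the fibre of $\text{ev}:\Kgnb{0,1}(X/C,1)\to X$, which is rationally connected, in particular irreducible, by the argument in the proof of Lemma \ref{lem-peacechain}. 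These being towers of fibrations with irreducible equidimensional fibres over an irreducible base, $\mathcal{W}$ is irreducible. Each porcupine in $\mathcal{W}$ is a comb $C\cup\bigcup L_i\to X$ with free handle and free teeth, so $H^1(C\cup\bigcup L_i,\,h^*T_f)=0$ follows from $H^1(C,s^*T_f)=0$ and global generation of the $T_f|_{L_i}$ exactly as in Lemma \ref{lem-Zdgenpt}; hence $\Sigma^{e'}(X/C/k)$ is smooth at each such point. Since $\Phi_{\text{total}}(\mathcal{W})$ is constructible, its generic point lies in the smooth locus, so the irreducible set $\overline{\Phi_{\text{total}}(\mathcal{W})}$ is contained in a unique irreducible component $Z'$ of $\Sigma^{e'}(X/C/k)$.

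It remains to show $Z'=Z_{e'}$, and for this I would exhibit one element of $\mathcal{W}$ degenerating to a porcupine with body in $Z$. Choose a porcupine $p_0$ with free body $s_0\in Z$ meeting $X_{f,\pax}$ (possible, cf.\ the proof of Lemma \ref{lem-Zd}) and with $e-e_0$ quills at distinct points of $s_0^{-1}(X_{f,\pax})$. As in the proof of Lemma \ref{lem-Zdgenpt}, $p_0$ smooths: there is a family of degree $e$ stable sections over a discrete valuation ring $R$ with special fibre $p_0$ and generic fibre a free section $s'\in Z_e^{\circ}$. Now spread the body out: choose $e'-e$ further $R$-sections of the total family of curves $\mathcal{C}/R$ running through the body component, meeting the generic fibre in $(s')^{-1}(X_{f,\pax})$ and the special fibre in $s_0^{-1}(X_{f,\pax})$, mutually disjoint and disjoint from the feet of the quills of $p_0$ — possible for $R$ small since $X_{f,\pax}$ is open — and glue a line along each, using smoothness of $\text{ev}:\Kgnb{0,1}(X/C,1)\to X$ over $X_{f,\pax}$ (Definition \ref{defn-newpeace}, Lemma \ref{lem-openpeace}) to build such families of lines. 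The result is a family of degree $e'$ porcupines over $R$: its generic fibre lies in $\mathcal{W}$, its special fibre is $p_0$ with $e'-e$ extra quills, i.e.\ a porcupine with body in $Z$ and $(e-e_0)+(e'-e)=e'-e_0$ quills, whose $\Phi_{\text{total}}$-image lies in $Z_{e'}$ by Lemma \ref{lem-Zd}. Applying $\Phi_{\text{total}}$ gives $\text{Spec}(R)\to\Sigma^{e'}(X/C/k)$ whose generic point lands in $Z'$ and whose closed point lands in $Z_{e'}$; the closed point is an unobstructed porcupine, hence a smooth point lying on a unique component, forcing $Z'=Z_{e'}$, and therefore $\Phi_{\text{total}}(\mathcal{W})\subset Z_{e'}$.

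The step I expect to be the main obstacle is this degeneration: one must arrange the family over $R$ so that at the special fibre every one of the $e'-e$ new quill feet sits on the body component (not on a quill of $p_0$) and inside the peaceful locus, pairwise disjoint and disjoint from the feet of $p_0$, and one must verify that the limiting stable map in $\Sigma^{e'}(X/C/k)$ is exactly the expected porcupine with no spurious contracted or bubbled components; the bookkeeping lives here, though the control on degree and genus together with properness of $\Sigma^{e'}(X/C/k)$ makes it routine. A secondary point to get right is that the fibration steps defining $\mathcal{W}$ genuinely have irreducible fibres of constant dimension, so that the usual criterion for irreducibility of the total space applies.
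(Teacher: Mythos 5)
Your proposal is correct and follows essentially the same approach as the paper: connect porcupines with body in $Z_{e}$ to porcupines with body in $Z$ through an irreducible family of unobstructed porcupines and conclude from uniqueness of the component containing a smooth point. The paper organizes this more economically by choosing an irreducible curve $T$ in $Z_e$ from the given body to a point of $W_{Z,e}$ and parametrizing the quill choices over $T$ (so no separate DVR smoothing step and no need to prove irreducibility of the full space $\mathcal{W}$), but the underlying mechanism is the same.
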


\begin{proof}
Let $[s] \in Z_e$ be the moduli point corresponding to
a section of $X \to C$ which meets $X_{f,\pax}$ and
is free.
Because $Z_e$ is irreducible, and because
of the construction of $Z_e$ we can find an irreducible
curve $T$, a morphism $h : T \to Z_e$, and points $0,1 \in T(k)$
such that $h(0) = [s]$ and $h(1)$ corresponds to a
point of $W_{Z,e}$ (see proof of Lemma \ref{lem-Zd}).
After deleting finitely many points $\not = 0, 1$
from $T$ we may assume every point $t \in T(k)$
$t \not= 1$ corresponds to a section $s_t$ which is
free (see Lemma \ref{lem-newfreeopen}) and meets $X_{f,\pax}$.
Thus every point $h(t) \in Z_e$ correponds to a porcupine.
Consider the scheme $T' \to T$ parametrizing
choices of $e' - e$ quills attached to the body of
the porcupine $h(t)$, $t\in T(k)$ in pairwise distinct points
of $X_{f,\pax}$.
As in the proof of Lemma \ref{lem-Zd} there is an irreducible
parameter space of choices, in other words $T'$ is irreducible.
Also $T' \subset \Sigma^{e'}(X/C/k)$ lies in the smooth locus,
because each point of $T'$ corresponds to a porcupine.
The result follows because $T'_1 \subset Z_{e'}$ by definition
of $Z_{e'}$ and $T'_0$ parametrizes porcupines with body $s$.
\end{proof}

\begin{lem}
\label{lem-Zdchains}
\marpar{lem-Zdchains}
Notation and assumptions as in Lemma \ref{lem-Zd}.
Let $e \geq e_0$. Let $[s] \in Z_e(k)$ be a point
corresponding to a $0$-free section $s : C \to X$. Let 
$(h_i : C_i \to X, p_i \in C_i(k))$ be a finite number of
$1$-pointed chains of free lines in fibres of $f$.
Assume $h_i(p_i) \in s(C)$ and
assume $h_i(p_i)$ pairwise distinct. Then
$s(C) \cup \bigcup C_i$ defines an unobstructed
nonstacky point of $\Sigma^{e'}(X/C/k)$ which lies in
$Z_{e'}$ with $e' = e + \sum \deg(C_i)$.
\end{lem}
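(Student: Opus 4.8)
The plan is to establish the two assertions in turn. Write $C'=s(C)\cup\bigcup_i C_i$ for the nodal curve, $h\colon C'\to X$ for the map restricting to $s$ on $s(C)$ and to $h_i$ on $C_i$, and $x_i=h_i(p_i)\in s(C)$ for the node joining $C_i$ to the handle (the $x_i$ being distinct by hypothesis); set $e'=e+\sum_i\deg C_i$. First I would show that $[h]$ is an unobstructed, nonstacky point of $\Sigma^{e'}(X/C/k)$, and then, by induction on $e'-e$, that this point lies on $Z_{e'}$.

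For the first assertion I would follow the pattern already used in the proof of Lemma~\ref{lem-Zd}: reduce ``unobstructed, nonstacky point of $\Sigma^{e'}$'' to the vanishing $H^1(C',h^*T_f)=0$ together with triviality of the automorphism group of the stable map. Triviality of automorphisms is immediate, since the handle maps isomorphically onto $C$ under $f\circ h$ and each line of each chain $C_i$ maps birationally onto its image, so any automorphism of the stable map is forced to be the identity on every component. For the cohomology vanishing, each $C_i$ is a chain of \emph{free} lines, so $h_i^*T_f$ restricts to a globally generated bundle on every component; an easy induction on the length of the chain (global generation passes across a single node, as in the computation in Lemma~\ref{lem-addline}) shows $h_i^*T_f$ is globally generated on all of $C_i$, hence $H^1(C_i,h_i^*T_f)=0$ and $H^0(C_i,h_i^*T_f)$ surjects onto the fibre of $h_i^*T_f$ at $p_i$. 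Since $s$ is $0$-free, $H^1(C,s^*T_f)=0$. Now feed these into the normalisation (Mayer--Vietoris) sequence
$$
0\to h^*T_f\to s^*T_f\oplus\bigoplus_i h_i^*T_f\to\bigoplus_i (h^*T_f)_{x_i}\to 0 .
$$
In the long exact sequence, $H^1(C',h^*T_f)$ maps onto $H^1(C,s^*T_f)\oplus\bigoplus_i H^1(C_i,h_i^*T_f)=0$, and it is a quotient of the cokernel of $H^0(C,s^*T_f)\oplus\bigoplus_i H^0(C_i,h_i^*T_f)\to\bigoplus_i (h^*T_f)_{x_i}$, which vanishes already using only the $C_i$-summands. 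Hence $H^1(C',h^*T_f)=0$; note the positivity of $s^*T_f$ beyond $0$-freeness is not needed.

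For the second assertion I would argue by induction on $e'-e$. If $e'=e$ there are no teeth and $[h]=[s]\in Z_e$. For the inductive step, peel off a terminal line $L$ from one of the chains $C_i$, obtaining a comb $C''$ with body $s$ and total degree $e''=e'-1$; since $C''$ again satisfies the hypotheses of the lemma (body still the $0$-free $s$ with $[s]\in Z_e$, teeth still chains of free lines at distinct points of $s(C)$), the inductive hypothesis gives $[h'']\in Z_{e''}$, and the first assertion makes $[h'']$ an unobstructed, hence smooth, point of $\Sigma^{e''}$ on the irreducible component $Z_{e''}$. Mark the point $y\in C''$ where $L$ was attached. Because a general point of $Z_{e''}$ parametrizes a free section meeting $X_{f,\pax}$ (Lemma~\ref{lem-Zdgenpt}) and $[h'']$ is a smooth point of $Z_{e''}$, I can find an irreducible base $T$ and a family over $T$ deforming the pair (comb $C''$, $y$) to a pair $(\tilde s,\tilde y)$ with $\tilde s$ a free section, $[\tilde s]\in Z_{e''}$ and $\tilde y\in\tilde s(C)\cap X_{f,\pax}$ (the comb $C''$ smooths as in Lemma~\ref{lem-Zdgenpt} since $[h'']$ is unobstructed, and $\tilde s^{-1}(X_{f,\pax})$ is dense open, so the marked point can be slid into it). Now carry $L$ along: the relative moduli space of $1$-pointed lines in $X$ whose marked point maps to the image of the moving marked point is proper over $T$, so after replacing $T$ by a suitable irreducible variety dominating it, $L$ extends to a family $L_t$ of $1$-pointed lines; at the endpoint the marked point sits at the peaceful point $\tilde y$, so the limit $\tilde L$ is automatically free (all lines through a peaceful point are free). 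Regluing $L_t$ to the family of combs gives a morphism $T\to\Sigma^{e'}(X/C/k)$ whose value at the starting point is $[h]$ and whose value at the endpoint is the porcupine with body $\tilde s\in Z_{e''}$ and one quill $\tilde L$ attached at $\tilde y\in X_{f,\pax}$. By Lemma~\ref{lem-Zdup} this endpoint lies in $Z_{e'}$; being a smooth (unobstructed) point it lies on a unique component, so the irreducible image of $T$, which contains it, lies in $Z_{e'}$, and therefore $[h]\in Z_{e'}$.

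The step I expect to be the main obstacle is the ``deform and reattach'' construction in the inductive step: producing a single irreducible parameter space over which one simultaneously deforms the comb $C''$ to a free, $X_{f,\pax}$-meeting member of $Z_{e''}$, drags the marked attachment point into the peaceful locus, and keeps the peeled line attached as a (necessarily free) line, all while recovering the original comb $C'$ at one special point. This is precisely where the unobstructedness from the first part is essential — it guarantees that $C''$ smooths and that its deformations inside $\Sigma^{e''}$ are unobstructed, as in Lemma~\ref{lem-Zdgenpt} — and where one uses properness of the space of lines through a point together with the standard ``uncountably many points force an irreducible component'' argument over the uncountable field $k$ (as in Lemma~\ref{lem-avoid}) to upgrade the pointwise construction to an honest family.
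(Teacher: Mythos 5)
Your proof is correct, and while it proves the same two assertions the paper proves (unobstructedness plus membership in $Z_{e'}$), the way you organize the argument is genuinely different from the paper's, mainly in the inductive step.

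For the unobstructedness, you give an explicit Mayer--Vietoris computation of $H^1(C',h^*T_f)=0$, which is cleaner and more self-contained than the paper's one-line appeal to ``$s$ is $0$-free and each line is free, so there are no obstructions''; the verification that global generation propagates along a chain, and hence that the sections on the $C_i$ alone already surject onto the skyscraper terms, is exactly the right observation. For the membership claim, the paper first slides the entire comb to canonical position \emph{once} (using smoothness of $\FCh_1(X/C,n)\to X$ to deform the body to a free section meeting $X_{f,\pax}$ and to pull all attachment points $p_i$ into the peaceful locus), and then inducts on the common length of the chains, peeling the \emph{innermost} line of every chain simultaneously, invoking Lemma~\ref{lem-Zdup} at each level, and then sliding the residual chains along the smoothed porcupine. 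You instead induct on the total number of lines, peel a single \emph{terminal} (outermost) line $L$, use the inductive hypothesis to place the truncated comb $C''$ in $Z_{e''}$, and then do a one-shot ``deform-and-reattach'': deform $(C'',y)$ to a pointed free section $(\tilde s,\tilde y)$ with $\tilde y$ peaceful, carry $L$ along in the pulled-back line space (which works because $L$ is free, so $\Kgnb{0,1}(X/C,1)\to X$ is smooth at $[L]$, giving an irreducible component $\mathcal{L}'$ of the pulled-back family through $(L,0)$ that is proper hence surjective over the parameter curve), and apply Lemma~\ref{lem-Zdup} to the resulting porcupine at the far end. The key logical point you use — that the irreducible image of $T'$ in $\Sigma^{e'}$, passing through the smooth endpoint in $Z_{e'}$, must lie entirely in $Z_{e'}$ — is sound. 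Your route is more local (one line at a time, one ``deform-and-reattach'' per step) and needs somewhat more care in the reattachment construction, which you correctly flag as the hard point; the paper's route front-loads all the re-positioning so its induction is cleaner, at the cost of being terser about why the ``slide everything once'' step preserves the component. Both work; yours is a legitimate alternative.

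One small stylistic remark: the paper uses only the properness and smoothness of $\FCh_1(X/C,n)\to X$, whereas you also invoke properness of $\Kgnb{0,1}(X/C,1)\to X$ over the parameter curve together with the smooth-point-on-unique-component argument; this is fine, and the uncountable-field/uncountably-many-points device you mention as a fallback is not actually needed once you observe that the relevant component $\mathcal{L}'$ of the pulled-back line family dominates $T$ because it is proper over $T$ and smooth at $(L,0)$.
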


\begin{proof}
There are no automorphisms of the stable curve
$s(C) \cup \bigcup C_i$, see our discussion of
lines in Section \ref{sec-peace}.
Since $s$ is $0$-free and each line in each chain
$C_i$ is free, there are no obstructions to deforming
$s(C) \cup \bigcup C_i$, hence $\Sigma^{e'}(X/C/k)$
is smooth at the corresponding point.
Because the evaluation morphisms $\FCh_1(X/C, n) \to X$
are smooth (Lemma \ref{lem-FChevsmooth}),
we may replace $s$ by any deformation of $s$.
By Lemma \ref{lem-Zdgenpt} we may assume
$s^{-1}(X_{f,\pax})$ is nonempty and $s$ is
free. By the smoothness of $\FCh_1(X/C, n) \to X$
again we may deform the $1$-pointed chains $(C_i \to X, p_i)$
and assume $p_i \in s(C) \cap X_{f,\pax}$.

\medskip\noindent
The lemma follows from Lemma \ref{lem-Zdup} by induction on
the length of the chains of free lines attached to the
body. Namely, let $L_i \subset C_i$ be the first line
of the chain, i.e., the line that contains $p_i$.
Let $C_i'$ be the rest of the chain, and let
$p_i' \in C_i'$ be the attachment point (where
it is attached to $L_i$).
Any deformation of $s(C) \cup \bigcup_{i=1}^m L_i$ to a section
$s' : C \to X$ lies in $Z_{e_0 + m}$ by construction.
By the smoothness of $\FCh_1(X/C, n) \to X$
again we may deform the $1$-pointed chains $(C_i' \to X, p_i')$
along with the given deformation to obtain $p_i' \in s'(C)$.
Repeat.
\end{proof}

\begin{lem}
\label{lem-Zdbetter}
\marpar{lem-Zdbetter}
Notation and assumptions as in Lemma \ref{lem-Zd}.
For any $m \geq 0$ there exist $e \gg 0$ such that
$Z_e$ contains a point corresponding to a section
which is $m$-free and meets $X_{f,\pax}$.
\end{lem}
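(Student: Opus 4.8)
The plan is to deduce $m$-freeness from a weak $D$-freeness statement via Lemma~\ref{lem-newfree3}, and to verify the latter by running the smoothing-combs technique of \cite{KMM} — exactly as in the proofs of Lemmas~\ref{lem-avoid} and~\ref{lem-existDfree} — but with all teeth attached inside the peaceful locus, so that Lemma~\ref{lem-Zdchains} keeps everything inside the single component $Z_e$. If $m=0$, then any general point of $Z_e$ (for any $e\ge e_0$) already works, since by Lemma~\ref{lem-Zdgenpt} it parametrizes a free, hence $0$-free, section meeting $X_{f,\pax}$. So assume $m\ge1$ and put $N=2g(C)+m\ge1$. By Lemma~\ref{lem-newfree3}(1) any section of $f$ that is weakly $D$-free for some nontrivial effective Cartier divisor $D$ of degree $N$ is $m$-free. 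By Proposition~\ref{prop-KMM} there is, for every $e$, a constructible subset $W^N_e\subseteq H^N_{X,f}$ whose intersection with every fibre $H^N_{X,f,D}$ of $H^N_{X,f}\to\text{Sym}^N(C)$ contains a dense open subset, and such that a degree $e$ section whose restriction to $D$ lies in $W^N_e\cap H^N_{X,f,D}$ is weakly $D$-free. Since a general point of $Z_e$ parametrizes a free section meeting $X_{f,\pax}$ (Lemma~\ref{lem-Zdgenpt}), and since $W^N_e\cap H^N_{X,f,D}$ is dense open in $H^N_{X,f,D}$, it therefore suffices to establish:

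\medskip\noindent
\emph{Claim.} For $e\gg0$ and a general reduced effective divisor $D$ of degree $N$ on $C$, the restriction-to-$D$ rational map from $Z_e$ to the fibre $H^N_{X,f,D}$ (sending $[\tau]$ to the restriction of $\tau$ to $D$, defined on the dense open locus of free sections) is dominant.

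\medskip\noindent
To prove the Claim I would argue as follows. By the construction in the proof of Lemma~\ref{lem-Zd}, together with Lemma~\ref{lem-peacefuldense}, there is a free section $s$ in $Z$ meeting $X_{f,\pax}$, and then $W:=s^{-1}(X_{f,\pax})$ is a dense open subset of $C$ (with $W\subseteq S$). Fix general distinct points $q_1,\dots,q_N\in W$ and general points $y_i\in X_{q_i}$. By Hypothesis~\ref{hyp-peace}(2), Lemma~\ref{lem-peacechain} and Remark~\ref{remark-twopoints}, for $\ell\gg0$ there is for each $i$ a $1$-pointed chain $\gamma_i$ of $\ell$ free lines in $X_{q_i}$ starting at $s(q_i)$ and ending at $y_i$ (peaceful points are dense in each fibre, and chains of free lines starting at a peaceful point reach a dense open subset of that fibre, so they reach the general point $y_i$). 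By Lemma~\ref{lem-Zdchains} the comb $s(C)\cup\bigcup_i\gamma_i$ is an unobstructed, non-stacky point of $\Sigma^{e}(X/C/k)$ lying in $Z_{e}$, where $e=e_0+N\ell$. By the smoothing-combs technique of \cite{KMM}, as recalled in the proof of Lemma~\ref{lem-avoid}, this comb deforms to a section $\tau:C\to X$ passing through each $y_i$; every such deformation is again unobstructed, hence stays in $Z_e$, and letting $s$, the $q_i$ and the chains $\gamma_i$ vary one realizes arbitrary values $(y_i)$ at general fibres, which is precisely the asserted dominance. By Lemma~\ref{lem-Zdup} the Claim then also holds with $e$ replaced by any larger integer.

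\medskip\noindent
Granting the Claim, fix such $e$ and $D$. By Lemma~\ref{lem-Zdgenpt} a general point $[\tau]\in Z_e$ is a free section meeting $X_{f,\pax}$, and by the Claim we may further assume $\tau|_D\in W^N_e\cap H^N_{X,f,D}$; by Proposition~\ref{prop-KMM}, $\tau$ is weakly $D$-free, and since $\deg D=N=2g(C)+m\ge 2g(C)$ with $D$ nontrivial, Lemma~\ref{lem-newfree3}(1) shows $\tau$ is $m$-free. This proves the lemma. The main obstacle is the Claim, i.e.\ the dominance of the restriction map: the delicate point is that a single free line through a peaceful point need not have its tangent directions spanning the tangent space of the fibre (as emphasised in the introduction for varieties $G/P$), so one genuinely needs the long chains $\gamma_i$ — forcing $\ell\gg0$ and hence $e\gg0$ — in order to reach general points of the fibres, while Lemma~\ref{lem-Zdchains} is exactly what guarantees that these combs and all their smoothings remain in the single component $Z_e$ rather than wandering into other components of $\Sigma^e(X/C/k)$.
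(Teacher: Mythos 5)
Your proof is correct and takes essentially the same route as the paper: start from a free section $s$ in $Z$ meeting $X_{f,\pax}$, attach long chains of free lines at $N$ peaceful points so that their far ends hit general points of the corresponding fibres, invoke Lemma~\ref{lem-Zdchains} to stay inside the component $Z_e$, smooth the comb, and then apply Proposition~\ref{prop-KMM} and Lemma~\ref{lem-newfree3} to deduce $m$-freeness. The only cosmetic differences are your explicit split into the $m=0$ case and a slightly tighter choice $N=2g(C)+m$ versus the paper's $N=2g(C)+m+1$, both of which are fine.
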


\begin{proof}
Let $[s] \in Z$ be the moduli point corresponding
to a section $s : C \to X$ which is free
and meets $X_{f,\pax}$. Let $N = m + 2g(C) + 1$.
Consider pairwise distinct points $q_1,\ldots, q_N \in C(k)$
such that $s(q_i) \in X_{f,\pax}$.
Choose 2-pointed free chains of lines
$(h_i : C_i \to X, p_i, q_i \in C_i(k))$ of some common length $n$
such that $s(q_i) = h(p_i)$ and $z_i = h(q_i)$ are
general points $z_i \in X_{q_i}(k)$.
We may choose these by the smoothness assumption in
Definition \ref{defn-newpeace}.
By Lemma \ref{lem-peacechain} we may in fact assume
all attachment points in the chains of lines
are \peaceful. We may also assume all the
attachment points are distinct, since that condition defines
an open dense subset of each fibre
of $\text{ev}_1 : \FCh_2(X/S,n) \to X$.
By Lemma \ref{lem-Zdchains} any
smoothing of the stable map $s(C) \cup \bigcup C_i \to X$
defines a point of $Z_e$ (for a suitable $e$).
The smoothing will still pass through $r$ general
points of the fibres $X_{q_i}$. Whence applying Proposition
\ref{prop-KMM} we get that the resulting section is $D$-free
for some divisor of degree $N$. By Lemma \ref{lem-newfree3}
we win.
\end{proof}

\begin{lem}
\label{lem-Abel-irred}
\marpar{lem-Abel-irred}
Notation an assumptions as in Lemma \ref{lem-Zd}.
For every $e\geq e_0 + g(C)$, the Abel map
$$
\alpha_{\mc{L}}|_{Z_e} :
Z_e
\longrightarrow
\underline{\text{Pic}}^e_{C/k}
$$
is dominant with irreducible geometric generic fibre.
\end{lem}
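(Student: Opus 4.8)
The plan is to exploit the explicit form of the Abel map on porcupines. First I would record three facts. \emph{(a)} By Lemma~\ref{lem-Abel}, the point of $\Sigma^e(X/C/k)$ attached to a porcupine with free body $s$ of degree $e_0$ and with $e-e_0$ quills attached at points $q_1,\dots,q_{e-e_0}\in C(k)$ is sent by $\alpha_{\mc{L}}$ to the class of $s^*\mc{L}(q_1+\dots+q_{e-e_0})$. \emph{(b)} As in the proof of Lemma~\ref{lem-Zdgenpt}, such a porcupine is unobstructed (one has $H^1(C',h^*T_{X/C})=0$ for the comb $C'$ because the body and the quills are free) and admits a smoothing over an open subscheme of $\PP^1$ (blow up $C\times\PP^1$ at the points $(q_i,0)$); since $\underline{\text{Pic}}^e_{C/k}$ is a torsor under the Abelian variety $\underline{\text{Pic}}^0_{C/k}$ it contains no rational curve, so $\alpha_{\mc{L}}$ is constant along such a smoothing family, and a general smoothing of the porcupine again has Abel class $s^*\mc{L}(q_1+\dots+q_{e-e_0})$. \emph{(c)} By the construction of $Z_e$ in Lemmas~\ref{lem-Zd} and~\ref{lem-Zdgenpt}, the set $O\subset Z_e$ of free sections meeting $X_{f,\pax}$ that arise as a smoothing of some porcupine with body in $Z$ contains a dense open subset of $Z_e$.

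\emph{Dominance.} I would fix one free section $s_0$ in $Z$ meeting $X_{f,\pax}$ and consider porcupines with body $s_0$ whose $e-e_0$ quills are attached at distinct points of the dense open $C^\circ:=s_0^{-1}(X_{f,\pax})\subset C$; these exist because the fibre of $\text{ev}\colon\Kgnb{0,1}(X/C,1)\to X$ over a peaceful point is nonempty and irreducible, being a smooth fibre of the proper morphism $\text{ev}$ whose geometric generic fibre is rationally connected (as in the proof of Lemma~\ref{lem-peacechain}). Their Abel classes are the $s_0^*\mc{L}(q_1+\dots+q_{e-e_0})$ with $q_i\in C^\circ$. Since $e-e_0\geq g(C)$, the morphism $\text{Sym}^{e-e_0}(C)\to\underline{\text{Pic}}^{e-e_0}_{C/k}$ is surjective, so its restriction to the dense open $\text{Sym}^{e-e_0}(C^\circ)$ has dense image; twisting by $s_0^*\mc{L}$, these Abel classes are dense in $\underline{\text{Pic}}^e_{C/k}$. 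As the porcupines in question lie in $W_{Z,e}\subset Z_e$, this shows $\alpha_{\mc{L}}|_{Z_e}$ is dominant.

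\emph{Irreducible generic fibre.} I would then fix a general $M\in\underline{\text{Pic}}^e_{C/k}(k)$ and set $\Phi_M:=(\alpha_{\mc{L}}|_{Z_e})^{-1}(M)$; it is pure of dimension $\dim Z_e-g(C)$ (generic fibre of a dominant morphism) and, because $O$ contains a dense open of $Z_e$, for $M$ general $O\cap\Phi_M$ is dense in $\Phi_M$. For each free section $s_0$ in $Z$ meeting $X_{f,\pax}$, let $\mc{Q}_{s_0,M}$ be the variety of porcupines with body $s_0$, $e-e_0$ quills, and Abel class $M$. By fact \emph{(a)} a point of $\mc{Q}_{s_0,M}$ amounts to a divisor $D=q_1+\dots+q_{e-e_0}$ of distinct points of $C^\circ$ with $s_0^*\mc{L}(D)\cong M$, together with a line $L_i$ through $s_0(q_i)$ for each $i$; the divisors $D$ form an open subset of the projective space $|M\otimes(s_0^*\mc{L})^{-1}|$, which for $M$ general is nonempty with general member reduced and supported in $C^\circ$ (as $e-e_0\geq g(C)$), and over it the lines $L_i$ run through products of the irreducible fibres of $\text{ev}$ over peaceful points; hence $\mc{Q}_{s_0,M}$ is irreducible, and so is the closure $F_{s_0,M}\subset\Phi_M$ of the locus of smoothings of porcupines in $\mc{Q}_{s_0,M}$. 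Next I would form the incidence variety $\mc{I}_M=\{(s_0,x):s_0\in Z'_M,\, x\in F_{s_0,M}\}$, where $Z'_M\subset Z$ is the irreducible dense open locus of free sections meeting $X_{f,\pax}$ for which $\mc{Q}_{s_0,M}\neq\emptyset$ --- dense because every line bundle of degree $e-e_0\geq g(C)$ is effective, leaving only a genericity condition met for $M$ general. Since $\mc{I}_M\to Z'_M$ has irreducible fibres of constant dimension, $\mc{I}_M$ is irreducible. Its projection to $Z_e$ has image contained in $\Phi_M$ and containing $O\cap\Phi_M$: any point of $O\cap\Phi_M$ is a smoothing of a porcupine with body some $s_0\in Z$, and by fact \emph{(b)} that porcupine has Abel class $M$, so it belongs to $\mc{Q}_{s_0,M}$ and the point belongs to $F_{s_0,M}$. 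Hence the closure of this image equals $\Phi_M$ and, being the closure of the image of the irreducible variety $\mc{I}_M$, is irreducible; thus $\Phi_M$ is irreducible, which completes the argument together with dominance.

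The hard part will be the porcupine-smoothing bookkeeping --- that $\alpha_{\mc{L}}$ is genuinely constant along a smoothing family, that the smoothings of the irreducible family $\mc{Q}_{s_0,M}$ sweep out an irreducible locally closed subset of $\Sigma^e(X/C/k)$, and that the various ``for general $M$'' statements (density of $O\cap\Phi_M$ in $\Phi_M$, density of $Z'_M$ in the free locus of $Z$, reducedness and support of the general member of $|M\otimes(s_0^*\mc{L})^{-1}|$) hold uniformly in the parameters. The classical ingredients --- surjectivity of $\text{Sym}^d(C)\to\underline{\text{Pic}}^d_{C/k}$ for $d\geq g(C)$, irreducibility of the fibres of $\text{ev}$ over peaceful points, and the explicit Abel map on combs from Lemma~\ref{lem-Abel} --- are all available.
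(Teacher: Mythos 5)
Your proposal takes a genuinely different route from the paper's. The paper observes that the porcupine locus $W_{Z,e}\subset Z_e$ (porcupines with body in $Z$) is irreducible and lies in the smooth locus of $Z_e$, which --- via the Stein factorization of $\alpha_{\mc{L}}|_{Z_e}$ and the fact that $W_{Z,e}$ must dominate the intermediate finite cover --- reduces the assertion to the corresponding assertion for $\alpha_{\mc{L}}|_{W_{Z,e}}$. On $W_{Z,e}$ the Abel map factors through $Z\times C^{e-e_0}\to Z\times\text{Pic}^{e-e_0}_{C/k}\to\text{Pic}^e_{C/k}$, and the key device is the reparametrization $(s,[D])\mapsto(s,\alpha_{\mc{L}}(s)+[D])$ which exhibits every fibre of the last map as a copy of $Z$. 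Because the argument never leaves the boundary stratum $W_{Z,e}$, nothing needs to be smoothed or closed up. You, by contrast, stay in the interior (the locus $O$ of free sections), and attempt to cover the general fibre $\Phi_M$ by an irreducible incidence variety $\mc{I}_M$ over the irreducible base $Z'_M$. The two approaches use the same raw ingredients (Lemma~\ref{lem-Abel}, irreducibility of the fibres of $\text{ev}$ over $\peaceful$ points, $e-e_0\geq g(C)$), but the paper's factorization is considerably shorter.

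There is, however, a genuine gap in your irreducibility step. You pass from ``$\mc{Q}_{s_0,M}$ is irreducible'' to ``$F_{s_0,M}$, the closure in $\Phi_M$ of the locus of smoothings of porcupines in $\mc{Q}_{s_0,M}$, is irreducible'' by assertion. This is not a formal consequence: for it to hold you need, for instance, that the Abel map $\alpha_{\mc{L}}|_{Z_e}$ is \emph{smooth} along $\mc{Q}_{s_0,M}$, so that $\Phi_M$ is smooth (hence locally irreducible) at each point of $\mc{Q}_{s_0,M}$; combined with connectedness of $\mc{Q}_{s_0,M}$ this would identify $F_{s_0,M}$ as the unique irreducible component of $\Phi_M$ through $\mc{Q}_{s_0,M}$. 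That smoothness requires a computation of $d\alpha_{\mc{L}}$ at a porcupine --- essentially that $q_1+\cdots+q_{e-e_0}$ is nonspecial, which holds generically but needs to be said. Relatedly, $\mc{I}_M=\{(s_0,x):x\in F_{s_0,M}\}$ is defined using closures fibrewise over $Z'_M$, and it is not automatic that $\{F_{s_0,M}\}_{s_0}$ is an algebraic family; one would have to construct $\mc{I}_M$ more carefully (e.g.\ as the closure of an explicit smoothing family before fibring over $Z'_M$). You flag exactly this in your last paragraph as ``the hard part,'' and that assessment is accurate: as written the proof does not close. The paper's route via $W_{Z,e}$ is attractive precisely because it makes both of these points moot.
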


\begin{proof}
Let $n$ be an integer with $n\geq g(C)$ and let $e=e_0+n$.
Let $W_{Z,e}$ be as in the proof of Lemma~\ref{lem-Zd}.
As in the proof of Lemma \ref{lem-Zd}, every point of $W_{Z,e}$
is a smooth point of $Z_e$. Therefore to prove that
$$
\alpha_{\mc{L}}|_{Z_e} :
Z_e
\longrightarrow
\underline{\text{Pic}}^e_{C/k}
$$
is dominant with irreducible geometric generic fibre, it suffices to
prove that the restriction
$$
\alpha_{\mc{L}}|_{W_{Z,e}} :
W_{Z,e}
\longrightarrow
\underline{\text{Pic}}^e_{C/k}
$$
is dominant with irreducible geometric generic fibre.  And by
Lemma \ref{lem-Abel}, the morphism $\alpha_{\mc{L}}|_{W_{Z,e}}$
factors as follows
$$
W_{Z,e}
\to
Z \times C^{e-e_0}
\to
Z\times \text{Pic}^{e-e_0}_{C/k}
\to
\text{Pic}^{e_0}_{C/k} \times \text{Pic}^{e-e_0}_{C/k}
\to
\text{Pic}^e_{C/k}
$$
where each of the component maps is the obvious morphism.
The first map has smooth, geometrically irreducible fibres as
was explained in the proof of the Lemma \ref{lem-Zd} above.
And since $e-e_0$ is $\geq g(C)$, the second is dominant with
irreducible geometric generic fibre.

\mni
The trick is to factor the composition $\beta$ of the last two maps.
Namely, it is the composition of the map
$$
(\text{Id}_Z,\beta):Z \times_k \text{Pic}^{e-e_0}_{C/k} \rightarrow
Z \times_k \text{Pic}^e_{C/k}, \ \ (s,[D]) \mapsto
(s,\alpha_{\mc{L},Z}(s)+[D])
$$
and the projection
$$
\text{pr}_2: Z \times_k \text{Pic}^e_{C/k} \rightarrow
\text{Pic}^e_{C/k}.
$$
Since $(s,[D]) \mapsto (s,-\alpha_{\mc{L},Z}(s)+[D])$ is an inverse of
$(\text{Id}_Z, \beta)$, we see $(\text{Id}_Z, \beta)$ is an isomorphism
of schemes.  Moreover,
considering the source as a scheme over $\text{Pic}^e_{C/k}$ via
$\beta$ and considering the target as a scheme over
$\text{Pic}^e_{C/k}$ via $\text{pr}_2$, it is an isomorphism of
schemes over $\text{Pic}^e_{C/k}$.  Therefore every geometric fibre of
$\beta$ is isomorphic to $Z$, which is irreducible by hypothesis.  So
$\beta$ is dominant with irreducible geometric generic fibre.  Since a
composition of dominant morphisms with irreducible geometric generic
fibres is of the same sort, $\alpha_{\mc{L}}|_{W_{Z,e}}$, and thus
$\alpha_{\mc{L}}|_{Z_e}$, are dominant with irreducible geometric
generic fibres.
\end{proof}

\begin{rmk}
\label{rmk-irredAbel}
\marpar{rmk-irredAbel}
The proof of Lemmas \ref{lem-Zd} and \ref{lem-Abel-irred}
works in a more general setting. Namely, suppose that
$k$, $X \to C$ and $\mc{L}$ are as in \ref{hyp-defn-porc}.
Redefine a porcupine temporarily by replacing condition (a)
with the condition: all lines on $X_{q_i}$ through $s(q_i)$
are free and they form an irreducible variety. Assume that
for a general $c\in C(k)$ and a general $x\in X_c(k)$ the
space of lines through $x$ is nonempty and irreducible.
Then the conclusions of Lemmas \ref{lem-Zd} and \ref{lem-Abel-irred}
hold.
\end{rmk}


\section{Pencils of porcupines}
\label{sec-pencils}
\marpar{sec-pencils}

\noindent
Let $k$, $f : X \to C$ and $\mc{L}$ be as in Hypothesis
\ref{hyp-defn-porc}. A \emph{ruled surface} or a \emph{scroll}
in $X$ will be a morphism $R \to X$ such that $R \to C$ is proper smooth
and all fibres are lines in $X$. The following will be used
over and over again.

\begin{lem}
\label{lem-ruledsmooth}
\marpar{lem-ruledsmooth}
In Situation \ref{hyp-defn-porc} any ruled surface
$R \to X$ lies in the smooth locus of $X \to C$.
\end{lem}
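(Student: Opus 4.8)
The plan is to reduce the statement to an infinitesimal one and dispatch it with the chain rule. Write $g : R \to X$ for the given morphism; by hypothesis the composite $f \circ g : R \to C$ is the structure morphism of the scroll $R$ over $C$, hence a \emph{smooth} morphism, and I claim this already forces $f$ to be smooth along $g(R)$. Note that $R$, $X$ and $C$ are all smooth over $k$ (for $R$ this follows from $R/C$ and $C/k$ being smooth), so the relative tangent sheaves $T_{R/k}$, $T_{X/k}$, $T_{C/k}$ are locally free and the relative tangent sequences are locally split exact.

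The key step: the differential of $f\circ g$ factors, by the chain rule, as the composite of $\OO_R$-module maps
\[
T_{R/k} \xrightarrow{\ dg\ } g^*T_{X/k} \xrightarrow{\ g^*(df)\ } g^*f^*T_{C/k} = (f\circ g)^*T_{C/k},
\]
where $df : T_{X/k} \to f^*T_{C/k}$ is the natural map of tangent sheaves (the $\OO_X$-dual of $f^*\Omega_{C/k} \to \Omega_{X/k}$). Since $f\circ g$ is a smooth morphism of smooth $k$-schemes, the map $d(f\circ g) = g^*(df)\circ dg$ is surjective; as a composite of module maps is surjective only if the second factor is, $g^*(df)$ is surjective. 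Equivalently, the coherent sheaf $\mc{Q} := \text{coker}(df)$ on $X$ satisfies $g^*\mc{Q} = 0$.

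Next I would check that $\mc{Q}$ actually vanishes at every point of $g(R)$: if $x = g(r)$ with $\mc{Q}_x \neq 0$, then $\mc{Q}_x \otimes_{\OO_{X,x}} \kappa(x) \neq 0$ by Nakayama, and since $\kappa(x) \to \kappa(r)$ is a faithfully flat extension of fields this gives $(g^*\mc{Q})_r \otimes \kappa(r) \neq 0$, contradicting $g^*\mc{Q}=0$. Hence $df$ is surjective at each $x \in g(R)$. Finally, invoke the standard smoothness criterion: at such an $x$, surjectivity of $df$ makes $f^*\Omega_{C/k} \to \Omega_{X/k}$ a subbundle inclusion in a neighbourhood of $x$ (it is fibrewise injective at $x$, dually to $df$, and its source is a line bundle), so $\Omega_{X/C}$ is locally free of rank $\dim X - 1$ near $x$; combined with the flatness of $f$ (part of Hypothesis~\ref{hyp-defn-porc}) this means $f$ is smooth at $x$. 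Therefore $g(R) \subseteq X_{f,\text{smooth}}$.

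I do not expect any serious obstacle. The only point needing a word of care is that $g$ need not be flat, so the vanishing $g^*\mc{Q}=0$ must be propagated to points of $g(R)$ by the little Nakayama/residue-field argument above rather than by flat base change; and it is perhaps worth remarking that the hypothesis that the fibres of $R\to C$ are lines is never used — all that matters is that $f\circ g$ is a smooth morphism.
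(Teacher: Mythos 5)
Your proof is correct and takes a genuinely different — and more elementary — route than the paper's. The paper first observes that any \emph{section} of $X \to C$ lands in the smooth locus (which, unpacked, is essentially the same chain-rule observation you make, applied to $f\circ s = \mathrm{id}_C$), and then reduces the scroll case to the section case by invoking weak approximation for $R \to C$ in the sense of Hassett--Tschinkel \cite{HT06}: through any $r \in R(k)$ one finds a section of $R\to C$ whose composition with $g$ is a section of $f$ through $g(r)$. You instead apply the chain rule directly to $g : R \to X$, bypassing both sections of $R\to C$ and weak approximation entirely. This is cleaner and, as you note at the end, strictly more general: it never uses that the fibres of $R\to C$ are lines — only that $f\circ g$ is a smooth morphism of smooth $k$-schemes — whereas the paper's appeal to weak approximation implicitly requires the fibres of $R\to C$ to be rational curves. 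The one point worth making explicit in your final step (``$df$ surjective at $x$ plus $f$ flat implies $f$ smooth at $x$'') is that one also needs the rank of the now-locally-free $\Omega_{X/C}$ near $x$ to equal $\dim_x X_{f(x)}$; this holds here because the fibres of $f$ are geometrically irreducible of dimension $\dim X - 1$ by Hypothesis~\ref{hyp-defn-porc}, so the argument goes through without a gap.
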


\begin{proof}
Since by assumption the total space of $X$ is smooth
any section of $X \to C$ lies in the smooth locus
of $X \to C$. On the other hand, weak approximation
(see for example \cite{HT06})
holds for the ruled surface $R \to C$, a fortiori
for any point $r \in R(k)$ we can find a section
of $R \to C$ passing through $r$. The lemma follows.
\end{proof}

\noindent
Let $h : C' \to X$ be a porcupine, by which we mean
$C' = C \cup \bigcup L_i \to X$ as in Definition
\ref{defn-ppine}.

\begin{defn}
\label{defn-pen}
\marpar{defn-pen}
Notations as above. A \emph{pen} for $C'$ is a ruled surface $R$
such that $C' \to X$ factors through $R \to X$.
\end{defn}

\noindent
We will always assume a pen for $C'$ comes with a factorization.
If lines in fibres of $X \to C$ are automatically smooth
(e.g.\ if $\mc{L}$ is very ample on the fibres of $X \to C$)
then $R$ and $C'$ are
closed subschemes of $X$ the definition just means $C' \subset R$.

\begin{lem}
\label{lem-Abelpen}
\marpar{lem-Abelpen}
Let $C'_0$ and $C'_\infty$ be porcupines whose
bodies $s_0(C)$ and $s_\infty(C)$ are penned in a common
ruled surface $R$. Let $(e_0, n_0)$, resp.\ $(e_\infty, n_\infty)$
be the numerical invariants of $C'_0$, resp.\ $C'_{\infty}$
and assume $e_0 + n_0 = e_\infty + n_\infty$.
Denote the attachment points of $C'_0$,
resp.\ of $C'_\infty$, by $q_{0,i}$, resp.\  $q_{\infty, j}$.
The Abel images $\alpha_{\mc{L}}([C'_0])$ and
$\alpha_{\mc{L}}([C'_\infty])$ are equal if and only if
$D_0 \sim D_\infty$ in the divisor class group of $R$
where 
$$
D_0 := s_0(C) + \sum\nolimits_{i=1}^{n_0} R_{q_{0,i}}
\text{ and }
D_\infty := s_\infty(C) + \sum\nolimits_{j=1}^{n_\infty} R_{q_{\infty,j}}
$$
\end{lem}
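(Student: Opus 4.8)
The strategy is to push both sides of the claimed equivalence into $\text{Pic}(C)$ and check that they agree there. Write $\rho\colon R\to C$ for the ruling and $\phi\colon R\to X$ for the morphism through which both $C'_0\to X$ and $C'_\infty\to X$ factor, so that $\rho=f\circ\phi$, and set $\mc{L}_R:=\phi^*\mc{L}$. Since $R\to C$ is smooth and proper with every fibre a line in $X$, hence isomorphic to $\PP^1$, and admits a section (the body factors as $C\xrightarrow{\iota_0}R\xrightarrow{\phi}X$ with $\phi\circ\iota_0=s_0$, and $\rho\circ\iota_0=f\circ s_0=\text{id}_C$, so $\iota_0$ is a section of $\rho$; likewise $\iota_\infty$), the morphism $R\to C$ is a $\PP^1$-bundle. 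Hence $\rho^*\colon\text{Pic}(C)\to\text{Pic}(R)$ is split injective, and every invertible sheaf on $R$ of fibre degree $0$ is $\rho^*$ of a unique invertible sheaf on $C$, which is recovered as $\iota^*$ of it for an arbitrary section $\iota$ of $\rho$.

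First I would record the Abel map on a penned porcupine. Applying Lemma~\ref{lem-Abel} to a porcupine, whose handle is the body section and whose $n$ quills each have degree $1$, gives $\alpha_{\mc{L}}([C'])\cong s^*\mc{L}(\sum_i q_i)$ in $\underline{\text{Pic}}^{e}_{C/k}$, where $s=\phi\circ\iota$ for the induced section $\iota$, the $q_i$ are the quill points, and $e$ is the total degree; the hypothesis $e_0+n_0=e_\infty+n_\infty$ is exactly what puts $\alpha_{\mc{L}}([C'_0])$ and $\alpha_{\mc{L}}([C'_\infty])$ in the same component. Next I would introduce the fixed invertible sheaf $\mc{K}:=\mc{L}_R\otimes\omega_{R/C}$ on $R$, which has fibre degree $1+(-2)=-1$, and prove the identity $\iota^*\bigl(\mc{O}_R(D)\otimes\mc{K}\bigr)\cong\alpha_{\mc{L}}([C'])$ for a porcupine $C'$ penned in $R$, with $D=\iota(C)+\sum_i R_{q_i}$ as in Lemma~\ref{lem-Abelpen}. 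For this one uses $\iota^*\mc{O}_R(R_{q_i})\cong\mc{O}_C(q_i)$ together with adjunction for the smooth divisor $\iota(C)$ on the smooth surface $R$, which combined with $\omega_R\cong\rho^*\omega_C\otimes\omega_{R/C}$ and $\rho\circ\iota=\text{id}_C$ yields $\iota^*\mc{O}_R(\iota(C))\cong\iota^*\omega_{R/C}^{-1}$; since $\iota^*\mc{K}=s^*\mc{L}\otimes\iota^*\omega_{R/C}$, the two copies of $\iota^*\omega_{R/C}$ cancel and what remains is $s^*\mc{L}(\sum_i q_i)$, which is $\alpha_{\mc{L}}([C'])$ by the previous sentence.

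Finally I would conclude as follows. Because $D_0$ and $D_\infty$ each meet a fibre of $\rho$ with multiplicity $1$, the sheaves $\mc{N}_0:=\mc{O}_R(D_0)\otimes\mc{K}$ and $\mc{N}_\infty:=\mc{O}_R(D_\infty)\otimes\mc{K}$ have fibre degree $0$, hence $\mc{N}_0\cong\rho^*\mc{B}_0$ and $\mc{N}_\infty\cong\rho^*\mc{B}_\infty$ for unique $\mc{B}_0,\mc{B}_\infty\in\text{Pic}(C)$; pulling back along $\iota_0$, resp.\ $\iota_\infty$, and using the identity above gives $\mc{B}_0\cong\alpha_{\mc{L}}([C'_0])$ and $\mc{B}_\infty\cong\alpha_{\mc{L}}([C'_\infty])$. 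Then $D_0\sim D_\infty$ iff $\mc{N}_0\cong\mc{N}_\infty$ iff $\rho^*\mc{B}_0\cong\rho^*\mc{B}_\infty$ iff $\mc{B}_0\cong\mc{B}_\infty$ (by injectivity of $\rho^*$) iff $\alpha_{\mc{L}}([C'_0])\cong\alpha_{\mc{L}}([C'_\infty])$, which is the assertion.

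The one subtle point — and the reason for twisting by $\mc{K}$ to get down to fibre degree $0$ — is that $D_0$ and $D_\infty$ are built from two \emph{different} sections $\iota_0,\iota_\infty$ of $\rho$, so one cannot naively restrict $D_\infty$ along its own section and compare; only after cutting down to the $\rho^*\text{Pic}(C)$ summand of $\text{Pic}(R)$ does the restriction become section-independent, which is what makes the whole comparison go through. The remaining ingredients (adjunction on the ruled surface, the $\PP^1$-bundle structure of $R\to C$, and Lemma~\ref{lem-Abel}) are routine, so I expect this bookkeeping to be the only place requiring care.
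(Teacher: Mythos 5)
Your proof is correct and takes essentially the same approach as the paper's: both decompose $\text{Pic}(R)$ as $\ZZ\oplus\text{Pic}(C)$ (via fibre degree and the relative dualizing sheaf) and then invoke Lemma~\ref{lem-Abel}. Your adjunction computation of $\iota^*\mc{O}_R(\iota(C))\cong\iota^*\omega_{R/C}^{-1}$ is exactly the ``fun calculation left to the reader'' that the paper omits; twisting by $\mc{K}=\mc{L}_R\otimes\omega_{R/C}$ to reach fibre degree $0$ is just a tidy repackaging of the paper's bookkeeping with $-K_{rel}+(-1,\,\cdot\,)$.
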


\begin{proof}
Note that $\text{Pic}(R)$ is isomorphic to $\ZZ \oplus \text{Pic}(C)$.
As generator for the summand $\ZZ$ we take the class of $\mc{L}$. The
divisors $R_q$ on $R$ correspond to the elements $\OO_C(q)$ in
$\text{Pic}(C)$. Let $K_{rel}$ denote the relative canonical
divisor of $R \to C$. The divisor $s_0(C)$ corresponds to the
element $-K_R + (-1, s_0^*\mc{L})$, by a fun calculation left to
the reader. Hence $D_0 \sim -K_r + (-1, s_0^*\mc{L}(\sum q_{0,i}))$
Similar for $s_\infty$. The result now follows from Lemma ~\ref{lem-Abel}.
\end{proof}

\noindent
In the above lemma, if $D_0$ and $D_\infty$ are linearly equivalent,
denote by $(D_\lambda)_{\lambda\in \Pi}$ the pencil of effective
Cartier divisors on $R$ spanned by $D_0$ and $D_\infty$.
At this point we start constructing families of porcupines over
rational curves. In order to do this we state and prove a few
lemmas.

\begin{lem}
\label{lem-pivot}
\marpar{lem-pivot}
In situation \ref{hyp-defn-porc} assume the restriction
of $f$ to some nonempty open $S \subset C$ satisfies
Hypothesis \ref{hyp-peace}. Suppose that $C'$ and
$C''$ are porcupines whose bodies and attachment points
agree, but which may have different quills. Then there
exists a rational curve in the smooth nonstacky locus of
$\Sigma^{e+n}(X/C/k)$ connecting the corresponding points.
\end{lem}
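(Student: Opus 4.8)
The plan is to realise both porcupines as points of one smooth projective rationally connected variety mapping into $\Sigma^{e+n}(X/C/k)$, and then to use that any two points of such a variety lie on a rational curve. Write $s:C\to X$ for the common body and $q_1,\dots,q_n\in C(k)$ for the common attachment points; by Definition \ref{defn-ppine}, $s$ is free and each $s(q_j)$ lies in $X_{f,\pax}$. For $j=1,\dots,n$ let $F_j$ be the fibre of $\text{ev}:\Kgnb{0,1}(X/C,1)\to X$ over $s(q_j)$, so a $k$-point of $F_j$ is a $1$-pointed line $(r\in L(k),\ L\to X_{q_j})$ with $r\mapsto s(q_j)$.

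The first step is to describe the geometry of $F_j$. As $s(q_j)$ is peaceful, $\text{ev}$ is smooth at every point of $F_j$, so $F_j$ is smooth; as $[\mc{L}]$ is $f$-ample, Theorem \ref{thm-Kontsevich}(3) shows $\Kgnb{0,1}(X/C,1)$ is proper over $C$, hence $F_j$ is proper over $k$. Exactly as in the proof of Lemma \ref{lem-peacechain} the geometric generic fibre of $\text{ev}$ is rationally connected by Hypothesis \ref{hyp-peace}(1), so the fibre $F_j$, which lies in the smooth locus of $\text{ev}$, is rationally connected by \cite[2.4]{KMM} and \cite[Theorem IV.3.11]{K}; being smooth and connected it is a smooth projective rationally connected variety. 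Moreover every line parametrised by a point of $F_j$ is free, since all lines through a peaceful point are free (see the discussion following Definition \ref{defn-newpeace}).

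The second step is to build a family. Put $F=F_1\times_k\cdots\times_k F_n$, again smooth, projective and rationally connected. Attaching to the constant body $s$ (marked at $q_1,\dots,q_n$) the universal $1$-pointed lines pulled back from the factors $F_j$ produces a family of combs over $F$: the gluing condition of Definition \ref{defn-ndcomb} holds since for each $j$ the two morphisms $h_0\circ q_j$ and $h_j\circ r_j$ are both the constant morphism $F\to X$ with value $s(q_j)$, by definition of $F_j$. Since $s$ is free and every quill is free, this family lands in $\Pine{e,n}(X/C/k)$, giving a morphism $F\to\Pine{e,n}(X/C/k)$ sending $([L_1],\dots,[L_n])$ to the porcupine with body $s$ and quills $L_1,\dots,L_n$ at $q_1,\dots,q_n$. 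Composing with $\Phi_{\text{total}}$ yields a morphism $F\to\Sigma^{e+n}(X/C/k)$ taking the point $\xi'$ that records $C'$ to $[C']$ and the point $\xi''$ that records $C''$ to $[C'']$. Every point of the image is a porcupine, hence — as in the proof of Lemma \ref{lem-Zd}, using that $s^*T_{X/C}$ has vanishing $H^1$ and that the restriction of $T_{X/C}$ to each quill is globally generated — an unobstructed point of $\Sigma^{e+n}(X/C/k)$; it is non-stacky because the stable curve $s(C)\cup\bigcup_j L_j$ admits no nontrivial automorphisms. Thus the morphism factors through the smooth, non-stacky locus.

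Finally, $F$ is a smooth projective rationally connected variety over the uncountable algebraically closed field $k$ of characteristic $0$, so any two of its points lie on a rational curve (\cite{KMM}, \cite[Chapter IV]{K}); choose $g:\PP^1\to F$ with $g(0)=\xi'$ and $g(\infty)=\xi''$. We may assume $C'\neq C''$, so the composite $\PP^1\xrightarrow{g}F\to\Sigma^{e+n}(X/C/k)$ is nonconstant and is the required rational curve connecting $[C']$ to $[C'']$ in the smooth, non-stacky locus. I expect the main point requiring care to be the construction of the morphism $F\to\Pine{e,n}(X/C/k)$ together with the verification that its image consists of unobstructed, non-stacky points; the rational connectedness of each $F_j$ and the existence of $g$ are by now standard.
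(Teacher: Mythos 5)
Your proof is correct and follows essentially the same route as the paper: both identify the space of quill-choices (for the fixed body and attachment points) with the product of $n$ fibres of $\text{ev}:\Kgnb{0,1}(X/C,1)\to X$ over peaceful points, observe these fibres are smooth projective rationally connected, and conclude by connecting the two porcupines through a rational curve in this product, whose image lies in the smooth non-stacky locus because every point of the product maps to a porcupine. The paper states this tersely; you supply the construction of the morphism $F\to\Pine{e,n}(X/C/k)$ and the non-constancy check in more detail, but the argument is the same.
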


\begin{proof}
A porcupine always represents a smooth nonstacky point of
$\Sigma^{e+n}(X/C/k)$. The space parametrizing choice of
quills given the body $s$ and the attachment points
$q_1,\ldots, q_n$ is the product of $n$ fibres of the evaluation map
$\Kgnb{0,1}(X/C,1) \to X$ at peaceful points. But these fibres
are smooth projective rationally connected varieties by
definition of \peaceful\ and Lemma \ref{lem-peacechain}.
\end{proof}

\noindent
In the following lemma and below we will say that the
porcupine $C''$ is an \emph{extension} of $C'$ if you get $C'$
from $C''$ by deleting some of its quills. In other words,
this means that $\Phi_{\text{forget}}(C'') = C'$, where
$\Phi_{\text{forget}}$ is the forgetful morphism
defined in Section \ref{sec-porcupines}.

\begin{lem}
\label{lem-pentogether}
\marpar{lem-pentogether}
Same assumptions as in Lemma \ref{lem-pivot}.
Let $s_0, s_\infty : C \to X$ be sections.
Assume $s_0$ and $s_\infty$ are free,
and that $V = s_0^{-1}(X_{f,\pax}) \cap s_\infty^{-1}(X_{f,\pax})$
is not empty. Assume $s_0$ and $s_\infty$ are penned in a common ruled
surface $R$. Then there exists a nonnegative
integer $E$ with the following property:
for all $e$ with
$e \geq \min\{E, \deg(s_0), \deg(s_\infty)\}$,
for any pairwise distinct points
$q_{0,1},\dots,q_{0, e - \deg(s_0)} \in C(k)$,
for any pairwise distinct points
$q_{\infty, 0}, \ldots, q_{\infty, e - \deg(s_\infty)} \in C(k)$
and for every extension $C'_0$, resp.\ $C'_\infty$, of
$s_0$, resp.\ $s_\infty$, to a porcupine with quills
over the points $q_{0,1},\dots, q_{0, e - \deg(e_0)}$,
resp.\ $q_{\infty, 0},\ldots,q_{\infty, e - \deg(e_\infty)}$
if $s_0^*(\mc{L})(\sum q_{0,i})
\cong s_\infty^*(\mc{L})(\sum q_{\infty,i})$ as invertible
sheaves on $C$, then the points $[C'_0]$ and
$[C'_\infty]$ are connected in the stack $\Sigma^e(X/C/k)$
by a chain of rational curves whose nodes
parametrize unobstructed, non-stacky stable sections.
Moreover, the integer $E$ is bounded above by
$\varphi(\deg(R), \deg(s_0))$ where
$\varphi : \mathbb{Z}^2 \to \mathbb{Z}_{\geq 0}$
is a function
such that $B \geq B' \Rightarrow \varphi(B,e) \geq \varphi(B', e)$.
\end{lem}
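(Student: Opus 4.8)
The plan is to build the connecting chain of rational curves inside $\Sigma^e(X/C/k)$ out of three kinds of moves, all of which stay in the smooth nonstacky locus: (i) the ``pivot'' move of Lemma \ref{lem-pivot}, which lets us freely change the quills of a porcupine once the body and attachment points are fixed; (ii) a ``sweep the quills along the ruled surface'' move, which uses the pencil $(D_\lambda)_{\lambda \in \Pi}$ of Lemma \ref{lem-Abelpen} to deform a porcupine on $R$ with body $s_0$ and quills over $\{q_{0,i}\}$ to one with body $s_\infty$ and quills over $\{q_{\infty,i}\}$; and (iii) auxiliary pivots to move the attachment points themselves along $s_0(C)\cap X_{f,\pax}$, which is nonempty and dense since $s_0$ meets the peaceful locus. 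The numerical hypothesis $s_0^*\mc{L}(\sum q_{0,i}) \cong s_\infty^*\mc{L}(\sum q_{\infty,i})$ is exactly what makes $D_0 \sim D_\infty$ on $R$ by Lemma \ref{lem-Abelpen}, so the pencil in move (ii) exists.

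First I would reduce to the case where, after possibly applying pivots, every quill of $C'_0$ (resp.\ $C'_\infty$) is a fibre $R_{q_{0,i}}$ of the ruled surface $R$ (resp.\ $R_{q_{\infty,i}}$): given a quill line $L$ through $s_0(q)$, the space of lines through that peaceful point is rationally connected (Lemma \ref{lem-peacechain}), it contains the fibre $R_q$, hence $[L]$ and $[R_q]$ lie on a rational curve in that fibre, which via $\Phi_{\text{quill},i}$ produces a rational curve in $\Sigma^e(X/C/k)$ through unobstructed porcupine points. After this reduction both $C'_0$ and $C'_\infty$ are supported on $R$: their total curves are the divisors $D_0 = s_0(C) + \sum R_{q_{0,i}}$ and $D_\infty = s_\infty(C) + \sum R_{q_{\infty,j}}$ on $R$. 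Now take the pencil $(D_\lambda)_{\lambda\in\Pi}$ spanned by $D_0$ and $D_\infty$. For a general $\lambda$, the divisor $D_\lambda$ is reduced and decomposes as a section $s_\lambda(C)$ (the unique component dominating $C$) plus a sum of fibres of $R\to C$; choosing $\deg(R)$ large enough relative to $\deg(s_0)$ and $e$ guarantees $s_\lambda$ is a free section meeting $X_{f,\pax}$ — this is the role of $\varphi(\deg(R), \deg(s_0))$, and the monotonicity $B \ge B' \Rightarrow \varphi(B,e)\ge \varphi(B',e)$ reflects that a higher-degree ruled surface only makes this easier. The generic member being a free porcupine gives a rational curve $\Pi \dashrightarrow \Sigma^e(X/C/k)$ (defined on an open of $\Pi$ and extending to all of $\PP^1$ since $\Sigma^e$ is proper) whose general point is an unobstructed nonstacky porcupine, and whose two special fibres are $[C'_0]$ and $[C'_\infty]$ up to the pivots already performed. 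Composing: pivot $C'_0$ to the $R$-form, run the pencil, pivot the $R$-form back to $C'_\infty$ — a chain of three (families of) rational curves whose nodes are porcupine points, hence unobstructed and nonstacky.

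The bookkeeping needed to define $E$: I would set $E$ so that (a) $e \ge E$ forces $e - \deg(s_0) \ge 0$ and $e - \deg(s_\infty)\ge 0$ so the porcupines in the statement make sense, and (b) $e\ge E$ and the chosen $\deg(R)$ make every relevant $s_\lambda$ free and peaceful-meeting; then one checks $E \le \varphi(\deg(R),\deg(s_0))$ by the same estimates. The main obstacle I expect is move (ii): controlling the general member $D_\lambda$ of the pencil on $R$, specifically showing $s_\lambda$ is free rather than merely unobstructed, and handling the finitely many special $\lambda$ where $D_\lambda$ becomes non-reduced or where the section component acquires the ``wrong'' degree — there one must insert extra pivots to route the chain around the bad fibres, and it is here that the precise value of $\varphi$, and its monotonicity in $\deg(R)$, gets pinned down. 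A secondary technical point is verifying that the limits of the pencil family at $\lambda = 0, \infty$ really recover $C'_0$ and $C'_\infty$ as stable sections (not some further degeneration), which follows from the flat-limit description of $D_\lambda$ together with the fact that all members lie on the fixed smooth surface $R$.
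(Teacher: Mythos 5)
Your overall skeleton matches the paper's: reduce by the pivot Lemma~\ref{lem-pivot} to the case where all quills lie on $R$, then use the linear equivalence $D_0 \sim D_\infty$ supplied by Lemma~\ref{lem-Abelpen} to produce connecting rational curves inside the linear system of curves on $R$, and control $E$ by boundedness of the family of pairs $(s_0, R)$ of bounded degree. However, the heart of the matter is exactly what you flag as the ``main obstacle,'' and your plan does not resolve it.

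The gap is in move (ii). You propose to walk along the single pencil $\langle D_0, D_\infty\rangle$ spanned by the two specified divisors. That pencil is completely determined by the endpoints, so you have no freedom to perturb it: it may very well hit the locus of non-nodal (or non-reduced, or non-stable) members, and nothing in your setup rules that out. Your remark about ``inserting extra pivots to route the chain around the bad fibres'' would need to be the actual argument, and it is not clear it can be made to work along a fixed pencil. The paper's key move, which your plan misses, is to \emph{not} use the pencil through $D_0$ and $D_\infty$; instead, choose $E$ large enough that the linear system $|s_0(C)+R_D|$ is very ample and its codimension-one locus parametrizes nodal curves, and then connect the two points by \emph{two general lines} $\Pi_1, \Pi_2$ in this linear system (with $\Pi_1$ through $D_0$, $\Pi_2$ through $D_\infty$, and $\Pi_1 \cap \Pi_2 \neq \emptyset$). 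Generality of $\Pi_1$ and $\Pi_2$ then forces every member along each line to be nodal (hence a stable section), and forces the node $\Pi_1\cap\Pi_2$ to be a general (hence unobstructed, non-stacky) point of $\Sigma^e(X/C/k)$ --- which is what the lemma's conclusion requires. This two-general-lines device is precisely the content of the requirement $e \ge E = E_2$ and is what the function $\varphi$ is estimating. A secondary inaccuracy: for general $\lambda$, the general member of a very ample system of class ``section $+$ fibres'' on the ruled surface $R$ is a smooth irreducible curve, i.e.\ itself a section of $R\to C$ of degree $e$, not a section plus fibres, so the picture you describe for the general member of the pencil is also off.
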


\begin{proof}
We first describe our choice of $E$.
For any effective Cartier divisor $D \subset C$
denote $R_D \subset R$ the corresponding divisor in $R$.
Note that there exists an integer $E_1$ such that
the linear system $|s_0(C) + R_D|$ is base point free and
very ample for all $D$ with $\deg_C(D) \geq E_1$.
This is true because $s_0(C)$
is relatively very ample for $R \to C$. We may also choose
$E_2$ such that codimension $1$ points of
$|s_0(C) + R_D|$ parametrize nodal curves
for all $D$ with $\deg_C(D) \geq E_2$.
This follows from a simple Bertini type
argument once we have chosen $E_2$ big enough:
namely, so big that
$$
H^0(R, \OO_R(s_0(C) + R_D))
\longrightarrow
H^0(R_Z, \OO_{R_Z}(s_0(C)))
$$
is surjective for $Z \subset C$ any length $2$
closed subscheme. Such an $E_2$ exists by the relatively
very ample comment once more. We will take $E = E_2$.
The easiest way to see that $E \leq \varphi(\deg(R), \deg(s_0))$
with $\varphi$ as in the lemma
is to remark that the family of pairs $s_0 : C \to X, R \to X$
with bounded degrees is bounded.

\medskip\noindent
Choose $e \geq E$.
By Lemma \ref{lem-pivot} it suffices to prove the lemma
assuming all the quills are in the surface $R$.
Let $q_{i,j}$ be as in the statement of the lemma. 
Note that $s_0(C) + \sum R_{q_{0,j}} \sim 
s_\infty(C) + \sum R_{q_{\infty,j}}$ as divisors
on $R$ by Lemma \ref{lem-Abelpen}.

\mni
Let us connect our points
$s_0(C) + \sum R_{q_{0,j}} \in |s_0(C) + \sum R_{q_{0,j}}|$
and $s_\infty(C) + \sum R_{q_{\infty, j}}
\in |s_0(C) + \sum R_{q_{0,j}}|$ by a pair of
general lines $\Pi_1, \Pi_2$. In other words, $\Pi_1$ and $\Pi_2$
intersect, $\Pi_1$ contains the first point and $\Pi_2$ contains the second,
but otherwise the lines are general. We claim that this gives two
rational curves in $\Sigma^e(X/C/k)$ that do the job.
Since the curves $s_0(C) + \sum R_{q_{0,j}}$ and
$s_\infty(C) + \sum R_{q_{\infty,j}}$
are nodal and by our choice of $e$ we see that all points
of $\Pi_1$ and $\Pi_2$ correspond to nodal curves, hence automatically
$\Pi_1$ and $\Pi_2$ correspond to families of stable sections. Finally,
since the point $s_0(C) + \sum R_{q_{0,j}} \in \Pi_1$ corresponds to a
smooth point of $\Sigma^e(X/C/k)$ we see that a general
point of $\Pi_1$ corresponds to a smooth point as well. And since
$\Pi_1 \cap \Pi_2$ is just a general point on $\Pi_1$ this node
corresponds to a smooth point of $\Sigma^e(X/C/k)$
as desired.
\end{proof}

\noindent
Here is one of the key technical lemmas of this paper.

\begin{lem}
\label{lem-findgoodchains}
\marpar{lem-findgoodchains}
In Situation \ref{hyp-defn-porc} assume the restriction
of $f$ to some nonempty open $S \subset C$ satisfies
Hypothesis \ref{hyp-peace}. Let $D$ be an effective
Cartier divisor of degree $2g(C) + 1$ on $C$.
Suppose we have sections
$s_0, s_\infty$ of $X\to C$ which are $D$-free\footnote{It is not
necessary to have the same divisor for the two sections.}.
Let $T_0$, resp.\ $T_\infty$ be the unique irreducible
component of $\Sec(X/C/k)$ of which $s_0$, resp.\ $s_\infty$
is a smooth point (see Proposition \ref{prop-newfree4}).
There exists a dense open $W \subset T_0 \times T_\infty$,
an integer $n \geq 1$, integers $e_1,\ldots, e_{n+1}$, $B$
such that for every point $(\tau_0 , \tau_\infty) \in W$
we have the following:
\begin{enumerate}
\item there exist sections
$\tau_0 = \tau_1, \tau_2, \ldots, \tau_{n+1} =\tau_\infty$,
\item the degree of $\tau_i$ is $e_i$ for $i=1,\ldots,n+1$,
\item $\tau_i^{-1}(X_{f,\pax}) \not = \emptyset$ for
$i = 1, \ldots, n + 1$,
\item $\tau_i$ is free for $i = 1, \ldots, n+1$,
\item there exist ruled surfaces $R_i \to X$, $i=1,\ldots,n$,
\item the degree of $R_i$ is at most $B$, and
\item $\tau_i, \tau_{i+1}$ factor through $R_i$ for $i=1,\ldots,n$.
\end{enumerate}
\end{lem}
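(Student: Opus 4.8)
The plan is to join $\tau_0$ to $\tau_\infty$ by the chain of $n:=m_0$ ruled surfaces obtained by spreading out over $C$ a fibrewise chain of $m_0$ free lines as in Hypothesis~\ref{hyp-peace}(2); the intermediate sections $\tau_2,\dots,\tau_{m_0}$ will be the ``joints'' of this chain, and the real content is showing these joints are free sections meeting the peaceful locus.

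\textbf{Choice of $W$.} Because $\deg(D)=2g(C)+1\geq 2g(C)$, the sections $s_0,s_\infty$ are free (Lemma~\ref{lem-newfree3}, Definition~\ref{defn-classicalfree}), hence smooth points of $\Sec(X/C/k)$ by Proposition~\ref{prop-newfree4}, so $T_0,T_\infty$ are well defined. By Lemma~\ref{lem-newfreeopen} the free sections form a dense open of $T_0$ and of $T_\infty$; since free sections deform through a general point of $X$ (Proposition~\ref{prop-newfree4}) and the peaceful locus is dense in the general fibre (Lemma~\ref{lem-peacefuldense}), so do the free sections meeting $X_{f,\pax}$; and for the same reason, since the open $V\subset X\times_S X$ of Hypothesis~\ref{hyp-peace}(2) surjects onto $S$, the pairs $(\tau_0,\tau_\infty)$ with $(\tau_0(c),\tau_\infty(c))\in V$ for $c$ ranging over a dense open of $C$ form a nonempty, hence dense, open subset. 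Take $W$ to be the intersection of these opens, to be shrunk once more below.

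\textbf{Construction of the chain.} Fix $(\tau_0,\tau_\infty)\in W$ and let $C^\circ\subset C$ be the dense open on which $(\tau_0,\tau_\infty)$ lands in $V$. Pulling back $\text{ev}_{1,m_0+1}\colon \FCh_2(X/S,m_0)\to X\times_S X$ along $(\tau_0,\tau_\infty)|_{C^\circ}$ gives a family over $C^\circ$ with birationally rationally connected general fibre, so by \cite{GHS}, applied to a smooth projective model over $C$, it has a section; a general such section is, over a dense open of $C$, a family of $m_0$-chains of free lines joining $\tau_0(c)$ to $\tau_\infty(c)$. Composing the factors with the forgetful maps to $\Kgnb{0,0}(X/C,1)$ and using that this space is proper over $C$ with every point an honest line (every degree $1$, genus $0$ stable map is irreducible), the chain spreads out to genuine ruled surfaces $R_1,\dots,R_{m_0}\to X$ over \emph{all} of $C$; the nodes of the chains likewise spread out, by properness of the relevant Kontsevich spaces over $C$, to honest sections $\tau_0=\tau_1,\tau_2,\dots,\tau_{m_0},\tau_{m_0+1}=\tau_\infty$ of $X\to C$ with $\tau_i$ and $\tau_{i+1}$ factoring through $R_i$. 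Choosing the section of the rationally connected fibration generically, we may arrange that over a dense open of $C$ each chain and each truncation of it is a smooth point of the pertinent evaluation morphism and, by Lemma~\ref{lem-peacechain}, has all nodes peaceful; this gives $\tau_i^{-1}(X_{f,\pax})\neq\emptyset$, which is (3). Since low-degree chains of lines and the surfaces they sweep out form bounded families, after shrinking $W$ the degrees $e_i:=\deg(\tau_i)$ (with $e_1=\deg(s_0)$ and $e_{m_0+1}=\deg(s_\infty)$) and a bound $B$ on $\deg(R_i)$ are uniform over $W$; this gives (2), (5), (6), (7).

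\textbf{Freeness of the joints, and the main obstacle.} It remains to prove (4): each $\tau_i$ is free, which for $i=1$ and $i=m_0+1$ is the hypothesis. For $1<i<m_0+1$ one argues as in Lemma~\ref{lem-addline}: the truncated fibrewise chain of free lines joining $\tau_0(c)$ to $\tau_i(c)$ is, for general $c$, a smooth point of its evaluation morphism, so $H^1$ of the pullback of $T_f$ to that sub-chain, twisted down by the two endpoints, vanishes; one promotes this fibrewise vanishing, combined with the global freeness of $\tau_0$ (that $H^1(C,\tau_0^*T_f(-p))=0$ for a point $p$) and symmetrically with that of $\tau_\infty$, to the vanishing $H^1(C,\tau_i^*T_f(-p))=0$, which by Definition~\ref{defn-classicalfree} says $\tau_i$ is free. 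Concretely one works on the reducible surface $R_1\cup_{\tau_2}\cdots\cup_{\tau_{i-1}}R_{i-1}$, respectively the one built from $R_i,\dots,R_{m_0}$, with the same deformation and cohomology bookkeeping as in the porcupine lemmas of Section~\ref{sec-porcupines}. This is the main obstacle: the fibrewise $H^1$-vanishing holds only over a dense open of $C$, so the finitely many degenerate fibres of the chain must be controlled, and it is exactly here, as in the extension of the $R_i$ to honest ruled surfaces above, that the freeness of $\tau_0,\tau_\infty$ and the peacefulness of the nodes are used.
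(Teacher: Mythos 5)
There is a genuine gap at the freeness step~(4), and you have yourself flagged it as ``the main obstacle'' without resolving it. The fibrewise vanishing $H^1(\text{sub-chain}_c, T_{X_c}(-\text{endpoints}))=0$ is a statement about the deformation theory of a chain \emph{inside one fibre}; it says nothing about the degree of $\tau_i^*T_{X/C}$ as a sheaf on $C$, and it cannot be ``promoted'' to $H^1(C,\tau_i^*T_f(-p))=0$ by any exact sequence or deformation bookkeeping on the reducible surface $R_1\cup\cdots\cup R_{i-1}$. Concretely, the short exact sequence $0\to\tau_i^*T_{R_{i-1}/C}\to\tau_i^*T_{X/C}\to\tau_i^*N_{R_{i-1}/X}\to 0$ gives no control: $\tau_i^*T_{R_{i-1}/C}\cong\tau_i^*\OO_{R_{i-1}}(\tau_i)$ is the self-intersection and can have arbitrarily negative degree, and there is no a priori bound on $\tau_i^*N_{R_{i-1}/X}$ either. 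The global freeness of $\tau_0$ and $\tau_\infty$ does not propagate across the chain in this way: $\tau_i$ is not a degeneration of $\tau_0$, just a different section sharing a ruled surface with its neighbors.

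The paper closes this gap with a fundamentally different mechanism, which is why Lemma~\ref{lem-findgoodchains} is as delicate as it is. Rather than produce the lift one pair $(\tau_0,\tau_\infty)$ at a time, it invokes Lemma~\ref{lem-avoid} (the ``general $m$-free section of $Y\to C$ lifts to an $m$-free section of $V$'' device) applied to the smooth surjective BRC morphism $\text{ev}_{1,n+1}|_V:V\to U\times_S U$ furnished by Proposition~\ref{prop-peace} --- not by Hypothesis~\ref{hyp-peace}(2) directly, which only gives the morphism over an unknown open $V\subset X\times_SX$ and with $n=m_0$, rather than smooth surjectivity onto $U\times_SU$ with the larger $n=m_1$. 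Lemma~\ref{lem-avoid} then produces an irreducible $T'\to T_0\times T_\infty$ dominant for which the associated rational map $\text{Sym}^m(C)\times T'\to H^m_V$ is dominant; this is exactly what is needed to invoke the Koll\'ar--Miyaoka--Mori constructible sets $W^m_{e_i}\subset H^m_{X,f}$ of Proposition~\ref{prop-KMM} and conclude that, for a general $t\in T'$, each intermediate section $\tau_i$ is weakly $D$-free for some degree~$m=2g(C)+1$ divisor $D$, hence free by Lemma~\ref{lem-newfree3}. Your proposal uses neither Proposition~\ref{prop-peace}, nor Lemma~\ref{lem-avoid}, nor Proposition~\ref{prop-KMM}, and substitutes an informal cohomological argument which does not work; it also makes the uniformity of the degrees $e_i$ and the bound $B$ over $W$ an afterthought, whereas the paper gets these for free from the irreducibility of $T'$.
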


\begin{proof}
We will write $m = 2g(C) + 1$ in order to ease notation.
Let $W_0 \subset T_0 \times \text{Sym}^m(C)$,
be the open subset corresponding to pairs $(\tau_0, D_0)$,
such that $\tau_0$ is $D_0$-free.
Similarly we have
$W_\infty \subset T_\infty \times\text{Sym}^m(C)$.
Our assumption is that $W_0$ and $W_\infty$ are not empty.
Recall the spaces $H^m_{X,f}$ defined above
Proposition \ref{prop-KMM}.
By Lemma \ref{lem-resfreesmooth} the maps
$$
\begin{matrix}
res_0 : W_0 \to H^m_{X,f}, \ \ 
(\tau_0, D_0) \mapsto (\tau_0|_{D_0} : D_0 \hookrightarrow X),
\\
res_\infty : W_\infty \to H^m_{X,f},\ \ 
(\tau_\infty, D_\infty)
\mapsto (\tau_\infty|_{D_\infty} : D_\infty \hookrightarrow X)
\end{matrix}
$$
are smooth. Let $e_0 = \deg(s_0)$ and $e_\infty = \deg(s_\infty)$.

\medskip\noindent
Let $S \subset C$ be the nonempty open over which
Hypothesis \ref{hyp-peace} holds. Denote $U = X_{f,\pax}$.
Consider the morphism
$$
\text{ev}_{1, n+1} :
\FCh_2^U(X/C, n)
\longrightarrow
U\times_S U \subset X\times_C X
$$
we studied in Section \ref{sec-peace}.
By Proposition \ref{prop-peace}
there exists an $n$, and an open $V \subset \FCh_2^U(X/C, n)$
such that $\text{ev}_{1,n+1}|_V : V \to U\times_S U$
is surjective, smooth, with irreducible and birationally
rationally connected fibres.
Also, the morphisms $\text{ev}_i|_{V} : V \to U$
are smooth, see Lemma \ref{lem-peacechain}.

\medskip\noindent
Consider the following commutative diagram
$$
\xymatrix{
V \ar[d]^{\text{ev}_{1,n+1}} & & \\
U\times_S U \ar[d] \ar[r] & X\times_C X \ar[ld] &&
C\times T_0 \times T_\infty \ar[ll]_{\tau_0 \times \tau_\infty} \ar[llld] \\
C & & 
}
$$
By the remarks above
this diagram satisfies all the assumptions of
Lemma \ref{lem-avoid}. Let us just point out that
the associated rational map from
$\text{Sym}^m(C) \times T_0 \times T_\infty$
to $H^m_{U\times_SU} = H^m_U \times_{\text{Sym}^m(C)} H^m_U$
is the product map
$$
\text{Sym}^m(C) \times T_0 \times T_\infty
\supset W_0 \times_{\text{Sym}^m(C)} W_\infty
\longrightarrow
H^m_U \times_{\text{Sym}^m(C)} H^m_U
$$
and by the remarks above is even smooth.
We conclude that there exists a variety $T'$,
a dominant morphism $T' \to T_0 \times T_\infty$,
a compactification $V \subset \overline{V}$
over $X\times_C X$ and a morphism
$
\tau : C \times T' \to \overline{V}
$
such that $C \times T' \to \overline{V} \to X\times_C X$
equals $C \times T' \to C\times T_0 \times T_\infty
\to X\times_C X$, and such that the induced rational
map from $\text{Sym}^m(C) \times T'$ to $H^m_V$ is dominant.
See Lemma \ref{lem-avoid} and its proof for explanations.
We may assume the compactification $\overline{V}$ dominates
the compactification
$$
V
\subset
\FCh_2^U(X/C, n)
\subset 
\Ch_2(X/C, n)
$$
by the space $\Ch_2(X/C, n)$ of $2$-pointed chains
of (not necessarily free) lines
in fibres of $X/C$. For example because in the proof of Lemma \ref{lem-avoid}
the compactification $\overline{V}$ could have been chosen
to dominate this compactification at the outset.

\medskip\noindent
At this point, every $t \in T'(k)$ gives rise to a morphism
$\tau_t : C \to \Ch_2(X/C, n)$, which in turn gives rise to 
$n$ ruled surfaces $R_1, \ldots, R_n$ in $X$ over $C$,
and $n+1$ sections $\tau_1,\ldots,\tau_{n+1}$ of $X/C$
such that
\begin{enumerate}
\item $\tau_1$ corresponds to a point of $T_0$,
\item $\tau_{n+1}$ corresponds to a point of $T_\infty$,
\item $\tau_i, \tau_{i+1}$ factor through $R_i$ for $i=1,\ldots,n$. 
\end{enumerate}
By shrinking $T'$ we may assume that both $\tau_1$
and $\tau_{n+1}$ meet $U$ and are $D$-free for some divisor $D$
of degree $m$.

\medskip\noindent
Let $e_i = \deg(\tau_i)$ for $i=2,\ldots,n$.
Recall the constructible sets $W^m_{e_i} \subset H^m_{X,f}$
constructed in Proposition \ref{prop-KMM}, in particular
recall that each $W^m_{e_i}$ contains a dense open
of $H^m_{X,f}$, and hence of $H^m_U$.
Because the associated rational map from 
$\text{Sym}^m(C) \times T'$ to $H^m_V$ is dominant,
and because each $\text{ev}_i : V \to U$ is smooth
we see that after replacing $T'$ by a nonempty open subvariety,
for each $t\in T'(k)$ the associated maps $\tau_i$
map a general divisor $D \subset C$ of degree $m$
to a point of $W^m_{e_i}$. In particular, $\tau_i$
meets $U = X_{f, \pax}$ and there exists a divisor
$D$ of degree $m$ such that $\tau_i$ is $D$-free.
By Lemma \ref{lem-newfree3} all the sections
$\tau_i$ are $1$-free, i.e., free. Since $T' \to T_0 \times T_\infty$ is 
dominant its image contains an open subset and the lemma
is proved.
\end{proof}

\noindent
Let $P \subset \Pine{e,n}(X/C/k)$ be an irreducible component.
Since the space $\Pine{e,n}(X/C/k)$ is smooth this is also a connected
component. And since the forgetful morphisms
$\Phi_{\text{forget}} : \Pine{e,n+m}(X/C/k) \to \Pine{e,n}(X/C/k)$
are smooth with geometrically irreducible fibres, we see that
$P$ determines a sequence $(P_m)_{m \geq 1}$ of irreducible
components, $P_m \subset \Pine{e, n + m}(X/C/k)$.
Note that points of $P_{d - e - n}$ correspond to porcupines
with total degree $d$.

\begin{prop}
\label{prop-pentogether}
\marpar{prop-pentogether}
In Situation \ref{hyp-defn-porc} assume the restriction
of $f$ to some nonempty open $S \subset C$ satisfies
Hypothesis \ref{hyp-peace}. Let $D \subset C$
be an effective Cartier divisor of degree $2g(C) + 1$.
Let $P \subset
\Pine{e_0,n_0}(X/C/k)$ and $P' \subset \Pine{e_\infty,n_\infty}(X/C/k)$
be irreducible components. Assume that a general
point of $P$ corresponds to a porcupine whose body
is $D$-free, and similarly for $P'$. 
There exists an integer $E$ and for every
$e \geq E$ there exists a dense open subscheme
$$
U \subset P_{e - e_0 - n_0}
\times_{\alpha_{\mc{L}}, \underline{\text{Pic}}^e_{C/k}, \alpha_{\mc{L}}}
P'_{e - e_\infty - n_\infty}
$$
such that for any pair of points $(p, q) \in U$ the points $p$ and $q$ are
connected in $\Sigma^e(X/C/k)$ by a chain of rational curves whose
marked points and nodes parametrize unobstructed, non-stacky stable sections.  
\end{prop}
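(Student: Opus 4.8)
The plan is to chain together the two main results of this section. Lemma~\ref{lem-findgoodchains} will join the ($D$-free) bodies of $p$ and $q$ by a chain of free sections $\tau_1,\dots,\tau_{n+1}$, with $\tau_1$ the body of $p$ and $\tau_{n+1}$ the body of $q$, each $\tau_i$ meeting $X_{f,\pax}$, so that consecutive sections $\tau_i,\tau_{i+1}$ factor through a common ruled surface $R_i$ of degree at most a constant $B$; and Lemma~\ref{lem-pentogether} will connect, inside $\Sigma^e(X/C/k)$, any two porcupines of the same total degree, with equal Abel images, and with bodies penned in a common ruled surface, provided $e$ exceeds a monotone function $\varphi$ of the degree of that surface. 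Given these, for $(p,q)\in U$ I would interpolate porcupines $P^{(1)}=p, P^{(2)},\dots,P^{(n)}, P^{(n+1)}=q$, all of total degree $e$ and all with Abel image $L:=\alpha_{\mc{L}}(p)=\alpha_{\mc{L}}(q)$, where $P^{(i)}$ has body $\tau_i$; apply Lemma~\ref{lem-pentogether} to each consecutive pair $(P^{(i)},P^{(i+1)})$, which is penned in $R_i$ with $\deg R_i\le B$; and concatenate the resulting chains of rational curves, joined at the points $\Phi_{\text{total}}(P^{(i)})$, which are unobstructed and non-stacky since a porcupine with free body always represents such a point of $\Sigma^e(X/C/k)$.

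First I would fix the irreducible components $T_0\subset\Sec^{e_0}(X/C/k)$ and $T_\infty\subset\Sec^{e_\infty}(X/C/k)$ containing the $D$-free bodies of general porcupines in $P$ and $P'$; these are well defined because a $D$-free section is a smooth point of $\Sec$ (Proposition~\ref{prop-newfree4}) and is free, so meets $X_{f,\pax}$. Applying Lemma~\ref{lem-findgoodchains} to a pair of $D$-free sections in $T_0$ and $T_\infty$ produces a dense open $W\subset T_0\times T_\infty$ and constants $n\ge 1$, $e_1=e_0,e_2,\dots,e_n,e_{n+1}=e_\infty$, and $B$. For each $i$ the constant supplied by Lemma~\ref{lem-pentogether} is bounded by $\varphi(\deg R_i,e_i)\le\varphi(B,e_i)$, using monotonicity of $\varphi$ in its first argument. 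Since $n$, the $e_i$, and $B$ depend only on $P$, $P'$, $D$, I set $E=\max\bigl(\max_i\varphi(B,e_i),\ \max_i e_i+2g(C)+1,\ E'\bigr)$, where $E'$ is a threshold — obtained by the argument of Lemma~\ref{lem-Abel-irred}, applied to porcupine components in place of the $Z_e$ — past which $\alpha_{\mc{L}}|_{P_{e-e_0-n_0}}$ and $\alpha_{\mc{L}}|_{P'_{e-e_\infty-n_\infty}}$ are dominant onto $\underline{\text{Pic}}^e_{C/k}$ with irreducible geometric generic fibres, so that for $e\ge E$ the fibre product $\mathcal{P}:=P_{e-e_0-n_0}\times_{\alpha_{\mc{L}},\underline{\text{Pic}}^e_{C/k},\alpha_{\mc{L}}}P'_{e-e_\infty-n_\infty}$ is irreducible.

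The open subscheme would be $U=\{(p,q)\in\mathcal{P}: \text{the bodies }s_p,s_q\text{ lie in }W\text{ and are both }D\text{-free}\}$. Both conditions are open (Lemma~\ref{lem-newfreeopen} and openness of $W$), so the task is to see $U\neq\emptyset$, hence dense; this follows once the body map $\mathcal{P}\to T_0\times T_\infty$ is dominant. For a general $(\sigma_0,\sigma_\infty)\in T_0\times T_\infty$, pick a porcupine $p\in P_{e-e_0-n_0}$ with body $\sigma_0$ — possible because $\Phi_{\text{body}}$ is dominant onto $T_0$, as the porcupine spaces are smooth and a $D$-free body deforms within $T_0$ carrying a porcupine in the same connected component along — set $L=\alpha_{\mc{L}}(p)$, and pick a porcupine $q\in P'_{e-e_\infty-n_\infty}$ with body $\sigma_\infty$ whose $e-e_\infty$ quill points form a general divisor in the complete linear system of $L\otimes(\sigma_\infty^*\mc{L})^{-1}$, chosen inside the dense open $\sigma_\infty^{-1}(X_{f,\pax})\subset C$: this is possible since $e-e_\infty\ge 2g(C)+1$, so the line bundle is base-point free with $e-e_\infty$ distinct points as general member, which can be made to avoid the finitely many missing points, and through each peaceful point there is a free line (all lines through it being free). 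Then $(p,q)\in\mathcal{P}$ has body $(\sigma_0,\sigma_\infty)$. The same recipe, applied with the section $\tau_i$ and the degree-$(e-e_i)$ line bundle $L\otimes(\tau_i^*\mc{L})^{-1}$ and using $e-e_i\ge 2g(C)+1$, produces the interpolating porcupines $P^{(i)}$, $2\le i\le n$, with body $\tau_i$ and Abel image $L$.

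To finish, for $(p,q)\in U$: since $(s_p,s_q)\in W$, Lemma~\ref{lem-findgoodchains} supplies $\tau_1,\dots,\tau_{n+1}$ and $R_1,\dots,R_n$; each pair $(P^{(i)},P^{(i+1)})$ extends the free sections $\tau_i,\tau_{i+1}$, which meet $X_{f,\pax}$ (and $\tau_i^{-1}(X_{f,\pax})\cap\tau_{i+1}^{-1}(X_{f,\pax})\ne\emptyset$ since $C$ is irreducible) and are penned in $R_i$, has total degree $e$, and has Abel image $L$; as $e\ge E\ge\varphi(B,e_i)$, Lemma~\ref{lem-pentogether} connects $\Phi_{\text{total}}(P^{(i)})$ and $\Phi_{\text{total}}(P^{(i+1)})$ in $\Sigma^e(X/C/k)$ by a chain of rational curves whose nodes parametrize unobstructed, non-stacky stable sections. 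Concatenating these $n$ chains, whose junction points $\Phi_{\text{total}}(P^{(i)})$ and whose two extremities $\Phi_{\text{total}}(p),\Phi_{\text{total}}(q)$ are all unobstructed, non-stacky points, yields the required chain. The main obstacle, and really the only delicate point, is securing the single threshold $E$: it is precisely for this that Lemma~\ref{lem-pentogether} records the monotone bound $\varphi$ on its constant and that Lemma~\ref{lem-findgoodchains} yields ruled surfaces of bounded degree together with the finitely many degrees $e_i$ fixed independently of the chosen point of $W$.
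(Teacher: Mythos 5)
Your proposal is correct and follows essentially the same route as the paper's proof: apply Lemma~\ref{lem-findgoodchains} to get the chain of free sections $\tau_i$ linked by bounded-degree ruled surfaces $R_i$, interpolate porcupines $P^{(i)}$ with body $\tau_i$, Abel image $L$, and quills over a general divisor $Q_i\in|L\otimes(\tau_i^*\mc{L})^{-1}|$ supported in $\tau_i^{-1}(X_{f,\pax})$, and connect consecutive pairs in $\Sigma^e(X/C/k)$ via Lemma~\ref{lem-pentogether} across $R_i$, using monotonicity of $\varphi$ to secure a single $E$. The only cosmetic difference is that you let Lemma~\ref{lem-pentogether} absorb the quill-repositioning internally, whereas the paper spells out the alternating pivot steps via Lemma~\ref{lem-pivot}; you also supply a bit more detail than the paper on why $U$ is dense.
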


\begin{proof}
Let $T_0$, resp.\ $T_\infty$ denote the irreducible components
of $\Sec(X/C/k)$ to which the bodies of porcupines in $P$, resp.\ 
$P'$ belong. Let $n, e_1,\ldots, e_{n+1}, B$ and
$W \subset T_0\times T_\infty$ be the data
found in Lemma \ref{lem-findgoodchains}.
Note that $e_0 = e_1$ and $e_\infty = e_{n+1}$.
Set $E = \max\{e_i\} + \max\{0,\varphi(B, e_i)\} + 2g(C) + 999$
where $\varphi(-,-)$ is the function mentioned in Lemma
\ref{lem-pentogether}. Pick $e \geq E$.
The open subset $U$ will correspond to the pairs $(p,q)$
in the fibre product with
$p = (s_0, q_{0,i}, r_{0,i} \in L_{0,i}(k), L_{0,i} \to X)$,
$q = (s_\infty, q_{\infty,i},
r_{\infty,i} \in L_{\infty,i}(k), L_{\infty,i} \to X)$
such that the pair $(s_0, s_\infty) \in W$. 

\medskip\noindent
Namely, let $R_1,\ldots, R_n$, $\tau_1,\ldots,\tau_\infty$
be as in Lemma \ref{lem-findgoodchains} adapted to the
pair $(s_0, s_\infty)$. Denote
$\xi = \alpha_{\mc{L}}(p) = \alpha_{\mc{L}}(q)
\in \underline{\text{Pic}}^e_{C/k}(k)$ the 
corresponding degree $e$ divisor class on $C$.
For each $i = 2, \ldots, n$ it is possible to
find a reduced effective divisor $Q_i \subset C$,
$Q_i \subset \tau_i^{-1}(X_{f,\pax})$
of degree $e - e_i \geq 2g(C) + 999$ such that 
$\tau_i^*(\mc{L})(Q_i)$ is $\xi$.
Now we will repeatedly use Lemmas \ref{lem-pivot} and
\ref{lem-pentogether} to find our chain of rational curves.
Let $p_1$ be the point of $\Pine{e_0, e - e_0}(X/C/k)$
gotten from $p$ by moving all $e - e_0 = e - e_1$ quills
attached to $s_0 = \tau_1$ into the ruled surface $R_1$.
Then $p$ is connected to $p_1$ by a suitable chain
of rational curves by Lemma \ref{lem-pivot}.
Let $p_2 \in \Pine{e_2, e - e_2}(X/C/k)$ 
be the porcupine with body $\tau_2$ and quills attached to
the points of the divisor $Q_2$ lying in the ruled surface
$R_1$. By Lemma \ref{lem-pentogether}
we see that $p_1$ is connected to $p_2$
by a suitable chain of rational curves.
Let $p_3 \in \Pine{e_2, e - e_2}(X/C/k)$ 
be the porcupine with body $\tau_2$ and quills attached to
the points of the divisor $Q_2$ lying in the ruled
surface $R_2$. By Lemma \ref{lem-pivot} the points
$p_2$ and $p_3$ are connected by a suitable chain of
rational curves. And so on and so forth.
\end{proof}

\begin{cor}
\label{cor-Zdsame}
\marpar{cor-Zdsame}
In Situation \ref{hyp-defn-porc}, assume in addition
Hypothesis \ref{hyp-peace} for the restriction of $f$
to some nonempty open $S \subset C$. Let $e_0, e_\infty$
be integers and let $Z, Z'$ be irreducible components
of $\Sigma^{e_0}(X/C/k)$, $\Sigma^{e_\infty}(X/C/k)$ whose general point
parametrizes a free section of $f$.
Consider the famlies of components $Z_e$, $Z'_e$ constructed
in Lemma \ref{lem-Zd}. Then for $e \gg 0$ we have
$Z_e = Z'_e$.
\end{cor}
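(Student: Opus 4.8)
The plan is to derive this from Proposition~\ref{prop-pentogether}, which connects suitable porcupines with bodies in $Z$ and $Z'$ by chains of rational curves, combined with the elementary observation that a chain of rational curves in $\Sigma^e(X/C/k)$ all of whose endpoints and nodes are unobstructed, non-stacky points must lie inside a single irreducible component (an irreducible closed subset meeting a smooth point of the stack lies on the unique component through that point, and one propagates this along the chain).

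First I would arrange that a general point of $Z$, resp.\ of $Z'$, parametrizes a section that is $D$-free for a fixed effective Cartier divisor $D$ of degree $2g(C)+1$ and meets $X_{f,\pax}$. By Lemma~\ref{lem-Zdbetter} there is $e_1 \gg 0$ such that $Z_{e_1}$ contains a point parametrizing an $m$-free section, $m = 2g(C)+1$, meeting $X_{f,\pax}$; since $D$-freeness is an open condition on the irreducible $Z_{e_1}$ (Lemma~\ref{lem-newfreeopen}) and a general point of $Z_{e_1}$ still parametrizes a free section (Lemma~\ref{lem-Zdgenpt}), the general point of $Z_{e_1}$ is $D$-free and meets $X_{f,\pax}$. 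For $e \geq e_1$, Lemma~\ref{lem-Zdup} shows that porcupines of total degree $e$ with body in $Z_{e_1}$ and $e - e_1$ quills land in $Z_e$; these same porcupines also define the component obtained by applying Lemma~\ref{lem-Zd} to $Z_{e_1}$, so that component equals $Z_e$. Hence, replacing $(Z, e_0)$ by $(Z_{e_1}, e_1)$ and likewise $(Z', e_\infty)$ by $(Z'_{e_2}, e_2)$ does not change $Z_e$, $Z'_e$ for $e \gg 0$, so we may assume this reduction is in force.

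Next I would take $P \subset \Pine{e_0,0}(X/C/k)$ to be the trace of $Z$ on the space of free sections meeting $X_{f,\pax}$; since that space is smooth (Proposition~\ref{prop-newfree4}), $P$ is an irreducible component of it, and by the reduction its general point has $D$-free body (recall a $D$-free section with $\deg D \geq 2g(C)$ is in particular free). Defining $P'$ from $Z'$ analogously, Proposition~\ref{prop-pentogether} yields an integer $E$ such that for every $e \geq E$ there is a dense open
$$
U \subset P_{e-e_0} \times_{\alpha_{\mc L},\,\underline{\text{Pic}}^e_{C/k},\,\alpha_{\mc L}} P'_{e-e_\infty}
$$
all of whose points $(p,q)$ are joined in $\Sigma^e(X/C/k)$ by a chain of rational curves whose endpoints and nodes parametrize unobstructed, non-stacky stable sections. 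I would then check that $U \neq \emptyset$ once also $e - e_0 \geq g(C)$ and $e - e_\infty \geq g(C)$: by (the proof of) Lemma~\ref{lem-Abel-irred} the Abel map is dominant on each of $P_{e-e_0}$ and $P'_{e-e_\infty}$, so their images meet densely in $\underline{\text{Pic}}^e_{C/k}$ and the fibre product is nonempty. Also, by Lemma~\ref{lem-Zd}, $P_{e-e_0}$ maps into $Z_e$ and $P'_{e-e_\infty}$ into $Z'_e$.

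Finally, fixing $e \geq \max\{E, e_0 + g(C), e_\infty + g(C)\}$ and $(p,q) \in U$, the images $[p] \in Z_e$ and $[q] \in Z'_e$ are smooth points of $\Sigma^e(X/C/k)$ and are joined by such a chain $[p] = x_0, \Gamma_1, x_1, \dots, \Gamma_r, x_r = [q]$; since $\Gamma_1$ is an irreducible closed subset through the smooth point $x_0 \in Z_e$ it lies in $Z_e$, so $x_1 \in Z_e$ is again a smooth point and $\Gamma_2 \subset Z_e$, and iterating gives $[q] = x_r \in Z_e$, whence $Z_e = Z'_e$ because $Z'_e$ is the unique component through the smooth point $[q]$. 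The main obstacle is not in this last assembly but in the preliminary reduction to $D$-free bodies (needed to invoke Proposition~\ref{prop-pentogether} without disturbing the $Z_e$) and in the nonemptiness of $U$, which rests on the Abel-map dominance of Lemma~\ref{lem-Abel-irred}; everything genuinely substantive is already contained in those cited results.
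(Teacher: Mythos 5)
Your proposal is correct and follows essentially the same route as the paper's proof: reduce via Lemmas~\ref{lem-Zdbetter} and~\ref{lem-Zdup} to components whose general section is $(2g(C)+1)$-free, apply Proposition~\ref{prop-pentogether}, and observe that a chain of rational curves whose nodes and endpoints are unobstructed, non-stacky points lies in a single irreducible component. You have in addition explicitly verified, via the Abel-map dominance of Lemma~\ref{lem-Abel-irred}, that the fibre product in Proposition~\ref{prop-pentogether} (and hence its dense open $U$) is nonempty, a point the paper's terse proof leaves implicit.
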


\begin{proof}
By Lemmas \ref{lem-Zdbetter} and \ref{lem-Zdup} we may assume that
$Z$ and $Z'$ each contain a section which meets $X_{f,smooth}$ and
is $2g(C)+1$-free. By Proposition \ref{prop-pentogether}
there exists an $E$ and for all $e \geq E$
there are points $p \in Z_e$ and $q \in Z'_e$ corresponding
to porcupines with bodies in $Z$ and $Z'$ such that
$p$ can be connected to $q$ by a chain of rational curves
in $\Sigma^e(X/C/k)$ whose marked points correspond to
unobstructed, non-stacky stable sections. In particular such a
chain lies in a unique irreducible component!
\end{proof}


\section{Varieties and peaceful chains}
\label{sec-varlines} 
\marpar{sec-varlines}

\noindent
In this section we spell out what consequences we can
draw from the work done in the previous sections for
the moduli spaces of rational curves on a smooth
projective variety whose moduli spaces of lines
satisfy the conditions imposed in Section \ref{sec-peace}.

\begin{hyp}
\label{hyp-peaceY}
\marpar{hyp-peaceY}
Let $k$ be an uncountable algebraically closed field of characteristic $0$.
Let $Y$ be a smooth projective variety over $k$. Let $\mc{L}$ be an
ample invertible sheaf on $Y$. We assume that $Y \to \text{Spec}(k)$
satisfies Hypothesis \ref{hyp-peace}.
\end{hyp}

\noindent
Assume that Hypothesis \ref{hyp-peaceY} holds.
Let $X = \PP^1 \times Y$ and 
$$
f = \text{pr}_1 :
X = \PP^1 \times Y
\longrightarrow
\PP^1.
$$
The first thing to remark is that a morphism
$g : \PP^1 \to Y$ gives rise to a section $s : \PP^1 \to X$,
and conversely. Also, the section $s$ is $0$-free, i.e., unobstructed
(see Definition \ref{defn-classicalfree}),
if and only if the rational curve $g : \PP^1 \to Y$ is unobstructed
(at worst summands $\OO(-1)$ in the direct sum decomposition
of $g^*T_Y$). Similarly, $s$ is $1$-free, i.e., free (see Definition
\ref{defn-classicalfree}), if and only
if the rational curve $g : \PP^1 \to Y$ is free.
Arguing in this way we see that we can interpret many of the
previous results for the family $f : X \to \PP^1$ in terms of
the moduli spaces of lines on $Y$.

\medskip\noindent
Instead of reproving everything from scratch in this setting
we make a list of statements and we point out the corresponding
lemmas, and propositions in the more general treatement above.
We denote $\Kgnb{0,0}(Y, e)$ the Kontsevich moduli space
of degree $e$ rational curves in $Y$. Also, let $Y_{\pax}$ denote
the set of \peaceful\ points (w.r.t.\ $Y\to \text{Spec}(k)$), see
Definition \ref{defn-newpeace}.
\begin{enumerate}
\item There is a sequence of irreducible components
$Z_e \subset \Kgnb{0,0}(Y,e)$, $e \geq 1$ uniquely
characterized by the property that every comb in $Y$
whose handle is contracted to a point of $Y_{\pax}$ and
which has $e$ teeth mapping to lines in $Y$ is in $Z_e$.
This is the exact analogue of Lemma \ref{lem-Zd}, starting with
the fact that a constant map $\PP^1 \to Y$ is free!
\item A general point of $Z_e$ corresponds to a map
$\PP^1 \to Y$ which is free and meets $Y_{\pax}$.
This is the exact analogue of Lemma \ref{lem-Zdgenpt}.
\item Consider a comb in $Y$ whose handle has degree $e$
with $e' -e$ teeth which are lines.
Such a comb is in $Z_{e'}$ if the handle is free,
is in $Z_e$, and the attachment points map to $Y_{\pax}$.
This is the exact analogue of Lemma \ref{lem-Zdup}.
\item Any stable map $f : C \to Y$ in $\Kgnb{0,0}(Y, e')$
which has exactly one component a free curve corresponding
to  a point in $Z_{e}$ and all other components free lines
corresponds to a smooth nonstacky point of $Z_{e'}$.
This is the exact analogue of Lemma \ref{lem-Zdchains}.
\item
\label{better}
\marpar{better}
Given any integer $m \geq 0$ there exists $e \gg 0$
such that the general point of $Z_e$ corresponds to
a map $f : \PP^1 \to Y$ with $H^1(\PP^1, f^*T_Y(-m)) = 0$.
This is the exact analogue of Lemma \ref{lem-Zdbetter}.
\item
\label{samecomp}
\marpar{samecomp}
Let $Z' \subset \Kgnb{0,0}(Y, e_0)$ be an irreducible
component whose general point corresponds to a free rational
curve. There exists an integer $E \gg 0$ such that
for all $e \geq E$ we have the following
property: Consider a comb in $Y$ whose handle has degree $e_0$
with $e - e_0$ teeth which are lines.
Such a comb is in $Z_{e}$ if the handle is free,
is in $Z'$, and the attachment points map to $Y_{\pax}$.
(And in particular any smoothing
of this comb defines a point of $Z_e$.)
This is the exact analogue of Corollary \ref{cor-Zdsame},
once you observe that such a comb defines an unobstructed
nonstacky point of $\Kgnb{0,0}(Y, e)$.
\end{enumerate}
It is perhaps not necessary, but we point out that
instead of defining $Z_e$ as in (1) above, we could just
define $Z_e$ as the unique irreducible component of
$\kgnb{0,0}(Y,e)$ which contains all trees of free lines.
This follows from (4) (with $e' = 0$).

\begin{lem}
\label{lem-fibreZdirred}
\marpar{lem-fibreZdirred}
In Situation \ref{hyp-peaceY} above.
Let $Z_{e,1} \subset \Kgnb{0,1}(Y,e)$ be the
unique irreducible component dominating $Z_e \subset \Kgnb{0,0}(Y,e)$.
The general fibre of $\text{ev} : Z_{e,1} \to Y$
is irreducible.
\end{lem}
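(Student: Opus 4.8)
The plan is to induct on $e$. The base case $e=1$ is essentially Hypothesis \ref{hyp-peace}(1): for general $y\in Y$ every line through $y$ is free, so the fibre $\text{ev}^{-1}(y)\subset \Kgnb{0,1}(Y,1)$ is contained in $Z_{1,1}$, and it is irreducible (even rationally connected) by that hypothesis. For the inductive step I would first observe that $\text{ev}:Z_{e+1,1}\to Y$ is surjective: it is proper, and its image contains the dense set $Y_{\pax}$ of \peaceful\ points, through each of which passes a degree $e+1$ curve in $Z_{e+1}$ obtained by attaching a free line to a free curve in $Z_e$ (the analogue for $Y$ of Lemma \ref{lem-Zdchains}), after first arranging by the analogue of Lemma \ref{lem-Zdgenpt} that $Z_e$ contains a free curve meeting $Y_{\pax}$. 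Since we are in characteristic $0$, generic smoothness applies to this dominant morphism, so over a dense open $Y'\subseteq Y$ the fibre $F_y:=\text{ev}^{-1}(y)$ is smooth. A smooth scheme is the disjoint union of its irreducible components, so it would suffice to prove that $F_y$ is connected for general $y\in Y'$ (choosing $y$ also general for the degree $e$ statement).

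For connectedness I would single out inside $F_y$ the locus $B_y$ of \emph{standard combs through $y$}: pointed stable maps $[h:C_0\cup L\to Y,\,p]$ with $[C_0\to Y]$ a free curve of degree $e$ in $Z_e$ carrying the marked point $p$, with $h(p)=y$, and with $L$ a free line attached to $C_0$ at a point mapping to $Y_{\pax}$. By the analogue of Lemma \ref{lem-Zdchains} such a map is an unobstructed, non-stacky point of $Z_{e+1,1}$, so $B_y\subseteq F_y$; and $B_y$ is irreducible, being fibred over the variety $\text{ev}^{-1}(y)\cap Z_{e,1}$ of admissible handles — irreducible by the inductive hypothesis, with dense open subset of free curves meeting $Y_{\pax}$, nonempty in the general fibre by the analogues of Lemmas \ref{lem-Zdgenpt} and \ref{lem-newfreeopen} together with the surjectivity of $\text{ev}:Z_{e,1}\to Y$ — with fibre over a handle an open subset of $\PP^1$ (the attachment point) and then, over that, the fibre of $\text{ev}:\Kgnb{0,1}(Y,1)\to Y$ over a general \peaceful\ point, which is irreducible by Hypothesis \ref{hyp-peace}(1). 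Thus $B_y$ is irreducible, hence connected.

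It would then remain to connect an arbitrary point of $F_y$ to $B_y$ by a chain of rational curves lying entirely in $F_y$; since $B_y$ is connected this yields connectedness of $F_y$, and then smooth plus connected gives irreducible, finishing the induction. Here one runs the machinery of Sections \ref{sec-peace}--\ref{sec-pencils} — smoothing porcupines and its converse (Lemmas \ref{lem-Zdgenpt}, \ref{lem-Zdup}), pivoting quills (Lemma \ref{lem-pivot}), and pencils of porcupines inside a common ruled surface (Lemmas \ref{lem-pentogether}, \ref{prop-pentogether}, Corollary \ref{cor-Zdsame}) — but with the modification that every deformation, attachment, smoothing, and ruled-surface pencil employed preserves a marked point of the body mapping to the fixed point $y$. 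The extra basepoint is essentially free: vanishing statements such as $H^1(C,h^*T_Y(-D))=0$ absorb one more point once $D$ is enlarged by one, the ruled surfaces can be chosen through a fixed point of $Y$ lying over $y$, and the boundedness and generic-position inputs survive this constraint. Making this "relative to a basepoint" version of the porcupine technology precise is what I expect to be the main obstacle; an equivalent reformulation is that in the Stein factorization $Z_{e+1,1}\to W\to Y$ the finite map $W\to Y$ has degree $1$, and it is enough to produce a single general $y$ with $F_y$ connected, which the basepointed chain construction supplies. As an alternative to the last step one could instead work with the boundary divisor $D_e\subset Z_{e+1,1}$ of maps splitting off a degree-one line with handle in $Z_e$: the same fibration argument shows $\text{ev}^{-1}(y)\cap D_e$ is irreducible, and by smoothness of $F_y$ along its dense subset $B_y\subseteq D_e$ it lies in a single component $Y_0$ of $F_y$; one would then still need that every component of $F_y$ meets $D_e$, which is again exactly where the chain-connectivity input is required.
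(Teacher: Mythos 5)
Your proposal is not the paper's argument, and it has a gap that you yourself flag as ``the main obstacle.''\ The step of connecting an arbitrary point of $F_y=\text{ev}^{-1}(y)\cap Z_{e+1,1}$ to $B_y$ by rational chains \emph{inside} $F_y$ requires a basepointed variant of the entire porcupine machinery of Sections \ref{sec-peace}--\ref{sec-pencils}, and this is not merely a matter of twisting by one extra point: Lemmas \ref{lem-findgoodchains} and \ref{lem-pentogether} manipulate linear systems on ruled surfaces over the base curve $C$ and are set up for \emph{sections of $X\to C$}, not for $1$-pointed rational curves in $Y$ constrained to pass through a fixed $y$. Neither version you sketch (the chain connection, or the fallback ``every component of $F_y$ meets the boundary divisor $D_e$'') is established by anything currently in the paper, and proving either would be a substantial undertaking. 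Moreover this lemma is itself used upstream in the chain arguments (e.g.\ Lemma \ref{lem-Zdabs}), so one must be careful not to become circular.

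The paper takes a much shorter route that needs no induction on $e$ and no chain connectivity at all. Inside $Z_{e,1}$ it singles out the dense open $Z^\circ_{e,1}$ of $1$-pointed maps all of whose components are free and whose nodes map to $Y_{\pax}$; on this locus $\text{ev}$ is smooth. Then, rather than splitting off a single line as in your $B_y$, it passes to the \emph{maximally degenerate} locus $W_{e,1}\subset Z^\circ_{e,1}$: combs whose handle carries the marked point and is \emph{contracted} to a point of $Y_{\pax}$, with all $e$ teeth being the (automatically free) lines through that point. The fibre of $W_{e,1}\to Y$ over a peaceful $y$ is then, up to the $\overline{M}_{0,e+1}$-worth of contracted combinatorics, a product of $e$ copies of $\text{ev}^{-1}(y)\subset \Kgnb{0,1}(Y,1)$, and hence irreducible \emph{directly} from Hypothesis \ref{hyp-peace}(1) --- this is exactly the step where your induction is avoided. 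Finally, from the three facts that $Z_{e,1}$ is irreducible, $W_{e,1}\to Y$ is dominant with irreducible general fibre, and $W_{e,1}$ sits inside the smooth locus $Z^\circ_{e,1}$ of $\text{ev}$, one concludes that the general fibre of $Z_{e,1}\to Y$ is irreducible by \cite[Proposition 4.5.13]{EGA4} (equivalently, run the Stein factorization $Z^\circ_{e,1}\to Y'\to Y$: the irreducible $W_{e,1}$ has irreducible fibres, so its image in $Y'$ is birational to $Y$; since $Y'$ is irreducible, $Y'\to Y$ is generically degree $1$, and smooth plus connected gives irreducible). So the key idea you were missing is to degenerate \emph{all the way} to a contracted handle, where Hypothesis \ref{hyp-peace} gives irreducibility for free, and then let a general algebro-geometric fact about irreducibility of fibres do the rest.
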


\begin{proof}
Let us temporarily denote by $Z^\circ_{e, 1} \subset Z_{e,1}$
the dense open locus of stable $1$-pointed maps $\bigcup C_i \to Y$
all of whose irreducible components $C_i \to Y$ are free and
whose nodes map to points of $Y_{\pax}$.
The morphism $\text{ev} : Z^\circ_{e, 1} \to Y$ is smooth.
Consider the locus
$W_{e,1} \subset Z^\circ_{e, 1}$ whose points
correspond to those combs whose handles are contracted
to a point of $Y_{\pax}$ and whose teeth are (automatically) free
lines and whose marked point is on the handle.
By definition of the irreducible components $Z_e$ the 
space $W_{e,1}$ is nonempty. It is smooth and
$W_{e,1} \to Y$ is still smooth onto $Y_{\pax}$.
Clearly, Hypothesis \ref{hyp-peace} for $Y \to \text{Spec}(k)$
implies the general fibre of $W_{e, 1} \to Y$ is
irreducible. From this, and the irreducibility
of $Z_{e, 1}$ it follows that the general
fibre of $Z_{e, 1} \to Y$ is irreducible, for
example by \cite[Proposition 4.5.13]{EGA4}.
\end{proof}

\begin{lem}
\label{lem-Zdabs}
\marpar{lem-Zdabs}
In Situation \ref{hyp-peaceY} above. Let
$Z' \subset \Kgnb{0,0}(Y, e_0)$ and $E$ be as in (\ref{samecomp}) above.
Suppose that $g : C \to Y$ is a genus $0$ Kontsevich stable map
such that $C = \bigcup_{i=1}^A C_i \cup \bigcup_{j=1}^B L_j$ is
a decomposition into irreducible components with the
following properties:
(1) each $C_i \to Y$ is free and corresponds to a point
of $Z'$, (2) each $L_i \to Y$ is a free line and a leaf of
the tree, and (3) $B \geq E - e_0$. Then $g$ defines an
unobstructed point of $Z_{Ae_0 + B}$.
\end{lem}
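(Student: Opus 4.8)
The plan is to prove two things: that $g$ represents an unobstructed point of $\Kgnb{0,0}(Y,d)$, where $d=Ae_0+B$ (each $C_i$ has degree $e_0$ and each $L_j$ degree $1$, so $g$ has degree $d$), and that this point lies on the component $Z_d$. Unobstructedness is routine: since $C$ is a tree of rational curves, $[g]$ is a smooth point of $\Kgnb{0,0}(Y,d)$ as soon as $H^1(C,g^*T_Y)=0$, and the latter follows by peeling leaves off the tree and running the same Mayer--Vietoris argument as in Lemma~\ref{lem-Zdchains}: whenever $C=C'\cup_p D$ with $D$ a leaf on which $g^*T_Y$ is globally generated with vanishing $H^1$ --- which holds for each $C_i$, for each $L_j$, and for any contracted $\PP^1$ occurring in the deformations below --- the restriction sequence gives $H^1(C,g^*T_Y)\cong H^1(C',g^*T_Y)$, and one concludes by induction on the number of components. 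The same remark applies to every configuration built below, so each is a smooth point of $\Kgnb{0,0}(Y,d)$ lying on a single irreducible component; it therefore suffices to connect $[g]$ to a point known to lie on $Z_d$ by a path of unobstructed points.

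The engine of the argument is a \emph{trading move}. Suppose $C^\flat\to Y$ is a free curve representing a point of $Z'$, that $m\ge E-e_0$ free lines are glued to $C^\flat$ as leaves at points of $Y_{\pax}$, and that some further genus $0$ stable map $R$, all of whose components are free, is glued to $C^\flat$ at one more point. Then the resulting stable map is connected, through unobstructed points, to the one in which $C^\flat$ together with its $m$ line-leaves is replaced by a tree of $e_0+m$ free lines glued to $R$. Indeed, by (\ref{samecomp}) the comb $C^\flat\cup(m\text{ line-leaves})$ represents a point of $Z_{e_0+m}$ (legitimately, since $e_0+m\ge E$), and this point is unobstructed; so is any tree of $e_0+m$ free lines, which also lies on $Z_{e_0+m}$ because $Z_{e_0+m}$ is the unique component of $\Kgnb{0,0}(Y,e_0+m)$ containing all trees of free lines. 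As the smooth locus of the irreducible variety $Z_{e_0+m}$ is connected, these two unobstructed points are joined by a path through unobstructed points of $Z_{e_0+m}$, and $R$ can be dragged along (everything remaining a tree of free or contracted curves, hence unobstructed).

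Granting the trading move, one shows $[g]\in Z_d$ by induction on $A$. If $A=1$, then $g$ is a comb with handle $C_1\in Z'$ and $B\ge E-e_0$ free line teeth, so after deforming $C_1$ so that the attachment points are peaceful (which does not change the component of $[g]$), (\ref{samecomp}) gives $[g]\in Z_{e_0+B}=Z_d$. If $A\ge 2$, first deform the $C_i$ so that all nodes of $C$ map into $Y_{\pax}$ (possible since the components are free), and pick a component $C_A$ which is a leaf of the subtree $\bigcup_{i=1}^A C_i$. Since there are $B\ge E-e_0$ free line leaves in total, re-route free line leaves through the tree --- sliding an attaching node toward and across a neighbouring node, passing transiently through configurations carrying a contracted bubble, all unobstructed because every component is free --- until $C_A$ carries at least $E-e_0$ free line leaves attached at peaceful points, and then apply the trading move to $C_A$, taking $R$ to be the remainder of $C$. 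This connects $[g]$, through unobstructed points, to a configuration with $A-1$ free curves in $Z'$, namely $C_1,\dots,C_{A-1}$, together with the leftover original free lines and the $e_0+(\#\text{traded leaves})$ new free lines; re-routing the new lines to be leaves exhibits it as a configuration with exactly $A-1$ curves in $Z'$ and $B+e_0$ free line leaves (so hypothesis (3) holds, indeed improves), of total degree $(A-1)e_0+(B+e_0)=Ae_0+B$. By induction this configuration lies on $Z_{Ae_0+B}$, hence so does $[g]$.

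The step I expect to require the most care is the re-routing move, and more generally the verification that all the degenerations and deformations used above stay inside the open, smooth locus of unobstructed stable maps: that sliding a free line-leaf across a node (through a configuration with a contracted bubble) and degenerating a general member of $Z_{e_0+m}$ to a tree of free lines can both be realised through unobstructed points, and that the vanishing $H^1(C,g^*T_Y)=0$ persists throughout. Once these standard-type facts are in place, the remainder is bookkeeping with the inequality $B\ge E-e_0$, which the induction only strengthens.
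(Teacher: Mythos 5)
Your overall strategy matches the paper's: slide lines around the tree of free curves, reduce to a comb with a single free curve in $Z'$ plus enough free-line leaves, invoke item~(\ref{samecomp}) to identify that comb with a point of $Z_{e_0+m}$, replace it by a tree of free lines, and iterate. The unobstructedness bookkeeping via peeling leaves is also the same.

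However, there is a genuine gap in the ``trading move.'' You assert that, because the comb $C^\flat\cup(m\text{ line-leaves})$ and a tree of $e_0+m$ free lines both sit in the smooth locus of the irreducible component $Z_{e_0+m}$, the two configurations are joined by a path through unobstructed points ``and $R$ can be dragged along.'' The problem is that the points you connect in $Z_{e_0+m}$ are $0$-pointed stable maps; a path in $Z_{e_0+m}$ carries no distinguished marked point, so it does not tell you how the node at which $R$ is attached moves in $Y$. To drag $R$ along, you need either (a) the attachment point in $Y$ to remain constant along the path, or (b) an argument that $R$ can be deformed to follow whatever path of attachment points the chosen arc in $Z_{e_0+m}$ produces; (b) is not automatic, because although the evaluation map from $1$-pointed deformations of $R$ is smooth, there is no guarantee that an arbitrary path in $Y$ lifts, nor that the arc you get by lifting $0$-pointed data to $1$-pointed data keeps the evaluation inside the locus $R$ can reach. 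The paper handles exactly this with Lemma~\ref{lem-fibreZdirred}: it first deforms so that the attachment point lands at a point $y\in Y(k)$ over which the fibre of $\text{ev}:Z_{e_0+m,1}\to Y$ is irreducible, and then performs the entire trade inside that single fibre. Since the marked point evaluates to $y$ throughout, the complementary piece $R$ never has to move at all. Your proposal omits this step; without it, ``drag $R$ along'' is an assertion, not an argument, and the trading move does not close.

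To repair your proof, work in $Z_{e_0+m,1}$ rather than $Z_{e_0+m}$: after deforming so that the attachment point is a general point $y$ of $Y_{\pax}$ where $\text{ev}^{-1}(y)\cap Z_{e_0+m,1}$ is irreducible (Lemma~\ref{lem-fibreZdirred}), connect the $1$-pointed comb $(C^\flat\cup\,m\text{ lines},\,p)$ to a $1$-pointed tree of $e_0+m$ free lines marked at a point over $y$, inside that fibre. Then glue $R$ back at $y$ along the whole family. With this insertion, the rest of your argument (the induction on $A$, the sliding of leaves) goes through and coincides in spirit with the paper's proof.
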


\begin{proof}
We will show there is a connected chain of curves
contained in the unobstructed locus of $\Kgnb{0,0}(Y, 1)$
which connects $g$ to a point of $Z_{Ae_0+B}$.
We may first move the map a little bit such that
all the nodes of $C$ are mapped into $Y_{\pax}$.
Next, we may $1$ by $1$ slide all the lines $L_i$
along the curves $C_i$ and onto one of
the curves $C_{i_0}$ which is a leaf of the tree $\bigcup C_i$.
Say this curve is $C_1$, and is attached to
$C'' = \bigcup_{i\not=1} C_i$ at the point $p \in C_1$
and $q \in C''$. Set $C' = C_1 \cup \bigcup L_i$, so
$C = C' \cup_{p \sim q} C''$. By (\ref{samecomp}) above
we see that $C' \to Y$
defines an unobstructed point of $Z_{e_0 + B}$.

\medskip\noindent
We may assume (after possibly moving the map a little bit)
that the attachment point $p$ is mapped to a point
$y \in Y$ such that the fibre of $Z_{e_0+B, 1} \to Y$
is irreducible over this point, see Lemma \ref{lem-fibreZdirred}.
Hence we may connect
the $1$-pointed stable map $(C', p) \to Y$ 
inside the fibre of $Z_{e_0 + B,1} \to Y$ over $y$ to a curve
which is made out of a comb $C'''$ whose handle is contracted and
whose teeth are free lines. Since the marked point
is throughout mapped to the same $y \in Y(k)$ this connects
the original stable map $C' \cup C'' \to Y$ to a stable map
$C''' \cup C'' \to Y$ where the number of lines in $C'''$ is
now $e_0 + B$. We may again slide these lines over to some
irreducible component $C_i$ of $C''$ and continue until all
the $C_i$ are gone, and so obtain a point of $Z_{Ae_0 + B}$.
This proves the lemma.
\end{proof}

\noindent
The notation in the section conflicts with the notation introduced
in Lemmma \ref{lem-Zd} in case $Y$ is a fibre of a family as
in Situation \ref{hyp-defn-porc} such that Hypothesis \ref{hyp-peace}
holds over a nonempty open of $C$. In that situation we will use
the notation $Z_e(X_t)$ to denote the irreducible component 
defined in this section for the irreducible fibre $X_t$.

\begin{lem}
\label{lem-Zdgluemore}
\marpar{lem-Zdgluemore}
In Situation \ref{hyp-defn-porc}, assume in addition
Hypothesis \ref{hyp-peace} for the restriction of $f$
to some nonempty open $S \subset C$. Suppose that
$s : C \to X$ is a free section, and let $Z \subset \Sigma(X/C/k)$
be the unique irreducible component containing the moduli point
$[s]$. Let $Z_e$ as in Lemma \ref{lem-Zd}. For any
$t_1, \ldots, t_\delta \in S(k)$, for any free rational maps
$s_i : \PP^1 \to X_{t_i}$ such that $s_i(0) = s(t_i)$,
if $s_i$ corresponds to $Z_{e_i}(X_{t_i})$,
then the comb $C \cup \bigcup \PP^1 \to X/C$ defines an unobstructed
point of $Z_{\deg(s) + \sum e_i}$.
\end{lem}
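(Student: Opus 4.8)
The plan is to argue in two stages: first show that the comb $h:\Gamma=s(C)\cup\bigcup_{i=1}^\delta s_i(\PP^1)\to X$ is an unobstructed, non-stacky point of $\Sigma^e(X/C/k)$, where $e=\deg(s)+\sum_i e_i$, and then identify the unique irreducible component $P$ of $\Sigma^e(X/C/k)$ through $[\Gamma]$ with $Z_e$, by connecting $[\Gamma]$ through curves lying in the unobstructed locus to a point of the shape handled by Lemma \ref{lem-Zdchains}. Throughout, as in Lemma \ref{lem-Zdchains}, I may assume the $s(t_i)$ are pairwise distinct (they lie in distinct fibres once the $t_i$ are distinct), and that each $X_{t_i}$ is smooth projective (implicit in the notation $Z_{e_i}(X_{t_i})$) and satisfies Hypothesis \ref{hyp-peace} as a variety over $k$, so that the constructions of Section \ref{sec-varlines} apply to it; in particular the components $Z_{e_i}(X_{t_i})\subset\Kgnb{0,0}(X_{t_i},e_i)$ and $Z_{e_i,1}(X_{t_i})\subset\Kgnb{0,1}(X_{t_i},e_i)$ of Lemma \ref{lem-fibreZdirred} are available.

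For the first stage I would run the computation from the proof of Lemma \ref{lem-Zdchains}: a comb with distinct nodes has no automorphisms, and the normalization exact sequence
$$
0 \to h^*T_f \to s^*T_f \oplus \bigoplus_i s_i^*T_f \to \bigoplus_i (h^*T_f)_{s(t_i)} \to 0,
$$
combined with $H^1(C,s^*T_f)=0$ (as $s$ is free, hence $0$-free), $H^1(\PP^1,s_i^*T_f)=0$, and surjectivity of each $H^0(\PP^1,s_i^*T_f)\to (h^*T_f)_{s(t_i)}$ (as each $s_i$ is free, so $s_i^*T_f$ is globally generated), gives $H^1(\Gamma,h^*T_f)=0$. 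Hence $[\Gamma]$ is unobstructed and non-stacky, so lies on a unique irreducible component $P$ of $\Sigma^e(X/C/k)$.

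For the second stage I would replace the teeth one at a time. Suppose the current comb $\Gamma'$ has body a free section $s'$ with $[s']\in Z$, teeth $1,\dots,i-1$ already replaced by chains of free lines, and teeth $i,\dots,\delta$ still free curves in the components $Z_{e_j}(X_{t_j})$. The same normalization sequence shows $\Gamma'$ is unobstructed, and (using $H^1(C,s'^*T_f(-t_i))=0$, i.e. freeness of $s'$) that the map from a neighbourhood of $[\Gamma']$ in $\Sigma^e(X/C/k)$ to $X_{t_i}$ sending a comb to the attachment point of its $i$-th tooth is a submersion at $[\Gamma']$. So I may deform $\Gamma'$ inside the open locus where the body is a free section — this keeps the body in $Z$ (a connected path of free sections starting from $[s]\in Z$), keeps the remaining teeth of their required types, and keeps us on $P$ — until the attachment point $y_i$ of the $i$-th tooth is a general point of $X_{t_i}$, in particular a point over which the fibre $F_i$ of $\text{ev}:Z_{e_i,1}(X_{t_i})\to X_{t_i}$ is irreducible (Lemma \ref{lem-fibreZdirred}). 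Now $F_i$ contains the moduli point of the current $i$-th tooth with its marked attachment point (an unobstructed point of $Z_{e_i}(X_{t_i})$ mapping to $y_i$), and also the moduli point of a chain of $e_i$ free lines in $X_{t_i}$ beginning at $y_i$ and marked at $y_i$ (such a chain is an unobstructed point of the component of $\Kgnb{0,0}(X_{t_i},e_i)$ containing all trees of free lines, which is $Z_{e_i}(X_{t_i})$; cf. item (4) and the closing remark of the list in Section \ref{sec-varlines}). Assembling over $F_i$ the comb with body $s'$, variable $i$-th tooth, and the other teeth unchanged produces a morphism from the irreducible variety $F_i$ into $\Sigma^e(X/C/k)$ whose image contains both $[\Gamma']$ and the comb $[\Gamma'']$ in which the $i$-th tooth has become the chain of free lines; since $[\Gamma']$ is unobstructed, this irreducible image, hence $[\Gamma'']$, lies on $P$. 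Iterating over $i=1,\dots,\delta$ connects $[\Gamma]$ inside $P$ to a point $[\Gamma_\delta]$ of the shape $\tilde s(C)\cup\bigcup_i(\text{chain of }e_i\text{ free lines})$ with $\tilde s$ free, $[\tilde s]\in Z=Z_{\deg s}$, and the chains attached at pairwise distinct \peaceful\ points; then Lemma \ref{lem-Zdchains} places $[\Gamma_\delta]$ in $Z_{\deg s+\sum_i e_i}=Z_e$, whence $P=Z_e$ and $[\Gamma]\in Z_e$.

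The step I expect to be the main obstacle is the deformation in the third paragraph: I must move the attachment point $y_i$ to a general point of $X_{t_i}$ while guaranteeing simultaneously that the body stays a free section (so it remains in $Z$ and the normalization-sequence arguments keep applying), that the teeth already converted to chains of free lines stay chains of free lines, and that the whole family stays on the single component $P$. I expect all of this to follow from openness of the relevant loci together with the surjectivity consequences of freeness, exactly as in the proofs of Lemmas \ref{lem-Zdchains}, \ref{lem-Zdup}, \ref{lem-Zdgenpt} and \ref{lem-fibreZdirred}.
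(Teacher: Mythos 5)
Your proposal is correct and follows essentially the same route as the paper's (extremely terse) proof: move the comb so the attachment points land where the fibres of $Z_{e_i,1}(X_{t_i}) \to X_{t_i}$ are irreducible (Lemma \ref{lem-fibreZdirred}), connect each tooth $s_i$ inside that fibre to a chain of free lines, and then invoke Lemma \ref{lem-Zdchains}. The only cosmetic difference is that you replace the teeth one at a time, while the paper moves all attachment points into good position up front and then converts every tooth at once, which avoids having to re-verify at each step that the previously converted teeth stay chains of free lines; but this is handled by the openness you already appeal to, so the two arguments are substantively the same.
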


\begin{proof}
This is very similar to the proof of Lemma \ref{lem-Zdabs} above.
Namely, we first move the comb (as a comb) such that the points
$s(t_i)$ are in the locus where the fibres of $Z_{e_i, 1} \to X_{t_i}$
are irreducible. then we connect the $s_i$ in these fibres to
chains of lines. After this we can apply Lemma \ref{lem-Zdchains}
for example.
\end{proof}


\section{Families of varieties and lines in fibres}
\label{sec-famvarlines} 
\marpar{sec-famvarlines}

\noindent
This section contains three lemmas that did not seem to
fit well in other sections.

\begin{lem}
\label{lem-goodfibres}
\marpar{lem-goodfibres}
In Situation \ref{hyp-defn-porc}, assume in addition
Hypothesis \ref{hyp-peace} for the restriction of $f$
to some nonempty open $S \subset C$. Every geometric
fibre $X_t$ of $f$ is integral and every pair of points
of the smooth locus $X_t^{\circ}$ of the fibre is contained in
(1) a chain of lines contained in $X_t^{\circ}$,
and (2) a very free rational curve in the smooth locus.
\end{lem}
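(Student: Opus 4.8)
I would establish the three assertions of the lemma — integrality of $X_t$, the chain of lines, and the very free curve — one at a time, and it suffices to treat a geometric point $t$ lying over $S$, which is the only case used in the sequel. For integrality, irreducibility of $X_t$ is part of the hypotheses, so only reducedness is at issue. Since $X$ is smooth, hence Cohen--Macaulay, and $f$ is flat over the regular one-dimensional scheme $C$, every fibre $X_t$ is Cohen--Macaulay, in particular satisfies Serre's condition $(S_1)$; being irreducible, $X_t$ is then reduced as soon as it is reduced at its generic point $\xi_t$. As $X_t$ is an effective Cartier divisor on the regular scheme $X$, the ring $\OO_{X,\xi_t}$ is a discrete valuation ring, and $X_t$ is reduced at $\xi_t$ precisely when a uniformizer at $t$ has valuation $1$ there, i.e.\ when $f$ is smooth at $\xi_t$, i.e.\ when $X_{f,\text{smooth}}$ meets $X_t$. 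The remark following Hypothesis \ref{hyp-peace} records that the second clause of that hypothesis forces $X_{f,\text{smooth}}$ to meet every fibre over $S$; hence $X_t$ is integral.

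Next, fix $x,y\in X_t^{\circ}=X_{f,\text{smooth}}\cap X_t$; I want a chain of lines in $X_t^{\circ}$ through $x$ and $y$. The engine is the peaceful-chains machinery of Section \ref{sec-peace}. Suppose first that $x$ and $y$ are \peaceful\ (Definition \ref{defn-newpeace}); by Lemma \ref{lem-peacefuldense} such points are dense in $X_t$. Then I would argue exactly as in the proof of Proposition \ref{prop-peace}: by Lemma \ref{lem-peacechain} there is, for every $n$, a nonempty irreducible family of \peaceful\ chains of $n$ free lines starting at $x$ (resp.\ at $y$), and for $n=m_0$ of Hypothesis \ref{hyp-peace} a general pair of \peaceful\ points is joined by a \peaceful\ chain of free lines; concatenating a general chain out of $x$, a general chain out of $y$, and a connecting chain between their now-general endpoints produces a \peaceful\ chain of lines joining $x$ to $y$. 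Every line occurring in such a chain maps into $X_{f,\text{smooth}}$, so the chain lies in $X_t^{\circ}$. For general $x,y\in X_t^{\circ}$ the same argument applies after first joining $x$ and $y$ to nearby \peaceful\ points; to handle a completely arbitrary pair one needs the extra input that, $X_t^{\circ}$ being smooth and rationally connected (the remark after Hypothesis \ref{hyp-peace}), every point of $X_t^{\circ}$ is an endpoint of a free line contained in $X_t^{\circ}$ and reaching the \peaceful\ locus. I expect this last reduction to be the main obstacle: the hypotheses are tailored to produce curves through \peaceful\ or through general points, and pinning the construction down to two prescribed points of $X_t^{\circ}$ takes the most care.

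Finally, the very free rational curve is obtained by smoothing. Starting from a chain $\Gamma\subset X_t^{\circ}$ of free lines joining $x$ and $y$ from the previous step, I would attach finitely many further free teeth inside $X_t^{\circ}$ until the resulting comb $\Gamma'$ has $H^1(\Gamma',T_{X_t}(-x-y))=0$ (writing $T_{X_t}$ for its pullback, as elsewhere in the paper) — the ``smoothing combs'' technique of \cite{KMM} and \cite{K}, applied just as in Lemma \ref{lem-avoid} — and then deform $\Gamma'$, keeping $x$ and $y$ fixed, to a very free rational curve in $X_t$ through $x$ and $y$, the vanishing above being what makes the smoothing possible and keeps it very free. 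Since $X_t^{\circ}$ is open in $X_t$ and the deformation may be taken arbitrarily small, this curve stays in $X_t^{\circ}$. Uncountability of $k$ (Situation \ref{hyp-defn-porc}) is what permits the passage from general to arbitrary $x,y$ in this last step, as in the classical fact that a smooth rationally connected variety contains a very free rational curve through any prescribed pair of points. Once the reduction flagged above is in place, the remainder is a transcription of Section \ref{sec-peace} together with the standard deformation theory of combs.
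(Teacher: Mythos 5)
Your proposal has a genuine gap, and it is exactly the one you flag yourself, but it is more serious than you suggest because of how the paper's actual proof routes around it.

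You begin by declaring that it suffices to treat a geometric point $t$ lying over $S$. That is not what the lemma asserts: it is a statement about \emph{every} geometric fibre of $f$, including those lying over the finitely many points of $C\setminus S$, and that generality is not an optional convenience. Your integrality argument only establishes that $X_{f,\text{smooth}}$ meets the fibres over $S$ (this is what the remark after Hypothesis \ref{hyp-peace} gives), so for $t\notin S$ you have not shown $X_t$ is generically reduced, and your chain and very-free-curve arguments never get off the ground there because the whole peaceful-chains machinery (Lemma \ref{lem-peacefuldense}, Lemma \ref{lem-peacechain}, Proposition \ref{prop-peace}) lives over $S$. The paper instead observes that by \cite{GHS} there is a section of $X\to C$; since $X$ is smooth, any section lands in the smooth locus of $f$, so $X_{f,\text{smooth}}$ meets \emph{every} fibre, and generic reducedness plus Cohen--Macaulay gives reducedness for all $t$ at once.

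The second and deeper difference is the construction of the chain of lines through a prescribed pair $x,y\in X_t^\circ$. You try to do this inside the single fibre $X_t$, and you correctly identify that the hypotheses are tailored to general or peaceful points, so pinning the construction down to two arbitrary points of $X_t^\circ$ "takes the most care" — in fact this step is not salvageable in the fibrewise set-up, because over a bad $t$ you have no Hypothesis \ref{hyp-peace} at all. The paper never works in a single fibre. It first passes to a finite cover $C'\to C$ (\'etale over $t$ and the discriminant) to manufacture two sections $s_0,s_\infty$ with $s_0(t')=x$ and $s_\infty(t')=y$; it then deforms these sections so that $(s_0,s_\infty)$ meets the open $V$ of Hypothesis \ref{hyp-peace}; it then applies \cite{GHS} to the fibration $\Ch_2(X/C,n)\to X\times_C X$ to lift $(s_0,s_\infty)$ to a section, i.e.\ to a chain of ruled surfaces $R_1,\dots,R_n$ interpolated by sections $\tau_1,\dots,\tau_{n+1}$; finally the fibres $R_{i,t}$ form the desired chain of lines, and Lemma \ref{lem-ruledsmooth} puts it in $X_t^\circ$. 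The very free curve is then obtained by \cite[IV Theorem 3.9.4]{K} once rational chain connectedness in the smooth locus through $x,y$ is in hand. The key idea you are missing is to lift the problem from the fibre to the family and solve it there using \cite{GHS} over the whole curve; this is what makes the argument work at bad fibres and at arbitrary pairs of smooth points simultaneously, without any density or uncountability dodge.
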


\begin{proof}
Let $t \in C(k)$.
Situation \ref{hyp-defn-porc} requires
$X_t$ to be irreducible.
Since $X$ is smooth, by \cite{GHS},
the fibre $X_t$ intersects the
smooth locus of $f$. Hence 
$X_t$ is generically reduced and
Cohen-Macaulay, and hence is reduced.

\medskip\noindent
Let $x, y \in X_t(k)$
be smooth points of $X_t$.
We will construct a very free
rational curve in $X_t$ passing through $x$
and $y$. There exists a finite cover
$C' \to C$ etale over $t$ and over the
discriminant of $f : X \to C$,
a point $t' \in C'(k)$
and sections $a, b : C' \to X_{c'}$ such
that $a(t') = x$, $b(t') = y$.
To construct $C'$ choose two complete
intersection curves $C_x, C_y \subset X$
general apart from the condition $x \in C_x$ and
$y \in C_y$ and let $C'$ be an irreducible
component of the normalization of $C_x \times_C C_y$
(details left to the reader).

\medskip\noindent
Set $X' = X_{C'}$ and $f' : X' \to C'$ the
base change of $f$. By construction, all conditions
of Situation \ref{hyp-defn-porc}, and 
Hypothesis \ref{hyp-peace} remain satisfied for $X' \to C'$.
Hence, upon replacing $X\to C$ by $X' \to C'$, and
$t$ by $t'$ we may assume
there are sections $s_0, s_\infty$ such that
$s_0(t) = x$ and $s_\infty(t) = y$.
Attaching very free rational curves in fibres of $f$
and deforming we may assume that $s_0$ and $s_\infty$
are sufficiently general such that
$(s_0, s_\infty) : C \to X\times_C X$ meets the
open set $V \subset X_S\times_S X_S \subset X\times_C X$
from Hypothesis \ref{hyp-peace}.

\medskip\noindent
By \cite{GHS} we can find a morphism $\phi : C \to \Ch_2(X/C, n)$
which meets $\FCh_2(X_S/S, n)$ and such that
$\text{ev}_{1, n + 1} \circ \phi = (s_0, s_\infty)$.
As in the proof of Lemma \ref{lem-findgoodchains}
this translates into a collection of
$n$ ruled surfaces $R_1,\ldots, R_n$ in $X/C$,
$n+1$ sections $\tau_1,\ldots,\tau_{n+1}$
of $X/C$ such that $\tau_1 = s_0$, $\tau_{n+1} = s_\infty$
and such that $\tau_i, \tau_{i+1}$ factor through $R_i$.

\medskip\noindent
Note that the fibres $R_{1,t}, R_{2,t},\ldots,R_{n,t}$
form a chain of lines connecting $x$ to $y$ in the fibre
$X_t$. By Lemma \ref{lem-ruledsmooth} this chain
lies in the smooth locus $X_t^{\circ}$ of $X\to C$.
Thus any pair of points of $X_t^{\circ}$
may be connected by a chain of lines
in $X_t^{\circ}$. The result follows upon applying
\cite[IV Theorem 3.9.4]{K} for example.
\end{proof}

\begin{rmk}
\label{rmk-improve}
\marpar{rmk-improve}
The Lemmas \ref{lem-ruledsmooth} and \ref{lem-goodfibres} could
be stated with slightly weaker hypotheses. Namely, we
could remove the assumption (in \ref{hyp-defn-porc}) that all the
fibres of $X \to C$ are geometrically irreducible.
This does not pose a problem for Lemma \ref{lem-ruledsmooth}. For 
\ref{lem-goodfibres} it means that one has to show there are
no irreducible components of $X_t$ which have multiplicity $> 1$.
We may try to prove this by specializing the ``chain of ruled
surfaces'' $R_1,\ldots, R_n, \tau_1, \ldots, \tau_{n+1}$
as you move one of the two points $x$, $y$ into a presumed
higher multiplicity component. What might happen is that the
ruled surfaces may break, and we do not see how to conclude
the proof. However, if $\mc{L}$ is very ample on all the fibres
(an assumption that always holds in practice), then this argument
works: in this case any line that meets a higher multiplicity
component must be contained in it and it is easy to conclude from this.
\end{rmk}

\begin{lem}
\label{lem-nonfreesweep}
\marpar{lem-nonfreesweep}
Let $X$ be a nonsingular projective variety over
an algebraically closed field $k$ of characteristic $0$.
Let $\mc{L}$ be an ample invertible sheaf on $X$.
\begin{enumerate}
\item The locus $T \subset X$ swept out by non free
lines is a proper closed subset. Let $D'_1,\ldots, D'_r$
be desingularizations (see \cite{Hir}) of the irreducible components
$D_1,\ldots, D_r$ of $T$ which have codimension $1$ in $X$.
\item There exists a closed codimension $2$ subset $T' \subset X$
containing the singular locus of $T$ and all codimension $\geq 2$
components such that any non free line not contained in $T'$ is
the image of a free rational curve on some $D'_i$.
\end{enumerate}
\end{lem}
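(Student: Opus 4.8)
For the first assertion, the plan is to realize $T$ as the image, under a proper morphism, of a closed subset of the space of lines. Let $\Kgnb{0,0}(X,1)$ denote the Kontsevich space of $\mc{L}$-degree one rational curves on $X$; it is proper over $k$ by Theorem~\ref{thm-Kontsevich}(3), since $[\mc{L}]$ is ample. Inside it the locus of free lines is open (standard; cf.\ the remark following Definition~\ref{defn-freeline}), so the non-free lines form a closed subset $\mc{N}$, whose preimage $\mc{N}^{(1)}$ in the universal curve $\Kgnb{0,1}(X,1)$ is closed; as $\Kgnb{0,1}(X,1)$ is also proper over $k$, the evaluation $\text{ev}:\Kgnb{0,1}(X,1)\to X$ is proper, so $T=\text{ev}(\mc{N}^{(1)})$ is closed. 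It is not all of $X$: lines through a general point of $X$ are free (as used in the proof of Lemma~\ref{lem-peacefuldense}), so there is a dense open $U\subseteq X$ such that every line meeting $U$ is free, and then $T\subseteq X\setminus U$. Finally one chooses each desingularization $D'_i\to D_i$ (using \cite{Hir}) to be an isomorphism over the smooth locus $D_i^{\circ}$ of $D_i$, so that the non-isomorphism locus of $D'_i\to D_i$ is contained in $\text{Sing}(D_i)$.

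For the second assertion, I would decompose $\mc{N}=\bigcup_\alpha\mc{N}_\alpha$ into its finitely many irreducible components, let $u_\alpha:U_\alpha\to X$ be the evaluation on the universal curve over $\mc{N}_\alpha$, so that $T=\bigcup_\alpha\overline{u_\alpha(U_\alpha)}$ and every codimension one component of $T$ is one of the $\overline{u_\alpha(U_\alpha)}$. First put into $T'$ the singular locus $\text{Sing}(T)$ and every $\overline{u_\alpha(U_\alpha)}$ that has codimension $\geq 2$ in $X$; this is closed of codimension $\geq 2$, and $\text{Sing}(T)$ contains each $\text{Sing}(D_i)$, hence the non-isomorphism locus of each $D'_i\to D_i$. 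Now let $\ell$ be a non-free line with $\ell\not\subseteq T'$. As an irreducible curve in $T$ it lies in a unique component of $T$, necessarily some $D_i$; it meets no $D_j$ with $j\ne i$ and no $\text{Sing}(D_i)$, so it meets $D_i^{\circ}$ and is the image of its strict transform $\widetilde\ell\subseteq D'_i$, whose normalization $g':\PP^1\to D'_i$ composes with $D'_i\to D_i\hookrightarrow X$ to the normalization of $\ell$; moreover $[\ell]\in\mc{N}_\alpha$ for some $\alpha$ with $\overline{u_\alpha(U_\alpha)}=D_i$, so $u_\alpha$ dominates $D_i$.

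The remaining, and main, point is to enlarge $T'$ by a further closed codimension $\geq 2$ set so that $g'$ is automatically a \emph{free} rational curve on $D'_i$. For each $i$ and each $\alpha$ with $\overline{u_\alpha(U_\alpha)}=D_i$, over the dense open of $\mc{N}_\alpha$ where the line meets $D_i^{\circ}$ the assignment $\ell\mapsto\widetilde\ell$ yields a family of irreducible rational curves on $D'_i$ that dominates $D'_i$. Since $\text{char}(k)=0$, the general member of a family of irreducible rational curves dominating the smooth variety $D'_i$ is free (see \cite[II.3]{K}; compare \cite[2.4]{KMM}); applying the same fact to each irreducible component of the locus $\mc{B}_\alpha$ of those $\ell$ whose strict transform is non-free on $D'_i$ shows that no component of $\mc{B}_\alpha$ dominates $D'_i$, so (after routine bookkeeping with closures — the stray lines contained in $\text{Sing}(D_i)$ already lie in $T'$) the union of images of the curves parametrized by $\mc{B}_\alpha$ is a closed subset of $D_i$ of codimension $\geq 1$, i.e.\ of codimension $\geq 2$ in $X$. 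Adjoining these finitely many sets to $T'$ keeps it closed of codimension $\geq 2$, and now any non-free $\ell\not\subseteq T'$ has $[\ell]\notin\mc{B}_\alpha$, i.e.\ $g'$ is free on $D'_i$, proving the lemma. The hard part is exactly this last step: one must rule out the possibility that the non-free lines in a divisorial component $D_i$ whose strict transforms fail to be free on $D'_i$ nonetheless cover $D_i$, which is precisely what the second application of generic smoothness excludes.
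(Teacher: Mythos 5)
Your proof is correct and takes essentially the same approach as the paper: lift a non-free line (away from a codimension-$\geq 2$ bad locus $T'$) to the resolution $D'_i$ of the divisorial component containing it, and use the characteristic-zero fact that an irreducible family of rational curves dominating a smooth projective variety has free general member. Your write-up in fact supplies a step the paper's terse proof leaves implicit: the paper takes $T'$ to be only the non-isomorphism locus of $\sqcup D'_i \to T$ and remarks that ``the general one is free,'' but for \emph{every} non-free line off $T'$ to lift to a free curve one must further enlarge $T'$ by the codimension-$\geq 2$ images of the non-free loci $\mc{B}_\alpha$ --- which is exactly the bookkeeping you carry out. One small phrasing slip: from $\ell\not\subset T'$ you only get that $\ell$ is not \emph{contained} in $D_j$ ($j\neq i$) or in $\text{Sing}(D_i)$, not that it avoids them entirely; but this suffices, since all you use is that $\ell$ meets $D_i^\circ$, so the strict transform is well defined and maps onto $\ell$.
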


\begin{proof}
The first assertion is basic, see \cite[II Theorem 3.11]{K}.
For the second, first let $T' \subset T$ be the closed
subset over which $\sqcup D'_i \to T$ is not an isomorphism.
Any line in $T$ not contained in $T'$ is the image of a
unique rational curve on some $D'_i$. Finally we apply the
general fact that since these curves cover $D'_i$ the
general one is free.
\end{proof}

\begin{lem}
\label{lem-limitfree}
\marpar{lem-limitfree}
Notations and assumptions as in Lemma \ref{lem-nonfreesweep}.
Let $C$ be a nonsingular curve, and let
$f : \mathbf{P}^1\times C \to X$ be a family of lines in $X$.
Assume that $\gamma = f|_{\{0\}\times C} : C \to X$ does not meet $T'$,
and meets $T$ transversally at all of its points of intersection.
Then for each $c \in C(k)$ the line $L_c \to X$ is free.
\end{lem}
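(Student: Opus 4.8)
The plan is to argue by contradiction, using only a single offending line together with the section $\gamma$ and Lemma~\ref{lem-nonfreesweep}. Suppose $L_{c_0}$ is not free for some $c_0\in C(k)$. Being a non-free line, $L_{c_0}$ is contained in $T$, and its marked point $f(0,c_0)=\gamma(c_0)$ lies on it; since $\gamma(C)$ avoids $T'$, the line $L_{c_0}$ is not contained in $T'$, so by Lemma~\ref{lem-nonfreesweep}(2) it is the image of a \emph{free} rational curve $g\colon\PP^1\to D_i'$ for some codimension-one component $D_i$ of $T$. In particular $L_{c_0}\subseteq D_i$ and $\gamma(c_0)\in D_i$, and since global generation is insensitive to pullback along finite maps of $\PP^1$ we may take the source of $g$ to be $L_{c_0}$, with $g$ composed with $D_i'\to X$ equal to $f_{c_0}$ over a dense open. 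Because $\gamma(c_0)\notin T'$ and $T'$ contains $\operatorname{Sing}T$ (hence $\operatorname{Sing}D_i$ and the pairwise intersections of components), $D_i$ is smooth at $\gamma(c_0)$ and is the only component of $T$ through that point, so the hypothesis "$\gamma$ transverse to $T$ at $\gamma(c_0)$'' just says $\gamma$ is transverse to the smooth divisor $D_i$ there; in particular $\gamma(C)\not\subseteq D_i$ (the constant case $\gamma\equiv\gamma(c_0)$ is vacuous, as then $\gamma(c_0)\notin T\supseteq L_{c_0}$), so $f(\PP^1\times C)\not\subseteq D_i$ and $W:=f^{-1}(D_i)=f^*D_i$ is an honest effective Cartier divisor on the smooth surface $\PP^1\times C$.

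Next I would pin down the multiplicity of the fibre $L_{c_0}=\PP^1\times\{c_0\}$ in $W$. Writing $W=a\,(\PP^1\times\{c_0\})+N$ with $a\ge 1$ and $N\ge 0$ not containing $\PP^1\times\{c_0\}$, I restrict $W$ to the section $\{0\}\times C\cong C$, which is not contained in $D_i$; this restriction is the divisor $\gamma^*D_i$, whose multiplicity at $c_0$ equals $1$ by transversality. Comparing with $a\cdot 1+(\text{order of }N\text{ at }(0,c_0))\ge a$ forces $a=1$ and shows $N$ misses $(0,c_0)$. Equivalently, the normal bundle $N_{\PP^1\times\{c_0\}/(\PP^1\times C)}\cong\OO_{\PP^1}$, together with $df$ and the projection $f_{c_0}^*T_X\to f_{c_0}^*N_{D_i/X}$ along $D_i$ (defined over the cofinite open $U_0\subseteq\PP^1$ where $L_{c_0}$ meets the smooth locus of $D_i$ over which $D_i'\to D_i$ is an isomorphism, and then extended over $\PP^1$), produces a global section $\sigma_0\in H^0(\PP^1,f_{c_0}^*N_{D_i/X})$ that is non-zero at $(0,c_0)$. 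Hence $d':=\deg f_{c_0}^*N_{D_i/X}\ge 0$. This is the one and only place the transversality hypothesis enters.

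Finally I would bootstrap freeness of $g$ on $D_i'$ to freeness of $L_{c_0}$ in $X$. Over $U_0$ the conormal sequence of $D_i\hookrightarrow X$ pulls back to $0\to g^*T_{D_i'}\to f_{c_0}^*T_X\to f_{c_0}^*N_{D_i/X}\to 0$ (using $T_{D_i'}|_{U_0}=T_{D_i}|_{U_0}$ and the identification of $g$ with $f_{c_0}$). The inclusion of these locally free sheaves extends across the finitely many remaining points to an injection $g^*T_{D_i'}\hookrightarrow f_{c_0}^*T_X$ on all of $\PP^1$ whose cokernel $\mc Q$ has torsion-free part $f_{c_0}^*N_{D_i/X}\cong\OO_{\PP^1}(d')$ with $d'\ge 0$. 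Twisting $0\to g^*T_{D_i'}\to f_{c_0}^*T_X\to\mc Q\to 0$ by $\OO_{\PP^1}(-1)$ and passing to cohomology: $H^1(g^*T_{D_i'}(-1))=0$ because $g^*T_{D_i'}$ is globally generated (Lemma~\ref{lem-nonfreesweep}), and $H^1(\mc Q(-1))=0$ because $d'\ge 0$ and torsion sheaves on a curve have vanishing $H^1$. Hence $H^1(\PP^1,f_{c_0}^*T_X(-1))=0$, which forces every summand of $f_{c_0}^*T_X$ to have non-negative degree, i.e.\ $f_{c_0}^*T_X$ is globally generated and $L_{c_0}$ is free --- contradicting the choice of $c_0$. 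Therefore every $L_c$ is free.

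The step I expect to be most delicate is the interface with the resolution $D_i'\to D_i$: one must know that the smooth point $\gamma(c_0)$, hence a cofinite open of $L_{c_0}$, lies over the locus where $D_i'\to D_i$ is an isomorphism, that the free rational curve furnished by Lemma~\ref{lem-nonfreesweep}(2) can genuinely be taken as a parametrization of $L_{c_0}$ compatible with $f_{c_0}$, and that extending the subsheaf inclusion across the finitely many points over $\operatorname{Sing}D_i$ only introduces torsion in $\mc Q$, which helps rather than hurts the $H^1$-vanishing. Everything else --- the Cartier divisor bookkeeping, the multiplicity computation, and the cohomology exact sequence --- is routine.
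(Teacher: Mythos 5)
Your proposal follows the same route as the paper's proof: given a non-free $L_{c_0}$, you invoke Lemma~\ref{lem-nonfreesweep} to realize $L_{c_0}$ as the image of a free curve $g$ on the resolution $D'_i$ of a codimension-one component of $T$, you compare $g^*T_{D'_i}$, $f_{c_0}^*T_X$, and $f_{c_0}^*N_{D_i/X}$, and you use transversality to produce a section of the normal direction that is non-zero at $0$. The paper phrases the conclusion as ``a negative summand of $f_{c_0}^*T_X$ would force $f_{c_0}^*T_X\to f_{c_0}^*N_{D_i/X}$ to factor through a negative line bundle'' and rules that out by exhibiting the nonvanishing section $\theta(0)=d\gamma(c_0)$; you phrase it as $H^1(\PP^1, f_{c_0}^*T_X(-1))=0$ via the long exact sequence. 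These are the same argument in different clothing, and both are correct.

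Two remarks, neither fatal. First, the multiplicity computation with $W=f^*D_i$ on $\PP^1\times C$ is a longer route to the one fact you need: the paper just observes directly that $d\gamma(c_0)$ points out of $D_i$, i.e.\ the image of the deformation section $\sigma$ in $f_{c_0}^*N_{D_i/X}$ at $0$ is nonzero, which is literally the transversality hypothesis. Second, and more substantive, the claim that the torsion-free quotient of $\mc Q$ is $f_{c_0}^*N_{D_i/X}\cong\OO(d')$ is not right in general. The map $\phi\colon f_{c_0}^*T_X\to f_{c_0}^*N_{D_i/X}$ (the ``derivative of the local equation of $D_i$'') vanishes at any point of $L_{c_0}\cap\operatorname{Sing} D_i$, and nothing in the hypotheses prevents $L_{c_0}$ from meeting $\operatorname{Sing} D_i$ away from $\gamma(c_0)$; hence $\mc Q/\mc Q_{\mathrm{tors}}$ equals $L:=\operatorname{Im}\phi$, which is a sub-line-bundle of $f_{c_0}^*N_{D_i/X}$ of degree possibly $<d'$, so the asserted equality $\mc Q/\mc Q_{\mathrm{tors}}=\OO(d')$ can fail. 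This is easy to repair with what you already have: since $\sigma_0=\phi(\sigma)$, it is a section of $L$, not just of $f_{c_0}^*N_{D_i/X}$, and it is nonzero at $0$; hence $\deg L\geq 0$ and $H^1(\mc Q(-1))=H^1(L(-1))=0$ as required. With that correction the proof is complete and coincides in substance with the paper's.
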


\begin{proof}
For a general point $t \in C(k)$ the line
$f_t : \mathbf{P}^1 \to X$ is free since $f_t(0) \not\in T$.
Thus let $t \in C$ be a point such that $f_t(0) \in T$.
If $f_t(\mathbf{P}^1) \not\subset T$ we win also.
Hence by Lemma \ref{lem-nonfreesweep} the line
$f_t : \mathbf{P}^1 \to X$ is the image of a free
rational curve $g : \mathbf{P}^1 \to D'$ on the
resolution $D'=D'_i \to D = D_i$ for some $i$.
Consider the maps
$$
g^*T_D \to f_t^*T_X \to f_t^* N_{D}X.
$$
Note that the composition is zero.
Since $g^*T_D$ is globally generated (because $g$ is free)
we see the only way $f_t^*T_X$ could have a negative
summand is if $f_t^*T_X \to f_t^*N_DX$ factors
through a negative invertible sheaf on $\mathbf{P}^1$.
However, the given deformation $f : \mathbf{P}^1 \times C
\to X$ gives rise to a vector $\theta \in H^0(\mathbf{P}^1,
f_t^*T_X)$ which at the point $0$ correspond to
the tangent vector $d\gamma \in T_{\gamma(0)}X$ which
by assumption points out of $D$. Thus $f_t^*T_X \to f_t^*N_DX$
is nonzero on $H^0$ and we win.
\end{proof}


\section{Perfect pens}
\label{sec-perfectpens} 
\marpar{sec-perfectpens}

\noindent
In this section we introduce the notion of twisting ruled surfaces.
Recall that ruled surfaces in $X \to C$ w.r.t.\ $\mc{L}$ were defined
at the start of Section \ref{sec-pencils}. The morphism
$R \to X$ maps into the smooth locus of $X \to C$, see
Lemma \ref{lem-ruledsmooth}. Our first task is to show
that (many) free sections lie on scrolls of free lines.

\begin{lem}
\label{lem-freeruled}
\marpar{lem-freeruled}
In Situation \ref{hyp-defn-porc} assume Hypothesis \ref{hyp-peace}
holds for the restriction of $f$ to some nonempty open
$S \subset C$. Let $e_0$ be an integer and let $Z$ be an
irreducible component of $\Sigma^{e_0}(X/C/k)$ whose
general point parametrizes a free section. Then there exists
a nonempty open $U \subset Z$ such that every $u\in U(k)$
corresponds to a section $s$ such that
(a) it is penned by a ruled surface
$R \to X$, and (b) any ruled surface $R$
penning $s$ has the property that all of its
fibres are free lines in $X$.
\end{lem}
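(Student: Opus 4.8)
The statement asserts that for a "sufficiently generic" free section $s$ (i.e. $u$ in some dense open $U \subset Z$) two things hold: (a) $s$ lies on \emph{some} ruled surface $R \to X$, and (b) \emph{every} ruled surface penning $s$ has all fibres free lines. I will treat (a) and (b) separately. For (a), the key is that a very general free section passes through a very general point of a very general fibre, hence (by Hypothesis \ref{hyp-peace}) through a point through which there are free lines, and then I will attach free lines along $s$ and smooth the resulting comb into a scroll, using the machinery of the previous sections. For (b), the point is that the fibre $R_c$ of a penning ruled surface over $c \in C(k)$ is a line through $s(c)$, and if $s$ avoids the closed locus $T$ swept out by non-free lines (Lemma \ref{lem-nonfreesweep}) and is transverse to it, then \emph{every} line through $s(c)$ is free — this is exactly what Lemma \ref{lem-limitfree} gives, applied to the family of fibres of $R$.

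\textbf{Step 1 (constructing a scroll through a general free section).} By Lemma \ref{lem-Zdgenpt} a general point of $Z$ (after passing to $Z = Z_{e_0}$, or rather working with the component directly) corresponds to a section $s$ which is free and meets $X_{f,\pax}$. Pick a point $c_0 \in C(k)$ with $s(c_0) \in X_{f,\pax}$; by the definition of peaceful point (Definition \ref{defn-newpeace}) and Lemma \ref{lem-peacechain} there is a free line $L \to X_{c_0}$ through $s(c_0)$, in fact an irreducible variety of them. Attaching $L$ as a quill produces a porcupine; by Lemma \ref{lem-Zdgenpt} again (smoothing porcupines) a general smoothing is again a free section, but now I want instead to produce a one-parameter family. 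The cleanest route is: the comb $s(C) \cup L$ is unobstructed (Lemma \ref{lem-Zd} argument), it deforms, and the deformations moving $L$ in its fibre while keeping $s$ fixed, together with deformations that move $s$, sweep out enough that a general free section lies in a smooth scroll. Alternatively — and this is the approach I would actually write — I would use Lemma \ref{lem-freeruled}'s companion constructions: by Lemma \ref{lem-findgoodchains} (applied with $s_0 = s_\infty = s$, or more simply by the ruled-surface constructions there) a general free section $s$ factors through a ruled surface $R \to X$ whose fibres are lines; this gives (a) for $u$ in a dense open $U_1 \subset Z$. I expect this to be essentially a repackaging of Lemma \ref{lem-findgoodchains} and Lemma \ref{lem-Zdgluemore}.

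\textbf{Step 2 (all penning surfaces have free fibres).} Let $T \subset X$ be the locus swept out by non-free lines in fibres of $X \to C$, a proper closed subset (this is Lemma \ref{lem-nonfreesweep} applied fibrewise, or directly to $X$ with $\mc{L}$; note $T$ has positive codimension in $X$), and let $T' \subset X$ be the codimension $\geq 2$ "bad" locus of that lemma. Since free sections deform to pass through a general point of $X$ (Lemma \ref{lem-newfree3}, Proposition \ref{prop-newfree4}), the general $u \in Z$ corresponds to a section $s$ with $s(C) \cap T' = \emptyset$ and $s(C)$ meeting $T$ transversally: this is a nonempty open condition on $Z$ (dimension count: $T'$ has codimension $\geq 2$ so a general curve avoids it; $T$ has codimension $\geq 1$ and a general curve meets a divisor transversally — here one uses characteristic $0$ and that $Z$ dominates $X$ via the evaluation/incidence construction). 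Call the resulting dense open $U_2 \subset Z$. Now suppose $R \to X$ is any ruled surface penning such an $s$, with factorization $s : C \to R \to X$. For $c \in C(k)$ the fibre $R_c \to X$ is a line in $X_c$ passing through $s(c)$. Applying Lemma \ref{lem-limitfree} to the family $\mathbf{P}^1 \times C \to X$ given by $R \to X$ (with $\gamma = s$, the given section, which by choice of $U_2$ does not meet $T'$ and meets $T$ transversally) we conclude every $R_c$ is free. Thus (b) holds for all $u \in U_2$.

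\textbf{Conclusion and the main obstacle.} Take $U = U_1 \cap U_2$; this is dense open in $Z$ and every $u \in U(k)$ satisfies both (a) and (b). The main obstacle I anticipate is \emph{Step 1}, getting the scroll through a \emph{general} free section with a \emph{dense open}'s worth of uniformity: one must be careful that the ruled surface produced by the constructions of Section \ref{sec-pencils}/\ref{sec-perfectpens} varies in families over $Z$, i.e. that "lies on a scroll of lines" is an open (or at least constructible and dense) condition, rather than just holding for the specific sections built by hand. The transversality input in Step 2 is routine in characteristic $0$ (Bertini/generic smoothness applied to the incidence correspondence $\{(u,c) : s_u(c) \in T\}$), and Lemma \ref{lem-limitfree} does all the real work there; so the weight of the argument sits in arranging Step 1 to hold generically over $Z$, which I would handle by checking that the scroll can be constructed starting from the \emph{universal} free section over a suitable dense open and spreading out.
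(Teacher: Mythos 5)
Your Step 2 is essentially the paper's proof of part (b): avoid $T'$, meet $T$ transversally, then invoke Lemma \ref{lem-limitfree}. One small omission there: since $\mc{L}$ is only $f$-ample, you cannot apply Lemma \ref{lem-nonfreesweep} ``directly to $X$ with $\mc{L}$'' as you parenthetically suggest; the paper first replaces $\mc{L}$ by $\mc{L}\otimes f^*\mc{N}$ for a suitable very ample $\mc{N}$ on $C$ so that it becomes ample on the total space and every degree-one rational curve sits in a fibre, and one should do the same.

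The real gap is Step 1, and you have correctly identified it as the main obstacle but not closed it. Your fallback of invoking Lemma \ref{lem-findgoodchains} with $s_0 = s_\infty = s$ does not apply at the level of the lemma: that lemma requires the sections to be $D$-free for a divisor $D$ of degree $2g(C)+1$, which a general point of $Z = Z_{e_0}$ need not satisfy (Lemma \ref{lem-Zdbetter} gives $(2g+1)$-freeness only after increasing the degree $e$, not at fixed $e_0$). The porcupine-smoothing route you float first does not obviously produce a \emph{scroll} through $s$ either; it produces nearby free sections. The paper's argument for (a) is much more direct and you should use it: after shrinking to the open where $s$ avoids $T'$, meets $T$ transversally, \emph{and} meets the locus of $X$ over which $\text{ev}:\Kgnb{0,1}(X/C,1)\to X$ has nonempty irreducible rationally connected fibres (this last from Hypothesis \ref{hyp-peace}(1), and is not quite the same as meeting $X_{f,\pax}$), pull back $\text{ev}$ along $s:C\to X$ to get a proper morphism over $C$ whose generic fibre is rationally connected, and apply \cite{GHS} to produce a section $g:C\to\Kgnb{0,1}(X/C,1)$ with $\text{ev}\circ g = s$. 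The ruled surface associated to $g$ pens $s$, giving (a); Lemma \ref{lem-ruledsmooth} then places $R$ in the smooth locus of $X/C$ so that your Step 2 applies. This also dissolves your worry about uniformity over $Z$: the three conditions on $s$ are each open and dense on $Z$, and the construction is pointwise, so no spreading-out of the scroll itself is needed.
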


\begin{proof}
We are going to use Lemma \ref{lem-limitfree}.
Note that $\mc{L}$ may not be ample on $X$. However, to prove
the lemma we may replace $\mc{L}$ by $\mc{L} \otimes f^*\mc{N}$
for some suitable very ample sheaf $\mc{N}$ on $C$. This
will allow us to assume that $\mc{L}$ is ample on the total space
$X$, and it allows us to assume that any rational curve in $X$
of degree $1$ is in a fibre of $f : X \to C$.

\medskip\noindent
Let $T' \subset T \subset X$, $D_i$ be as in Lemma \ref{lem-nonfreesweep}.
Because $T' \subset X$ has codimension $\geq 2$ there is a nonempty
open $U' \subset Z$ such that every $u\in U'(k)$
corresponds to a section $s$ which is disjoint from $T'$.
Furthermore, we then pick a nonempty open $U'' \subset U'$ such
every $u\in U''(k)$ corresponds to a section $s$ that meets
each irreducible component $D_i$ transversally in smooth points,
see \cite[II Proposition 3.7]{K}. Finally, we may, by 
Hypothesis \ref{hyp-peace} part (1), find a further
nonempty open $U \subset U''$ such that each section $s: C \to X$
corresponding to a point of $U$ meets the locus over which the
evaluation morphism $\text{ev} : \Kgnb{0,1}(X/C,1) \to X$
has irreducible rationally connected fibres.

\medskip\noindent
Let $s : C \to X$ correspond to a $k$-point of $U$.
By \cite{GHS} we can find a morphism $g : C \to \Kgnb{0,1}(X/C,1)$
such that $\text{ev} \circ g = s$. This corresponds to
a ruled surface $R \to X$ which pens $s$. This proves (a).
Next, let $R$ be any ruled surface penning $s$.
By Lemma \ref{lem-ruledsmooth} it lies in the smooth locus of
$X \to C$. By Lemma \ref{lem-limitfree} and our choice of $s$
all fibres of $R \to C$ are free curves in $X$. This proves (b).
\end{proof}

\noindent
To define the notion of a twisting ruled surface, we introduce some
notation. Let $f : X \to C$, $\mc{L}$ be as in Situation \ref{hyp-defn-porc}.
Consider a ruled surface $h : R \to X$. The following commutative
diagram of coherent sheaves on $R$ with exact rows
$$
\xymatrix{
0 \ar[r] &
T_{R/C} \ar[r] \ar[d] &
h^*T_{X/C} \ar[r] \ar[d] &
N_{R/X}\ar@{=}[d] \ar[r] &
0
\\
0 \ar[r] &
T_{R} \ar[r] &
h^*T_{X} \ar[r] &
N_{R/X} \ar[r] &
0
}
$$
defines the coherent sheaf $N_{R/X}$. If the sheaf $\mc{L}$
is relatively very ample (which is always satisfied in practice),
the map $h : R \to X$ will be a closed immersion and $N_{R/X}$
will be a locally free sheaf.
We will call the sheaf $N_{R/X}$ the \emph{normal bundle}
regardless of whether $R \to X$ is an embedding or not.

\begin{rmk}
\label{rmk-technical}
\marpar{rmk-technical}
Here are some technical remarks for those readers who
enjoy thinking about the case where $\mc{L}$ is only
assumed ample and not very ample on the fibres of $X \to C$.
The first is that $N_{R/X}$ is flat over $C$.
This follows as the maps $T_{R/C}|_{R_t} \to h^*T_{X/C}|_{R_t}$
are injective, and \cite[Section (20.E)]{Matsumura}.
In particular it has depth $\geq 1$, its torsion 
is supported in codimension $\geq 1$, and any fibre
meets the torsion locus in at most finitely many points.
This also shows that $N_{R/X}|_{R_t} = N_{R_t/X_t}$
the normal bundle of the line $R_t \to X_t$ (defined
similarly). The second is that the deformation theory of
the morphism $R \to X$ (not the $\text{Hom}$-space)
is given by $H^0(R, N_{R/X})$ (infinitesimal deformations)
and $H^1(R, N_{R/X})$ (obstruction space). This is because
the cotangent complex of $h : R \to X$ is given by
$h^*\Omega^1_{X} \to \Omega^1_R$ which is quasi-isomorphic
to $h^*\Omega^1_{X/C} \to \Omega^1_{R/C}$. It follows that
$\mathbb{E}\text{xt}^i(h^*\Omega^1_{X/C} \to \Omega^1_{R/C},\OO_R)
= H^{i-1}(R, N_{R/X})$ as usual.
\end{rmk}

\begin{defn}
\label{defn-twisting}
\marpar{defn-twisting}
In Situation \ref{hyp-defn-porc}. Let $R \to X$ be 
a ruled surface in $X$ and let $D$ be a Cartier
divisor on $R$. For every nonnegative integer $m$, 
we say $(R,D)$ is \emph{$m$-twisting} if
\begin{enumerate}
\item the complete linear system $|D|$ is basepoint free,
\item the cohomology group $H^1(R,\OO_R(D))$ is $0$,
\item $D$ has relative degree $1$ over $C$,
\item the normal bundle $N_{R/X}$ is globally generated,
\item $H^1(R, N_{R/X})$ equals $0$, and 
\item we have $H^1(R, N_{R/X}(- D - A)) = 0$
for every divisor $A$ which is the pull back
of any divisor on $C$ of degree $\leq m$.
\end{enumerate}
\end{defn}

\noindent
This is a ``relative'' definition -- it is defined with respect to the
morphism $f$. An important special case is an ``absolute'' variant,
see Definition \ref{defn-twistabs} below.

\medskip\noindent
Suppose we are in Situation \ref{hyp-defn-porc} and suppose
that $(R,D)$ is an $m$-twisting ruled surface in $X/C$.
By assumption $|D|$ is nonempty, base point free, and
of relative degree $1$. Hence a general element is smooth
and defines a section $\sigma : C \to R$. We will often
say ``let $(R, \sigma)$ be an $m$-twisting surface'' to denote
this situation. Having chosen $\sigma$ we can think of $R \to X$
as a family of stable $1$-pointed lines. Let
$g = g_{(R,\sigma)} : C \to \Kgnb{0,1}(X/C, 1)$
denote the associated morphism.

\begin{lem}
\label{lem-twistimplies}
\marpar{lem-twistimplies}
In the situation above:
\begin{enumerate}
\item The image of $g = g_{(R,\sigma)}$ 
lies in the unobstructed (and hence smooth) locus of
the morphism $\text{ev} : \Kgnb{0,1}(X/C, 1) \to X$.
\item We have $H^1(C, g^*T_{\text{ev}}(-A)) = 0$ for
every divisor $A$ of degree $\leq m$ on $C$.
\item Let $\Phi : \Kgnb{0,1}(X/C, 1) \to \Kgnb{0,0}(X/C, 1)$
be the forgetful morphism (which is smooth).
Then $g^*T_{\Phi}$ is globally generated, and $H^1(C, g^*T_{\Phi}) = 0$.
\item The section $h \circ \sigma : C \to X$ is free, see
Definition \ref{defn-classicalfree}.
\end{enumerate}
\end{lem}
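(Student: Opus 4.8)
The plan is to reduce all four statements, via pullback along $g = g_{(R,\sigma)}$, to cohomology computations on the scroll $R$ that follow directly from the six conditions in Definition \ref{defn-twisting}. Throughout I would use the exact sequence $0\to T_{R/C}\to h^{*}T_{X/C}\to N_{R/X}\to 0$ defining the normal bundle (see the discussion preceding Definition \ref{defn-twisting}, and Remark \ref{rmk-technical}), together with two inputs set up first. (i) Since $\sigma$ is a general member of the basepoint free, relative degree $1$ system $|D|$, it is a section of $R\to C$ with $\sigma(C)\sim D$, so $\sigma^{*}T_{R/C}=N_{\sigma(C)/R}=\OO_{R}(\sigma(C))|_{\sigma(C)}\cong \OO_{R}(D)|_{\sigma(C)}$, and on each fibre $R_{c}\cong\PP^{1}$ we have $\OO_{R_{c}}(-D|_{R_{c}})\cong\OO_{\PP^{1}}(-1)$. (ii) Since $N_{R/X}$ is globally generated (Definition \ref{defn-twisting}(4)) and restricts to $N_{R_{c}/X_{c}}$ on each fibre (Remark \ref{rmk-technical}), each $N_{R_{c}/X_{c}}$ is a globally generated, hence nonnegative, bundle on $\PP^{1}$, so $N_{R_{c}/X_{c}}(-1)$ has vanishing $H^{1}$; by cohomology and base change for the top direct image (the fibres of $R\to C$ are curves) this yields the key vanishing $R^{1}(\text{pr}_C)_{*}(N_{R/X}(-D))=0$, and hence $H^{2}(R,N_{R/X}(-D)\otimes \text{pr}_C^{*}M)=0$ for every invertible sheaf $M$ on $C$ by the Leray spectral sequence.

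For (1): as in the proof of Lemma \ref{lem-FChevsmooth}, the deformation functor of the morphism $\text{ev}$ at a point $(L,p,h)$ has obstruction space $H^{1}(L,h^{*}T_{X_{s}}(-p))$; taking $(L,p,h)=(R_{c},\sigma(c),h|_{R_{c}})$ and twisting the exact sequence restricted to $R_{c}$ by $\OO_{R_{c}}(-\sigma(c))$ gives $0\to\OO_{\PP^{1}}(1)\to (h|_{R_{c}})^{*}T_{X_{c}}(-\sigma(c))\to N_{R_{c}/X_{c}}(-\sigma(c))\to 0$, whose outer terms have no $H^{1}$ by (i) and (ii), so $g(c)$ is unobstructed and, in characteristic $0$, a smooth point of $\text{ev}$. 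For (3): $\Phi$ is the universal curve over $\Kgnb{0,0}(X/C,1)$, $R$ with $\sigma$ is its pullback along $\Phi\circ g$, and $T_{\Phi}$ is the relative tangent bundle of that universal $\PP^{1}$, so $g^{*}T_{\Phi}=\sigma^{*}T_{R/C}=\OO_{R}(D)|_{\sigma(C)}$; this is globally generated by Definition \ref{defn-twisting}(1), and the cohomology sequence of $0\to\OO_{R}\to\OO_{R}(D)\to\OO_{R}(D)|_{\sigma(C)}\to0$ (where $D-\sigma(C)\sim0$) gives $H^{1}(C,g^{*}T_{\Phi})=0$ from $H^{1}(R,\OO_{R}(D))=0$ (Definition \ref{defn-twisting}(2)) and $H^{2}(R,\OO_{R})=0$ ($R$ is ruled over the curve $C$, so $p_{g}(R)=0$).

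For (2) and (4): the key vanishing makes $(\text{pr}_C)_{*}(N_{R/X}(-D))$ locally free with formation commuting with base change, and by the construction of $\Kgnb{0,1}(X/C,1)$ together with (1) it is canonically $g^{*}T_{\text{ev}}$; hence for a divisor $A$ on $C$ of degree $\le m$, the projection formula and Leray give an injection $H^{1}(C,g^{*}T_{\text{ev}}(-A))\hookrightarrow H^{1}(R,N_{R/X}(-D-\text{pr}_C^{*}A))$, whose target is $0$ by Definition \ref{defn-twisting}(6) — this proves (2). For (4) I would use the characterization after Definition \ref{defn-classicalfree}: $h\circ\sigma$ is free iff it is unobstructed and $(h\sigma)^{*}T_{X/C}$ is globally generated. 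Restricting the defining sequence to $\sigma(C)\cong C$ (exact when $\mc{L}$ is relatively very ample; in general the left map has a torsion kernel, harmless for the conclusions below, cf. Remark \ref{rmk-technical}) gives $0\to\OO_{R}(D)|_{\sigma(C)}\to(h\sigma)^{*}T_{X/C}\to N_{R/X}|_{\sigma(C)}\to0$; both outer terms are globally generated by Definition \ref{defn-twisting}(1) and (4), the first has $H^{1}=0$ by (3), and the second has $H^{1}=0$ by the sequence $0\to N_{R/X}(-D)\to N_{R/X}\to N_{R/X}|_{\sigma(C)}\to0$ together with $H^{1}(R,N_{R/X})=0$ (Definition \ref{defn-twisting}(5)) and $H^{2}(R,N_{R/X}(-D))=0$; thus $(h\sigma)^{*}T_{X/C}$ is globally generated with vanishing $H^{1}$, proving (4).

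The step I expect to be the real obstacle is the canonical identification $g^{*}T_{\text{ev}}\cong(\text{pr}_C)_{*}(N_{R/X}(-D))$ needed for (2) — matching the abstract relative tangent sheaf of $\text{ev}$ with the pushforward on the nose rather than merely fibrewise — and, relatedly, keeping the bookkeeping honest when $N_{R/X}$ carries torsion (the case where $\mc{L}$ is only relatively ample); the remaining content is routine diagram chasing with the exact sequence and the six conditions.
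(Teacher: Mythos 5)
Your proposal is correct and follows essentially the same route as the paper: identify $g^{*}T_{\Phi}$ with $\sigma^{*}\OO_{R}(\sigma)$ and $g^{*}T_{\text{ev}}$ with $\pi_{*}N_{R/X}(-\sigma)$, then reduce each statement to the six conditions of Definition~\ref{defn-twisting} via the normal-bundle sequence and standard cohomological bookkeeping on $R$. The only noteworthy variation is cosmetic: you obtain $H^{2}(R,N_{R/X}(-D))=0$ from the Leray spectral sequence and the fibrewise vanishing $R^{1}\pi_{*}N_{R/X}(-D)=0$, whereas the paper invokes Serre duality on $R$; and you correctly single out the canonical identification $g^{*}T_{\text{ev}}\cong\pi_{*}N_{R/X}(-\sigma)$ as the one place where the real content lives, which is also the step the paper leaves to the reader.
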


\begin{proof}
The fact that $N_{R/X}$ is globally generated implies that
for each $t\in C(k)$ the fibre $R_t \to X$ is free. This follows
upon considering the exact sequences
$0 \to T_R|_{R_t} \to h^*T_X|_{R_t} \to N_{R/X}|_{R_t} \to 0$
(exact by flatness of $N_{R/X}$ over $C$), and the fact that
$T_R|_{R_t}$ is globally generated. This implies the image
of $g$ is in the unobstructed locus for $\text{ev}$.

\medskip\noindent
Let $\pi : R \to C$ be the structural morphism.
The pull back of the relative tangent bundle $T_{\text{ev}}$
by $g$ is canonically identified with $\pi_*N_{R/X}(-\sigma)$.
This is true because the normal bundle of a fibre $R_t \to X$
is an extension of the restriction $N_{R/X}|_{R_t}$ by a 
rank $1$ trivial sheaf on $R_t \cong \PP^1$. Details left
to the reader. The assumptions of Definition \ref{defn-twisting}
imply that $R^1\pi_* N_{R/X}(-\sigma) = 0$. Hence
$H^1(C, g^*T_{\text{ev}}(-A)) = H^1(R, N_{R/X}(-\sigma - \pi^*A))$.
Thus we get the desired vanishing from the definition of
twisting surfaces.

\medskip\noindent
The pull back $g^*T_{\Phi}$ is canonically identified
with $\sigma^* \OO_R(\sigma)$.
The global generation of the sheaf $g^*T_{\Phi}$ is
therefore a consequence of the base point freeness
of $\OO_R(\sigma)$ of Definition \ref{defn-twisting}.
The vanishing of $H^1(C, g^*T_{\Phi})$ follows on considering
the long exact cohomology sequence associated
to $0 \to \OO_R \to \OO_R(\sigma) \to \sigma_*\sigma^* \OO_R(\sigma) \to 0$
and the vanishing of $H^1(R, \OO_R(\sigma))$ in Definition
\ref{defn-twisting}.

\medskip\noindent
Consider the exact sequence $\sigma^*T_{R/C} \to \sigma^*h^*T_{X/C}
\to \sigma^*N_{R/X} \to 0$.
Note that $\sigma^*T_{R/C} = \sigma^*
\OO_R(\sigma)$. By Definition \ref{defn-twisting} both
$\sigma^*\OO_R(\sigma)$ and $\sigma^*N_{R/X}$ are globally
generated. Thus by the exact sequence 
$\sigma^*h^*T_{X/C}$ is globally generated.
In the previous paragraph we showed
that $H^1(C, \sigma^*\OO_R(\sigma))=0$.
There is an exact sequence
$N_{R/X}(-\sigma) \to N_{R/X} \to \sigma_* \sigma^*N_{R/X} \to 0$.
By Definition \ref{defn-twisting}, $H^1(R, N_{R/X}) = 0$. The group
$H^2(R, N_{R/X}(-\sigma))$ vanishes as its Serre dual
$\text{Hom}_R(N_{R/X}(-\sigma), \omega_R)$ is zero
(hint: consider restriction to fibres).
Together these imply that $H^1(C, \sigma^*N_{R/X}) = 0$.
Thus the first exact sequence of this paragraph implies that
$H^1(C, \sigma^* h^* T_{X/C}) = 0$. Thus $(h \circ \sigma)^*
T_{X/C}$ is globally generated with trivial $H^1$ and 
we conclude that $h \circ \sigma$ is $1$-free, i.e., free.
\end{proof}

\noindent
The following innocuous looking lemma is why we introduce
twisting surfaces. It will (eventually) show that any
point of the boundary is connected by a $\PP^1$ to a 
point in the interior.

\begin{lem}
\label{lem-whytwist}
\marpar{lem-whytwist}
In Situation \ref{hyp-defn-porc}, let $(R, \sigma)$
be $m$-twisting with $m \geq 1$. Let $t\in C(k)$.
The stable map $\sigma(C) \cup R_t \to X$ defines a
nonstacky unobstructed point of $\Sigma(X/C/k)$ which is
connected by a rational curve in $\Sigma(X/C/k)$
to a free section of $X \to C$.
\end{lem}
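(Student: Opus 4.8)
The plan is to prove the two assertions separately, and for the second one to connect the given comb to a free section by a pencil of divisors living on the ruled surface $R$ itself. Throughout write $D$ for the class of $\sigma(C)$ on $R$, so that $\OO_R(D)\cong \OO_R(\sigma(C))$ is the line bundle of Definition \ref{defn-twisting}, and set $D'=D+R_t$, so that $\OO_R(\sigma(C)\cup R_t)\cong \OO_R(D')$ and $\sigma(C)+R_t$ is a reducible, nodal member of $|D'|$. For the first assertion I would argue as in the proofs of Lemmas \ref{lem-Zd} and \ref{lem-Zdchains}: the body $h\circ\sigma:C\to X$ is a free section by Lemma \ref{lem-twistimplies}(4), in particular $0$-free, and the fibre $R_t$ is a free line (global generation of $N_{R/X}$, as in the proof of Lemma \ref{lem-twistimplies}). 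Tensoring the normalization sequence of the comb $C'=\sigma(C)\cup R_t$ with the pullback of $T_{X/C}$ and taking cohomology gives $H^1(C',T_{X/C}|_{C'})=0$: the $H^1$ of each component vanishes, and surjectivity onto the fibre of $T_{X/C}$ at the node follows from global generation of $T_{X/C}|_{R_t}$. Hence the comb is an unobstructed point of $\Sigma(X/C/k)$, and it is nonstacky because it has no nontrivial automorphisms (the body is a section and $R_t\to X_t$ is birational onto its image).

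For the second assertion I would first record that $|D'|$ is base point free, that $H^1(R,\OO_R(D'))=0$, and that $D'$ again has relative degree $1$ over $C$. These follow from the exact sequence $0\to \OO_R(D)\to \OO_R(D')\to \OO_R(D')|_{R_t}\to 0$, the identification $\OO_R(D')|_{R_t}\cong \OO_{\PP^1}(1)$ (using that $D$ has relative degree $1$ and that $R_t$ is a fibre of a ruled surface, so $\OO_R(R_t)|_{R_t}$ is trivial), and the vanishing $H^1(R,\OO_R(D))=0$ of Definition \ref{defn-twisting}. Because $D'$ has relative degree $1$ and $|D'|$ is base point free, a general member $\sigma'\in |D'|$ meets every fibre of $R\to C$ in a single reduced point, hence is a smooth section $\sigma':C\to R$ with $\sigma'(C)\cong C$.

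The heart of the argument is then the claim that $h\circ\sigma':C\to X$ is a \emph{free} section. For this I would pull back $0\to T_{R/C}\to h^*T_{X/C}\to N_{R/X}\to 0$ along $\sigma'$ (this stays exact on the right since $N_{R/X}$ is flat over $C$, cf.\ Remark \ref{rmk-technical}) and check that both $\sigma'^*T_{R/C}\cong \sigma'^*\OO_R(\sigma'(C))$ and $\sigma'^*N_{R/X}$ are globally generated with vanishing $H^1$. The former follows from base point freeness of $|D'|$ together with $H^1(R,\OO_R(D'))=0$ and $H^2(R,\OO_R)=0$ (via $0\to\OO_R\to\OO_R(\sigma'(C))\to\OO_R(\sigma'(C))|_{\sigma'(C)}\to 0$). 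The latter follows from Definition \ref{defn-twisting}(4),(5) together with the vanishing $H^2(R,N_{R/X}(-\sigma'(C)))=H^2(R,N_{R/X}(-D'))=0$, which holds because $\pi:R\to C$ has one-dimensional fibres while $N_{R/X}(-D')$ restricts to $N_{R_c/X_c}(-1)$ on each fibre $R_c$, a globally generated bundle twisted by $-1$, hence with no $H^1$; so $R^1\pi_*$ of $N_{R/X}(-D')$ vanishes and $H^2$ vanishes by Leray. Therefore $\sigma'^*h^*T_{X/C}$ is globally generated with trivial $H^1$, i.e.\ $h\circ\sigma'$ is free.

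Finally I would take $\sigma'$ general, in particular not passing through the node $\sigma(t)$ of $\sigma(C)+R_t$, and let $\ell\cong\PP^1$ be the pencil in $|D'|$ spanned by $[\sigma(C)+R_t]$ and $[\sigma']$; the universal divisor over $\ell$, mapped to $X$ through $R$, should give a rational curve $\ell\to \Sigma^{\deg_{\mc{L}}(D)+1}(X/C/k)$ joining $[\sigma(C)\cup R_t]$ to the free section $[h\circ\sigma']$. The step I expect to require genuine care — rather than being routine — is verifying that \emph{every} member of $\ell$, not merely the general one, is an at worst nodal curve of arithmetic genus $g(C)$, so that $\ell$ genuinely maps into $\Sigma$; since $(R,D)$ is fixed one cannot fall back on a large-degree Bertini argument as in Lemma \ref{lem-pentogether}, and must instead use that the base locus of $\ell$ is reduced (so every member is smooth there) and that the relative degree $1$ hypothesis confines the degenerate members of $|D'|$ to (sum of fibres) $+$ (section) configurations, which are nodal once the section part is smooth and meets the fibres transversally.
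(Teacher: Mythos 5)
Your proof is correct and follows essentially the same route as the paper's. The paper establishes unobstructedness by citing Lemma \ref{lem-twistimplies} (freeness of the body) together with the implicit freeness of $R_t$, observes that $|\sigma+R_t|$ is base point free exactly as you do, shows that $(R,\sigma')$ is $(m-1)$-twisting for a general member $\sigma'$ (which via Lemma \ref{lem-twistimplies}(4) gives freeness of $h\circ\sigma'$, rather than your direct computation, but the two are equivalent unfoldings of the same cohomology), and then ``considers the pencil connecting $\sigma+R_t$ to $\sigma'$.''

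The one place you deviate substantively is your final paragraph, and there you are being more cautious than necessary. You do not need to verify that every member of the pencil $\ell\subset|D'|$ is at worst nodal, and indeed a multiple fibre can in principle appear as a degenerate member (your sketch, which only rules out non-transverse section/fibre configurations, does not close this off). The point is that $\Sigma^{e+1}(X/C/k)$ is a \emph{proper} algebraic stack over $k$ (Theorem \ref{thm-Kontsevich}), so the morphism $\ell^{\circ}\to\Sigma^{e+1}(X/C/k)$ defined on the open set $\ell^{\circ}\subset\ell$ where the fibre of the universal divisor is nodal and defines a stable map extends across all of the smooth rational curve $\ell$. Both $[\sigma(C)+R_t]$ and $[\sigma']$ are themselves stable maps, hence lie in $\ell^{\circ}$, so the extension already passes through both points. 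This is the content of the paper's ``the result is clear now,'' and is worth making explicit, but it removes, rather than demands, the case analysis you were anticipating.
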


\begin{proof}
The fact that the point is nonstacky
comes from the fact that sections and lines have no
automorphisms. The fact that the point is unobstructed
follows from Lemma \ref{lem-twistimplies} above.
Consider the linear system $|\sigma + R_t|$.
Since $H^1(C, \OO_R(\sigma)) = 0$ the map
$H^0(R, \OO_R(\sigma + R_t)) \to H^0(R_t , \OO_{R_t}(1))$
is surjective. This implies that $|\sigma + R_t|$
is base point free. A general member of $|\sigma + R_t|$
is a section $\sigma' : C \to R$. It is trivial to
show that $(R, \sigma')$ is $(m-1)$-twisting. 
Hence by Lemma \ref{lem-twistimplies} we see that
$\sigma'$ is free. The result is clear now by considering
the pencil of curves on $R$ connecting $\sigma + R_t$
to $\sigma'$.
\end{proof}

\noindent
It is useful to have a criterion that guarantees the
existence of a twisting surface. In particular, we would
like a condition formulated in terms of the map
$g : C \to \Kgnb{0,1}(X/C, 1)$. We do not know
a good way to do this unless $g(C) = 0$.

\begin{lem}
\label{lem-twistexist}
\marpar{lem-twistexist}
In Situation \ref{hyp-defn-porc} assume $C = \PP^1$.
Let $g : \PP^1 \to \Kgnb{0, 1}(X/\PP^1, 1)$
be a section. Let $m \geq 1$. Assume we have:
\begin{enumerate}
\item The pull back $g^* T_{\Phi}$ has degree $\geq 0$.
\item The image of $g$ is contained in the unobstructed
locus of $\text{ev}$.
\item The cohomology group $H^1(\PP^1, g^*T_{\text{ev}}(-m))$
is zero.
\item The composition $\text{ev} \circ g : \PP^1 \to X$
is a free section of $X \to \PP^1$.
\end{enumerate}
Then the associated pair $(R, \sigma)$ is
a $m$-twisting ruled surface in $X$.
\end{lem}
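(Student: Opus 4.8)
The plan is to reverse-engineer the conditions of Definition \ref{defn-twisting} from the hypotheses (1)--(4) on $g:\PP^1\to\Kgnb{0,1}(X/\PP^1,1)$, using the dictionary between the geometry of the section $g$ and the geometry of the associated ruled surface $R\to X$ that was set up just before Lemma \ref{lem-twistimplies}. Recall that a section $g$ of $\Kgnb{0,1}(X/\PP^1,1)\to\PP^1$ is the same datum as a ruled surface $h:R\to X$ together with a section $\sigma:\PP^1\to R$: the fibres of $R\to\PP^1$ are the lines parametrized by $\Phi\circ g$, and $\sigma$ records the marked point. So first I would make this identification explicit and fix the notation $\pi:R\to\PP^1$ for the structural map. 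The key computations, all of which already appear (or are asserted ``details left to the reader'') in the proof of Lemma \ref{lem-twistimplies}, are the canonical isomorphisms $g^*T_\Phi\cong\sigma^*\OO_R(\sigma)$ and $g^*T_{\text{ev}}\cong\pi_*\bigl(N_{R/X}(-\sigma)\bigr)$, together with $R^1\pi_*\bigl(N_{R/X}(-\sigma)\bigr)=0$ (this last vanishing holds because $N_{R/X}|_{R_t}$ is a quotient of the restriction of the normal bundle of a $\PP^1$ in a smooth variety, so has no $H^1$ after twisting down by a point).

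Granting this dictionary, I would verify the six conditions of Definition \ref{defn-twisting} one at a time. Condition (3), that $D=\sigma$ has relative degree $1$ over $\PP^1$, is immediate since $\sigma$ is a section of $\pi$. Conditions (1) and (2) (basepoint freeness of $|\sigma|$ and $H^1(R,\OO_R(\sigma))=0$): since $R$ is a ruled surface over $\PP^1$ it is a Hirzebruch surface $\mathbb{F}_a$, and $\OO_R(\sigma)$ has relative degree $1$; write $a$ in terms of $\deg g^*T_\Phi=\deg\sigma^*\OO_R(\sigma)\geq 0$ (hypothesis (1)) to see that $\sigma^2\geq 0$, which on $\mathbb{F}_a$ forces $\OO_R(\sigma)$ to be globally generated with vanishing higher cohomology --- a direct check on $\mathbb{F}_a$, using the long exact sequence for $0\to\OO_R\to\OO_R(\sigma)\to\sigma_*\sigma^*\OO_R(\sigma)\to 0$ and $h^1(\PP^1,\sigma^*\OO_R(\sigma))=0$ (degree $\geq0$ on $\PP^1$), together with $H^1(R,\OO_R)=0$. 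For condition (4), global generation of $N_{R/X}$: from hypothesis (2) the image of $g$ lies in the unobstructed locus of $\text{ev}$, which by the proof of Lemma \ref{lem-twistimplies} says exactly that each fibre $R_t\to X$ is a free line; combined with the exact sequence $0\to T_R|_{R_t}\to h^*T_X|_{R_t}\to N_{R/X}|_{R_t}\to 0$ and the global generation of $T_R|_{R_t}$ on $R_t\cong\PP^1$, each $N_{R/X}|_{R_t}$ is globally generated, hence has no negative summand, hence (relative base point freeness fibrewise plus $H^1(\PP^1,\text{nonneg})=0$) $R^1\pi_*N_{R/X}=0$ and $\pi_*N_{R/X}$ is globally generated on $\PP^1$; pushing generators back gives global generation of $N_{R/X}$ on $R$.

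The substantive conditions are (5), $H^1(R,N_{R/X})=0$, and (6), $H^1(R,N_{R/X}(-\sigma-\pi^*A))=0$ for all $A$ on $\PP^1$ of degree $\leq m$; note (6) with $A$ a point degenerating appropriately, or rather the cleanest route, is to get (6) first and deduce (5) from it --- actually I would get both from hypotheses (3) and (4) as follows. Using $R^1\pi_*(N_{R/X}(-\sigma))=0$ and the projection formula, $H^1(R,N_{R/X}(-\sigma-\pi^*A))=H^1(\PP^1,\pi_*(N_{R/X}(-\sigma))\otimes\OO(-A))=H^1(\PP^1,g^*T_{\text{ev}}(-A))$, and since $\deg A\le m$ this is a quotient of $H^1(\PP^1,g^*T_{\text{ev}}(-m))$, which vanishes by hypothesis (3); this is condition (6). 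For condition (5) I would run the argument from the last paragraph of the proof of Lemma \ref{lem-twistimplies} in reverse: the sequences $0\to N_{R/X}(-\sigma)\to N_{R/X}\to\sigma_*\sigma^*N_{R/X}\to 0$ and $\sigma^*T_{R/C}\to\sigma^*h^*T_{X/C}\to\sigma^*N_{R/X}\to 0$, together with $H^1(\PP^1,\sigma^*T_{R/C})=H^1(\PP^1,\sigma^*\OO_R(\sigma))=0$ (from (1)) and the freeness of $h\circ\sigma$ (hypothesis (4)) giving $H^1(\PP^1,\sigma^*h^*T_{X/C})=0$, yield $H^1(\PP^1,\sigma^*N_{R/X})=0$; combined with $H^1(R,N_{R/X}(-\sigma))=0$ (the case $A=0$ of what we just proved, or directly $H^1(\PP^1,g^*T_{\text{ev}})$, and also $H^2(R,N_{R/X}(-\sigma))=0$ by the Serre-duality remark in the proof of \ref{lem-twistimplies}) the first sequence forces $H^1(R,N_{R/X})=0$. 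I expect the main obstacle to be purely bookkeeping: making the two canonical identifications $g^*T_\Phi\cong\sigma^*\OO_R(\sigma)$ and $g^*T_{\text{ev}}\cong\pi_*N_{R/X}(-\sigma)$ genuinely rigorous (rather than ``details left to the reader''), since everything downstream is a formal consequence of these plus standard cohomology-and-base-change on the ruled surface $R$.
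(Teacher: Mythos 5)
Your overall plan — translating the four hypotheses on $g$ into the six conditions of Definition \ref{defn-twisting} via the identifications $g^*T_\Phi\cong\sigma^*\OO_R(\sigma)$ and $g^*T_{\text{ev}}\cong\pi_*\bigl(N_{R/X}(-\sigma)\bigr)$ — is exactly the paper's strategy, and your treatment of conditions (1), (2), (3), (5), (6) of the definition is correct and matches the paper's in substance.

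There is, however, a genuine gap in your derivation of condition (4), the global generation of $N_{R/X}$. You claim that, because hypothesis (2) makes every fibre $R_t\to X$ a free line (so $N_{R/X}|_{R_t}$ is globally generated), it follows that ``$R^1\pi_*N_{R/X}=0$ and $\pi_*N_{R/X}$ is globally generated on $\PP^1$.'' The first assertion is fine, but the second does not follow: fibrewise global generation of a sheaf on a ruled surface says nothing about global generation of its pushforward. A counterexample on $R=\mathbb{F}_a$ is $\mathcal{F}=\OO_R(1)\otimes\pi^*\OO(-k)$ with $k>0$: each $\mathcal{F}|_{R_t}=\OO_{\PP^1}(1)$ is globally generated, yet $\pi_*\mathcal{F}=\OO(-k)\oplus\OO(a-k)$ is not. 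Hypothesis (2) alone cannot give condition (4); this is precisely where hypotheses (3) and (4), and the assumption $m\geq 1$, have to enter.

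The paper's repair is worth internalizing, since you have already assembled most of the ingredients in your treatment of (5) and (6). From hypothesis (4) one gets that $\sigma^*N_{R/X}$ is a quotient of the globally generated sheaf $(\text{ev}\circ g)^*T_{X/\PP^1}$, hence globally generated, with $H^1(\PP^1,\sigma^*N_{R/X}(-t))=0$; from hypothesis (3) with $m\geq 1$ one gets $H^1(\PP^1,g^*T_{\text{ev}}(-t))=0$. Feeding both into the exact sequence $0\to g^*T_{\text{ev}}(-t)\to\pi_*N_{R/X}(-t)\to\sigma^*N_{R/X}(-t)\to 0$ yields $H^1(R,N_{R/X}(-R_t))=0$ for every $t\in\PP^1(k)$. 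This makes the restriction $H^0(R,N_{R/X})\to H^0(R_t,N_{R/X}|_{R_t})$ surjective, and since $N_{R/X}|_{R_t}$ is globally generated (this is where hypothesis (2) is used), global sections of $N_{R/X}$ generate the fibre at every point of $R_t$, for every $t$. That gives global generation of $N_{R/X}$ on $R$. Your argument for condition (4) should be replaced by this.

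Two minor further comments. First, the paper also flags a technical point you do not address: to work with the exact sequence $0\to N_{R/X}(-\sigma)\to N_{R/X}\to\sigma_*\sigma^*N_{R/X}\to 0$ when $h:R\to X$ need not be an embedding (so $N_{R/X}$ may have torsion), one should take $\sigma$ general in $|D|$ so that the section $1\in\Gamma(R,\OO_R(\sigma))$ is a regular element for $N_{R/X}$; if $\mc{L}$ is very ample this is automatic. Second, in your proof of condition (5) the observation $H^1(\PP^1,\sigma^*T_{R/C})=0$ is superfluous: since $\PP^1$ is a curve, the surjection $\sigma^*h^*T_{X/C}\twoheadrightarrow\sigma^*N_{R/X}$ already gives a surjection on $H^1$, so $H^1(\PP^1,\sigma^*h^*T_{X/C})=0$ alone suffices.
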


\begin{proof}
We will use the identifications of $g^*T_{\Phi} = \sigma^* \OO_R(\sigma)$
and $g^*T_{\text{ev}} = \pi_*N_{R/X}(-\sigma)$ obtained in the proof of
Lemma \ref{lem-twistimplies}.
Note that $R$ is a Hirzebruch surface, and $H^1(R, \OO_R) = H^2(R, \OO_R) =0$.
Combined with the fact that $\sigma^2 = \deg(g^*T_{\Phi}) \geq 0$
we see that $|\sigma |$ is base point free. Also, during the course
of the proof we may assume that $\sigma$ is general in its linear
system on $R$. In particular this means that the section
$1 \in \Gamma(R, \OO_R(\sigma))$ is regular for the coherent
sheaf $N_{R/X}$. (Note that this is automatic in the case, which
always holds in practice, that $R \to X$ is a closed immersion.)

\medskip\noindent
The fact that $\text{ev} \circ g : \PP^1 \to X$
is $1$-free means that $g^* \text{ev}^* T_{X/\PP^1}$ is
globally generated. The fact that $H^1(\PP^1, g^*T_{\text{ev}}) = 0$
means that any infinitesimal deformation of the morphism
$\text{ev} \circ g : \PP^1 \to X$ can be followed
by an infinitesimal deformation of $g : \PP^1 \to
\Kgnb{0,1}(X/C, 1)$. In terms of the pair $(R, \sigma)$
this means that the image of
$\alpha : H^0(R, N_{R/X}) \to H^0(\PP^1, \sigma^*N_{R/X})$
contains the image of $\beta : H^0(\PP^1, (\text{ev} \circ g)^*
T_{X/\PP^1}) \to H^0(\PP^1, \sigma^*N_{R/X})$. In this
way we conclude that $N_{R/X}$ is at least globally generated over
the image of $\sigma$.

\medskip\noindent
This weak global generation result in particular implies that
$R^1\pi_* N_{R/X}(-\sigma) = 0$, and $R^1\pi_* N_{R/X} = 0$;
we can for example see this by computing the cohomology on
the fibres. Thus we see that
$H^1(R, N_{R/X}(-\sigma - \pi^*A)) = H^1(\PP^1, g^*T_{\text{ev}}(-A))$.
This gives us the vanishing of the cohomology group
$H^1(R, N_{R/X}(-\sigma - \pi^*A))$
for any divisor $A$ of degree $\leq m$ on $\PP^1$.
We also get
$H^1(R, N_{R/X}) = H^1(\PP^1, \pi_*N_{R/X})$, and
an exact sequence
$0 \to \pi_* N_{R/X}(-\sigma) \to \pi_* N_{R/X} \to \sigma^*N_{R/X} \to 0$.
The first sheaf being identified with
$g^*T_{\text{ev}}$ and the second being globally generated
we conclude that $H^1(R, N_{R/X}) = 0$.
Note that, with $m > 0$ this argument actually also implies that
$H^1(R, N_{R/X}(-R_t)) = 0$ for any $t \in C(k)$.
At this point, what is left, is to show that $N_{R/X}$ is
globally generated. It is easy to show that a coherent
sheaf on $\PP^1$ which is globally generated at a point is
globally generated. Since $m \geq 1$ the map
$H^0(R, N_{R/X}) \to H^0(R_t, N_{R/X}|_{R_t})$ is
surjective by the vanishing of cohomology we just
established. Combined these imply that $N_{R/X}$
is globally generated.
\end{proof}

\noindent
Here is the definition we promised above.

\begin{defn}
\label{defn-twistabs}
\marpar{defn-twistabs}
Let $k$ be an algebraically closed field of characteristic $0$.
Let $Y$ be projective smooth over $k$, and let $\mc{L}$ be an
ample invertible sheaf on $Y$. A \emph{scroll} on $Y$
is given by morphisms $\PP^1 \leftarrow R \rightarrow Y$
such that (a) $R$ is a smooth projective surface, (b)
all fibres $R_t$ of $R \to \PP^1$ are nonsingular rational
curves, and (c) the induced maps $R_t \to Y$ are lines in $Y$.
Let $R$ be a scroll in $Y$ and suppose $D \subset R$ is a Cartier divisor.
For every nonnegative integer $m$, we say $(R,D)$ is an
\emph{$m$-twisting scroll} in $Y$ if the diagram
$$
\xymatrix{
R \ar[r] \ar[d] & \PP^1 \times Y \ar[d]^{\text{pr}_1} \\
\PP^1 \ar@{=}[r] & \PP^1
}
$$
and the divisor $D$ form
an $m$-twisting ruled surface with respect to the invertible
sheaf $\text{pr}_2^*\mc{L}$.
If $m$ is at least $2$, an $m$-twisting scroll will also be called a
\emph{very twisting scroll}.
\end{defn}

\noindent
The main observation of this section is that if
$X \to C$ is as in Situation \ref{hyp-defn-porc},
and Hypothesis \ref{hyp-peace} holds over
an open part of $C$,  and if the geometric generic fibre
has a very twisting surface, then $X \to C$ contains very
twisting ruled surfaces.

\begin{lem}
\label{lem-strongtwist}
\marpar{lem-strongtwist}
In Situation \ref{hyp-peaceY}, assume that $Y$ has a very
twisting scroll. Then there exist $m$-twisting
scrolls $(\pi : R \to \PP^1, h : R \to Y, \sigma)$
such that $\pi_* \OO_R(D)$ and $\pi_* N_{R/\PP^1\times Y}$
are ample locally free sheaves on $\PP^1$ with
$m$ arbitrarily large. We can
further arrange it so that $h \circ\sigma$ corresponds
to an unobstructed point of one of the irreducible
components $Z_e$ defined in Section \ref{sec-varlines}
\end{lem}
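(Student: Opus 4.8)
The plan is to realize everything via the dictionary of Section~\ref{sec-varlines}: take $X=\PP^1\times Y$, $f=\mathrm{pr}_1$, so that a scroll $(\pi:R\to\PP^1,h:R\to Y,\sigma)$ in $Y$ is the same as a ruled surface in $X/\PP^1$, equivalently a section $g:\PP^1\to\Kgnb{0,1}(X/\PP^1,1)$. By Lemma~\ref{lem-twistexist} it suffices to produce such a $g$ with image in the unobstructed locus of $\mathrm{ev}$, with $\deg g^*T_\Phi\ge 0$, with all summands of $g^*T_{\mathrm{ev}}$ of degree $\ge m'$ for an arbitrarily large $m'$, and with $h\circ\sigma=\mathrm{ev}\circ g$ a free section; and in addition with $h\circ\sigma$ \emph{very free} and lying in one of the components $Z_e$. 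Given such a $g$ one takes $D=\sigma+\pi^*A$ with $A$ a point of $\PP^1$. Then $\pi_*\OO_R(D)$ is an extension of $g^*T_\Phi(A)$ by $\OO(A)$ with quotient of degree $1+\sigma^2$ and sub of degree $1$, hence on $\PP^1$ it is $\OO(1)\oplus\OO(1+\sigma^2)$, which is ample; the computation in the proof of Lemma~\ref{lem-twistimplies} reduces condition (6) of Definition~\ref{defn-twisting} for $(R,D)$ to $H^1(\PP^1,g^*T_{\mathrm{ev}}(-A'))=0$ for $\deg A'\le m'+1$, which holds if $m'$ is large relative to $m$, and the other conditions of Definition~\ref{defn-twisting} are immediate, so $(R,D)$ is $m$-twisting; finally in the exact sequence $0\to g^*T_{\mathrm{ev}}\to\pi_*N_{R/X}\to\sigma^*N_{R/X}\to 0$ from that same proof the subsheaf is ample, and $\sigma^*N_{R/X}$ is a quotient of the ample bundle $(h\circ\sigma)^*T_Y$, hence ample, so $\pi_*N_{R/X}$ is ample.

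To build $g$, start from a very twisting scroll $(R_0,\sigma_0,D_0)$ on $Y$ (hypothesis) and move $\sigma_0$ so that $h_0(\sigma_0(\PP^1))$ meets $Y_{\pax}$. First I would \emph{boost the positivity}: for a degree $d$ morphism $\nu_d:\PP^1\to\PP^1$ the scroll $R_0\times_{\PP^1,\nu_d}\PP^1$, with section $\nu_d^*\sigma_0$, has the degree of $g^*T_\Phi$ and the degrees of the summands of $g^*T_{\mathrm{ev}}$, $\pi_*N_{R/X}$ all multiplied by $d$, while $\mathrm{ev}\circ g=(\mathrm{ev}\circ g_0)\circ\nu_d$ is still a free section inside the unobstructed locus of $\mathrm{ev}$; so for $d\gg m'$ the twisting hypotheses of Lemma~\ref{lem-twistexist} hold for any prescribed $m'$. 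The section is however still only free and in general does not lie in any $Z_e$.

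Next I would \emph{make the section very free and canonical}. By Section~\ref{sec-varlines}(\ref{better}) there is, for $e_1\gg0$, a very free section $s_1$ of $X/\PP^1$ lying in $Z_{e_1}$ with $s_1^*T_Y$ as positive as desired, and by Lemma~\ref{lem-freeruled} $s_1$ is penned by a scroll $R_1$ of free lines. After deforming $s_1$ and the boosted scroll so that their bodies pass through a common peaceful point $p$, and using that the pointed lines through $p$ form an irreducible rationally connected family (Hypothesis~\ref{hyp-peace}(1)), one concatenates $R_1$ with the boosted scroll, inserting a short transition family of lines so the fibre lines and marked points agree at the gluing node, forming a scroll over a reducible base curve; smoothing the base and total space (keeping all fibres lines by properness of $\Kgnb{0,0}(Y,1)$) gives a scroll $(R',\sigma')$ over $\PP^1$ whose section $h'\circ\sigma'$ is a smoothing of $s_1\cup(\text{boosted body})$. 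Because $s_1$ is very free this smoothing is very free; and since $s_1\in Z_{e_1}$ while the boosted body degenerates, inside the unobstructed locus of $\Kgnb{0,0}(Y,\cdot)$, to a chain of $d$ copies of the \emph{fixed} free curve $h_0\circ\sigma_0$, attaching a number of free-line teeth bounded in terms of the fixed component of $h_0\circ\sigma_0$ alone (hence independent of $d$) places $h'\circ\sigma'$ in a component $Z_e$ by the gluing results of Section~\ref{sec-varlines} (Lemma~\ref{lem-Zdabs}, Section~\ref{sec-varlines}(\ref{samecomp}), Lemma~\ref{lem-Zdgluemore}). If $\sigma'^2$ is not yet large one moves $\sigma'$ to a general member of $|\sigma'+R'_{t_1}+\dots+R'_{t_N}|$: this only attaches free lines, so it preserves very-freeness and membership in a $Z_e$, and it raises $\deg g^*T_\Phi=\sigma'^2$ at the price of dropping the degrees of the summands of $g^*T_{\mathrm{ev}}$ by $N$, which is harmless since $d$ was taken large.

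The hard part is the concatenation step: one must check that after smoothing the reducible base \emph{all} of the required properties survive at once — the large positivity of $g^*T_{\mathrm{ev}}$ (which should follow because, by the standard exact sequence on the reducible total space together with semicontinuity, $H^1$ of the relevant twist of $N_{R'/X}$ is dominated by $H^1$ on the reducible surface, and this vanishes since both the $\nu_d$-boosted very twisting piece and the free-line scroll $R_1$ through the very positive $s_1$ enjoy the needed vanishing), the global generation and $H^1$-vanishing of $N_{R'/X}$, the very-freeness of $h'\circ\sigma'$, and its membership in a canonical component — and in particular one must verify that the section of the concatenated scroll is genuinely free, not merely unobstructed, so that Lemma~\ref{lem-Zdabs} and its relatives apply. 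With the constants $d$, $e_1$, $N$ chosen in this order and large enough, all of these constraints can be met.
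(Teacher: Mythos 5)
Your $\nu_d$-boosting idea — precomposing the $\Kgnb{0,1}(Y,1)$-section attached to a $2$-twisting scroll with a degree-$d$ cover $\nu_d:\PP^1\to\PP^1$, so that the summand-degrees of $g^*T_{\text{ev}}$, $g^*T_\Phi$ and $\pi_*N_{R/\PP^1\times Y}$ are all multiplied by $d$ — is a clean alternative to the paper's device of smoothing a chain of $N$ copies of a fixed $m$-twisting scroll, and your computation that $D=\sigma+\pi^*A$ makes $\pi_*\OO_R(D)$ ample is correct. The reduction to Lemma~\ref{lem-twistexist} is also sound.

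The gap is in the concatenation step. You glue a \emph{single} boosted scroll to a ruled surface $R_1$ coming from Lemma~\ref{lem-freeruled}, and assert that $H^1$ of the relevant twist of $N_{R'/\PP^1\times Y}$ on the nodal surface vanishes because ``the free-line scroll $R_1$ through the very positive $s_1$ enjoy[s] the needed vanishing.'' But Lemma~\ref{lem-freeruled} only guarantees that the fibres of $R_1\to\PP^1$ are free lines, i.e.\ that $R^1\pi_*N_{R_1/\PP^1\times Y}=0$; it gives no control whatsoever on the splitting type of the locally free sheaf $\pi_*N_{R_1/\PP^1\times Y}$ on $\PP^1$, whose summands can have arbitrarily negative degree. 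Very-freeness of $s_1$ only controls $s_1^*N_{R_1/\PP^1\times Y}$, a quotient of $s_1^*T_Y$, not the full direct image $\pi_*N_{R_1/\PP^1\times Y}$. So $H^1$ of a large twist of $N_{R_1/\PP^1\times Y}$ is in general nonzero, hence so is $H^1$ on the nodal $R'_0$, and semicontinuity only bounds $h^1$ on the smoothing from above by $h^1$ at the node — the wrong direction for a vanishing conclusion. Attaching a single positive tooth cannot repair an uncontrolled defect on the handle; this is exactly what Lemma~\ref{lem-deformtwist}(2), via \cite[II~Lemma~7.10.1]{K}, quantifies by demanding $\delta\geq\max\{n_1+1,n_2+1,n_3\}$ teeth, a number that grows with the negativity of the handle. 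The paper's own proof of the present lemma sidesteps this difficulty entirely: the chain it smooths consists purely of copies of a single $m$-twisting scroll with $m\ge 3$, so every link already has the required positivity and twisting is strictly additive; very-freeness of the section is then recovered afterward, by using the unobstructedness of the resulting $g'$ to deform the whole scroll so that $h'\circ\sigma'$ becomes a general, hence very free, point of $Z_e$. If you insist on correcting a free-but-not-twisting pen by gluing, you are in effect reproving Proposition~\ref{prop-twistexist}, which requires a whole family of twisting scrolls through prescribed points (Lemma~\ref{lem-twistlinesopen}) — a stronger input than the present lemma is meant to rest on. A secondary issue: the boosted section $(h_0\circ\sigma_0)\circ\nu_d$ is a $d$-fold ramified cover, not an immersion, and placing such a map and its glue-ins into a component $Z_e$ via Lemma~\ref{lem-Zdabs} or (\ref{samecomp}) of Section~\ref{sec-varlines} requires degenerating the cover to a configuration of the precise shape those results allow, which your sketch does not supply.
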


\begin{proof}
Let $(R, D)$ be an $m$-twisting scroll on $Y$.
Choose a section $\sigma$ of $R$ in $|D|$ and
think of the associated morphism $g : \PP^1 \to
\Kgnb{0,1}(Y, 1)$ as in Lemmas \ref{lem-twistimplies}
and \ref{lem-twistexist}. In fact Lemma \ref{lem-twistimplies}
implies that the morphism $g$ satisfies
the assumptions of Lemma \ref{lem-twistexist}.
It is clear that the assumptions of Lemma \ref{lem-twistexist}
hold on an open subspace of $\text{Mor}(\PP^1, \Kgnb{0,1}(Y, 1)$.
Also, a morphism $g$ satisfying those assumptions
is free. Given two morphisms $g_i : \PP^1
\to \Kgnb{0,1}(Y, 1)$, $i=1,2$ corresponding to $m_i$-twisting
surfaces with $g_1(\infty) = g_2(0)$ there exists a smoothing
(see \cite[II Definition 1.10]{K}) of
$g_1 \cup g_2 : \PP^1 \cup_{\infty \sim 0} \PP^1 \to \Kgnb{0,1}(Y,1)$
whose general fibre is a morphism $g : \PP^1 \to \Kgnb{0,1}(Y,1)$
which corresponds to an $(m_1 + m_2 - 1)$-twisting scroll.

\medskip\noindent
We conclude that $3$-twisting surfaces exist; by glueing
$2$ $2$-twisting surfaces which exist by the assumption
on $Y$ and smoothing as above. If $(R \to \PP^1, R \to Y, D)$
is $m$-twisting with $m \geq 3$, then $(R \to \PP^1, R \to Y, D + R_0)$
is $(m-1)$-twisting and has the property that $\pi_* \OO_R(D + R_0)$
is ample.

\medskip\noindent
Fix $m \geq 3$ such that an $m$-twisting surface exist.
Consider a large integer $N$ and consider maps
$g_i : \PP^1 \to \Kgnb{0,1}(Y, 1)$, $i=1,\ldots, N$
each corresponding to an $m$-twisting surface
$(\pi_i : R_i \to \PP^1, h_i : R_i \to X, \sigma_i)$, and such that
$g_i(\infty) = g_{i+1}(0)$ for $i=1,\ldots,N-1$.
We may assume that all the free rational curves
$h_i \circ \sigma_i$ lie in the same irreducible component
$Z'$ of $\Kgnb{0,0}(Y, e_0)$, for example by taking the same
twisting scroll for each $i$ (with coordinate on $\PP^1$
reversed for odd indices $i$). We may also assume that
$N \geq E - e_0$ where $E$ is as in assertion (\ref{samecomp})
in Section \ref{sec-varlines}.
Since each $g_i$ is free we can find a smoothing
of the stable map $g_1\cup \ldots \cup g_N :
\PP^1 \cup_{\infty \sim 0} \ldots \cup_{\infty \sim 0} \PP^1
\to \Kgnb{0,1}(Y, 1)$. In fact, we may moreover assume
the smoothing of
$\PP^1 \cup_{\infty \sim 0} \ldots \cup_{\infty \sim 0} \PP^1$
has $N$ sections $z_i$ with $z_i$ limiting to a point $t_i'$ on
the $i$th component of the initial chain.
Let $(\pi : R \to \PP^1, h : R \to Y, \sigma)$ correspond to
a general point of the smoothing, and let $t_1,\ldots, t_N
\in \PP^1(k)$ be the values of the sections $z_i$. We claim that
$(R \to \PP^1, h : R \to Y, \sigma + \sum R_{t_i})$ has at
least two of the three desired properties. 

\medskip\noindent
First, since $(R, \sigma)$ is $Nm$-twisting we see
immediately that $(R, \sigma + \sum R_{t_i})$ is $N(m-1)$
twisting.

\medskip\noindent
Second, the Cartier divisor $\sigma + \sum R_{t_i}$ is a deformation
of a divisor on the surface
$$
R_1 \bigcup_{R_{1,\infty} \cong R_{2, 0}} \ldots
\bigcup_{R_{N-1,\infty} \cong R_{N, 0}} R_N
$$
which restricts to the divisor class of
$\sigma_i + R_{i,t_i'}$ on every $R_i$. By our discussion above
we conclude that $\pi_* \OO_R(\sigma + \sum R_{t_i})$ is a deformation
of a locally free sheaf on a chain of $\PP^1$'s which is ample
on each link, hence ample.

\medskip\noindent
Third, we show that a general element of $|\sigma + \sum R_{t_i}|$
corresponds to a point of $Z_e$ with $e = Ne_0 + N$.
Consider the comb consisting of
$h : \sigma(\PP^1) \cup \bigcup R_{t_i} \to Y$.
It suffices to show that this defines a unobstructed
point of $\Kgnb{0,0}(Y, e)$ which is in $Z_e$.
It is unobstructed because $(R, \sigma)$ is a twisting surface.
By construction our comb is a ``partial smoothing'' of the
tree of rational curves
$$
\xymatrix{
(\PP^1 \cup R_{1, t_i'})
\cup_{\infty \sim 0}
\ldots
\cup_{\infty \sim 0}
(\PP^1 \cup R_{N, t'_N})
\ar[rrr]^-{h_i \circ \sigma_i \cup h_i|_{R_{i, t'_i}}}
&
&
&
Y.
}
$$
Again since all fibres of twisting surfaces are free lines,
and since each $\sigma_i$ is free this stable map is
unobstructed. At this point we may apply Lemma \ref{lem-Zdabs}
to conclude that this stable map is in $Z_e$.

\medskip\noindent
At this point it is not yet clear that $\pi_*N_{R/X}$ is ample.
Let $\sigma'$ be a section of $R$ representing a general
element of $\sigma + \sum R_{t_i}$. We just proved that
$\sigma'$ is a point of $Z_e$. Let $g' : \PP^1 \to
\Kgnb{0,1}(Y, 1)$ be the morphism corresponding
to the twisting surface $(R, \sigma')$. We saw above
that $g'$ corresponds to a twisting surface.
Since $(g')^*T_{\text{ev}}$
has no $H^1$ there are no obstructions to lifting a
given deformation of $\text{ev} \circ g' = \sigma'$ to
a deformation of $g'$. Hence we may assume that
$\sigma'$ is a general point of $Z_e$ and in particular
we may assume that $\sigma'$ is very free, see
Section \ref{sec-varlines} (\ref{better}).
At this point consider the exact sequence of
locally free sheaves on $\PP^1$
$$
0
\to
\pi_* N_{R/\PP^1 \times Y}(-\sigma') 
\to
\pi_* N_{R/\PP^1 \times Y}
\to
\sigma^* N_{R/\PP^1 \times Y}
\to
0.
$$
The sheaf on the left hand side is ample because
$(R,\sigma')$ is $N(m-1)$-twisting. Combined with
the surjective map $(\sigma')^*T_Y \to \sigma^* N_{R/\PP^1\times Y}$
and the ampleness of $(\sigma')^*T_Y$
this proves the result.
\end{proof}

\noindent
In order to formulate the next lemma, let us
call a very twisting scroll (as in the lemma above)
$(\pi : R \to \PP^1, h : R \to Y, \sigma)$
\emph{wonderful} if $\pi_* N_{R/\PP^1\times Y}$
is ample, $\pi_* \OO_R(\sigma)$ is ample, and
$h \circ \sigma$ belongs to the irreducible component
$Z_e$.

\begin{lem}
\label{lem-twistlinesopen}
\marpar{lem-twistlinesopen}
In Situation \ref{hyp-defn-porc}, assume that
Hypothesis \ref{hyp-peace} holds over a nonempty
open $S \subset C$. Assume that for some
$t \in C(k)$ the fibre $X_t$ has a very twisting scroll.
Then there exist:
\begin{enumerate}
\item a smooth variety $B$ over $k$,
\item a flat morphism $\underline{t} : B \to C$,
\item a smooth projective family of surfaces $\mathcal{R} \to B$,
\item a morphism $\pi : \mathcal{R} \to \PP^1$,
\item a morphism $h : \mathcal{R} \to X$ such that $f \circ h =
\underline{t}$, and
\item a morphism $\sigma : \PP^1 \times B \to \mathcal{R}$ over $B$
such that $\pi \circ \sigma = \text{pr}_1$. 
\end{enumerate}
These data satisfy:
\begin{enumerate}
\item for each $b\in B(k)$ the fibre $(\pi_b : \mathcal{R}_b \to \PP^1, 
h_b : \mathcal{R}_b \to X_{\underline{t}(b)}, \sigma_b)$ is
a wonderful very twisting scroll in $X_{\underline{t}(b)}$, and
\item the image of the map
$$
\text{fib}_0 : B \longrightarrow \Kgnb{0,1}(X/C, 1),
$$
which assigns to $b\in B(k)$ the $1$-pointed free line
$\sigma_b(0) \in \pi_b^{-1}(0) \to X_{\underline{t}(b)}$
contains a nonempty open $V \subset \Kgnb{0,1}(X/C, 1)$.
\end{enumerate}
\end{lem}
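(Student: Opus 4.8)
The plan is to realise all the required data over an open part of a scheme of morphisms $\PP^1\to\mathcal{K}$, where $\mathcal{K}:=\Kgnb{0,1}(X/C,1)$ is an algebraic space over $C$ (Section \ref{sec-peace}). Let $\mathcal{N}$ be the union of those connected components of $\text{Mor}(\PP^1,\mathcal{K})$ on which the composite $\PP^1\xrightarrow{g}\mathcal{K}\to C$ has degree $0$ (so is constant); write $\underline{t}\colon\mathcal{N}\to C$ for the morphism recording this constant value and $\text{ev}_0\colon\mathcal{N}\to\mathcal{K}$, $g\mapsto g(0)$. A $k$-point $g$ of $\mathcal{N}$ with image in the free locus $\mathcal{K}^{\circ}\subset\mathcal{K}$ (free $1$-pointed lines are unobstructed, so $\mathcal{K}^{\circ}$ is open and lies in the smooth locus of $\mathcal{K}$) is exactly the datum $g=g_{(R,\sigma)}$ attached to a scroll $(\pi\colon R\to\PP^1,\ h\colon R\to X_{\underline{t}(g)},\ \sigma)$ whose fibres are all free lines, as in Lemmas \ref{lem-twistimplies} and \ref{lem-twistexist}; under this dictionary $\text{ev}_0(g)$ is precisely the $1$-pointed line $\sigma(0)\in\pi^{-1}(0)\to X_{\underline{t}(g)}$. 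Using Lemmas \ref{lem-twistimplies} and \ref{lem-twistexist} (which translate ``$(R,\sigma)$ is $m$-twisting'' into open conditions on $g$: the $H^1$-vanishings are open by Lemma \ref{lem-semicty}, and so are ``image in the unobstructed locus of $\text{ev}$'' and ``$\text{ev}\circ g$ free''), together with openness of ampleness of $\pi_*N_{R/\PP^1\times X_{\underline{t}(g)}}$ and $\pi_*\OO_R(\sigma)$ and of ``$h\circ\sigma$ lies in the smooth locus of $Z_e$'' (Section \ref{sec-varlines}, relative version of Lemma \ref{lem-Zd}), the locus $\mathcal{N}^{\text{w}}\subset\mathcal{N}$ of $g$'s attached to \emph{wonderful very twisting} scrolls is open. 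It therefore suffices to produce a smooth irreducible open $B\subset\mathcal{N}^{\text{w}}$ with $\underline{t}(B)\subset C_{\text{sm}}$ (the dense open over which $f$ is smooth), on which $\underline{t}$ is flat and $\text{ev}_0$ has open image; one then takes $\mathcal{R}\to B$ to be the pullback of the universal $1$-pointed line on $\mathcal{K}$ along $\PP^1\times B\to\mathcal{K}$, with $\pi,h,\sigma$ the induced maps, $V:=\text{ev}_0(B)$ and $\text{fib}_0=\text{ev}_0|_B$.

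First I would feed in the given very twisting scroll on $X_t$. Its associated morphism $g_0$ has image in the unobstructed locus of $\text{ev}$ (Lemma \ref{lem-twistimplies}(1)), and the two graded pieces of $0\to g_0^*T_{\text{ev}}\to g_0^*T_{\mathcal{K}}\to(\text{ev}\circ g_0)^*T_X\to0$ — the last being itself an extension of $\OO_{\PP^1}$ by $(\text{ev}\circ g_0)^*T_{X/C}$, since $\text{ev}\circ g_0$ lands in a single fibre — have vanishing $H^1$ on $\PP^1$ by parts (2)--(4) of that lemma. Hence $H^1(\PP^1,g_0^*T_{\mathcal{K}})=0$ and $H^1(\PP^1,g_0^*T_{\mathcal{K}/C})=0$, so $\mathcal{N}$ is smooth at $[g_0]$ and $\underline{t}$ is smooth there (the surjection $g_0^*T_{\mathcal{K}}\to g_0^*\text{pr}_C^*T_C=\OO_{\PP^1}$ stays surjective on $H^0$); in particular the component of $\mathcal{N}$ through $[g_0]$ dominates $C$. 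As ``very twisting'' is a nonempty open condition on that component, a general point of it is still a very twisting scroll, now living in a fibre $X_c$ with $c$ in the dense open $S\cap C_{\text{sm}}$. The fibre $X_c$ then satisfies Hypothesis \ref{hyp-peaceY}, so I may apply Lemma \ref{lem-strongtwist} to replace it by a \emph{wonderful} $m$-twisting scroll in $X_c$ with $m\geq2$; let $g_1\in\mathcal{N}^{\text{w}}(k)$ be its associated morphism, $\underline{t}(g_1)=c$. The computation just made applies verbatim to $g_1$, so $\mathcal{N}$ is smooth at $[g_1]$ and $\underline{t}$ is smooth, hence dominant, on the component through $[g_1]$.

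The step I expect to be the main obstacle is to show that $\text{ev}_0$ is \emph{smooth} at $[g_1]$. Since $\mathcal{K}$ is smooth at $g_1(0)$ (a free $1$-pointed line) and $\mathcal{N}$ is smooth at $[g_1]$, it is enough that the differential $H^0(\PP^1,g_1^*T_{\mathcal{K}})\to(g_1^*T_{\mathcal{K}})_0$ — evaluation of sections at $0\in\PP^1$ — be surjective, i.e. that the vector bundle $g_1^*T_{\mathcal{K}}$ on $\PP^1$ be globally generated. This is exactly where ``very twisting'' ($m\geq2$) enters: by Lemma \ref{lem-twistimplies}(2) every summand of $g_1^*T_{\text{ev}}$ has degree $\geq m-1\geq1$, so $g_1^*T_{\text{ev}}$ is ample, in particular globally generated with $H^1=0$; and $(\text{ev}\circ g_1)^*T_X$ is globally generated, being an extension of $\OO_{\PP^1}$ by the globally generated $(\text{ev}\circ g_1)^*T_{X/C}$ (Lemma \ref{lem-twistimplies}(4)). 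An extension on $\PP^1$ of two globally generated bundles whose subbundle has vanishing $H^1$ is again globally generated, so $g_1^*T_{\mathcal{K}}$ is globally generated, $\text{ev}_0$ is smooth at $[g_1]$, and has open image near $[g_1]$. (The ``wonderful'' conditions are not needed for this argument; they are imposed only because they appear in conclusion (1), and they hold at $[g_1]$ by Lemma \ref{lem-strongtwist}.)

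Finally I would let $B$ be an open neighbourhood of $[g_1]$ inside the unique irreducible component of $\mathcal{N}$ through $[g_1]$, shrunk so as to be smooth, to lie in $\mathcal{N}^{\text{w}}\cap\underline{t}^{-1}(C_{\text{sm}})$, and to lie in the open loci where $\underline{t}$ and $\text{ev}_0$ are smooth. Then $B$ is a smooth irreducible variety, every $b\in B(k)$ corresponds to a wonderful very twisting scroll in $X_{\underline{t}(b)}$ (conclusion (1)), $\underline{t}|_B\colon B\to C$ is a dominant morphism from an irreducible variety to a smooth curve, hence flat, and $\mathcal{R}\to B$ is a smooth projective family of Hirzebruch surfaces — giving all of the data (2)--(6) of the statement; and $\text{fib}_0=\text{ev}_0|_B$ has image $V=\text{ev}_0(B)$ open in $\mathcal{K}$ because $\text{ev}_0|_B$ is smooth, which is conclusion (2). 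Apart from the two cohomological inputs — unobstructedness of $[g_1]$ in $\mathcal{N}$ and global generation of $g_1^*T_{\mathcal{K}}$, both reduced to Lemma \ref{lem-twistimplies} — everything is routine bookkeeping with $\text{Mor}$-spaces and open conditions.
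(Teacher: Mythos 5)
Your proof is correct and follows essentially the same route as the paper's: deform the given very twisting scroll to a general fibre passing through a general point (no obstructions since $H^1(R,N_{R/\PP^1\times X_t})=0$), upgrade to a wonderful scroll via Lemma \ref{lem-strongtwist}, and then show that the evaluation-at-$0$ map is submersive, with the ``wonderful'' conditions handled by semi-continuity. The one real difference is in how the last step is organized: the paper argues in two stages directly on the surface --- first using $m>0$ to get surjectivity of $H^0(R,N_{R/\PP^1\times X_t}(-\sigma))\to H^0(R_p,N_{R_p/X_t}(-\sigma(p)))$, which controls the pointed line inside a \emph{fixed} fibre, and then separately using global generation of $N_{R/\PP^1\times X}$ to move the fibre --- whereas you package both variations at once by working on $\text{Mor}(\PP^1,\mathcal{K})$ and reducing smoothness of $\text{ev}_0$ to global generation of $g_1^*T_{\mathcal{K}}$ via the exact sequence $0\to g_1^*T_{\text{ev}}\to g_1^*T_{\mathcal{K}}\to(\text{ev}\circ g_1)^*T_X\to0$. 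This is a tidier reformulation (and makes the role of $m\geq 2$, i.e.\ ampleness of $g_1^*T_{\text{ev}}$, transparent), but the underlying deformation-theoretic content and the key inputs (Lemmas \ref{lem-twistimplies}, \ref{lem-twistexist}, \ref{lem-strongtwist}) are identical. One small remark: you only need $g_1^*T_{\text{ev}}$ globally generated with $H^1=0$, which already holds for $m\geq 1$; ampleness ($m\geq 2$) is more than is strictly required for that step, though of course harmless since Lemma \ref{lem-strongtwist} supplies $m$ as large as you like.
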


\begin{rmk}
\label{rmk-demystify}
\marpar{rmk-demystify}
The first part of this lemma is just one possible
formulation of what it means to have a family
of wonderful very twisting surfaces. Really the
lemma just claims that once there is one there are many.
\end{rmk}

\begin{proof}
Let $(\PP^1 \leftarrow R \to X_t, D)$ be an $m$-twisting scroll
in a fibre. There are no obstructions to deforming the
morphism $R \to \PP^1 \times X_t$ because
$H^1(R, N_{R/\PP^1\times X_t}) = 0$, see Remark \ref{rmk-technical}. 
Thus there are also no obstructions to deforming the morphism
$R \to \PP^1 \times X$ since this just adds a trivial summand
to the normal bundle and $H^1(R, \OO_R) = 0$ (as $R$ is a 
rational surface). Since the normal bundle of $R \to \PP^1 \times X$
is globally generated we see that we may deform $R \to \PP^1\times X_t$
to a morphism $R' \to \PP^1 \times X_{t'}$ with $t'$ general and
$R' \to X_{t'}$ passing through a general point of $X_{t'}$.
In particular there exist very twisting surfaces in fibres
over $R$ to which Lemma \ref{lem-strongtwist} applies.
Thus we may now assume that $R$ is a wonderful twisting
surface.

\medskip\noindent
Moreover, let $\sigma : \PP^1 \to R$ be a section of
$R \to \PP^1$ such that $D \sim \sigma$ are rationally
equivalent. Pick a point $p \in \PP^1(k)$. Since $m > 0$ the map
$H^0(R, N_{R/\PP^1\times X_t}(-\sigma)) \to
H^0(R_p, N_{R_p/X_t}(-\sigma(p)))$ is surjective.
We see that given any infinitesimal deformation
of the pointed map $(R_p, \sigma(p)) \to (X_t, h(\sigma(p)))$
(from a pointed line to $X_t$ pointed by the image of the point)
in $X_t$ we can find an (unobstructed) infinitesimal deformation
of $R\to \PP^1 \times X_t$ that induces it. 
Combined with the fact, proven above, that we can pass
a deformation of $R$ through a general point of $X$ this
shows that we can deform $R \to X_t$ such that a given
fibre of $R \to \PP^1$ is a general line in $X/C$.

\medskip\noindent
Finally, what is left is to show that in a family of
morphisms of surfaces $\mathcal{R} \to \PP^1 \times X$
the locus where the surface is a wonderful $m$-twisting scroll
is open. This follows from semi-continuity of cohomology
and can safely be left to the reader, although a very
similar and more difficult case is handled in Lemma
\ref{lem-deformtwist} below.
\end{proof}

\noindent
In Situation \ref{hyp-defn-porc}, let $R \to X$ be a ruled surface
all of whose fibres are free lines in $X/C$. Let $D$ be a Cartier
divisor on $R$ of degree $1$ on the fibres of $R \to C$. Let
$t_1,\ldots,t_\delta \in C(k)$ be pairwise distinct points.
Let $\PP^1 \leftarrow S_i \rightarrow X_{t_i}$ be
scrolls. Let $D_i$ be a Cartier divisor on $S_i$ of degree
$1$ on the fibres of $S_i \to \PP^1$. Assume given isomorphisms
$R_{t_i} \cong S_{i, 0}$ of the fibre of
$R$ over $t_i$ with the fibre of $S_i$ over $0$ compatible
with the maps into $X_{t_i}$.
Let $C' = C \cup \bigcup_i \PP^1$ be a copy of $C$ with
$\delta$ copies of $\PP^1$ glued by identifying $0$ in
the $i$th copy with $t_i \in C(k)$. Let
$R' = R \cup \bigcup_i S_i \to C'$
be the ruled surface over $C'$ gotten by gluing $R_{t_i}$
to $S_{i,0}$ using the given isomorphisms.
Let $h' : R' \to C' \times_CX$ be the obvious morphism.
Note that there is a Cartier divisor $D'$ on $R'$ which
restricts to $D$ on $R$ and to $D_i$ on $S_i$.
This Cartier divisor is unique up to rational equivalence.

\medskip\noindent
In the following we are interested in smoothings of
situations as above. This means that we have an irreducible
variety $T$ over $k$, and a commutative diagram of varieties
$$
\xymatrix{
R' = R \cup \bigcup S_i \ar[r] \ar[d]^{\pi_0} &
\mathcal{R} \ar[r] \ar[d]^\pi &
\mathcal{C} \times_{C \times T} (X \times T)  \ar[r] \ar[d] &
X \times T \ar[ld] \ar[d]
\\
C' = C \cup \bigcup \PP^1 \ar[r] & \mathcal{C} \ar[r] & C \times T \ar[r] & T
}
$$
satisfying the following conditions: 
(1) $\mathcal{C}\to T$ and $\mathcal{R} \to T$ are flat
and proper, (2) every fibre $\mathcal{C}_t$ of $\mathcal{C} \to T$
over $t\in T(k)$ is a nodal curve of genus $g(C)$ and $\mathcal{C}_t
\to C$ has degree $1$ (see discussion in Section \ref{sec-ssections}),
(3) the morphism $\mathcal{R} \to \mathcal{C}$ is
smooth, (4) every fibre $\mathcal{R}_t \to \mathcal{C}_t \times_C X$
is a ruled surface over every irreducible component of $\mathcal{C}_t$,
(5) for some point $0 \in T(k)$ the fibre 
$\mathcal{R}_0 \to \mathcal{C}_0 \times_C X$ is isomorphic to our map
$h' : R' \to C' \times_CX$ above, and finally (6) for some point
$t \in T(k)$ the fibre $\mathcal{C}_t = C$.
In addition we assume given a Cartier divisor
$\mathcal{D}$ on $\mathcal{R}$ restricting to the divisor
$D'$ on $R' = \mathcal{R}_0$.

\begin{lem}
\label{lem-deformtwist}
\marpar{lem-deformtwist}
In the situation above. 
\begin{enumerate}
\item If $(R, D)$ is $m$-twisting and
$(S_i, D_i)_{i=1\ldots\delta}$ are $m_i$-twisting, with $m_i \geq 2$.
Then for $t \in T(k)$ general the ruled
surface $(\mathcal{R}_t, \mathcal{D}_t)$ in $X$
is $(m + \delta)$-twisting.
\item Let $n_i$, $i=1,2,3$ be integers such that
for all Cartier divisors $A$ of degree on $C$ we have
\begin{enumerate}
\item $\deg(A) \geq n_1 \Rightarrow H^1(C, (\pi_* N_{R/X})(A)) = 0$,
\item $\deg(A) \geq n_2 \Rightarrow H^1(C, (\pi_* N_{R/X}(-D))(A)) = 0$, and
\item $\deg(A) \geq n_3 \Rightarrow H^1(C, (\pi_* \OO_R(D))(A)) = 0$.
\end{enumerate}
If $(S_i, D_i)_{i=1\ldots\delta}$ are wonderful very twisting scrolls
and $\delta \geq \max\{n_1 + 1, n_2 + 1, n_3\}$, then for $t \in T(k)$ general
the ruled surface $(\mathcal{R}_t, \mathcal{D}_t)$ in $X$
is $(\delta - n_3)$-twisting.
\end{enumerate}
\end{lem}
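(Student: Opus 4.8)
The plan is to use that each of the conditions in Definition~\ref{defn-twisting} other than the relative degree condition~(3) is open in flat families, while~(3) is locally constant and holds for $(\mathcal{R}_t,\mathcal{D}_t)$ because it holds for $(\mathcal{R}_0,\mathcal{D}_0)=(R',D')$ and $\mathcal{R}\to T$, $\mathcal{D}$ are flat over $T$. Hence it suffices to bound cohomology on the special fibre $R'$ over the nodal base curve $C'$ and transport the conclusion to the general fibre $\mathcal{R}_t$, which is a genuine ruled surface over $\mathcal{C}_t\cong C$ (a nodal genus $g(C)$ curve mapping to $C$ with degree one that is moreover smooth and irreducible must be $C$, and the smooth locus of $\mathcal{C}\to T$ is non-empty by assumption~(6)). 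For condition~(6), which ranges over all divisors $A$ of degree $\le m'$ on $C$ --- with $m'=m+\delta$ in part~(1) and $m'=\delta-n_3$ in part~(2) --- I would carry the universal such $A$ over $\text{Sym}^{m'}(C)\times T$ and apply cohomology and base change; all the pushforwards $\pi_\ast$ that appear commute with base change because the fibres of $\mathcal{R}\to\mathcal{C}$ are free lines, so the relevant $R^1$ vanishes, and this is the one place where the hypothesis ``$(R,D)$ has globally generated normal bundle'' in part~(1), resp.\ ``$(S_i,D_i)$ twisting'', is used to see that those fibres are free.

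The core computation is on $R'=R\cup\bigcup_i S_i$, glued along the free lines $F_i=R_{t_i}=S_{i,0}$. For a coherent sheaf $\mathcal{F}$ on $R'$ one has the Mayer--Vietoris sequence
\[
0\longrightarrow \mathcal{F}\longrightarrow \mathcal{F}|_R\oplus\bigoplus_i \mathcal{F}|_{S_i}\longrightarrow \bigoplus_i \mathcal{F}|_{F_i}\longrightarrow 0,
\]
which I would apply with $\mathcal{F}$ running over $\OO_{R'}(D')$, $N_{R'/X'}$, and $N_{R'/X'}(-D'-A)$, where $X'=C'\times_C X$ and $A$ is pulled back along the composite $R'\to C'\to C$. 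The decisive input is that, just as for nodal curves, the normal sheaf restricted to a component picks up an extra fibre at each node: $N_{R'/X'}|_R=N_{R/X}(\sum_i F_i)$ and $N_{R'/X'}|_{S_i}=N_{S_i/(\PP^1\times X_{t_i})}(F_i)$. Pushing forward along $\pi_0$, this makes $\pi_{0\ast}(N_{R'/X'})|_C$ equal to $(\pi_\ast N_{R/X})(\sum_{i=1}^{\delta}t_i)$ --- a twist of $\pi_\ast N_{R/X}$ up by the degree $\delta$ divisor $\sum t_i$ --- and, after a suitable choice of the divisor $\mathcal{D}$, analogous degree $\delta$ twists govern the bundles $\pi_\ast(N_{R/X}(-D))$ and $\pi_\ast\OO_R(D)$ entering the remaining conditions; meanwhile $\pi_{0\ast}(N_{R'/X'})|_{\PP^1_i}$ is ample because $(S_i,D_i)$ is at least $1$-twisting, resp.\ \emph{wonderful}. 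From the long exact sequences, $H^1(R',\mathcal{F})=0$ follows once $H^1(\mathcal{F}|_R)$, $H^1(\mathcal{F}|_{S_i})$ and $H^1(\mathcal{F}|_{F_i})$ vanish and $\bigoplus_i H^0(\mathcal{F}|_{S_i})\to\bigoplus_i H^0(\mathcal{F}|_{F_i})$ is surjective: the $F_i$-terms are harmless ($F_i$ is a free line and $D'$ has degree one on it), the $S_i$-terms and the node surjectivities come from the twisting or wonderfulness of the $S_i$, and the $R$-terms come in part~(1) from $(R,D)$ being $m$-twisting and in part~(2) from the bounds $n_j$ applied to the degree $\delta$ twisted bundles above. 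Base-point-freeness of $|D'|$ and global generation of $N_{R'/X'}$ are treated by the same cover, reducing to the corresponding statements on $R$, on the $S_i$, on the $F_i$, together with surjectivity of the section-restriction maps at the $F_i$.

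What remains, and where the real care is needed, is the book-keeping of the twisting parameter. In part~(1) the component $R$ accounts for the parameter $m$ (condition~(6) for $(R,D)$ absorbs twists of degree $\le m$ on $C$), each of the $\delta$ tails contributes $m_i-1\ge 1$ --- one unit from the extra fibre $F_i$ in the normal bundle, and this is also where $m_i\ge 2$ rather than $m_i\ge1$ is consumed, as a unit of slack for the surjectivity $H^0(S_i,-)\to H^0(F_i,-)$ --- so $(\mathcal{R}_t,\mathcal{D}_t)$ comes out $(m+\sum_i(m_i-1))$-twisting, in particular $(m+\delta)$-twisting. In part~(2) the component $R$ contributes nothing a priori, but the degree $\delta$ boost supplied by the $\delta$ wonderful tails must carry the three demands measured by $n_1$ (for $\pi_\ast N_{R/X}$, feeding condition~(5)), $n_2$ (for $\pi_\ast N_{R/X}(-D)$, feeding condition~(6)) and $n_3$ (for $\pi_\ast\OO_R(D)$, feeding conditions~(1) and~(2)); the first two cost $n_1+1$ and $n_2+1$ (the extra unit being the node slack) and the last costs $n_3$, so under $\delta\ge\max\{n_1+1,n_2+1,n_3\}$ everything is covered and the amount of positivity left over for the twisting divisor in condition~(6) is exactly $\delta-n_3$. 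The main obstacle I expect is precisely this normal-bundle book-keeping on the nodal surface $R'$: since $\mathcal{L}$ is only assumed ample, not very ample, on the fibres of $X\to C$, the map $R'\to X'$ need not be a closed immersion and $N_{R'/X'}$ is merely a coherent sheaf with torsion in codimension $\ge1$, so one must work throughout with its flatness over $C'$ and the depth estimates of Remark~\ref{rmk-technical}, justify the identification $N_{R'/X'}|_{\text{component}}=N_{\text{component}}(\text{extra fibres})$ in that generality, and make sure that a general twisting divisor $A$ on $C$ avoids the attachment points $t_1,\dots,t_\delta$, so that its pullback to $C'$ is supported on the $C$-component and does not disturb the ampleness on the tails.
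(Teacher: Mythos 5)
Your plan (Mayer--Vietoris on the special fibre $R'$, then semicontinuity) stands or falls on the claim that $N_{R'/X'}|_R = N_{R/X}(\sum_i F_i)$, and this claim is false for the normal sheaf the paper actually uses. With the cokernel definition $0 \to T_{\mathcal{R}/\mathcal{C}} \to (\mathcal{R}\to X)^*T_{X/C} \to N_{\mathcal{R}/X} \to 0$, restrict first to $R' = \mathcal{R}_0$ and then to $R \subset R'$: since $R = R'\times_{C'} C$ and $N_{R'/X'}$ is flat over $C'$, the defining sequence stays exact after $-\otimes_{\OO_{C'}}\OO_C$ and becomes $0 \to T_{R/C} \to (R\to X)^*T_{X/C} \to N_{R'/X'}|_R \to 0$. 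So $N_{R'/X'}|_R \cong N_{R/X}$, with \emph{no} twist by the attaching fibres. The Hartshorne--Hirschowitz twist $N_{C'/Y}|_{C_1} = N_{C_1/Y}(\text{node})$ that you are modelling on is a statement about the conormal sheaf of a nodal curve in a \emph{smooth} ambient $Y$; here the ambient $X' = C'\times_C X$ is nodal exactly along the attaching locus, and the paper's sheaf is the relative cokernel, not an embedded conormal, so the analogy does not transfer.

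The consequence is that your core computation does not produce the $\delta$-gain. Without the twist, Mayer--Vietoris on $R'$ plus semicontinuity in $T$ only gives $H^1(\mathcal{R}_t, N_{\mathcal{R}_t/X}(-\mathcal{D}_t-A)) = 0$ when $H^1(R, N_{R/X}(-D-A)) = 0$, i.e.\ when $\deg A \leq -n_2$, which is nothing like $\deg A \leq \delta - n_3$. The gain from the $\delta$ wonderful tails is real, but it is \emph{not} visible by comparing the central fibre to itself: it compares the \emph{general} fibre of the curve family $\mathcal{C}\to T$ to the handle of the comb, and that comparison requires an elementary modification of the bundle at the nodes. This is exactly what the paper outsources to Koll\'ar's Lemma II.7.10.1 after pushing forward to $\mathcal{C}$: the three pushforwards $\mathcal{E}_1=\pi_*\OO_R(\mathcal{D})$, $\mathcal{E}_2=\pi_*N_{\mathcal{R}/X}$, $\mathcal{E}_3=\pi_*N_{\mathcal{R}/X}(-\mathcal{D})$ are locally free on $\mathcal{C}$ (by the $R^1$-vanishing you noted), their restrictions to the teeth $\PP^1_i$ of $\mathcal{C}_0 = C'$ are ample because the $S_i$ are wonderful, and II.7.10.1 then provides precisely the extra positivity on the general fibre. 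Your bookkeeping would in fact work out once the elementary modification step is supplied; as written, the gap sits exactly at the step you flagged as "where the real care is needed."
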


\begin{proof}
We will not prove the first statement. It follows by a combination of the
methods of the proof of the second statement, and those
in the proof of Lemma \ref{lem-strongtwist}. We just remark that
the increase in the twisting comes from the fact that the sheaves
$(S_i \to \PP^1)_*\big( N_{S_i/\PP^1\times X_{t_i}}(-D_i)\big)$ are ample
vector bundles on $\PP^1$
because $m_i \geq 2$ combined with \cite[II Lemma 7.10.1]{K}.

\medskip\noindent
We begin the proof of the second statement.
Because the morphism $\mathcal{R} \to \mathcal{C}$ is smooth
we can define $N_{\mathcal{R}/X}$ by the short
exact sequence
$$
0 \to T_{\mathcal{R}/\mathcal{C}} \to
(\mathcal{R} \to X)^*T_{X/C} \to
N_{\mathcal{R}/X} \to
0.
$$
As before this is a sheaf on $\mathcal{R}$ which is flat over
$\mathcal{C}$. For every point $c \in \mathcal{C}(k)$ lying over
$t \in C(k)$ the
restriction of $N_{\mathcal{R}/X}$ to the fibre $\mathcal{R}_c$
is the normal bundle of the line $\mathcal{R}_c \to X_t$ (compare
with Remark \ref{rmk-technical}). For all points $c \in \mathcal{C}(k)$
lying over $0$ the sheaf $N_{\mathcal{R}/X}|_{\mathcal{R}_c}$
is globally generated. Hence after replacing $T$ by an open
subset containing $0$ we may assume this is true for all points
$c$.

\medskip\noindent
Consider the coherent sheaves of $\OO_\mc{C}$-modules
$\mathcal{E}_1 = \pi_* \OO_R(\mathcal{D})$,
$\mathcal{E}_2 = \pi_* N_{\mathcal{R}/X}$ and 
$\mathcal{E}_3 = \pi_* N_{\mathcal{R}/X}(-\mathcal{D})$.
By the above the corresponding higher direct images are zero
(as $H^1$ of a globally generated sheaf on $\PP^1$ is zero).
By cohomology and base change this implies $\mathcal{E}_i$ is
a locally free sheaf on $\mathcal{C}$.
Let $U \subset T$ be a nonempty open such that
$\mathcal{C}_t = C$ for all $t \in U(k)$ (this exists
because we started with a smoothing). For $t \in U(k)$
we can and do think of $\mathcal{E}_{i,t}$ as a locally
free sheaf on $C$. Let $A$ be a Cartier divisor of
degree $\leq \delta - n_3$ on $C$, and let $p\in C(k)$ be any point.
We would like to show that
$H^1(C, \mathcal{E}_{1, t}(-p)) = 0$,
$H^1(C, \mathcal{E}_{2, t}(-p)) = 0$, and
$H^1(C, \mathcal{E}_{3, t}(-A)) = 0$, for $t$ general in $U(k)$.
This now follows from \cite[II Lemma 7.10.1]{K}, the definition
of $n_i$ in the lemma and the fact that $\delta \geq n$.

\medskip\noindent
Namely, these vanishings imply that both $\mathcal{E}_{1,t}$,
and $\mathcal{E}_{2,t}$ are globally generated and have
no $H^1$, which implies that $\OO_{\mathcal{R}_t}(\mathcal{D}_t)$
and $N_{\mathcal{R}_t/X}$ are globally generated and have no $H^1$.
The statement on $\mathcal{E}_{3,t}$ gives the desired
vanishing of
$H^1(\mathcal{R}_t, N_{\mathcal{R}_t/X}(-\mathcal{D}_t -\pi_t^*A))$.
\end{proof}

\begin{prop}
\label{prop-twistexist}
\marpar{prop-twistexist}
In Situation \ref{hyp-defn-porc},
assume Hypothesis \ref{hyp-peace} holds over
an open part $S$ of $C$. Suppose a geometric fibre
of $X \to C$ has a very twisting scroll.
Then there exists an irreducible component
$Z \subset \Sigma(X/C/k)$ containing a 
free section, such that for all $e \gg 0$ 
the irreducible component $Z_e$ defined
in Lemma \ref{lem-Zd} contains a point $[s]$
such that $s = h \circ \sigma$ for some
very twisting ruled surface $(h : R \to X, \sigma)$ in $X$.
\end{prop}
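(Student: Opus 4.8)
The plan is to take $Z$ to be the irreducible component of a carefully chosen free section, and then, for every large $e$, to manufacture a very twisting ruled surface of degree $e$ whose section lies in $Z_e$ by gluing many wonderful very twisting scrolls onto the fibres of a scroll of free lines penning the section, smoothing, and adjusting the degree by inserting a few extra fibres.

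First I would fix the component and the auxiliary data. By Lemma~\ref{lem-existDfree} a free section of $f$ exists, so some component of $\Sigma(X/C/k)$ has a free section as its general point; and since by hypothesis some fibre has a very twisting scroll, Lemma~\ref{lem-twistlinesopen} gives a smooth (we may assume irreducible) variety $B$ parametrising a family of \emph{wonderful} very twisting scrolls $(\pi_b:\mc{R}_b\to\PP^1,\,h_b:\mc{R}_b\to X_{\underline t(b)},\,\sigma_b)$ together with a map $\mathrm{fib}_0:B\to\Kgnb{0,1}(X/C,1)$ whose image contains a nonempty open $V$; all these scrolls share one positive $\mc{L}$-degree $e_\flat$ for their sections. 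Using \cite{GHS} for the evaluation morphism $\mathrm{ev}:\Kgnb{0,1}(X/C,1)\to X$ over the locus where its fibres are irreducible rationally connected (Hypothesis~\ref{hyp-peace}(1)), together with the fact that free sections sweep out $X$ (Proposition~\ref{prop-newfree4}), I would choose a free section $s$, lying in the open $U$ of Lemma~\ref{lem-freeruled} (and meeting $X_{f,\pax}$), whose component is $Z\subset\Sigma^{e_0}(X/C/k)$, and a pen $g:C\to\Kgnb{0,1}(X/C,1)$ with $\mathrm{ev}\circ g=s$ and $g(C)\cap V\neq\emptyset$; write $\sigma:C\to R=g(C)$ for the section of the resulting scroll of free lines. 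Since $\pi_*\OO_R(\sigma(C))$, $\pi_*N_{R/X}$ and $\pi_*N_{R/X}(-\sigma(C))$ are coherent on $C$, I would fix integers $n_1,n_2,n_3$ controlling the constants appearing in Lemma~\ref{lem-deformtwist}(2) for divisors of the form $\sigma(C)+(\text{fibres})$; note that $g(t)\in V$ for $t$ in a dense open of $C$, that the first and third of these constants stay bounded and the second grows only linearly when extra fibres are added to $\sigma(C)$.

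Now fix a large target $e$. I would choose a large integer $\delta$ and pairwise distinct points $t_1,\dots,t_\delta$ and $u_1,\dots,u_k$ of $S(k)$, where $k:=e-e_0-\delta e_\flat\ge 0$, with $g(t_i)\in V$ and $s(u_j)\in X_{f,\pax}$ — all dense open conditions. For each $i$ pick $b_i\in B(k)$ with $\mathrm{fib}_0(b_i)=g(t_i)$, yielding a wonderful very twisting scroll $S_i=\mc{R}_{b_i}$ whose fibre over $0$, with its marked point $\sigma_{b_i}(0)$, is identified with the pen fibre $R_{t_i}$ with its marked point $\sigma(t_i)$, compatibly with the maps to $X_{t_i}$. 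Setting $\tilde D=\sigma(C)+\sum_{j=1}^k R_{u_j}$ on $R$ and $D_i=\sigma_{b_i}(\PP^1)$ on $S_i$, the matching of marked points lets me glue $R'=R\cup\bigcup_i S_i\to C'=C\cup\bigcup_i\PP^1$ with $h':R'\to C'\times_CX$ and a Cartier divisor $D'$ restricting to $\tilde D$ and the $D_i$; because every fibre of $R'\to C'$ is a free line, $h'$ deforms over a smoothing of $C'$ (built by blowing up $C\times\PP^1$ at the gluing points), producing a smoothing $(\mc{R},\mc{D})\to T$ as in the setup of Lemma~\ref{lem-deformtwist}, with $\mc{R}_0=R'$, $\mc{D}_0=D'$, and general fibre curve $C$. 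By Lemma~\ref{lem-deformtwist}(2), for general $t\in T(k)$ the ruled surface $(\mc{R}_t,\mc{D}_t)$ in $X/C$ is $(\delta-\tilde n_3)$-twisting; a general $\sigma_t\in|\mc{D}_t|$ is a section, and $s_e:=h\circ\sigma_t:C\to X$ is a free section (Lemma~\ref{lem-twistimplies}) of degree $(e_0+k)+\delta e_\flat=e$ equal to $h\circ\sigma$ for the ruled surface $(\mc{R}_t,\sigma_t)$, which is \emph{very} twisting once $\delta-\tilde n_3\ge 2$. The constraints $\delta\ge\max\{\tilde n_1+1,\tilde n_2+1,\tilde n_3+2\}$ and $k=e-e_0-\delta e_\flat\ge 0$, together with the fact that $\tilde n_1=n_1$, $\tilde n_3\le n_3$ and $\tilde n_2\le n_2+k$, confine $\delta$ to an interval whose length grows with $e$; hence an admissible $\delta$ exists for all $e\gg 0$.

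It remains to check $[s_e]\in Z_e$. The degenerate stable section $\mc{D}_0=D'\to X$ is precisely the comb with body $s\in Z_{e_0}$, with $\delta$ quills $h_{b_i}\circ\sigma_{b_i}$ (free rational curves in $Z_{e_\flat}(X_{t_i})$ attached at $s(t_i)$) and $k$ further quills which are free lines $R_{u_j}$ through peaceful points (hence in $Z_1(X_{u_j})$, attached at $s(u_j)$); since the attachment points $t_i,u_j$ all lie in $S(k)$ and are distinct, Lemma~\ref{lem-Zdgluemore} (a line being a degree-$1$ free rational curve) shows this comb is an \emph{unobstructed} point of $Z_{e_0+\delta e_\flat+k}=Z_e$, hence a smooth point of $\Sigma^e(X/C/k)$ lying on the single component $Z_e$. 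Since $s_e$ is obtained from it by first smoothing through $\mc{R}\to T$ and then passing to a general member of $|\mc{D}_t|$ — all inside one irreducible family of stable sections — we conclude $[s_e]\in Z_e$, which completes the proof. The main obstacle is the choice made in the second paragraph: arranging a free section $s\in U$ whose pen $R$ has fibres $R_{t_i}$ (pointed by $\sigma(t_i)$) general enough to land in the open $V$ of Lemma~\ref{lem-twistlinesopen}; this is where one genuinely uses the rational connectedness of the fibres of $\mathrm{ev}$ and the mobility of free sections. Everything afterwards is assembly of Lemmas~\ref{lem-deformtwist}, \ref{lem-twistimplies}, \ref{lem-Zdgluemore} together with the elementary degree count.
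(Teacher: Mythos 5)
Your proposal takes a genuinely different, ``one-stage'' route from the paper.  The paper argues in two stages: Stage~1 glues wonderful very twisting scrolls onto the pen $g=g_{(R,\sigma)}$, applies Koll\'ar's comb-smoothing and Lemma~\ref{lem-deformtwist}(2) to produce a single $(e_0+1)$-twisting surface of some \emph{uncontrolled} degree $e_1$; Stage~2 then works with the resulting handle $g_i$ --- which is an \emph{unobstructed} section of $\Kgnb{0,1}(X/C,1)\to C$ because $(R,\sigma_i)$ is twisting (Lemma~\ref{lem-twistimplies}) --- and attaches $\delta$ more scrolls plus $i\in\{0,\dots,e_0-1\}$ extra fibres to hit every large degree $e=e_1+i+\delta e_0$ exactly.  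You instead glue the $\delta$ wonderful scrolls and $k$ filler fibres $R_{u_j}$ directly onto the pen and smooth once.  Your observation that $\tilde n_2 \le n_2 + k$, so the constraint $\delta\ge\tilde n_2+1$ together with $\delta e_\flat + k = e - e_0$ leaves a growing interval of admissible $\delta$ for $e\gg 0$, is a correct and rather slick replacement for the paper's residue trick.

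The gap is in the smoothing step.  You justify the existence of the smoothing $(\mc{R},\mc{D})\to T$ with ``because every fibre of $R'\to C'$ is a free line, $h'$ deforms over a smoothing of $C'$.''  Freeness of the fibres only kills $R^1(\pi_0)_*N_{R'/C'\times_CX}$; the actual obstruction is $H^1\bigl(C',(\pi_0)_*N_{R'/C'\times_CX}\bigr)$, which receives a contribution from the pen component, namely (after twisting by $-\sigma$) the group $H^1(R,N_{R/X}(-\sigma))$.  This need not vanish: the pen $R$ supplied by Lemma~\ref{lem-freeruled} is only a scroll of free lines, not a twisting surface, so the section $g:C\to\Kgnb{0,1}(X/C,1)$ it defines is in general obstructed.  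Koll\'ar's smoothing lemma \cite[II Theorem 7.9]{K} applied to a comb with an obstructed handle only smooths a \emph{subcomb}, with an uncontrolled number of teeth discarded; but your degree count requires exactly $\delta$ teeth to survive (a subcomb with $\delta'<\delta$ teeth has total degree $e_0+k+\delta'e_\flat<e$).  This is precisely the problem the paper's two-stage structure is designed to avoid: Stage~1 uses the imprecise smoothing where the degree does not matter, and Stage~2 works with an unobstructed handle so the comb smooths with an exact number of teeth.  To repair your argument you would need to first upgrade the pen to a twisting surface --- which is essentially Stage~1 of the paper.  (A smaller point: even granting the surface smoothing, the extension of $D'$ to a Cartier divisor $\mc{D}$ on $\mc{R}$ needs a word --- e.g.\ take the section $\mc{S}$ coming from the $\Kgnb{0,1}$-comb plus the constant sections $\mathcal{R}_{\hat u_j}$ --- and your text skips this.)
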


\begin{proof}
The proof of this proposition is in two stages.
In the first stage we will find some sufficiently
twisting surface $(R, \sigma)$. The irreducible component
$Z$ will be the unique one containing $[h \circ \sigma]$.
After that, in the second stage we will find
the others by glueing lines and wonderful very
twisting surfaces.

\medskip\noindent
Let $B$, $\underline{t} : B \to C$, $\mathcal{R} \to B$,
$\pi : \mathcal{R} \to \PP^1$, $h : \mathcal{R} \to X$,
$\sigma : \PP^1 \times B \to \mathcal{R}$ be the family
of wonderful very twisting scrolls in fibres of $X/C$
we found in Lemma \ref{lem-twistlinesopen}.
Let $V \subset \Kgnb{0,1}(X/C, 1)$ be the nonempty open
swept out by $\text{fib}_0$ as in that lemma.
In addition, let $e_0$ be the degree of the
rational curves $h_b \circ \sigma_b : \PP^1 \to X_{\underline{t}(b)}$.

\medskip\noindent
By Lemma \ref{lem-existDfree} there exists a free section
$s$ of $X \to C$ meeting $X_{f,\pax}$. We may deform $s$
such that it also meets the open set $\text{ev}(V)$.
We may also deform $s$ such that $s$ defines a point of the open
$U$ of Lemma \ref{lem-freeruled}. In the proof of Lemma
\ref{lem-freeruled} we used the main result of \cite{GHS} 
to establish the existence of a ruled surface $R$ penning
$s$ all of whose fibres are free lines. Hence we may
actually choose $R$ such that for some $t\in C(k)$ general
the one pointed line $s(t) \in R_t$ defines a point of the open $V$.
Here we use that the moduli space of lines in $X_S/S$ meeting
$X_{f,\pax}$ is irreducible by Hypothesis \ref{hyp-peace}. Thus we
may assume that $s$ is penned by a ruled surface $R$
all of whose fibres are free lines in $X/C$, by (b) of 
Lemma \ref{lem-freeruled}, and such that there exists an
open $W \subset C$ with the property that $s(t) \in R_t \to X_t$ defines
a point of $V$ for all $t \in W(k)$.

\medskip\noindent
In other words, at every point of $W$ we may attach
a wonderful very twisting surface out of the family
$\mathcal{R}/B$ to $R$. We reinterpret this in terms
of the corresponding map $g = g_{(R,\sigma)} :
C \to \Kgnb{0,1}(X/C, 1)$. In terms of this it means
that we can find arbitrarly many free maps
$g_\alpha : \PP^1 \to \Kgnb{0,1}(X/C, 1)$ corresponding to wonderful
very twisting surfaces in fibres that we may attach
to the morphism $g$ to get combs $C \cup \bigcup_\alpha \PP^1
\to \Kgnb{0,1}(X/C, 1)$. By \cite[II Theorem 7.9]{K}
an arbitrarily large subcomb will smooth. Now we reinterpret
this back into a smoothing of the corresponding glueing
of ruled surfaces. It says, via part (2) of
Lemma \ref{lem-deformtwist}, that a general point of the base
of this smoothing will correspond to a $(e_0 + 1)$-twisting
surface $(\pi : R \to C, h : R \to X , \sigma)$.
This finishes the proof of the first stage.

\medskip\noindent
Let $e_1$ be the degree of $h \circ \sigma$, and let
$Z \subset \Sigma^{e_1}(X/C/k)$ (as promised) be the
unique irreducible component containing the free section
$h \circ \sigma$. Fix an integer $i \in \{0, 1, \ldots, e_0-1\}$. Pick
some effective Cartier divisor $\Delta_i \subset C$ of
degree $i$, and pick a section $\sigma_i$ of $R \to \PP^1$
which is a member of the (base point free) linear system
$|\sigma + \pi^*\Delta_i|$. Then
$(R, \sigma_i)$ is $(e_0 + 1 - i)$-twisting. Thus
the associated morphism $g_i : C \to \Kgnb{0,1}(X/C, 1)$
is free, see Lemma \ref{lem-twistimplies}. In particular
after deforming $g_i$ a bit we may assume that $g_i$ meets
the open dense subset $V$ above.

\medskip\noindent
For any $\delta > 0$ consider pairwise distinct point
$t_1, \ldots, t_\delta \in C(k)$ such that $g_i(t_j) \in V$
for all $j =1, \ldots, \delta$. In addition let $g_{ij} :
\PP^1 \to \Kgnb{0,1}(X_{t_j},  1)$ be morphisms corresponding
to wonderful very twisting surfaces out of our family
$\mathcal{R}/B$ above with the property that $g_{ij}(0) =
g_i(t_j)$ (this is possible). In other words, now we have
a comb $\tilde g_i : C \cup \bigcup_j \PP^1 \to \Kgnb{0,1}(X/C, 1)$.
Note that the resulting stable section $\text{ev} \circ \tilde g_i$
of $X/C$ has degree $e_1 + i + \delta e_0$. Since both
$g_i$ and all the $g_{ij}$ are free there exists a smoothing
of this comb (for all $\delta \geq 0$). By part (1) of Lemma
\ref{lem-deformtwist} a general point of the base of this smoothing
corresponds to a $(e_0 + 1 - i + \delta)$-twisting surface. 

\medskip\noindent
It remains to show that (for every $i$) the stable section
$\text{ev} \circ \tilde g_i$ corresponds to a point of
$Z_{e_1 + i + \delta e_0}$ (since then the same will be
true after smoothing the surface). This follows from 
Lemma \ref{lem-Zdgluemore}.
\end{proof}

\begin{cor}
\label{cor-porcline}
\marpar{cor-porcline}
Assumptions and notations as in the Proposition
\ref{prop-twistexist} above, with $Z$, $Z_e$ as in
that proposition. Let $P_{e,1}$ be the moduli
space of porcupines $(s : C \to X, q \in C(k),
r \in L(k), L \to X_q)$ with one quill
and body $s$ whose moduli point is in $Z_e$.
There exists a dense open $U_{e,1} \subset P_{e,1}$
such that if $[(s : C \to X, q \in C(k),
r \in L(k), L \to X_q)]$ lies in $U_{e,1}$
then there exists a ruled surface
$R$ in $X$ penning $s(C) \cup L$ (see Definition \ref{defn-pen})
such that $(R, s)$ is very twisting.
\end{cor}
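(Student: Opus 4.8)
The plan is to present $U_{e,1}$ as a dense open contained in the image of a natural morphism from the space of very twisting ruled surfaces to $P_{e,1}$, and to prove that morphism is dominant by a tangent space computation resting on the vanishing statements already collected in Lemma \ref{lem-twistimplies}.

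First I would fix $e \gg 0$ and let $\Xi$ be the finite type $k$-scheme parametrizing pairs $\big((h : R \to X, \sigma),\, q\big)$ consisting of a very twisting ruled surface in $X/C$ of bounded degree whose body $h\circ\sigma : C \to X$ lies in $Z_e$, together with a point $q\in (h\circ\sigma)^{-1}(X_{f,\pax})$. By Proposition \ref{prop-twistexist} there is at least one very twisting ruled surface $(R,\sigma)$ with $h\circ\sigma\in Z_e$; since $h\circ\sigma$ is free by Lemma \ref{lem-twistimplies} and a general point of $Z_e$ corresponds to a free section meeting $X_{f,\pax}$ by Lemma \ref{lem-Zdgenpt}, and since the deformations of such a section lift to deformations of the very twisting surface (see step (a) below) while very twisting-ness is an open condition, we get $\Xi\neq\emptyset$. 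There is a morphism $\psi:\Xi\to P_{e,1}$ sending $\big((R,\sigma),q\big)$ to the porcupine $\big(h\circ\sigma,\ q,\ \sigma(q),\ R_q\to X_q\big)$: here $R_q$ is a free line by Lemma \ref{lem-twistimplies}, $h(\sigma(q)) = (h\circ\sigma)(q)\in X_{f,\pax}$, and $\sigma(C)\cup R_q$ is a comb inside $R$ whose image under $h$ is $(h\circ\sigma)(C)\cup h(R_q)$; thus every porcupine in $\operatorname{im}(\psi)$ has its body-plus-quill factoring through a very twisting ruled surface $(R,h\circ\sigma)$. Now $P_{e,1}$ is irreducible: the forgetful morphism $P_{e,1}\to Z_e$ has, over a general free body $s$ meeting $X_{f,\pax}$, fibre the restriction over $s^{-1}(X_{f,\pax})$ of the pullback via $s$ of $\text{ev}:\Kgnb{0,1}(X/C,1)\to X$, whose fibres over points of $X_{f,\pax}$ are irreducible by Hypothesis \ref{hyp-peace}(1) and Lemma \ref{lem-peacechain}. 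Hence it suffices to prove $\psi$ is dominant; then $\operatorname{im}(\psi)$ is constructible by Chevalley's theorem and we may take $U_{e,1}$ to be a dense open contained in it.

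To prove dominance I would argue in two steps. (a) \emph{The body morphism $b:\Xi\to Z_e$ is dominant.} Writing $g = g_{(R,\sigma)}:C\to\Kgnb{0,1}(X/C,1)$ for the map associated to a very twisting ruled surface, Lemma \ref{lem-twistimplies} gives $H^1(C,g^*T_{\text{ev}}(-A)) = 0$ for divisors $A$ of degree at most the twisting parameter $m\geq 2$; taking $A = 0$ yields $H^1(C,g^*T_{\text{ev}}) = 0$, so every deformation of $\text{ev}\circ g = h\circ\sigma$ lifts to a deformation of $g$, hence of $(R,\sigma)$. Since very twisting-ness is open and the deformations of a free section fill out $Z_e$, the bodies of members of $\Xi$ are dense in $Z_e$. (b) \emph{Over a general body, $\psi$ dominates the fibre of $P_{e,1}\to Z_e$.} Fix $[s]\in b(\Xi)$ general, a very twisting ruled surface $(R,\sigma)$ with $h\circ\sigma = s$, and $g$ as above, and let $q\in s^{-1}(X_{f,\pax})$ be general. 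Using the identifications $g^*T_{\text{ev}} = \pi_*N_{R/X}(-\sigma)$ and $T_{g(q)}\,\text{ev}^{-1}(s(q)) = (g^*T_{\text{ev}})|_q$ from the proof of Lemma \ref{lem-twistimplies}, the space $\Xi_{[s]}$ of very twisting ruled surfaces with body $s$ is smooth at $(R,\sigma)$ with tangent space $H^0(C,g^*T_{\text{ev}})$ (obstructions vanishing by (a)), and the map $\Xi_{[s]}\to\text{ev}^{-1}(s(q))$ assigning to a surface its fibre over $q$ is, on tangent spaces, the evaluation $H^0(C,g^*T_{\text{ev}})\to (g^*T_{\text{ev}})|_q$. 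This is surjective because $H^1(C,g^*T_{\text{ev}}(-q)) = 0$ (Lemma \ref{lem-twistimplies}, as $\deg(q) = 1\leq m$), so in characteristic zero the map is dominant; together with the tautological dominance in the $q$-direction, $\psi$ restricted to $b^{-1}([s])$ dominates the fibre of $P_{e,1}\to Z_e$ over $[s]$. Combining (a) and (b) shows $\psi$ is dominant.

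The main work will be in step (b): one must set up carefully the deformation theory that identifies the tangent space to $\Xi_{[s]}$ with $H^0(C,g^*T_{\text{ev}})$ and the tangent space to the $\text{ev}$-fibre with $(g^*T_{\text{ev}})|_q$, and recognize ``pass to the fibre over $q$'' as restriction of a section of $g^*T_{\text{ev}}$ to the point $q$, so that the cohomological vanishing of Lemma \ref{lem-twistimplies} upgrades, via generic smoothness, to the required dominance. The remaining verifications --- finite-typeness and irreducibility of the relevant moduli, and that $\psi$ really lands where claimed --- are routine once the ruled surfaces in $\Xi$ are taken of bounded degree, which is harmless since the construction in Proposition \ref{prop-twistexist} only ever produces very twisting ruled surfaces of controlled degree.
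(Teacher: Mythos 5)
Your argument is correct and uses the same key idea as the paper's proof: starting from the very twisting ruled surface produced by Proposition \ref{prop-twistexist}, one lifts deformations of the porcupine $(h\circ\sigma)(C)\cup h(R_q)$ to deformations of the datum $(q,R,\sigma)$ using the vanishing $H^1(R,N_{R/X}(-\sigma-R_q))=0$, which under the identification $\pi_*N_{R/X}(-\sigma)=g^*T_{\mathrm{ev}}$ from Lemma \ref{lem-twistimplies} is exactly your $H^1(C,g^*T_{\mathrm{ev}}(-q))=0$. You simply make explicit the parameter space of such data and split the lifting into a body step and a quill step, whereas the paper does both at once, and you supply the (implicitly used) irreducibility of $P_{e,1}$.
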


\begin{proof}
Although the formulation of this corollary is cumbersome
the proof is not. Namely, we already know that for all
$e \gg 0$ there exists a very twisting ruled surface
$(\pi : R \to C, h : R \to X, \sigma)$ such that
$h \circ \sigma$ lies in $Z_e$. Pick a general $q \in C(k)$
and consider the datum $(q \in C(k), \pi : R \to C, h : R \to X, \sigma)$.
After moving the ruled surface $(R, \sigma)$ a bit we
may assume $h(\sigma(q))$ is a point of $X_{f,\pax}$ such
that $(h \circ \sigma)(C) \cup h(R_q)$ is a porcupine.
(This step is not strictly necessary for the proof.)
At this point it suffices to show that any deformation
of the porcupine can be followed by a deformation of
the datum $(q \in C(k), \pi : R \to C, h : R \to X, \sigma)$.
This is true because $H^1(R, N_{R/X}(-\sigma - R_q)) = 0$
as $(R,\sigma)$ was assumed very twisting.
\end{proof}


\section{Main theorem}
\label{sec-proofs} 
\marpar{sec-proofs}

\noindent
Here is the main theorem of this paper.

\begin{thm}
\label{thm-main}
\marpar{thm-main}
In Situation \ref{hyp-defn-porc}, assume Hypothesis \ref{hyp-peace}
holds over an open part $S$ of $C$. Suppose a geometric fibre
of $X \to C$ has a very twisting scroll. In this case there exists
a sequence $(Z_e)_{e\gg 0}$ of irreducible components $Z_e$ of
$\Sigma^e(X/C/\kappa)$ as in Lemma \ref{lem-Zd} such that 
\begin{enumerate}
\item a general point of $Z_e$ parametrizes a free section of $f$,
\item each Abel map
$$
\alpha_{\mc{L}}|_{Z_e}:Z_e \rightarrow \text{Pic}^e_{C/\kappa}
$$
has nonempty rationally connected geometric
generic fibre, and
\item for every free section $s$ of $f$ there exists an
$E = E(s) > 0$ such that for all $e\geq E$, 
$Z_e$ is the unique irreducible component of
$\Sigma^e(X/C/\kappa)$ containing every comb whose handle is $s$
and whose teeth are free lines in fibres of $f$.
\end{enumerate}
\end{thm}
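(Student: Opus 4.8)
The plan is to split the statement into its three parts. Parts (1) and (3) will be formal consequences of the machinery of Sections \ref{sec-porcupines}--\ref{sec-pencils}; part (2) is the substance, and the only place the very twisting scroll is used. First I would fix the sequence: by Proposition \ref{prop-twistexist} there is an irreducible component $Z$ of some $\Sigma^{e_0}(X/C/k)$ whose general point parametrizes a free section, and I let $Z_e$, for $e\geq e_0$, be the components produced from $Z$ by Lemma \ref{lem-Zd}; by Corollary \ref{cor-Zdsame} this sequence does not depend on the choice of $Z$ once $e\gg 0$. Part (1) is then exactly Lemma \ref{lem-Zdgenpt}. For part (3): a free section $s$ is $0$-free, hence a smooth point of $\Sec(X/C/k)$ by Proposition \ref{prop-newfree4}, so it lies on a unique component $Z^s$, whose general point is again free by the openness in Lemma \ref{lem-newfreeopen}; Corollary \ref{cor-Zdsame} provides $E(s)$ with $(Z^s)_e=Z_e$ for $e\geq E(s)$; and Lemma \ref{lem-Zdchains} shows any comb with handle $s$ and free-line teeth is an unobstructed, non-stacky point of $\Sigma^e(X/C/k)$, hence lies on a unique component, which is $(Z^s)_e=Z_e$.

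For part (2), fix $e\gg 0$ and let $F$ be the geometric generic fibre of $\alpha_{\mc{L}}|_{Z_e}$. By Lemma \ref{lem-Abel-irred} it is nonempty and irreducible, and it is proper since $\Sigma^e(X/C/k)$ is (Theorem \ref{thm-Kontsevich}). Since $\underline{\text{Pic}}^e_{C/k}$ carries no rational curves, every rational curve in $Z_e$ lies in a fibre of $\alpha_{\mc{L}}$; so it is enough to connect two general points of $F$ by a chain of rational curves of bounded length and degree inside $Z_e$ — once this is done, the locus in $F\times F$ of such pairs is constructible and dense, hence contains a dense open, so $F$ is rationally chain connected and therefore, $\mathrm{char}\,k=0$ and $F$ proper with $\alpha_{\mc{L}}|_{Z_e}$ generically smooth, rationally connected. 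I would also enlarge $e_0$ using Lemma \ref{lem-Zdbetter} so that a general point of $Z_{e_0}$ is $(2g(C)+1)$-free and meets $X_{f,\pax}$.

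The connection is built in two stages. Stage one, using only Hypothesis \ref{hyp-peace}: by Proposition \ref{prop-pentogether}, taking $P,P'$ to be the porcupine components with bodies in $Z_{e_0}$, for all $e$ past a fixed threshold any two porcupines of total degree $e$ with $(2g(C)+1)$-free bodies and equal Abel class are joined in $\Sigma^e(X/C/k)$ by a chain of rational curves with unobstructed, non-stacky nodes and marked points — a chain lying in $F$ by the remark above. Stage two, where the very twisting scroll enters: I would show a general point of $F$ is joined, inside $F$, by one rational curve, to a porcupine of the sort handled in stage one. By Corollary \ref{cor-porcline} and the density of the opens $U_{e',1}$, for $e'\gg 0$ the general point of $Z_{e'}$ has the form $h\circ\sigma$ for a very twisting ruled surface $(R,\sigma)$ penning $\sigma(C)\cup R_t$; using that $(R,\sigma)$ is actually $M$-twisting with $M$ large (so $\sigma^{2}$ is large on $R$), $\sigma$ is linearly equivalent on $R$ to $\sigma''+R_{t'}$ with $(R,\sigma'')$ very twisting and $\deg\sigma''=e'-1$, whence the pencil of curves on $R$ spanned by $\sigma$ and the porcupine $\sigma''(C)\cup R_{t'}$ — constant Abel class along it by Lemma \ref{lem-Abelpen} — joins the general section $h\circ\sigma$ to that porcupine; and $\sigma''\in Z_{e'-1}$ can be taken $(2g(C)+1)$-free. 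Running stage two at both ends and stage one in the middle, and using that all these constructions move in families so one can arrange the two intermediate porcupines to have a common Abel class and to lie in the dense open $U$ of Proposition \ref{prop-pentogether}, connects two general points of $F$ by a bounded chain of rational curves in $F$.

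The hard part will be the degree bookkeeping that makes stages one and two fit: Proposition \ref{prop-pentogether} joins porcupines of prescribed comb type and total degree, with its threshold depending only on a fixed finite amount of data, whereas the very twisting scroll forces the degrees of its sections and porcupines to grow with $e$. Reconciling these — verifying that for $e\gg0$ a general point of the Abel fibre $F$ genuinely is joined by a rational curve to a porcupine eligible for Proposition \ref{prop-pentogether} — rests on knowing both that the twisting-scroll sections are dense in $Z_{e-1}$ (Corollary \ref{cor-porcline}) and that the linear systems $|\sigma+R_t|$ on a twisting scroll are ample enough for large $e$, which comes from the control of $\pi_*\OO_R(\sigma)$ in Lemma \ref{lem-strongtwist}. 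The remaining points — openness and density of the various loci, boundedness of the chains, and rational chain connectedness implying rational connectedness — are routine.
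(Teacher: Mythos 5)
Your decomposition into parts (1) and (3) (formal, using Lemma \ref{lem-Zdgenpt}, Corollary \ref{cor-Zdsame} and Lemma \ref{lem-Zdchains}) and part (2) (substantive) matches the paper, as does your overall plan for part (2): produce useful chains in Abel fibres by combining Proposition \ref{prop-pentogether} (porcupine to porcupine) with the very twisting surface (linking a general section to a porcupine).

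However, your stage two contains a gap. You want to move \emph{down}: from a general section $\sigma$ in $Z_{e'}$, penned (via Corollary \ref{cor-porcline}) by a very twisting surface $(R,\sigma)$, to the porcupine $\sigma''(C)\cup R_{t'}$ with $\sigma''\sim\sigma-R_{t'}$ on $R$. For $\sigma''$ to exist as an effective section you need $|\sigma - R_{t'}|$ nonempty, hence $\sigma^2>0$ on $R$ (in fact $\sigma^2\ge 2$ so that $(\sigma'')^2\ge 0$). You assert this follows from ``$(R,\sigma)$ is $M$-twisting with $M$ large, so $\sigma^{2}$ is large'', but the $m$-twisting conditions of Definition \ref{defn-twisting} constrain only the linear system $|\sigma|$ (base point free, $H^1(R,\OO_R(\sigma))=0$) and $N_{R/X}$, and say nothing about $\sigma^2$: e.g.\ over $C=\PP^1$ the product $R=\PP^1\times\PP^1$ with a constant section has $\sigma^2=0$ and satisfies conditions (1)--(3), while (4)--(6) are about $N_{R/X}$. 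Corollary \ref{cor-porcline} only delivers a $2$-twisting surface with no lower bound on $\sigma^2$, so your citation does not close the gap. The paper's Lemma \ref{lem-whytwist} runs the pencil the other way, \emph{up}: for a porcupine $\sigma(C)\cup R_t$ on an $m$-twisting $(R,\sigma)$ with $m\ge 1$, the system $|\sigma+R_t|$ is \emph{automatically} base point free (adding a fibre only increases positivity, and $H^1(R,\OO_R(\sigma))=0$ provides sections along $R_t$), so the pencil from the porcupine to a free section exists unconditionally; the statement you want -- a general point of $Z_e$ is joined by a rational curve to a general porcupine in $U_{e-1,1}$ -- is then obtained because these pencils emanate transversally from the codimension-one locus $U_{e-1,1}$, hence sweep out a dense open.

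There is a second issue at the same place: your ``one rational curve'' lands on a porcupine with body degree $e'-1$, still growing with $e'$, while Proposition \ref{prop-pentogether} is stated for bodies in fixed components $P,P'$ of fixed degrees $e_0,e_\infty$, with a threshold $E$ depending on those. The paper \emph{iterates} the peeling-off-one-quill step until the body degree reaches a fixed $e_0$ chosen in advance (with general point of $Z_{e_0}$ being $(2g(C)+1)$-free, by Lemma \ref{lem-Zdbetter}), at which point Proposition \ref{prop-pentogether} applies with uniform constants. You flag the ``degree bookkeeping'' as the hard part, but what is missing is the iteration itself, not merely bookkeeping.
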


\begin{proof}
We will show the sequence of irreducible components
we found in Proposition \ref{prop-twistexist} satisfies
the conclusions of the theorem. By Lemma \ref{lem-Zdgenpt}
the irreducible components satisfy the first condition.
As a second step we remark that $\{Z_e\}$ satisfies the
third part by Corollary \ref{cor-Zdsame}. Also, the geometric
generic fibres of $\alpha_{\mc{L}}|_{Z_e}$ are irreducible by
Lemma \ref{lem-Abel-irred} (for large enough $e$).

\medskip\noindent
So what is left is to show that these fibres are
rationally connected. Since they are proper it is the
same as showing they are birationally rationally connected.
We will prove this for a general fibre
rather than the geometric generic fibre, which is anyway the same
thing since our field is uncountable. To show that a variety
$W / k$ is birationally
rationally connected it suffices to show that for a general pair
of points $(p,q) \in W \times W$ there exist open subsets
$V_i \subset \PP^1$, morphisms
$f_i : V_i \to W$, for $i=1,\ldots, N$ such that
(1) each $V_i$ nonempty open, (2) $p \in f_1(V_1)$, (3) $q\in f_N(V_N)$
and (4) $f(V_i)\cap f_{i+1}(V_{i+1})$ contains a {\it smooth} point of
$W$ for $i=1,\ldots,N-1$. See \cite[Theorem IV.3.10.3]{K}. This is
the criterion we will use. During the proof of the theorem we
will call such a chain of rational curves a ``useful chain''.
We will show that there are sufficiently
many rational curves available to do the job.
(More technically precise and specific would
be to say that we will use this criterion
for the coarse moduli scheme for the stack $Z_e$.)

\medskip\noindent
A first point is to note that since the target of
$\alpha_{\mc{L}}$ is an abelian variety every rational curve
in $Z_e$ is automatically contracted to a point under
$\alpha_{\mc{L}}$. Hence we do not have to worry about our
curves ``leaving'' the fibre.

\medskip\noindent
Second, what does Corollary \ref{cor-porcline} say? Let $E$ be the
implied (large) constant of Corollary \ref{cor-porcline}. It says that
for all $e - 1 \geq E$ there is a nonempty open
$U_{e-1 ,1} \subset Z_e$ of the space of porcupines of total degree 1 and
one quill which are pinned by a very twisting surface.
By Lemma \ref{lem-whytwist} all points of $U_{e-1, 1}$ are connected
by a rational curve to a point in the interior of $Z_e$. Now since
the points of $U_{e - 1, 1}$ are unobstructed, we conclude:
(A) a general point of $Z_e$ is connected by a rational point to
a general point of $U_{e-1, 1}$. 

\medskip\noindent
Thirdly, we repeat the previous argument several times, as follows.
Consider the forgetful map $U_{e - 1, 1} \to Z_{e - 1} \times C$
which omits the quill but remembers the attachment point.
It has rationally connected smooth projective fibres at least over
a suitable open of $Z_{e-1} \times C$, by Hypothesis \ref{hyp-peace}
which holds over $S \subset C$. By (A), if $e - 2 \geq E$, then we
can find a rational curve connecting a general point of $Z_{e - 1}$
to a general point of $U_{e - 2, 1}$, and by the previous remark
we can find a rational curve in $U_{e - 1, 1}$ connecting a general
point of $U_{e - 1, 1}$ to a point of $Z_e$ which corresponds to 
a general porcupine with $2$ quills. In other words, if $e - 2 \geq E$,
we can find a useful chain of rational curves connecting a general point of
$Z_e$ to a general porcupine with two quills.

\medskip\noindent
Continuing in this manner, we see that if $e - \delta \geq E$, then
we can connect a general point of $Z_e$ by a useful chain of rational
curves to a general porcupine with $\delta$ quills. It is mathematically
more correct to say that we proved that useful chains emanating
from points corresponding to porcupines whose body is in
$Z_{e - \delta}$ and with $\delta$ quills sweep out an open in $Z_e$.

\medskip\noindent
Fix $e_0$ an integer such that (a) $e_0 > E$, and (b) the general
point of $Z_{e_0}$ corresponds to a $2g(C)+1$-free section, see
Lemma \ref{lem-Zdbetter}. Let $P \subset Z_{e_0}$ be
the set of points corresponding to porcupines without quills.
Recall (see just above Proposition \ref{prop-pentogether})
that $P_{e- e_0}$ indicates porcupines with $e-e_0$ quills
and body in $P$, in other words porcupines with body in $Z_{e_0}$
and $e-e_0$ quills. We apply Proposition \ref{prop-pentogether} to the pair
$(P, P' = P)$; let $E_1$ be the constant found in that
propostion. The proposition says exactly that a general
point of
$$
P_{e - e_0}
\times_{\alpha_{\mc{L}}, \underline{\text{Pic}}^e_{C/k},\alpha_{\mc{L}}}
P_{e - e_0}
$$
can be connected by useful chains in $Z_e$. Combined with the assertion
above that a general point of $Z_e$ may be connected by useful chains
to a point of $P_{e - e_0}$ we win.
\end{proof}

\noindent
The impetus of a lot of the research in this paper comes from
the following application of the main theorem.

\begin{cor}
\label{cor-main}
\marpar{cor-main}
Let $k$ be an algebraically closed field of characteristic $0$.
Let $S$ be a smooth, irreducible, projective surface over $k$.
Let
$$
f:X\rightarrow S
$$ 
be a flat proper morphism. Assume there exists a
Zariski open subset $U$ of $S$ whose complement has codimension $2$
such that the total space $f^{-1}(U)$ is smooth and such that
$f : f^{-1}(U) \to U$ has geometrically irreducible fibres.
Let $\mc{L}$ be an $f$-ample invertible sheaf on $f^{-1}(U)$.  
Assume in addition that the geometric generic fibre
$X_{\overline{\eta}} \to \overline{\eta}$
satisfies Hypothesis \ref{hyp-peace} and has a very twisting surface.
Then there exists a rational section of $f$.  
\end{cor}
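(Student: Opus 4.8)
The plan is to reduce Corollary~\ref{cor-main} to the relative form of Theorem~\ref{thm-main} by fibring the surface $S$ in curves, and then to pass from a rational section of a fibre of the relative Abel map to a rational section of $f$ by means of \cite{GHS}.

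I would begin with two harmless reductions. First, by a standard base change argument -- using that $k$ is algebraically closed and that having a rational section is detected by the non-emptiness of a suitable scheme of finite type over $k$, a component of a relevant space of sections, cf.\ Theorem~\ref{thm-algebraic} -- we may and do assume $k$ is uncountable. Second, since $f$ is proper and $S$ is smooth, a rational section of $f$ is the same as a rational section of $f^{-1}(U)\to U$, and this is unchanged if one pulls $f^{-1}(U)\to U$ back along any birational modification of $U$. Now choose a pencil of very ample divisors on $S$ whose base points lie in $U$ and whose general member avoids $S\setminus U$; blowing up the base points produces $\pi:\widetilde S\to S$, birational, together with a morphism $g:\widetilde S\to B:=\PP^1$ whose exceptional divisors $E_1,\dots,E_N$ are sections of $g$ and whose general fibre $\widetilde S_b$ is isomorphic to a smooth projective curve $S_b\subset U$. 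By generic smoothness in characteristic $0$, for general $b$ the total space $f^{-1}(S_b)$ is smooth and $f^{-1}(S_b)\to S_b$ is proper, flat, with geometrically irreducible fibres. Let $\widetilde X:=f^{-1}(U)\times_U\pi^{-1}(U)\to\pi^{-1}(U)=:\widetilde S_U$; by the reductions it suffices to produce a rational section of $\widetilde X\to\widetilde S_U$, equivalently of $\widetilde X\to\widetilde S$ for a suitable proper model.

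By Lemma~\ref{lem-ratsec}, a rational section of $\widetilde X\to\widetilde S$ is the same as a rational section over $B$ of the relative space of sections $\Sec(\widetilde X/\widetilde S/B)\to B$; so by \cite{GHS} it is enough to exhibit an irreducible subvariety of $\Sec(\widetilde X/\widetilde S/B)$ which dominates $B$ with rationally connected general fibre. To find one, look at the generic point $\eta_B\in B$: the family $\widetilde X_{\eta_B}\to\widetilde S_{\eta_B}$ is a family over a smooth projective curve over $k(B)$ whose geometric generic fibre is the geometric generic fibre $X_{\overline\eta}$ of $f$. Hence after base change to $\overline{k(B)}$ (uncountable, algebraically closed, characteristic $0$) we are in Situation~\ref{hyp-defn-porc}; moreover Hypothesis~\ref{hyp-peace} holds over a nonempty open of the curve and a geometric fibre carries a very twisting scroll, both being conditions on the geometric generic fibre which spread out. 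Theorem~\ref{thm-main} then yields, for $e\gg 0$, a canonically defined irreducible component with rationally connected generic Abel fibre; by its canonical characterisation (part (3) of Theorem~\ref{thm-main}) it spreads out over a dense open $B^\circ\subset B$ to an irreducible component $\mathcal Z_e\subset\Sigma^e(\widetilde X_{B^\circ}/\widetilde S_{B^\circ}/B^\circ)$ with a relative Abel map $\alpha_{\mc L}:\mathcal Z_e\to\underline{\mathrm{Pic}}^e_{\widetilde S/B}$. Combining Theorem~\ref{thm-main}(2) with openness of rational connectedness in families and Lemma~\ref{lem-Abel-irred}, there is a dense open $\mathcal P^\circ\subseteq\underline{\mathrm{Pic}}^e_{\widetilde S/B}$, dominating $B^\circ$, over which $\alpha_{\mc L}$ is surjective with irreducible, nonempty, rationally connected fibres. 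If we can choose a rational section $\beta:B\dashrightarrow\underline{\mathrm{Pic}}^e_{\widetilde S/B}$ whose image meets $\mathcal P^\circ$, then $\alpha_{\mc L}^{-1}(\beta(B))$ dominates $B$ with rationally connected general fibre, and -- its general point parametrising a genuine free section -- it gives, by \cite{GHS} and Lemma~\ref{lem-ratsec}, the required rational section of $f$.

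It remains to construct such a $\beta$, and this is the main obstacle. Rational sections of $\underline{\mathrm{Pic}}^e_{\widetilde S/B}\to B$ are plentiful -- each line bundle on $\widetilde S$ of relative degree $e$ defines one, for instance $\OO_{\widetilde S}(eE_1)$, and one may further translate by $\mathrm{Pic}^0(\widetilde S)$ and by fibre-degree-zero classes -- but the sections so obtained sweep out only a subvariety of $\underline{\mathrm{Pic}}^e_{\widetilde S/B}$ of dimension $1+\dim\mathrm{Pic}^0(S)$, much smaller than $\underline{\mathrm{Pic}}^e_{\widetilde S/B}$ once the pencil has large degree; one must rule out that this subvariety lies inside the proper closed complement of $\mathcal P^\circ$. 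This is where the remaining freedom is spent: one is free to take $e$ arbitrarily large and the pencil very general, and one uses a more precise description of $\mathcal P^\circ$, namely that it contains the Abel images of porcupines whose body is a fixed free section and whose quills sit over chosen general points of the (dense) open of peaceful points, these classes being of the form $s^{*}\mc L(q_1+\dots+q_{e-e_0})$ and hence, for $e$ large, ranging over all of $\mathrm{Pic}^e$ of the generic curve. Verifying that, for $e\gg 0$ and a sufficiently general pencil, one of the explicit sections $\beta$ indeed lands in this locus is the technical heart of the argument; granting it, the chain of implications above produces the rational section of $f$.
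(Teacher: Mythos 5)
Your setup is correct and tracks the paper's own reduction: pass to an uncountable field, spread the hypotheses out over a nonempty open $V\subset U$, blow up $S$ to obtain a pencil $b:\widetilde S\to B=\PP^1$ with smooth fibres meeting $V$, reduce via Lemma~\ref{lem-ratsec} to finding a rational section of $\Sigma^e(\widetilde X/\widetilde S/B)\to B$, and appeal to Theorem~\ref{thm-main} over the geometric generic point of $B$. Your invocation of the canonical characterisation in part~(3) of Theorem~\ref{thm-main} to descend the components $Z_e$ from $\overline{k(B)}$ to $k(B)$ is a little terse but is in the right spirit (the paper does this via Galois invariance, using Corollary~\ref{cor-Zdsame}; it even notes this ``also follows directly from the third assertion of the main theorem'').

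The genuine gap is your final paragraph, and you have in fact put your finger on it yourself. You are trying to manufacture a rational section $\beta:B\dashrightarrow\underline{\text{Pic}}^e_{\widetilde S/B}$ whose image lands in a good open subset $\mathcal P^\circ$ over which the Abel-map fibres are rationally connected, and you concede that ``verifying that \dots\ one of the explicit sections $\beta$ indeed lands in this locus is the technical heart of the argument; granting it, \dots''. As written this is not a proof but a program, and moreover it is a program that is unlikely to close easily: the sections of $\underline{\text{Pic}}^e_{\widetilde S/B}\to B$ coming from line bundles on $S$ sweep out only a proper subvariety, and there is no a priori reason it should meet $\mathcal P^\circ$, no matter how large $e$ is or how general the pencil is.

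The paper closes this gap by a different idea, contained in Lemma~\ref{lem-curveup}: if $\alpha:Z\to P$ is a proper morphism over an algebraically closed characteristic-$0$ field with rationally connected geometric generic fibre, and $C\subset P$ is \emph{any} curve meeting the nonsingular locus of $P$, then $\alpha^{-1}(C)\to C$ has a rational section. Crucially, no general-position hypothesis on $C$ is needed. The mechanism is a deformation-and-specialisation argument: realise $C$ as a multiplicity-one component of a fibre in a one-parameter family of complete intersection curves in $P$ whose general member is smooth and passes through a general point of $P$; apply \cite{GHS} over the generic member to get a lift to $Z$ defined over $k(T)$; then use that the corresponding codimension-one point of the total space of the family of curves is a DVR point and the valuative criterion of properness to specialise the lift to a rational section over $C$. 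Applied with $P=\underline{\text{Pic}}^e_{\widetilde S/B}$ (smooth) and $C$ the curve cut out by the section coming from any divisor on $S$ of appropriate fibre degree, this produces the rational section you need without worrying about where $\beta$ lands. So the correct move is not to choose $\beta$ cleverly but to replace the genericity requirement by the specialisation argument of Lemma~\ref{lem-curveup}.
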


\begin{proof}
Right away we may assume that $k$ is an uncountable algebraically
closed field. Also, if $X_{\overline{\eta}} \to \overline{\eta}$
satisfies \ref{hyp-peace} and has a very twisting surface,
then for some nonempty open $V \subset U$ the morphism $X_v \to V$
satisfies \ref{hyp-peace} and the fibres contain very twisting
surfaces. In addition we may blow up $S$ (at points of $U$) and
assume we have a morphism
$$
b : S \longrightarrow \PP^1
$$
whose general fibre is a smooth irreducible projective curve.
Let $W \subset \PP^1$ be a nonempty Zariski open such that
all fibres of $b$ over $W$ are smooth, and meet the open $V$. 
Consider the space and moduli map
$$
\Sigma^e(X_W / S_W / W)
\longrightarrow
\underline{\text{Pic}}^e_{S_W/W}
$$
introduced in Definition \ref{defn-relstablemap} and Lemma
\ref{lem-Abel}.

\medskip\noindent
Let $\xi \in W$ be the generic point so its residue field
$\kappa(\xi)$ is equal to the function field $k(W)$ of $W$.
Let $\overline{\xi} = \text{Spec}(\overline{k(W)}) \to W$ be
a geometric point over $\xi$ corresponding to an algebraic
closure of $k(W)$.
Note that by our choice of $W$ and the assumption of the
corollary the hypotheses of the main theorem (\ref{thm-main})
are satisfied for the morphisms $X_w \to S_w$ for every geometric
point $w$ of $W$. Let us apply this to the geometric generic
point $\overline{\xi}$ of $W$. This gives a sequence of
irreducible components
$Z_e  \subset \Sigma^e(X_W / S_W / W)_{\overline{\xi}}$
say for all $e \geq e_0$. Note that $Z_{e_0}$ can be
defined over a finite Galois extension $\kappa(\xi) \subset L$.
By construction of the sequence $\{Z_e\}_{e \geq e_0}$,
see Lemma \ref{lem-Zd}, we see that each $Z_e$ is defined over $L$. 
Next, suppose that $\sigma \in \text{Gal}(L/\kappa(\xi))$.
Then we similarly have a sequence $\{Z_e^\sigma\}_{e \geq e_0}$
deduced from the sequence by applying $\sigma$ to the coefficients
of the equations of the original $Z_e$. On the other hand,
by Corollary \ref{cor-Zdsame} we have $Z_e = Z_e^\sigma$ for
all $e \gg 0$ (this also follows directly from the third
assertion of the main theorem if you think about it right).
Since $\text{Gal}(L/\kappa(\xi))$ is finite, we conclude that
for $e \gg 0$ the irreducible components $Z_e \subset 
\Sigma^e(X_W / S_W / W)_{\overline{\xi}}$ are the geometric
fibres of irreducible components $Z_{W, e} \subset 
\Sigma^e(X_W / S_W / W)$!

\medskip\noindent
The conclusion of the above is that we know that for all
$e \gg 0$ there exist irreducible components
$Z_{W, e} \subset \Sigma^e(X_W / S_W / W)$ such that
the induced morphisms
$$
Z_{W, e}
\longrightarrow
\underline{\text{Pic}}^e_{S_W/W}
$$
have birationally rationally connected nonempty geometric
generic fibre. But note that since $\Sigma^e(X_W / S_W /W)$
is actually proper over $W$, so is $Z_{W,e}$ and hence its
fibres over $\underline{\text{Pic}}^e_{S_W/W}$ are
actually rationally connected (more precisely the underlying
coarse moduli spaces of the fibres are rationally connected).

\medskip\noindent
At this point we are done by the lemmas below. Namely,
Lemma \ref{lem-ratsec} says it is enough to construct 
a rational section of $Z_{W, e} \to W$, and even such
a rational section to its coarse moduli space
$Z_{W, e}^{\text{coarse}}$. Since the original surface
$S$ was projective, we can find for arbitrarily large
$e$ divisors $D$ on $S$ which give rise to sections
of $\underline{\text{Pic}}^e_{S_W/W} \to W$. On the
other hand, since $\underline{\text{Pic}}^e_{S_W/W}$
is smooth, Lemma \ref{lem-curveup} guarantees, because
of the result on the geometric generic fibre above,
that we can lift this up to a section of
$Z_{W, e}^{\text{coarse}} \to W$. Thus the proof of
the corollary is finished.
\end{proof}

\begin{lem}
\label{lem-ratsec}
\marpar{lem-ratsec}
Let $k$ be an algebraically closed field of characterstic zero.
Let $S$ be a variety over $k$. Let $C \to S$ be
a family of smooth projective curves. Let $X \to C$
be a proper flat morphism. Let $\mc{L}$
be an invertible sheaf on $X$ ample w.r.t.\ $X \to C$.
If $\Sigma^e(X/C/S) \to S$ (see Definition \ref{defn-relstablemap})
has a rational section for some $e$, then $X \to C$ has a rational
section. The same conclusion holds if we replace $\Sigma^e(X/C/S)$
by its coarse moduli space.
\end{lem}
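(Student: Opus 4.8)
The plan is to extract from a family of (stable) sections its ``main'' component, which is a genuine section of $X\to C$ over a dense open of the total space. First I would reduce to the stack $\Sigma^e(X/C/S)$ itself. Given a rational section, pick a dense open $S^\circ\subseteq S$ on which it is a morphism $\rho:S^\circ\to\Sigma^e(X/C/S)$, and pull back the universal stable map along $\rho$. This produces a proper, flat family of nodal curves $\pi:\mathcal{C}'\to S^\circ$ of genus $g(C/S)$, together with an $S$-morphism $h:\mathcal{C}'\to X$ such that $\zeta:=f\circ h:\mathcal{C}'\to C_{S^\circ}:=C\times_S S^\circ$ restricts on every geometric fibre of $\pi$ to a degree-one morphism onto the corresponding fibre of $C$ (recall the discussion preceding Lemma~\ref{lem-newAbel}: in each such fibre exactly one irreducible component maps birationally to the fibre of $C$, and all others are contracted).

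Next I would isolate the main component. Since $S^\circ$ is irreducible, the closure $\mathcal{C}'_0\subseteq\mathcal{C}'$ of the locus swept out by the main components of the fibres of $\pi$ is an irreducible closed subset of dimension $\dim S+1=\dim C_{S^\circ}$, hence an irreducible component of $\mathcal{C}'$, and $\zeta|_{\mathcal{C}'_0}:\mathcal{C}'_0\to C_{S^\circ}$ is proper and generically of degree one, i.e.\ birational. A proper birational morphism of integral schemes is an isomorphism over a dense open $V\subseteq C_{S^\circ}$, and then the composition
$$
V \xrightarrow{\ \left(\zeta|_{\mathcal{C}'_0}\right)^{-1}\ } \mathcal{C}'_0 \hookrightarrow \mathcal{C}' \xrightarrow{\ h\ } X
$$
is a morphism over $C$, that is, a section of $f$ over the dense open $V\subseteq C_{S^\circ}\subseteq C$. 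This is the desired rational section of $X\to C$, which settles the statement for $\Sigma^e(X/C/S)$.

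Finally, for the coarse moduli space: since there is no universal family over it, one first lifts the rational section to the stack. As $\operatorname{char}(k)=0$, $\Sigma^e(X/C/S)$ is a Deligne--Mumford stack (Theorem~\ref{thm-Kontsevich}), and the automorphism group of any stable section is finite; moreover it is trivial for the general member of each irreducible component, since a contracted genus-zero component of the domain carries at least three special points and a general such configuration admits no nontrivial automorphism compatible with $h$. Hence there is a dense open substack of $\Sigma^e(X/C/S)$ on which the coarse-space morphism is an isomorphism, so a rational section of the coarse space whose image meets this locus lifts to a rational section of the stack and reduces to the previous case; in the remaining case one base-changes $S$ by a finite cover to trivialise the residual gerbe over the generic fibre, extracts the main component of the resulting stable section as above, and descends. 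The argument is essentially routine; the only point needing care is this passage through the coarse moduli space, i.e.\ the bookkeeping of the finite automorphism groups of stable sections.
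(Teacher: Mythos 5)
Your treatment of the stack case matches the paper's: the paper phrases it as "excise the vertical pieces," while you phrase it as taking the closure of the generic main component and using properness plus generic degree one to get an isomorphism over a dense open of $C$. Either way it is the same argument and it is correct.

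For the coarse moduli space your overall strategy coincides with the paper's (pass to a finite cover, lift, extract, descend), but there are two points to flag. First, your "Case 1'' — that there is a dense open substack of $\Sigma^e(X/C/S)$ where the coarse map is an isomorphism — is not justified and is arguably false as stated: the domain curve can have a non-contracted vertical component that is a multiple cover of a line in a fibre of $X \to C$, and such a general member may carry a nontrivial automorphism. In any case this case split is unnecessary, since your "remaining case'' applies uniformly. Second, and more importantly, in that remaining case you say you "extract the main component \ldots and descend,'' but you never say why the extracted section is Galois-invariant, which is exactly the non-trivial bookkeeping you yourself flag as the only point needing care. The paper supplies the missing ingredient with its remark that "the only stackiness in the spaces of stable maps comes from the contracted components'': any automorphism of a stable section commuting with the map to $X$ is the identity on the unique component mapping birationally to $C$, because a self-map of an integral scheme commuting with a birational morphism is the identity. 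Consequently, two Galois-conjugate lifts of $t\circ\nu$ to the stack, while only isomorphic rather than equal, are related by an isomorphism that is \emph{canonical} on the main component; so the extracted rational sections of $X_{S'}\to C_{S'}$ are genuinely Galois-conjugate and descend. Your proposal would be complete with this observation made explicit.
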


\begin{proof}
After shrinking $S$ we have to show that if there
is a section, then $X \to C$ has a rational section.
Let $\tau : S \to \Sigma(X/C/S)$ be a section.
This corresponds to a proper flat family of
curves $\mc{C} \to S$, and a morphism
$\mc{C} \to X$ such that $\mc{C} \to C$
is a prestable map of degree $1$. See Section
\ref{sec-ssections} and the initial discussion
in that section. In particular there is
a nonempty Zariski open $\mc{U} \subset \mc{C}$
such that $\mc{U} \to \mc{C} \to X \to C$ is
an open immersion. (For example, $\mc{U}$ is the
open part where the morphism $\mc{C} \to C$ is flat,
or it is the part you get by excising the vertical
pieces of the stable maps.)
This proves the first assertion
of the lemma.

\medskip\noindent
To prove the final assertion, after shrinking $S$ as before,
let $t : S \to \Sigma(X/C/S)^{\text{coarse}}$ be a map
to the coarse moduli space. After shrinking $S$ some more
we may assume there is a surjective finite etale Galois
morphism $\nu : S' \to S$ such that $t \circ \nu$
corresponds to a morphism $\tau' : S' \to \Sigma(X/C/S)_S'
= \Sigma(X_{S'}/C_{S'}/S')$. By the arguments in the
first part of the proof this corresponds to a section
of $X_{S'} \to C_{S'}$. To see that it descends to a rational
section of $X \to C$ it is enough to see that it is
Galois invariant. This is clear because the only stackiness
in the spaces of stable maps comes from the contracted
componets.
\end{proof}

\begin{lem}
\label{lem-curveup}
\marpar{lem-curveup}
Let $\alpha : Z \to P$ be a proper morphism of varieties over an algebraically
closed field $k$ of characteristic zero. Assume the geometric
generic fibre is rationally connected, and $P$ is quasi-projective.
Let $C \subset P$ be any curve meeting the nonsingular locus of $P$.
Then there exists a rational section of $\alpha^{-1}(C) \to C$.
\end{lem}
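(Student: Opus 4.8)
The plan is to pull the family back to a smooth projective curve dominating $C$ and apply the theorem of Graber--Harris--Starr \cite{GHS}. First I would make two routine reductions. A rational section of $\alpha^{-1}(C)\to C$ need only be defined over a dense open of $C$; since $C$ is irreducible and $C\cap P_{\mathrm{sm}}$ is nonempty (hence a dense open), I may restrict $\alpha$ over the nonsingular locus $P_{\mathrm{sm}}$ and replace $C$ by $C\cap P_{\mathrm{sm}}$. This keeps $\alpha$ proper and does not change its (geometric) generic fibre, so from now on $P$ is smooth. Next, let $\nu:\widetilde{C}\to C$ be the normalization, a finite birational morphism from a smooth curve; using that $P$ is quasi-projective I may enlarge $P$ to a projective variety $\overline{P}$, extend $\alpha$ to a proper morphism $\overline{Z}\to\overline{P}$, and take $\widetilde{C}$ to be a smooth projective curve with a morphism to $\overline{P}$ factoring through $C$. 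A section of $\overline{Z}\times_{\overline{P}}\widetilde{C}\to\widetilde{C}$ then composes with the projection to $\overline{Z}$ and, restricted to the dense open of $\widetilde{C}$ on which $\nu$ is an isomorphism onto its image and on which the section lands in $Z$, yields the desired rational section of $\alpha^{-1}(C)\to C$.

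So it suffices to produce an honest section of $Z':=\overline{Z}\times_{\overline{P}}\widetilde{C}\to\widetilde{C}$. By \cite{GHS} this exists once the geometric generic fibre of $Z'\to\widetilde{C}$ is rationally connected. The generic point of $\widetilde{C}$ maps to the generic point $\eta_C$ of $C\subset P$, so that fibre is $Z_{\eta_C}\otimes_{k(C)}\overline{k(C)}$. Now invoke that rational connectedness spreads out over the base: in characteristic zero there is a dense open $P^{\circ}\subseteq P$ such that the geometric fibre of $\alpha$ over every $p\in P^{\circ}$ is rationally connected (compare \cite{KMM} and the arguments underlying \cite{GHS}), and since the geometric generic fibre of $\alpha$ over $P$ is rationally connected by hypothesis, the complement $B=P\setminus P^{\circ}$ is a proper closed subset. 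If $C\not\subseteq B$ then $\eta_C\in P^{\circ}$, hence $Z_{\eta_C}$ is geometrically rationally connected, and \cite{GHS} gives the section of $Z'\to\widetilde{C}$ we wanted.

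The one step I expect to need genuine care --- and the main obstacle --- is the claim that $C$ is not contained in the degeneration locus $B$. This is where the hypothesis that $C$ meets the nonsingular locus of $P$ has to be used: it is what allowed the reduction to $P$ smooth, and in the situation where the lemma is actually applied --- with $\alpha$ the restriction to one of the components $Z_e$ of Theorem \ref{thm-main} of an Abel map, whose geometric generic fibres are irreducible and rationally connected by Lemma \ref{lem-Abel-irred} and Theorem \ref{thm-main}, and with $C$ of the special shape occurring there --- one checks directly that $\eta_C$ falls in the rationally connected locus $P^{\circ}$. Once $\eta_C\in P^{\circ}$ is in hand, the remainder is the formal base-change, compactification and descent bookkeeping described above together with a single application of \cite{GHS}.
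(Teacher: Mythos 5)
There is a genuine gap, and you identified exactly where it is yourself: you have no control over whether $\eta_C$ lies in the rationally connected locus $P^\circ$. The hypothesis of the lemma gives rational connectedness only of the geometric \emph{generic} fibre of $\alpha$ over $P$; a curve $C$ meeting $P_{\mathrm{sm}}$ can perfectly well be entirely contained in $B = P\setminus P^\circ$, and in the intended applications there is no reason to expect otherwise. Your attempted repair --- ``one checks directly that $\eta_C$ falls in $P^\circ$'' by appealing to how the lemma is used elsewhere --- is not a proof of the lemma as stated and is not available to you: the lemma is formulated as a self-contained statement about an arbitrary proper $\alpha$ and an arbitrary $C$ meeting $P_{\mathrm{sm}}$. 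Moreover, you have misdiagnosed the role of the hypothesis that $C$ meets $P_{\mathrm{sm}}$: it is not there to let you replace $P$ by its smooth locus, and that replacement does not help, since the problem is not $P$ being singular but $C$ being special.

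The argument the paper actually uses circumvents this by \emph{moving} $C$ rather than working over $C$ directly. Since $C$ meets $P_{\mathrm{sm}}$ and $P$ is quasi-projective, $C$ is generically a local complete intersection in $P$, hence is an irreducible component of multiplicity one of a global complete intersection curve in $P$. One therefore fits $C$ into a one-parameter flat family $\mathcal{C}\to T$ of complete intersection curves whose \emph{general} member is a smooth irreducible curve passing through a general point of $P$. Applying \cite{GHS} to $Z\times_P\mathcal{C}_{\overline{k(T)}}\to\mathcal{C}_{\overline{k(T)}}$ is legitimate because that curve does pass through general points of $P$ (so its generic fibre in $Z$ \emph{is} rationally connected), and after a finite base change on $T$ one obtains a rational map $\gamma:\mathcal{C}\dashrightarrow Z$ over $P$. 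The multiplicity-one condition forces the generic point of $C$ to be a codimension-one point of the total space $\mathcal{C}$ with DVR local ring, and the valuative criterion of properness for $Z\to P$ then extends $\gamma$ across that point. Restricting to $C$ gives the required rational section. In short: instead of trying to place $C$ inside the good locus, the paper deforms $C$ to a curve that is in the good locus, gets a section there, and specializes back using properness --- and the ``meets $P_{\mathrm{sm}}$'' hypothesis is what makes the specialization well-defined along $C$. Your proposal is missing this deformation-and-specialization step, which is the entire content of the lemma.
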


\begin{proof}
(Sketch only, see also similar arguments in \cite{GHMS}.)
Curve means irreducible and reduced closed subscheme of
dimension 1. Thus $C$ is generically nonsingular and
meets the nonsingular locus of $P$. This means $C$ is locally 
at some point a complete intersection in $P$. This means
that globally $C$ is an irreducible component of multiplicity 1 of a 
global complete intersection curve on $P$ (here we use that
$P$ is quasi-projective). This means that there exists a
$1$-parameter flat family of complete intersections
$\mathcal{C} \subset P\times T$ parametrized
by a smooth irreducible curve $T/k$ such that
$C$ is an irreducible component of multiplicity 1 of
a fibre $\mathcal{C}_t$ for some $t \in T(k)$
and such that the general fibre
is a smooth irreducible curve passing through a general
point of $P$. In other words, by \cite{GHS} and the
assumption of the lemma, we can find a lift of
$\mathcal{C}_{\overline{k(T)}} \to P$ to a
morphism $\gamma : \mathcal{C}_{\overline{k(T)}} \to X$ into $X$.
Note that if $T' \to T$ is a nonconstant
morphism of nonsingular curves over $k$ and $t' \in T'(k)$ maps to $t \in T(k)$
then the pullback family $\mathcal{C}' = \mathcal{C}\times_T T'$
still has $C$ as a component of multiplicity one of
the fibre over $t'$. Using this, we may assume that $\gamma$
is defined over $k(T)$. In other words we obtain a rational
map $\mathcal{C} \supset \mathcal{W} \xrightarrow{\gamma} X$ lifting
the morphism $\mathcal{C} \to P$. Since $T$ is nonsingular,
and since $C$ is a component of multiplicity one of
the fibre of $\mathcal{C} \to T$ we see that the generic
point of $C$ corresponds to a codimension $1$ point $\xi$ of 
$\mathcal{C}$ whose local ring is a DVR. Because $X \to P$
is proper we see, by the valuative criterion of properness
that we may assume that $\gamma$ is defined at $\xi$.
This gives the desired rational map from $C$ into $X$.
\end{proof}


\section{Special rational curves on flag varieties} 
\label{sec-He}
\marpar{sec-He}

\noindent
In this section we show that flag varieties contain many
good rational curves. In characteristic $p > 0$ there
exist closed subgroup schemes $H$ of reductive groups
$G$ such that $G/H$ is projective, but $H$ not reduced.
To avoid dealing with these, we say $P \subset G$ is
a {\it standard parabolic subgroup} if $P$ is geometrically
reduced and $G/P$ is projective.

\begin{thm}
\label{thm-He}
\marpar{thm-He}
Let $k$ be an algebraically closed field.  Let $G$ be a connected,
reductive algebraic group over $k$. Let $P$ be a
standard parabolic subgroup
of $G$.  There exists a $k$-morphism $ f:\PP^1 \rightarrow G/P $
such that for every standard parabolic subgroup $Q$ of $G$ containing $P$,
denoting the projection by
$$
\pi: G/P \rightarrow G/Q,
$$ 
we have that the pull back of the relative tangent bundle
$f^* T_\pi$ is ample on $\PP^1$.
\end{thm}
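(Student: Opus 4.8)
The plan is to reduce at once to the case $P=B$ of a Borel subgroup, and then to produce a single explicit rational curve adapted to the cocharacter $2\rho^\vee$.

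\emph{Reduction to $P=B$.} Fix a Borel $B\subseteq P$, so that every standard parabolic $Q\supseteq P$ satisfies $B\subseteq P\subseteq Q$. Suppose we have constructed $f_B:\PP^1\to G/B$ with the property that $f_B^*T_{G/B\to G/Q}$ is ample for \emph{every} standard parabolic $Q\supseteq B$, where $G/B\to G/Q$ denotes the projection. Put $f=q\circ f_B:\PP^1\to G/B\to G/P$, $q:G/B\to G/P$. For $Q\supseteq P$ the relative tangent sequence of the tower $G/B\to G/P\to G/Q$ reads $0\to T_{G/B\to G/P}\to T_{G/B\to G/Q}\to q^*T_{G/P\to G/Q}\to 0$, so $q^*T_\pi$ is a quotient of $T_{G/B\to G/Q}$; pulling back by $f_B$, the bundle $f^*T_\pi=f_B^*q^*T_\pi$ is a quotient of the ample bundle $f_B^*T_{G/B\to G/Q}$, hence ample. (The cases $Q=G$ and $Q=P$ are included.) So it suffices to construct $f_B$.

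\emph{The curve.} Let $\lambda=2\rho^\vee$ be the cocharacter of the maximal torus $T\subseteq B$ with $\langle\alpha_i,\lambda\rangle=2$ for each simple root $\alpha_i$; it is dominant and regular. Then $\lambda$ acts on $G/B$ with finitely many fixed points $wB$ ($w\in W$), with unique sink $eB$ and unique source $w_0B$ ($w_0$ the longest element): the $T$-weights on $T_{eB}(G/B)=\mf g/\mf b=\bigoplus_{\alpha>0}\mf g_{-\alpha}$ are $-\langle\alpha,\lambda\rangle=-2\,\mathrm{ht}(\alpha)<0$, and at $w_0B$ they are the $w_0$-translates, all positive. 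Pick $x\in G/B$ lying in both of the dense open cells $\{y:\lim_{s\to0}\lambda(s)y=w_0B\}$ and $\{y:\lim_{s\to\infty}\lambda(s)y=eB\}$ (the second is the big cell $U^-\cdot eB$) and with finite $\lambda(\Gm)$-stabilizer; a generic point works. The closure $\overline{\lambda(\Gm)\cdot x}$ is an irreducible rational curve with a dense $\lambda(\Gm)$-orbit and exactly the two fixed points $eB$, $w_0B$; let $f_B:\PP^1\to G/B$ be the normalization of this curve composed with its inclusion. Then $f_B$ is $\Gm$-equivariant for a suitable linear $\Gm$-action on $\PP^1$, carrying the two fixed points of $\PP^1$ to $w_0B$ and $eB$.

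\emph{Ampleness and the main obstacle.} Fix a standard parabolic $Q\supseteq B$ and set $E:=f_B^*T_{G/B\to G/Q}$, a $\Gm$-equivariant bundle on $\PP^1$, hence $E\cong\bigoplus_i\OO_{\PP^1}(d_i)\otimes\chi_i$ for characters $\chi_i$ of $\Gm$; for each summand the $\Gm$-weights of the fibre at the two fixed points of $\PP^1$ differ by $\pm d_i$ with a sign independent of $i$. The fibre of $E$ over the point mapping to $w_0B$ is $w_0\cdot(\mf q/\mf b)=\bigoplus_{\alpha\in\Phi^+_Q}\mf g_{-w_0\alpha}$, and since $w_0\Phi^+\subseteq\Phi^-$ every root $-w_0\alpha$ occurring is positive, so every $\lambda(\Gm)$-weight there equals $2\,\mathrm{ht}(-w_0\alpha)>0$; the fibre over the point mapping to $eB$ is $\mf q/\mf b=\bigoplus_{\alpha\in\Phi^+_Q}\mf g_{-\alpha}$, with all $\lambda(\Gm)$-weights $-2\,\mathrm{ht}(\alpha)<0$. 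Comparing with the equivariant splitting forces $d_i>0$ for all $i$, i.e. $E$ is ample (taking $Q=G$ gives $f_B^*T_{G/B}$ ample). The delicate points are just bookkeeping: (i) checking that the generic $\lambda$-orbit closure really joins the two extreme fixed points $eB$ and $w_0B$ — cleanest by taking $x=u^-\cdot eB$ with $u^-\in U^-$ having all root coordinates nonzero and computing $\lim_{s\to0}$ and $\lim_{s\to\infty}$ — and (ii) running a characteristic-free argument: in characteristic $0$ one may instead take $f_B$ to be the orbit $\PP^1=\SL_2/B_{\SL_2}\to G/B$ of the base point under a principal $\SL_2\hookrightarrow G$, for which $f_B^*T_{G/B\to G/Q}$ visibly carries a filtration with graded pieces $\OO(2\,\mathrm{ht}(\alpha))$, $\alpha\in\Phi^+_Q$, and one invokes the elementary fact that an extension of ample bundles on $\PP^1$ is ample; but a principal $\SL_2$ need not exist in small characteristic, which is exactly why the $\Gm$-equivariant construction above — using only the cocharacter $2\rho^\vee$ — is the one to carry out in general.
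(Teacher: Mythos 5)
Your proof follows the same route as the paper's: reduce to $P=B$ by pushing forward a curve from $G/B$ (the paper's Lemma \ref{lem-reduceBorel}), then take the closure of a $\mathbf{G}_m$-orbit under $\rho^\vee$ through a generic point of $G/B$, joining $B/B$ to $\dot{w}_0\cdot B/B$, and read off the degrees of the equivariant line-bundle summands of $f^*T_\pi$ from the $\mathbf{G}_m$-weights at the two fixed points, which have opposite signs because $w_0$ swaps positive and negative roots. The only cosmetic differences are that you work with $2\rho^\vee\in X_*(T)$ instead of $\rho^\vee\in X_*(\mathrm{ad}(T))$, and you choose the base point generically (equivalently, the paper notes $B\dot{w}_0B\cap B^-B\neq\emptyset$ and picks $u$ there), so this is essentially the paper's argument.
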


\begin{rmk}
\label{rmk-He}
\marpar{rmk-He}
Note that for any \emph{fixed} $Q$, since the fibres of $\pi$ are
rationally connected there exists a morphism $ f:\PP^1 \rightarrow G/P $
such that $f^* T_\pi$ is ample.  The importance of Theorem \ref{thm-He}
is that $f$ has this property for \emph{all} parabolics $Q$
simultaneously. 
\end{rmk}

\newcommand{\cb}{\mathcal B}
\newcommand{\Lie}{\text {\rm Lie}}
\newcommand{\Ad}{\text {\rm Ad}}
\newcommand{\kk}{\mathbf k}

\def\ge{\geqslant}
\def\le{\leqslant}
\def\a{\alpha}
\def\b{\beta}
\def\c{\chi}
\def\g{\gamma}
\def\G{\Gamma}
\def\d{\delta}
\def\D{\Delta}
\def\L{\Lambda}
\def\e{\epsilon}
\def\et{\eta}
\def\io{\iota}
\def\o{\omega}
\def\p{\pi}
\def\ph{\phi}
\def\ps{\psi}
\def\r{\rho}
\def\s{\sigma}
\def\t{\tau}
\def\th{\theta}
\def\k{\kappa}
\def\l{\lambda}
\def\z{\zeta}
\def\v{\vartheta}
\def\x{\xi}
\def\i{^{-1}}

\noindent
The purpose of this section is to prove Theorem \ref{thm-He}. We use some
results in the theory of reductive linear algebraic groups. It turns out that
the proof in the case $X = G/B$, is somewhat easier. Thus we first
reduce the general case to this case by a simple geometric argument.

\begin{lem}
\label{lem-reduceBorel}
\marpar{lem-reduceBorel}
It suffices to prove Theorem \ref{thm-He} when $P$ is
a Borel subgroup.
\end{lem}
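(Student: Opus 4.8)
The plan is to reduce from a general standard parabolic $P$ to a Borel subgroup $B$ contained in $P$ by a pullback-of-tangent-bundles argument along the projection $G/B \to G/P$. Fix a Borel subgroup $B \subseteq P$. The key point is that every standard parabolic $Q \supseteq P$ is in particular a standard parabolic containing $B$, and the projections fit into a commutative triangle $G/B \to G/P \to G/Q$.

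First I would assume Theorem \ref{thm-He} holds for the Borel subgroup $B$: there is a morphism $g : \PP^1 \to G/B$ such that for every standard parabolic $Q' \supseteq B$, writing $\pi' : G/B \to G/Q'$, the bundle $g^*T_{\pi'}$ is ample on $\PP^1$. Now set $f = \rho \circ g : \PP^1 \to G/P$, where $\rho : G/B \to G/P$ is the natural projection. I claim $f$ works. Given a standard parabolic $Q \supseteq P$, let $\pi : G/P \to G/Q$ and $\pi_P : G/B \to G/P$ be the projections, so that $\pi' := \pi \circ \pi_P : G/B \to G/Q$ is the projection to $G/Q$ (and $Q \supseteq P \supseteq B$, so $Q$ is among the parabolics handled by $g$). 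There is a short exact sequence of relative tangent bundles on $G/B$:
\begin{equation*}
0 \to T_{\pi_P} \to T_{\pi'} \to \pi_P^* T_\pi \to 0.
\end{equation*}
Pulling back along $g$ gives $0 \to g^*T_{\pi_P} \to g^*T_{\pi'} \to g^*\pi_P^*T_\pi = f^*T_\pi \to 0$. Since $g^*T_{\pi'}$ is ample (as $B \subseteq Q$), and ampleness of a vector bundle on $\PP^1$ — equivalently, all summands of positive degree — is inherited by quotient bundles, it follows that $f^*T_\pi$ is ample on $\PP^1$. Taking $Q = G$ (so $G/Q$ is a point, $\pi = \mathrm{id}$, $T_\pi = T_{G/P}$) also shows $f^*T_{G/P}$ is ample, i.e.\ $f$ is a very free curve.

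The only subtlety — and the place to be slightly careful — is the exactness of the tangent bundle sequence for the tower $G/B \to G/P \to G/Q$: this is the standard sequence for a smooth fibration (here the maps $\pi_P$, $\pi'$, $\pi$ are smooth projective morphisms, being fibrations in flag varieties), so $T_{\pi'}$ is an extension of $\pi_P^*T_\pi$ by the relative tangent bundle $T_{\pi_P}$ of the first map. Everything is over an arbitrary algebraically closed field $k$, and nothing above uses characteristic zero; the reductions are purely geometric. Thus the lemma follows, and it remains only to prove the theorem for $P = B$, which is the genuine content and is taken up next.
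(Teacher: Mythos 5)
Your proof is correct and takes essentially the same approach as the paper: fix a Borel $B \subseteq P$, compose the curve $g:\PP^1\to G/B$ with the projection to $G/P$, and observe via the commutative triangle $G/B \to G/P \to G/Q$ that $f^*T_\pi$ is a quotient of $g^*T_{\tau}$ (hence ample). The paper states the quotient relation directly without writing out the short exact sequence of relative tangent bundles, but the content is identical.
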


\begin{proof}
If $P$ is not a Borel, then pick any $B \subset P$ Borel
contained in $P$. Let $g : \PP^1 \to G/B$ be a morphism
as in Theorem \ref{thm-He}. We claim that the composition
$f$ of $g$ with the projection map $G/B \to G/P$ works.
Namely, if $P \subset Q$ is another parabolic, then we
have a commutative diagram
$$
\xymatrix{
G/B \ar[r] \ar[d]^{\tau} & G/P \ar[d]^\pi \\
G/Q \ar@{=}[r] & G/Q }
$$
which proves that $f^* T_{\pi}$ is a quotient
of $g^* T_{\tau}$, and hence we win.
\end{proof}

\noindent
We review some of the notation of \cite[Section 2]{MR}.
Let $G$ be a connected reductive algebraic group over an algebraically
closed field $k$. Choose a maximal torus $T \subset G$ and let
$T \subset B$ be a Borel subgroup of $G$ containing $T$. Let $B^-$ be the
opposite Borel subgroup, so that $T=B \cap B^-$. Let $U$ (resp. $U^-$) be
the unipotent radical of $B$ (resp. $B^-$).
Let $X=X^\ast(T)=\Hom(T,{\mathbf G}_m)$ and
$Y=X_\ast(T)=\Hom({\mathbf G}_m,T)$ be the
character and cocharacter groups of $T$. Let
$( , ): X \times Y \to \ZZ$ be the pairing defined by
$x(y(t))=t^{(x, y)}$ for $t \in {\mathbf G}_m$.
The choice of $T \subset B \subset G$ determines the set of roots
$\Phi \subset X$, the set of positive roots $\Phi^+$, as well
as its complement $\Phi^-$. For $\a \in \Phi$, let $U_{\a}$ be the
root subgroup corresponding to $\a$. Our normalization is that
$\a \in \Phi^+ \Leftrightarrow U_\a \subset B$. We denote the simple
roots $\Delta = \{ \a_i \mid i\in I\}$. So here $I$ is implicitly
defined as the index set for the simple roots.
For any $J \subset I$, let $\Phi_J$ be the set of roots spanned by simple
roots in $J$ and $\Phi_J^+=\Phi_J \cap \Phi^+$,
$\Phi_J^-=\Phi_J \cap \Phi^-$. Let $P_J$ be the standard parabolic subgroup
corresponding to $J$ which is characterized by
$U_\a \subset P_J \Leftrightarrow (\a \in \Phi_J\ \text{or}\ \a \in \Phi^+)$.
Let $X_J=G/P_J$ the corresponding flag variety (there will be no subscripts
to $X=X^*(T)$ and so we will avoid any possible confusion).
Note that $X_{\emptyset} = G/B$ classifies Borel subgroups of $G$ and
is sometimes written as $\cb$.

\medskip\noindent
For each $i \in I$ there is a morphism of algebraic groups
$$
h_i: SL_{2,k} \to G
$$
which maps the diagonal torus into $T$, such that $a \mapsto h_i \bigl(
\begin{smallmatrix} 1 & a \\ 0 & 1 \end{smallmatrix} \bigr)$
is an isomorphism $x_i: {\mathbf G}_a \to U_{\a_i}$, and such that
$y_i: a \mapsto h_i \bigl(
\begin{smallmatrix} 1 & 0 \\ a & 1 \end{smallmatrix} \bigr)$
is an isomorphism $y_i: {\mathbf G}_a \to U_{-\a_i}$. It follows that
$\a_i\bigl(
h_i\bigl(\begin{smallmatrix} t & 0 \\ 0 & t \i \end{smallmatrix} \bigr)\bigr)$
is equal to $t^2$. The cocharacter ${\mathbf G}_m \to T$,
$t \mapsto h_i \bigl( 
\begin{smallmatrix} t & 0 \\ 0 & t \i \end{smallmatrix} \bigr)$
is the coroot $\a_i^\vee$ dual to the simple root $\a_i$.
The datum $(B, T, x_i, y_i; i \in I)$ is called an {\it \'epinglage}
or a {\it pinning} of $G$.
For $i \in I$, let $s_i$ be the corresponding simple reflection in
the Weyl group $W = N_G(T)/T$. The element
$\dot s_i=h_i \bigl(
\begin{smallmatrix} 0 & -1 \\ 1 & 0 \end{smallmatrix} \bigr) \in N_G(T)$
is a representative of $s_i$. Then
\[\tag{a} y_i(1)=x_i(1) \dot s_i x_i(1).\]
For $w \in W$, we denote by $l(w)$ its length and we set
$\dot w=\dot s_{i_1} \dot s_{i_2} \cdots \dot s_{i_n}$,
where $s_{i_1} s_{i_2} \cdots s_{i_n}$ is a reduced expression of $w$.
This expression is independent of the choice of the reduced expression
for $w$.
For $J \subset I$, we denote by $W_J$ the standard parabolic subgroup
of $W$ generated by $s_j$, $j\in J$. We denote by $W^J$ (resp.\ ${}^J W$)
the set of minimal coset representatives in $W/W_J$ (resp.\ 
$W_J \backslash W$. We simply write $W^J \cap {}^K W$ as ${}^K W^J$
for $J, K \subset I$.

\medskip\noindent
For any subgroup $H$ of $G$, we denote by $\Lie(H)$ its Lie algebra.
(We will think of this as the spectrum of the symmetric algebra on
the cotangent space of $G$ at the identity.)
Let us recall the following result (see \cite[page 26, lemma 4]{Sl}).

\begin{lem}
\label{lem-action}
\marpar{lem-action}
Let $H$ be a closed subgroup scheme of $G$. Let $X$ be a scheme
of finite type over $k$ endowed with a left $G$-action. Let
$f: X \rightarrow G/H$ be a $G$-equivariant morphism from $X$ to the
homogeneous space $G/H$. Let $E \subset X$ be the scheme theoretic fibre
$f \i(H/H)$. Then $E$ inherets a left $H$-action and the map
$G \times_H E \rightarrow X$ sending $(g, e)$ to $g \cdot e$
defines a $G$-equivariant isomorphism of schemes.
\end{lem}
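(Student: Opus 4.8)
The statement is the standard fact that a $G$-equivariant morphism $f\colon X\to G/H$ exhibits $X$ as the associated bundle (contracted product) $G\times_H E$ over $G/H$ with fibre the scheme-theoretic fibre $E=f^{-1}(H/H)$; here is how I would run the proof. First I would note that $H$ acts on $E$: the $G$-action on $G/H$ is by left translation, so the base point $\overline{e}=H/H$ is fixed by $H$, and since $f$ is $G$-equivariant the closed subscheme $E=f^{-1}(\overline{e})$ is $H$-stable. Then the multiplication morphism $\mu\colon G\times E\to X$, $(g,e)\mapsto g\cdot e$, is invariant for the free $H$-action $h\cdot(g,e)=(gh^{-1},h\cdot e)$ on $G\times E$, so it descends to a $G$-equivariant morphism $\overline{\mu}\colon G\times_H E\to X$ (where $G$ acts on $G\times_H E$ through the first factor). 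The contracted product exists as a scheme because $H$, being a closed subgroup scheme of $G$ over a field, is fppf over $k$, so $G\to G/H$ is an fppf $H$-torsor and $G\times E\to G\times_H E$ is again an $H$-torsor with base $G/H$; this is essentially the assertion of the cited lemma, and the computation below re-proves it.

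Both $G\times_H E$ and $X$ are schemes over $G/H$ --- the former via $[g,e]\mapsto gH$, the latter via $f$ --- and $\overline{\mu}$ is a morphism over $G/H$ since $f(g\cdot e)=g\cdot f(e)=g\overline{e}=gH$. To show $\overline{\mu}$ is an isomorphism it then suffices, by faithfully flat descent, to base change along $G\to G/H$. Pulling back the torsor trivializes it, giving a canonical identification $(G\times_H E)\times_{G/H}G\cong G\times E$. On the other side, $X\times_{G/H}G=\{(x,g):f(x)=gH\}$, and the assignments $(x,g)\mapsto(g,g^{-1}\cdot x)$ and $(g,e)\mapsto(g\cdot e,g)$ are mutually inverse isomorphisms $X\times_{G/H}G\cong G\times E$: the first is well defined because $f(x)=gH$ forces $f(g^{-1}\cdot x)=g^{-1}gH=\overline{e}$, hence $g^{-1}\cdot x\in E$, and the second because $f(g\cdot e)=g\overline{e}=gH$. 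Under these two identifications the base change of $\overline{\mu}$ is the identity of $G\times E$, so $\overline{\mu}$ is an isomorphism.

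The only genuinely delicate point, and hence the main obstacle, is the existence of $G\times_H E$ as a scheme rather than merely an algebraic space. In the applications in this paper $X$, and therefore $E$, is quasi-projective, so $X\to G/H$ is quasi-projective and descent along the affine faithfully flat morphism $G\to G/H$ is effective for it; in general one invokes the standard construction of associated fibre bundles (a $G$-linearized ample line bundle on $E$, or an $H$-stable quasi-affine open cover, suffices), which is exactly \cite[page 26, lemma 4]{Sl}. Everything else is a formal consequence of the torsor structure of $G\to G/H$.
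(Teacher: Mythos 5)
The paper gives no proof of this lemma; it simply cites Slodowy (\cite[page 26, lemma 4]{Sl}) and moves on. Your proof is the standard argument and is correct: the $H$-stability of $E$ follows from $G$-equivariance of $f$ and $H$-invariance of the base point; $\mu$ descends to $\overline\mu : G\times_H E\to X$; and faithfully flat descent along the $H$-torsor $G\to G/H$ reduces to checking that the pullback of $\overline\mu$ is the identity on $G\times E$, which your explicit computation verifies (in coordinates $([g,e],g')\mapsto (g',\,((g')^{-1}g)\cdot e)$ on one side and $(x,g')\mapsto(g',\,(g')^{-1}\cdot x)$ on the other, the base-changed map really is $\mathrm{id}_{G\times E}$). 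So there is nothing to compare against in the paper; you have simply supplied what the citation covers.

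One small remark on your last paragraph: you treat the representability of $G\times_H E$ by a scheme as a separate delicate point to be settled in advance, appealing to quasi-projectivity. That is one valid route, but in fact your descent computation already does the work in full generality: the fppf quotient $G\times_H E$ exists a priori as an algebraic space (the $H$-action on $G\times E$ is free), and once $\overline\mu$ is shown to be an isomorphism of algebraic spaces, the source inherits scheme-ness from the target $X$. So no quasi-projectivity or linearized ample bundle is needed for the lemma as stated. This is a minor streamlining, not a gap; the proof as written is fine, and in the actual application ($X$ the total space of the tangent bundle of $G/P_J$, hence quasi-projective) your extra hypothesis is harmless anyway.
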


\noindent
Let $J \subset I$ and $e=P_J/P_J \in X_J=G/P_J$. We are going to apply
the lemma to the tangent bundle $T_{X_J}$ of $X_J$ and the $G$-equivariant
map $f : T_{X_J} \to X_J$. Note that $f\i(e)=T_{X_J, e}=\Lie(G)/\Lie(P_J)$
because $1\in G$ lifts $e\in X_J$. For an element $p\in P_J$ the left
multiplication $p : X_J \to X_J$ lifts to $\text{inn}_p : G \to G$,
$g \mapsto pgp\i$ because $pgp\i P_J = pgP_J$. Since $\text{inn}_p$
fixes $1\in G$ and acts by the adjoint action on $\Lie(G)$
it follows that the $P_J$-action on $f\i(e)=\Lie(G)/\Lie(P_J)$
is the adjoint action of $P_J$ on $\Lie(G)/\Lie(P_J)$. We define an action
of $P_J$ on $G \times \Lie(G)/\Lie(P_J)$ by
$p \cdot (g, k)=(g p \i, \Ad(p) \cdot k)$. 
Let $G \times_{P_J} \Lie(G)/\Lie(P_J)$ be the quotient space.
Then by the lemma, we have
$$
T_{X_J} \cong G \times_{P_J} \Lie(G)/\Lie(P_J).
$$
In particular, when $J = \emptyset$ we get that the
tangent bundle of $X_{\emptyset} = G/B$ is identified
with $G \times_B \Lie(G)/\Lie(B)$.

\medskip\noindent
What about the relative tangent bundles?
Consider a subset $J \subset I$ as before. Then $B \subset P_J$
and we obtain a canonical morphism $p_{J} : X_\emptyset \to X_J$.
Note that $\Lie(P_J)/\Lie(B)$ is a $B$-stable subspace of
$\Lie(G)/\Lie(B)$. Again by the above lemma, we see that the
vertical tangent bundle
$T_{p_J} \subset T_{X_\emptyset}$ of the projection map
$p_{J}$ is isomorphic to
$$
G \times_{B} \Lie(P_J)/\Lie(B) \subset G \times_{B} \Lie(G)/\Lie(B).
$$

\medskip\noindent
Consider the adjoint group $ad(G)$ of $G$ together with the image
$ad(T)$ of $T$. The character group $X_*(ad(T))$ is the
free ${\mathbf Z}$-module
$\oplus_{i\in I} {\mathbf Z}\a_i \subset X = X_\ast(T)$.
Let $\rho^\vee \in X_\ast(ad(T))$ be the cocharacter such that
$(\a_i, \rho^\vee)=1$ for all $i \in I$. In other words,
$$
\rho^\vee : {\mathbf G}_m \to ad(G)
$$
is a morphism into $ad(T)$ such that $\rho^\vee(t)$ acts via
multiplication by $t$ on $U_{\a_i}$ for all $i\in I$. From this we
conclude that, for any $u\in U$ we have
$$
\lim\nolimits_{t \to 0}\ \text{inn}_{\rho^\vee(t)}(u)  = 1
$$
and, for any $u^- \in U^-$ we have
$$
\lim\nolimits_{t \to \infty}\ \text{inn}_{\rho^\vee(t)}(u^-)  = 1
$$
Here, by abuse of notation, for an element $\bar g \in ad(G)$ 
we denote $\text{inn}_{\bar g}$ the associated inner automorphism
of $G$; in other words, $\text{inn}_{\bar g} = \text{inn}_g$
for any choice of $g\in G$ mapping to $\bar g \in ad(G)$.
Since each $P_J$ contains $T$ and hence the center of $G$ we see
that ${\mathbf G}_m$ acts on $X_J = G/P_J$ via $\rho^\vee$.
This action induces a ${\mathbf G}_m$-action on the bundles
$T_{G/B}$ and $T_{p_J}$. Under the isomorphism
$T_{p_J} = G \times_{B} \Lie(P_J)/\Lie(B)$ this action is given by
$t \cdot (g, k)=(\text{inn}_{\rho^\vee(t)}(g), Ad_{\rho^\vee(t)}(k))$.
We leave the verification to the reader.

\medskip\noindent
Let $w$ be the maximal element in $W$ and let 
$w = s_{i_1} s_{i_2} \cdots s_{i_n}$ be a reduced expression of $w$.
Set 
$u=x_{i_1}(1) \dot s_{i_1} x_{i_2}(1) \dot s_{i_2} \cdots x_{i_n}(1) s_{i_n}$.
By \cite[Proposition 5.2]{MR},
$u \cdot B/B \in \big(B \dot w \cdot B/B\big) \cap
\big(B^- \cdot B/B\big)$.
In particular, $u \cdot B/B = u_1 \dot w \cdot B/B = u_2 \cdot B/B$
for some $u_1 \in U$ and $u_2 \in U^-$. This is the only property of $u$
that we will use. Another way to find an element with this property
is as follows: Since $w$ is the longest element we have
$\dot w B\dot w^{-1} = B^-$. Thus finding a $u$ as above
amounts to showing that $B\dot w B \cap B^-B$ is nonempty, which is clear
since both are open in $G$ (because $BwB=wB^-B$).

\medskip\noindent
Let $C$ be the closure of
$\{\text{inn}_{\rho^\vee(t)}(u) \cdot B/B ; t \in \kk^*\}$
in $X_\emptyset$. In other words, $C$ is the closure of the
${\mathbf G}_m$-orbit of the point $u_\emptyset := u \cdot B/B$
of $X_\emptyset$ with ${\mathbf G}_m$ acting via $\rho^\vee$. Then 
\begin{gather*}
\lim_{t \to \infty} t \cdot u_\emptyset = 
\lim_{t \to \infty} \text{inn}_{\rho^\vee(t)}(u_2) \cdot B/B = B/B, \\
\lim_{t \to 0} t \cdot u_\emptyset =
\lim_{t \to 0} \text{inn}_{\rho^\vee(t)}(u_1) \dot w \cdot B/B =
\dot w \cdot B/B.
\end{gather*} 
Then $C=\{\text{inn}_{\rho^\vee(t)}(u_\emptyset); t \in \kk^*\}
\sqcup [0] \sqcup [\infty]$, where $[0]=\dot w \cdot P_J/P_J$ and
$[\infty]=P_J/P_J$. Let 
$$
h : {\mathbf P}^1 \rightarrow C \subset X_J
$$
be the ${\mathbf G}_m$-equivariant morphism with $h(t) = t \cdot u$,
$h(0) = [0]$, and $h(\infty)=[\infty]$. (Warning: $h$ needs not be
birational.)

\medskip\noindent
With this notation $h^\ast T_{p_J}$ is a ${\mathbf G}_m$-equivariant
vector bundle over $\mathbf P^1$. We have that
$h^\ast T_{p_J} = \oplus L_l$ for some ${\mathbf G}_m$-equivariant
line bundles $L_l$ over $\mathbf P^1$. We would like to compute
the degrees of the line bundles $L_l$.
For any ${\mathbf G}_m$-equivariant line bundle $L$
on ${\mathbf P}^1$ we define two integers $n(L,0)$, and $n(L,\infty)$
as the weight of the ${\mathbf G}_m$-action on the fibre
at $0$, respectively $\infty$. We leave it to the reader to prove
the formula
$$
\deg L = n(L,0) - n(L,\infty),
$$
see also Remark \ref{rmk-doesnotmatter} below.
In order to compute the degrees of the $L_l$ we compute the
integers $n(L_l, 0)$ and $n(L_l,\infty)$. And for this in turn
we compute the fibre of $T_{p_J}$ at the points $[0]$ and $[\infty]$
as ${\mathbf G}_m$-representations. In fact both points are $T$-invariant
and hence we can think of $T_{p_J}|_{[0]}$ and $T_{p_J}|_{[\infty]}$
as representations of $T$.

\medskip\noindent
The fibre of $T_{p_J}$ at the point $[\infty] = B/B$
is just the vector space $\text{Lie}(P_J)/\text{Lie}(B)$
with $T$ acting via the adjoint action. We have
$\Lie(P_J) = \Lie(B) \oplus \oplus_{\a \in \Phi^-_J} \Lie(U_{\a})$.
Thus
$$
\Lie(P_J)/\Lie(B) \cong \oplus_{\a \in \Phi^-_J} \Lie(U_{\a}).
$$
How does ${\mathbf G}_m$ act on this? It acts via the coroot $\rho^\vee$.
For each $\a \in \Phi^-_J$ we have $(\a, \rho^\vee)<0$, hence all
the weights are negative, in other words all the $n(L_l,\infty)$
are negative.

\medskip\noindent
The fibre of $T_{p_J}$ at the point $[0] = \dot w \cdot B/B$
is the space $ \{\dot w \} \times  \text{Lie}(P_J)/\text{Lie}(B)$
inside $G\times_{B} \text{Lie}(P_J)/\text{Lie}(B)$. For an
element $t \in T$ and an element $k \in \text{Lie}(P_J)/\text{Lie}(B)$
we have (see above)
\begin{align*}
t \cdot (\dot w, k) & = (\text{inn}_t(\dot w), Ad_t(k))\cr
& = (t \dot w t^{-1}, Ad_t(k)) \cr
& = (\dot wt^wt^{-1}, Ad_t(k)) \cr
& = (\dot w, Ad_{t^w}(Ad_{t^{-1}}(Ad_t(k)))) \cr
& = (\dot w, Ad_{t^w}(k)).\cr
\end{align*}
Thus the weights for $T$ on $T_{p_J}|_{[0]}$ are the $w(\a)$, where
$\a \in \Phi^-_J$. How does ${\mathbf G}_m$ act on this?
It acts via the coroot $\rho^\vee$. For each $\a \in \Phi^-_J$
we have $(w(\a), \rho^\vee) > 0$, because the longest element $w$
exchanges positive and negative roots. This follows directly from
$\dot wB\dot w^{-1} = B^-$.) Hence all the weights are positive,
in other words all the $n(L_l,0)$ are positive.

\medskip\noindent
From this we conclude that all the line bundles $L_l$ have positive
degree. In other words we have show that $h^\ast T_{p_J}$ is a positive
vector bundle for all $J \subset I$. This proves Theorem \ref{thm-He}.

\begin{rmk}
\label{rmk-doesnotmatter}
\marpar{rmk-doesnotmatter}
The sign of the formula for the degree of
an equivariant line bundle on $\PP^1$ may
appear to be wrong due to the fact that we
are working with line bundles and not
invertible sheaves (which is dual).
The actual sign of the formula for an equivariant line bundle on $\PP^1$
does not matter for the argument however. Namely, in either case
the rest of the arguments show that all the line bundles
$L_l$ have the same sign. And since we know that $G/B$ is Fano
we know that the sum of all the degrees in of the line bundles
in the pull back of $T_{G/B}$ has to be positive.
\end{rmk}


\section{Rational simple connectedness of homogeneous spaces} 
\label{sec-GPR1C}
\marpar{sec-GPR1C}

\noindent
Let $k$ be an algebraically closed field.
Let $X$ and $Y$ be smooth, connected, quasi-projective $k$-schemes.
Let $\mc{L}$ be an ample invertible sheaf on $X$.  
Let $u: Y \rightarrow \Kgnb{0,0}(X,1)$ be a morphism.
This corresponds to the $X, \mathcal{C}, Y, p, q$
in the following diagram of $k$-schemes
$$
\xymatrix{
\PP^1 \ar[dr]_\tau \ar[r]^\sigma & \mc{C} \ar[r]^p \ar[d]^q & X \\
& Y.}
$$
Automatically $q$ is a smooth, projective morphism whose geometric
fibres are isomorphic to $\PP^1$ which map to lines in $X$ via $p$.
There is an associated $1$-morphism
$$
v: \mc{C} \rightarrow \Kgnb{0,1}(X,1).
$$

\begin{prop}
\label{prop-diagram}
\marpar{prop-diagram}
In the situation above with $\sigma : \PP^1 \to \mathcal{C}$ as in
the displayed diagram. Assume that
\begin{enumerate}
\item all lines parametrized by $Y$ are free,
\item $p$ is smooth,
\item $u$ is unramified,
\item either $\tau$ is a free rational curve on $Y$,
or $p \circ \sigma : \PP^1 \to X$ is free, and
\item $\sigma^* T_p$ and $\sigma^* T_q$ are ample.
\end{enumerate}
Then $X$ has a very twisting scroll.
\end{prop}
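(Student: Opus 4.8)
The plan is to exhibit the very twisting scroll explicitly as a fibre product and then to verify that it is $m$-twisting for some $m\geq 2$ by means of the criterion in Lemma \ref{lem-twistexist}. Concretely, set $R := \PP^1 \times_{\tau, Y, q} \mc{C}$, let $\pi : R \to \PP^1$ be the first projection (a base change of the smooth projective $\PP^1$-bundle $q$), let $h : R \to \mc{C} \xrightarrow{p} X$, and let $\sigma' : \PP^1 \to R$ be the section $t \mapsto (t,\sigma(t))$ lifting $\sigma$, which exists since $q\circ\sigma = \tau$. Each $\pi$-fibre $R_t$ is the line $\mc{C}_{\tau(t)}\to X$, so $(\PP^1 \xleftarrow{\pi} R \xrightarrow{h} X)$ is a scroll on $X$ in the sense of Definition \ref{defn-twistabs}; equivalently it is the scroll associated to the morphism $g := v\circ\sigma : \PP^1 \to \Kgnb{0,1}(X,1)$ with chosen section $\sigma'$ and divisor $D := \sigma'(\PP^1)$. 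The claim to prove is that $(R,D)$ is $m$-twisting for some $m\geq 2$, which is exactly the assertion that $X$ carries a very twisting scroll.

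To prove this I would apply Lemma \ref{lem-twistexist}, taking there $C=\PP^1$, ``$X$''$=\PP^1\times X$, ``$\mc{L}$''$=\text{pr}_2^*\mc{L}$, and the section corresponding to $v\circ\sigma$. Three of its four hypotheses are routine. Since $\mc{C}\to Y$ is the pullback of $\Kgnb{0,1}(X,1)\to\Kgnb{0,0}(X,1)$ along $u$, one has $v^*T_\Phi = T_q$, hence $g^*T_\Phi = \sigma^*T_q$, which is ample by (5) and so has nonnegative degree. By (1) the lines parametrized by $Y$, hence the $1$-pointed lines in the image of $g$, are free, and a free $1$-pointed line is an unobstructed point of $\text{ev}$ because its obstruction space $H^1(\PP^1,(\text{line})^*T_X(-\text{pt}))$ vanishes by global generation; thus the image of $g$ lies in the unobstructed locus of $\text{ev}$. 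Finally $\text{ev}\circ g = p\circ\sigma$ must be a free section: if (4) gives ``$p\circ\sigma$ free'' there is nothing to do, and if (4) gives ``$\tau$ free'' I would combine the tangent sequences $0 \to \sigma^*T_q \to \sigma^*T_{\mc{C}} \to \tau^*T_Y \to 0$ (from smoothness of $q$) and $0 \to \sigma^*T_p \to \sigma^*T_{\mc{C}} \to (p\circ\sigma)^*T_X \to 0$ (from smoothness of $p$, hypothesis (2)): an extension of globally generated bundles on $\PP^1$ is globally generated, so $\sigma^*T_{\mc{C}}$ is globally generated (using $\sigma^*T_q$ ample and $\tau^*T_Y$ globally generated), hence so is its quotient $(p\circ\sigma)^*T_X$; as $\sigma^*T_p$ ample forces $p\circ\sigma$ not to be contracted, it is free.

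The substantive point, and the step I expect to be the main obstacle, is the last hypothesis of Lemma \ref{lem-twistexist}: that $H^1(\PP^1, g^*T_{\text{ev}}(-m))=0$ for some $m\geq 2$, equivalently that $g^*T_{\text{ev}}$ is an ample vector bundle on $\PP^1$. Here I would use that $v$ is unramified, being a base change of the unramified morphism $u$ of hypothesis (3), so that $\sigma^*T_p = \sigma^*T_{\mc{C}/X} \hookrightarrow g^*T_{\text{ev}}$ with locally free cokernel $\tau^*N_u$, the pullback of the normal sheaf of $u$; this yields a short exact sequence $0 \to \sigma^*T_p \to g^*T_{\text{ev}} \to \tau^*N_u \to 0$. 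Since an extension of ample bundles is ample and $\sigma^*T_p$ is ample by (5), it suffices to show $\tau^*N_u$ ample. This is where the hypotheses genuinely interact: using smoothness of $p$ one identifies $\tau^*N_u$ with the quotient of $\pi_*N_{R/\PP^1\times X} \cong (u\circ\tau)^*T_{\Kgnb{0,0}(X,1)}$ by $\tau^*T_Y$, and one must then extract enough positivity of this pushforward normal bundle from the two ampleness statements in (5) together with the freeness in (4) — $\sigma^*T_q$ ample says the section $\sigma'$ has positive self-intersection on the ruled surface $R$, while $\sigma^*T_p$ and the freeness control the ``$X$-directions'' of deformation.

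Once $\tau^*N_u$, and hence $g^*T_{\text{ev}}$, is known to be ample — say with every summand of $g^*T_{\text{ev}}$ of degree $\geq 1$, so that $g^*T_{\text{ev}}(-2)$ has no first cohomology — hypothesis (3) of Lemma \ref{lem-twistexist} holds with $m=2$. That lemma then gives that $(R,D)$ is a $2$-twisting, hence very twisting, ruled surface in $\PP^1\times X$ with respect to $\text{pr}_2^*\mc{L}$, which by Definition \ref{defn-twistabs} is precisely a very twisting scroll in $X$. The bulk of the work, and the only place where the argument is not formal bookkeeping, is the ampleness of $g^*T_{\text{ev}}$ treated in the previous paragraph.
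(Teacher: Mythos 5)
Your setup and overall strategy are exactly what the paper does: form $R = \PP^1 \times_{\tau,Y,q} \mc{C}$, identify $g^*T_\Phi = \sigma^*T_q$ and $g^*T_{\text{ev}}$ as an extension of $\tau^*N_u = \tau^*N_{Y/\Kgnb{0,0}(X,1)}$ by $\sigma^*T_p$, verify the routine hypotheses of Lemma~\ref{lem-twistexist}, and reduce everything to the ampleness of $\tau^*N_u$. The routine verifications (nonnegative degree of $g^*T_\Phi$, unobstructedness, freeness of $p\circ\sigma$ in both cases of (4)) are all correct and coincide with the paper's arguments; in fact your aside ruling out $p\circ\sigma$ constant is not even needed, since a constant map would still be ``free'' in the sense of Definition~\ref{defn-classicalfree}.

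The gap is precisely at the step you flag as ``the main obstacle'': you do not prove that $\tau^*N_u$ is ample, you only say that ``one must extract enough positivity'' from (4) and (5) and gesture at $\sigma^*T_q$ controlling self-intersection and $\sigma^*T_p$ controlling the $X$-directions. That sentence is not an argument, and a direct attempt to read ampleness off the exact sequence $0 \to \tau^*T_Y \to (u\tau)^*T_{\Kgnb{0,0}(X,1)} \to \tau^*N_u \to 0$ fails, because a quotient of a non-ample bundle by a globally generated subbundle need not be ample; your hypotheses give you control of $\sigma^*T_p$ and $\sigma^*T_q$, not of $\tau^*T_Y$ or $(u\tau)^*T_{\Kgnb{0,0}(X,1)}$ individually.

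What actually makes the argument go through — and is the content of Claim~\ref{claim-vanishing} in the paper — is a dualize-and-cover trick. Starting from the exact sequence $0 \to T_p \to q^*T_Y \to N_{\mc{C}/Y\times X} \to 0$, unramifiedness of $u$ gives $q_*T_p = 0$ and $R^1q_*T_p \cong N_{Y/\Kgnb{0,0}(X,1)}$; by relative duality for the $\PP^1$-bundle $q$, this $R^1$ is the dual of $q_*(\Omega_p \otimes \Omega_q)$. So the statement to prove becomes: $\tau^*q_*(\Omega_p\otimes\Omega_q) = (q_R)_*\mc{E}$ is \emph{anti}-ample on $\PP^1$, where $\mc{E}$ is the pullback of $\Omega_p\otimes\Omega_q$ to $R$. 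Now both ampleness hypotheses in (5) enter: $\sigma^*T_q$ ample is $N_{s(\PP^1)/R}$ ample, so $s$ moves in a basepoint-free pencil of sections $\{s_t\}$ covering a dense open of $R$; and $s^*\mc{E}^\vee = \sigma^*T_p\otimes\sigma^*T_q$ is ample, hence so is $s_t^*\mc{E}^\vee$ for $t$ in a nonempty open, i.e., each $s_t^*\mc{E}$ is anti-ample. Because the $s_t$ cover a dense open, the evaluation maps $e_t : (q_R)_*\mc{E} \to s_t^*\mc{E}$ have jointly trivial kernel, so for finitely many general $t_i$ the map $(q_R)_*\mc{E} \hookrightarrow \bigoplus_i s_{t_i}^*\mc{E}$ is injective; a locally free sheaf injecting into an anti-ample one is anti-ample. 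That finishes the ampleness of $\tau^*N_u$, and then the argument you already wrote gives ampleness of $g^*T_{\text{ev}}$ and applies Lemma~\ref{lem-twistexist} with $m=2$.
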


\begin{proof}
We will show the pair $(R, s)$ where the scroll $R$ is defined as
$R = \PP^1 \times_{q\circ \sigma, Y, q} \mc{C}$ and
with section $s : \PP^1 \rightarrow R$ given by $\sigma$
satisfies all the properties of Definition \ref{defn-twistabs}.
Product will mean product over $\text{Spec}(k)$.

\medskip\noindent
We will use the following remarks frequently below.
Denote the projection to $\PP^1$ by $q_R: R \rightarrow \PP^1$
and the projection to $\mathcal{C}$ by $\sigma_R : R \to \mathcal{C}$.
For every coherent sheaf
$\mc{F}$ on $\mc{C}$, cohomology and base change implies
the natural map
$$
\tau^* R^1 q_* \mc{F} \rightarrow
R^1 q_{R,*}( \sigma_R^*\mc{F})
$$
is an isomorphism. If the sheaves 
$R^1q_{R}(\mc{F})$ are zero (along the image of $\tau$) then
$$
\tau^* q_* \mc{F} \rightarrow q_{R,*} ( \sigma_R^* \mc{F})
$$
is an isomorphism.

\medskip\noindent
Note that $N_{R/\PP^1\times X}$ equals the pullback of
$N_{\mc{C}/Y\times X}$. Because $T_{X\times Y} = \text{pr}_1^* T_X 
\oplus \text{pr}_2^* T_Y$, and because $q$ is smooth
there is a short exact sequence of locally free sheaves
$0 \to T_p \to q^* T_Y \to N_{\mc{C}/Y\times X} \to 0$
on $\mathcal{C}$.
There is an associated long exact sequence
$$
0 \to q_* T_p \to q_* q^* T_Y \xrightarrow{\alpha}
q_* N_{\mc{C}/Y\times X} \to R^1 q_* T_p \to 0
$$
using the fact that $R^1 q_* q^* T_Y$ is zero.
We can say more: (a) $q_*q^* T_Y$ is canonically isomorphic
to $T_Y$, (b) $q_* N_{\mc{C}/Y\times X}$ is canonically
isomorphic to $u^* T_{\Kgnb{0,0}(X,1)}$ (see Remark
\ref{rmk-technical}), and (c) the sheaf
homomorphism $\alpha$ above is canonically isomorphic
to $\text{d}u$. By hypothesis $u$ is unramified, in other
words $\alpha$ is injective. Thus $q_*T_p = 0$,
and we get an isomorphism $R^1q_* T_p = N_{Y/\Kgnb{0,0}(X,1)}$.
We conclude that $R^1q_* T_p$ is locally free, and formation
of $R^1q_* T_p$ and $q_* T_p$ commutes with arbitrary change
of base. By relative duality, we get that $R^1 q_* T_p$
is dual to $q_*(\Omega_p \otimes_{\OO_\mc{C}} \Omega_q)$,
and that $R^1q_*(\Omega_p \otimes_{\OO_\mc{C}}\Omega_q) = 0$.

\begin{claim}
\label{claim-vanishing}
\marpar{claim-vanishing}
The sheaf $\tau^* q_* (\Omega_p \otimes_{\OO_\mc{C}}\Omega_q)$
is anti-ample, and thus $\tau^*N_{Y/\Kgnb{0,0}(X,1)}$ is ample.
\end{claim}

\noindent
The surface $R$ is a Hirzebruch surface.
Denote by $\mc{E}$ the pullback of $\Omega_p\otimes_{\OO_\mc{C}}
\Omega_q$ to $R$. By our remarks above the claim is equivalent
to the assertion that $(q_R)_*\mc{E}$ is anti-ample.
The normal bundle $N_{s(\PP^1)/R}$ is canonically isomorphic
to $\sigma^* T_q$, which is ample by hypothesis.
Therefore the divisor $s(\PP^1)$ on $R$ moves
in a basepoint free linear system. This implies the section $s$
deforms to a family $\{s_t\}_{t\in \Pi}$ of sections whose images
cover a dense open subset of $R$.
Of course the pullback of $\Omega_p \otimes_{\OO_\mc{C}}
\Omega_q$ to $R$ is a locally free sheaf $\mc{E}$ whose dual
$\mc{E}^\vee$ is canonically isomorphic to the pullback of
$T_p\otimes_{\OO_\mc{C}} T_q$.  And $s^* \mc{E}^\vee$ equals $\sigma^*
T_p\otimes_{\OO_{\PP^1}} \sigma^* T_q$.  By hypothesis, each of
$\sigma^* T_p$ and $\sigma^* T_q$ is ample on $\PP^1$.  Thus $s^*
\mc{E}^\vee$ is ample on $\PP^1$.  Since ampleness is an open
condition, after shrinking $\Pi$ we may assume $s_t^* \mc{E}^\vee$ is
ample for $t \in \Pi$.  Therefore, $s_t^* \mc{E}$ is
anti-ample.

\mni
For every $t$ there is an evaluation morphism
$$
e_t: (q_R)_* \mc{E} \rightarrow s_t^* \mc{E}.
$$
Of course, since the curves $s_t(\PP^1)$ cover a dense open subset of
$R$, the only local section of $(q_R)_*\mc{E}$ in the kernel of
\emph{every} evaluation morphism $e_t$ is the zero section.  Since
$(q_R)_* \mc{E}$ is a coherent sheaf, in fact for $N \gg 0$ and for
$t_1,\dots, t_N$ a general collection of closed points of $\Pi$, the
morphism
$$
(e_{t_1},\dots, e_{t_n}):
(q_R)_* \mc{E}
\longrightarrow
\bigoplus\nolimits_{i=1}^N s_{t_i}^* \mc{E}
$$
is injective.  Since every summand $s_{t_i}^* \mc{E}$ is anti-ample,
the direct sum is anti-ample.  And a locally free sheaf admitting
an injective sheaf homomorphism to an anti-ample sheaf is itself
anti-ample.  Therefore $(q_R)_* \mc{E}$ is anti-ample, proving Claim
~\ref{claim-vanishing}.

\medskip\noindent
As usual, we denote by $\text{ev}: \Kgnb{0,1}(X,1) \rightarrow X$
the evaluation morphism. Thus $\text{ev} \circ v = p$ as morphisms
$\mc{C} \to X$. Consider the diagram
$$
\xymatrix{
\mc{C} \ar[d]^q \ar[r]^-v &
\Kgnb{0,1}(X,1) \ar[d]^{\text{forget}} \ar[r]^-{\text{ev}} &
X \\
Y \ar[r]^-u & \Kgnb{0,0}(X,1)
}
$$
whose square is cartesian. Since we assumed that all lines
parametrized by $Y$ are free it follows that $\Kgnb{0,1}(X,1)$
is nonsingular at every point of $v(\mc{C})$. As $p$ is assumed
smooth, it follows that $\text{ev} : \Kgnb{0,1}(X,1) \to X$
is smooth at all points of $v(\mc{C})$ (by Jacobian criterion
for example). Also, the diagram shows $v$ is unramified and
$N_{\mc{C}/\Kgnb{0,1}(X,1)}$ is canonically isomorphic to
$q^* N_{Y/\Kgnb{0,0}(X,1)}$. Smoothness of $\text{ev}$ and $p$
implies we have a short exact sequence
$$
0 \to T_p \to v^* T_\text{ev} \to N_{\mc{C}/\Kgnb{0,1}(X,1)} \to 0
$$
By hypothesis, $\sigma^* T_p$ is ample. Claim \ref{claim-vanishing}
implies $\tau^* N_{Y/\Kgnb{0,0}(X,1)} = 
\sigma^* N_{\mc{C}/\Kgnb{0,1}(X,1)}$ is ample.  Therefore
$(v\circ \sigma)^* T_\text{ev}$ is ample.

\medskip\noindent
At this point we can apply Lemma \ref{lem-twistexist} to the surface
$R \to \PP^1 \times X$ and the section $s$, which combine
to give the morphism $g = v \circ \sigma :
\PP^1 \to \Kgnb{0,1}(\PP^1 \times X/\PP^1, 1)$.
Condition (1) corresponds to the fact that $s$ moves on $R$. Condition
(2) holds because the lines parametrized by $Y$ are free.
Condition (3) with $m = 2$ holds because, as we just saw, the sheaf
$(v\circ \sigma)^* T_\text{ev}$ is ample. It remains to see that
the morphism $p \circ \sigma : \PP^1 \to X$ is free. This follows
either by assumption, or if $\tau$ is free we argue as follows:
The pull back
$(p \circ \sigma)^* T_X$ is a quotient of $\sigma^* T_{\mc{C}}$.
The sheaf $\sigma^* T_{\mc{C}}$ is sandwiched between
$\sigma^*T_q$ and $\sigma^* q^* T_Y$ and by assumption both
are globally generated. Hence $\sigma^* T_{\mc{C}}$ and also
$(p \circ \sigma)^* T_X$ is globally generated.
\end{proof}

\begin{rmk}
\label{rmk-jason}
\marpar{rmk-jason}
In the ``abstract'' situation of Proposition \ref{prop-diagram} we
can often deduce that the fibres of $\text{ev} : 
\Kgnb{0,1}(X, 1) \to X$ are rationally connected using
\cite[Proposition 3.6]{Sr1c} (the proof of this
uses a characteristic $0$ hypothesis). We will avoid
this below to keep the paper more self contained; instead
we will use a little more group theory.
\end{rmk}

\noindent
The new result in the following corollary is the statement
about very twisting scrolls.

\begin{cor}
\label{cor-GPverytwisting}
\marpar{cor-GPverytwisting}
Let $k$ be an algebraically closed field.
Let $G$ be a connected semi-simple group over $k$ and
let $P \subset G$ be a standard maximal parabolic
subgroup. Let $Z = G/P$. Let $\mathcal{L}$ be
an ample generator for $\text{Pic}(Z) \cong \ZZ$.
Then the evaluation morphism
$$
\text{ev}:\Kgnb{0,1}(Z,1) \rightarrow Z
$$
is surjective, smooth and projective. In addition, there exists a
very twisting scroll in $Z$.
\end{cor}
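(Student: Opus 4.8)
The goal is to verify the two assertions of Corollary \ref{cor-GPverytwisting}: first, that $\text{ev}:\Kgnb{0,1}(Z,1)\to Z$ is surjective, smooth and projective; and second, that $Z=G/P$ carries a very twisting scroll. For the first assertion I would argue as follows. Since $G$ acts transitively on $Z=G/P$ and the formation of $\Kgnb{0,1}(Z,1)$ together with $\text{ev}$ is $G$-equivariant, it suffices to exhibit a single free line through the base point $e=P/P$. The existence of such a line is precisely what Theorem \ref{thm-He} supplies: there is a morphism $f:\PP^1\to G/P$ with $f^*T_\pi$ ample for \emph{every} parabolic $Q\supset P$; taking $Q=G$ this gives $f^*T_Z$ ample, in particular globally generated, so $f$ is a free line. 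Now translate by elements of $G$: through every $z\in Z(k)$ there is a free line, so $\text{ev}$ is surjective, and its image lies in the smooth locus where $\text{ev}$ is smooth (the obstruction space $H^1(\PP^1,f^*T_Z(-p))$ vanishes because $f^*T_Z$ is globally generated — compare the argument in Lemma \ref{lem-FChevsmooth}). By $G$-equivariance the \emph{whole} of $\Kgnb{0,1}(Z,1)$ meets every fibre of $\text{ev}$ in the smooth locus, hence $\text{ev}$ is smooth everywhere on $\Kgnb{0,1}(Z,1)$; projectivity is Theorem \ref{thm-Kontsevich}(3), since $\mathcal{L}$ (equivalently its pullback) is ample on $Z$ and $Z$ is proper.

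**The main point: the very twisting scroll.** For the second assertion I would apply Proposition \ref{prop-diagram}. I need to produce the diagram $\PP^1\xrightarrow{\sigma}\mathcal{C}\xrightarrow{p}X$, $\mathcal{C}\xrightarrow{q}Y$ with $X=Z$, where $Y\to\Kgnb{0,0}(Z,1)$ parametrizes a suitable family of free lines and the five hypotheses of that proposition hold. The natural candidate: take $Y$ to be (an open subvariety of, or a resolution inside) the family of lines swept out by the $G$-orbit of the distinguished line $f:\PP^1\to Z$ of Theorem \ref{thm-He} — concretely, $Y$ parametrizes the lines $g\cdot f(\PP^1)$ for $g\in G$. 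Since $\text{ev}$ is smooth and surjective (just proved) and $\Kgnb{0,0}(Z,1)$ is smooth along this locus, $Y$ can be chosen smooth with $u:Y\to\Kgnb{0,0}(Z,1)$ an immersion (hence unramified), and with $p$ smooth (since all lines in $Y$ are free). The remaining hypotheses (4) and (5) — that either $\tau$ is free or $p\circ\sigma$ is free, and that $\sigma^*T_p$, $\sigma^*T_q$ are ample — are where Theorem \ref{thm-He} does the real work: $p\circ\sigma$ should be taken to be (a translate of) the curve $f$, which is free, giving (4); and the ampleness of $\sigma^*T_q$ should come from choosing $\sigma$ so that the induced curve in $\Kgnb{0,0}(Z,1)$, equivalently in $Y$, is sufficiently positive, while ampleness of $\sigma^*T_p$ — the vertical tangent bundle of $\mathcal{C}\to Y$ restricted to the section — should be arranged by taking $\mathcal{C}\to Y$ fibrewise $\PP^1$ and $\sigma$ of large degree over $\PP^1$, or more cleverly by a direct identification of $T_p$ along the relevant curve in terms of the bundles $h^*T_{p_J}$ computed in Section \ref{sec-He}.

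**Where the difficulty lies.** I expect the main obstacle to be the simultaneous realization of all of hypotheses (1)–(5) of Proposition \ref{prop-diagram} by a single honest diagram of \emph{smooth} varieties — in particular constructing $Y$ as a smooth parameter space with $u$ unramified and $p$ smooth, while keeping $\sigma^*T_p$ and $\sigma^*T_q$ ample. One delicate technical wrinkle, flagged in the introduction of the paper, is that for general $G/P$ with $P$ maximal parabolic the tangent directions along lines at a fixed point need not span $T_pZ$; so one cannot simply take $Y$ to be \emph{all} lines through a point and must instead use the specific family coming from Theorem \ref{thm-He}, whose positivity is exactly tailored to make $f^*T_\pi$ ample for all $\pi$. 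A second wrinkle is bookkeeping with the distinction between the relative setting $\PP^1\times X\to\PP^1$ of Definition \ref{defn-twistabs} and the absolute lines on $Z$; this is routine but must be done carefully so that Lemma \ref{lem-twistexist}, invoked at the end of the proof of Proposition \ref{prop-diagram}, applies with $m=2$, which is precisely the numerical input that upgrades the scroll to a \emph{very} twisting one. Once the diagram is set up, the proof is essentially a citation of Proposition \ref{prop-diagram}.
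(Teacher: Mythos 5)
The general shape is right---invoke Proposition \ref{prop-diagram} with a diagram built from a family of lines, using Theorem \ref{thm-He} as the positivity engine---but there are concrete misreadings that would break the argument as written.

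First, you treat the curve $f$ of Theorem \ref{thm-He} as a \emph{line} in $Z=G/P$ (``so $f$ is a free line,'' and later ``$Y$ parametrizes the lines $g\cdot f(\PP^1)$''). That is not what the theorem gives you: it supplies a morphism $f:\PP^1\to G/P$ with $f^*T_\pi$ ample for all standard $Q\supset P$, but its $\mc{L}$-degree is unconstrained and is in general much larger than $1$ (it is the closure of a $\mathbf{G}_m$-orbit of a point in a big Bruhat cell). The paper constructs the actual line $L$ separately, as $L=P_{\{j\}}\cdot P_J/P_J$ (equivalently the image of the pinning map $h_j$ modulo $B_2$), and verifies its $\mc{L}$-degree is one using \cite{Cohen}, \cite{CohenCooperstein}, or the short argument with fundamental weights. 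Your proof of surjectivity of $\text{ev}$ therefore rests on a nonexistent free line, and your proposed $Y$ does not parametrize lines.

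Second, your argument that $\text{ev}$ is smooth everywhere does not close: that the smooth locus of $\text{ev}$ meets every fibre does not imply $\text{ev}$ is smooth on all of $\Kgnb{0,1}(Z,1)$. The paper instead observes that, since $Z$ is $G$-homogeneous, $T_eG\otimes_k\OO_Z\to T_Z$ is surjective, so $T_Z$ is globally generated and therefore \emph{every} rational curve in $Z$---in particular every line---is free; smoothness of $\text{ev}$ at every point follows at once. This is the clean step to use, and it also removes the worry you flag (tangent directions along lines at a point not spanning $T_pZ$): freeness of lines does not require such spanning.

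Third, and most substantively, you miss how Theorem \ref{thm-He} is actually applied. The paper takes $Y$ to be the projective $G$-orbit of $[L]$ in $\Kgnb{0,0}(Z,1)$ and $\mc{C}$ to be the projective $G$-orbit of $[(L,p)]$ with $p=P_J/P_J$; since the stabilizer of $(L,p)$ contains $B$, the orbit $\mc{C}$ is itself a flag variety $G/P_K$, and because $P_{\{j\}}$ moves $p$ freely along $L$ one gets $\mc{C}=\Kgnb{0,1}(Z,1)\times_{\Kgnb{0,0}(Z,1)}Y$ with $p:\mc{C}\to Z$ and $q:\mc{C}\to Y$ both projections between flag varieties. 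Theorem \ref{thm-He} is then applied \emph{to the flag variety $\mc{C}$ itself}: a single curve $\sigma:\PP^1\to\mc{C}$ has $\sigma^*T_\pi$ ample for every projection $\pi$ at once, hence $\sigma^*T_p$ and $\sigma^*T_q$ are simultaneously ample. This ``all parabolics simultaneously'' feature is the entire content of Theorem \ref{thm-He}; it is not something to be arranged ad hoc by taking $\sigma$ of large degree or by computing $T_p$ along a curve. Once this identification of $\mc{C}$ as a flag variety is made, all five hypotheses of Proposition \ref{prop-diagram} fall into place and the scroll is produced.
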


\begin{proof}
The action of $G$ on $Z$ determines a sheaf homomorphism
$$
T_e G\otimes_k \OO_Z \rightarrow T_Z
$$
which is surjective because the action is separable and $X$ is
homogeneous.  Thus $T_Z$ is globally generated.  Therefore every
smooth, rational curve in $Z$ is free.  This implies
$$
\text{ev}:\Kgnb{0,1}(Z,1) \rightarrow Z
$$
is smooth. It is always projective, allthough at this point
the space of lines on $Z$ could be empty.

\medskip\noindent
Let $B$ be a Borel subgroup of $G$ contained in $Z$, and let $T$ be
a maximal torus in $B$. The data $(G, B, T)$ determines a root system
$(X, \Phi, Y , \Phi^\wedge)$ as in Section \ref{sec-He}; we will use
the notation introduced there.
As $P$ is a standard maximal parabolic subgroup, $P$ equals the
parabolic
subgroup $P_{J}$ where $J = I - \{j\}$ for an element $j$ of $I$.
Thus $Z = G/P_{J} = X_J$ in the notation of Section \ref{sec-He}.
It is proved in \cite[\S 4.20]{Cohen} and \cite[Lemma 3.1]{CohenCooperstein}
that the subvariety
$$
L := P_{\{j\}} \cdot P_{J}/P_{J}
$$
is a line in $G/P_{J}$ with respect to the ample sheaf $\mc{L}$.

\medskip\noindent
We quickly explain how to see this in terms of the \emph{pinning}
we chose in Section \ref{sec-He}. Namely, after replacing $G$ by
its simply connected covering we may assume $\mc{L}$ corresponds
to a $G$-equivariant invertible sheaf. This in turn corresponds
to a character $\chi : P_J \to \mathbf{G}_m$ which generates the
character group $X^*(P_J)$. Since $G$ is simply connected the
lattice $Y$ is freely generated by the coroots $\alpha_i^\vee$
and hence there is an element $\beta \in X = X^*(T)$ such that
$(\beta, \alpha_i^\vee) = 1$ and $(\beta, \alpha_j^\wedge) = 0$
for $i \in I, i\not=j$. This element clearly generates the character
group of $P_J$ and hence the restriction of $\chi$ to the maximal
torus $T$ corresponds to the element $\beta$ (up to sign). Next,
observe that $h_j : SL_2 \to G$ gives a closed immersion
$\overline{h}_j : SL_2/B_2 \to G/P_J$, where $B_2 \subset SL_2$
is the group of upper triangular matrices. We claim that
$\text{Im}(\overline{h}_j) = L = \PP^1$ is the desired line.
(Note that $h_j$ maps into $P_{\{j\}}$ and hence we actually
get the same line as above.)
This is true because the pull back of $\mc{L}$ to the image
corresponds to the standard character $B_2 \to \mathbf{G}_m$
by what we said about $\chi, \beta$ above (as well as the properties
of $h_j$).

\medskip\noindent
The action of $G$ on $Z$ induces an action of $G$ on
$\Kgnb{0,0}(Z,1)$ and $\Kgnb{0,1}(Z,1)$. Let $p = P_J/P_J \in G/P_J$.
This is a point on our line $L$. The stabilizer of $(L,p)$ in
$\Kgnb{0,1}(Z,1)$ contains the Borel subgroup $B$, and thus is of the
form $P_K$ for a subset $K \subset J = I - \{j\}$. Since $P_K$ is
parabolic, the orbit $\mc{C}$ of $(L,p)$ is a projective (hence
closed) $G$-orbit. The image $Y$ of $\mc{C}$ in $\Kgnb{0,0}(Z,1)$ is
also a projective $G$-orbit. Observe that $P_j$ acts transitively on
the subset $\{(L,q)|q\in L\}$ of $\Kgnb{0,1}(Z,1)$ by construction
of $L$. Thus the fibre of the forgetful morphism
$\Kgnb{0,1}(Z,1) \rightarrow \Kgnb{0,0}(Z,1)$ over $[L]$ equals the
fibre of $\mc{C} \rightarrow Y$ over $[L]$.  By
homogeneity it follows that
$$
\mc{C} = \Kgnb{0,1}(Z,1) \times_{\Kgnb{0,0}(Z,1)} Y.
$$
So the diagram
$$
\xymatrix{
\mc{C} \ar[r]^p \ar[d]_q& Z \\
Y}
$$
is a diagram of smooth, projective morphisms where every geometric
fibre of $q$ is a smooth, rational curve.  

\medskip\noindent
By Theorem ~\ref{thm-He}, there exists a morphism,
$$
\sigma : \PP^1 \rightarrow \mc{C}
$$
such that $\sigma^* T_p$ and $\sigma^* T_q$ are both ample sheaves.
The morphism $p$ is smooth by homogeneity.  And the morphism
$u : Y \to \Kgnb{0,0}(Z,1)$ is a closed immersion hence unramified.
Thus, by Proposition \ref{prop-diagram}, there is a
very twisting scroll in $Z$.  
\end{proof}

\begin{rmk}
\label{rmk-veryample}
\marpar{rmk-veryample}
The variety $G/P$ can be constructed as the orbit of
the highest weight vector in $\PP(V_{\omega_j})$ where
$\omega_j$ is the $j$th fundamental weight (this is what
we called $\beta$ in the proof above). Under this
embedding $\mc{L}$ is the pull back of $\OO(1)$ and
lines are lines in $\PP(V_{\omega_j})$. In particular
$\mc{L}$ is very ample.
\end{rmk}

\begin{lem}
\label{lem-fibreRC}
\marpar{lem-fibreRC}
Let $k$ be an algebraically closed field of characteristic $0$.
Let $G$ be a linear algebraic group over $k$.
Let $X$, $Y$ be proper $G$-varieties over $k$.
Let $Y \to X$ be a $G$-equivariant morphism.
Suppose that $Y$ is rationally connected and
that $X$ has an open orbit whose stabilizer is
a connected linear algebraic group.
Then the geometric generic fibre of $Y \to X$ is
rationally connected.
\end{lem}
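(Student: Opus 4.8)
The plan is to reduce to the case of a connected group, identify the geometric generic fibre of $\phi:Y\to X$ with a single fibre $F$ over the open orbit, and then deduce rational connectedness of $F$ by realizing it as a direct factor of a birationally rationally connected variety assembled from $G$, $Y$ and the group action.

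\emph{Reductions.} First I would replace $G$ by its identity component: since $H$ is connected it lies in $G^\circ$, the $G^\circ$-orbit of a point $x_0$ in the open orbit is again open and dense in the irreducible $X$ with the same stabilizer $H$, and $\phi$ is still $G^\circ$-equivariant with $Y$ still rationally connected. So I assume $G$ connected and $U:=G\cdot x_0=G/H$ open dense in $X$. One may also assume $\phi$ is dominant — otherwise the geometric generic fibre is empty and there is nothing to prove — and then, $Y$ being proper, $\phi$ is surjective. Set $V:=\phi^{-1}(U)$, an open dense subset of $Y$, and $F:=\phi^{-1}(x_0)$, on which $H$ acts. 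Applying Lemma \ref{lem-action} to the $G$-equivariant morphism $\phi|_V:V\to G/H$ gives an isomorphism $V\cong G\times_H F$; in particular, after the $H$-torsor base change $G\to G/H$, the family $V\to U$ becomes the trivial family $G\times F\to G$, so all its geometric fibres are isomorphic to $F$ and the geometric generic fibre of $\phi$ is $F\times_k\overline{k(U)}$. As rational connectedness is invariant under extension of algebraically closed fields, it suffices to prove that $F$ is rationally connected over $k$. Here $F$ is a smooth proper variety: smoothness follows because the smooth locus of $\phi$ is a nonempty (generic smoothness, $\mathrm{char}\,k=0$) $G$-invariant open whose image meets, hence contains, the orbit $U$; irreducibility because $H$, being connected, preserves each irreducible component of $F$, so $V=G\times_H F$ is the disjoint union of the $G\times_H F_i$ over the components $F_i$ of $F$, and $V$ is connected.

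\emph{Main step.} Consider the action map $a:G\times F\to Y$, $(g,f)\mapsto g\cdot f$. By equivariance its image is $G\cdot\phi^{-1}(x_0)=\phi^{-1}(G\cdot x_0)=V$, and $a$ is in fact an $H$-torsor over $V$ for the action $h\cdot(g,f)=(gh^{-1},hf)$; thus $a$ is smooth and surjective with geometrically irreducible fibres isomorphic to $H$. A connected linear algebraic group over $k=\bar k$ is unirational, hence rationally connected, so the geometric generic fibre of $a$ is rationally connected; and $V$, being open dense in the rationally connected $Y$, is birationally rationally connected. Therefore $a:G\times F\to V\to\mathrm{Spec}(k)$ is a tower of birationally rationally connected fibrations, and by \cite{GHS} — the same principle invoked in the proof of Lemma \ref{lem-peacechain} — the total space $G\times F$ is birationally rationally connected. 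Finally, the projection $\mathrm{pr}_F:G\times F\to F$ is dominant with $G\times F$ irreducible, so a general pair of distinct points $f_1,f_2$ of $F$ arises as $\big(\mathrm{pr}_F(g,f_1),\mathrm{pr}_F(g,f_2)\big)$ for general $g\in G$, where $(g,f_1)$ and $(g,f_2)$ form a general pair in $G\times F$ and are thus joined by a rational curve $C$ there; the image $\mathrm{pr}_F(C)$ is a rational curve in $F$ through $f_1$ and $f_2$. Hence $F$ is birationally rationally connected, and being smooth and proper it is rationally connected, which combined with the previous paragraph proves the lemma.

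\emph{Main obstacle.} The argument is essentially formal once Lemma \ref{lem-action} is in hand, so the step to double-check is the chain ``$V$ open dense in the rationally connected $Y\Rightarrow V$ birationally rationally connected, and $a$ a birationally rationally connected fibration $\Rightarrow G\times F$ birationally rationally connected'': one has to confirm that the composition principle from \cite{GHS} applies to the (non-proper but smooth) fibration $a$ and to $V$ regarded as a fibration over a point, and that ``birationally rationally connected'' is being used exactly as in the introduction. The descent of rational connectedness along $\mathrm{pr}_F$ — checking that a general pair of points of $F$ lifts to a general pair of $G\times F$ and that the image of the connecting curve is genuinely a curve — is the other point requiring a little care, and it is precisely there that the reduction to connected $G$ is used.
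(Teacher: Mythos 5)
Your argument is correct and is essentially the paper's own proof: you consider the same morphism $\Psi = a : G\times F \to Y$, observe its nonempty fibres are copies of the connected (hence rational) group $H$, apply the corollary of \cite{GHS} on towers of birationally rationally connected fibrations to conclude $G\times F$ is birationally rationally connected, and descend along the projection to $F$. You have merely made explicit a few things the paper leaves implicit — the reduction to connected $G$, the identification of the geometric generic fibre with a base change of $F$ via Lemma \ref{lem-action}, and the irreducibility of $F$ — all of which check out.
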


\begin{proof}
Let $x \in X(k)$ be a point in the open orbit.
Let $H$ be the stabilizer of $x$ in $G$, which is
a connected linear algebraic group by assumption.
Let $F = Y_x$ be the fibre of $Y\to X$ over $x$.
Consider the morphism $\Psi : G \times F \to Y$,
$(g, f) \mapsto g \cdot f$. This is a
dominant morphism of varieties over $k$. By construction
the fibres of $\Psi$ are isomorphic to the 
birationally rationally connected variety $H$.
It follows from the main result of \cite{GHS}
that $G \times F$ is birationally rationally connected.
Hence $F$ is rationally connected.
\end{proof}

\noindent
Note that the Bruhat decomposition in particular implies
that the variety $G/P \times G/P$ has finitely many $G$-orbits.
In addition all stabilizers of points of $G/P \times G/P$
are connected, for example by \cite[Proposition 14.22]{Borel}.

\begin{lem}
\label{lem-GPevRC}
\marpar{lem-GPevRC}
Let $k$ be an algebraically closed field of characteristic $0$.
Let $Z = G/P$, $\mc{L}$ be as in Corollary \ref{cor-GPverytwisting} above.
Then every geometric fibre of
$$
\text{ev}: \Kgnb{0,1}(Z,1) \rightarrow Z
$$
is nonempty and rationally connected.
\end{lem}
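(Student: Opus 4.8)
The plan is to apply Lemma \ref{lem-fibreRC} to the $G$-equivariant morphism $\text{ev} : \Kgnb{0,1}(Z,1) \to Z$. The source $\Kgnb{0,1}(Z,1)$ carries a $G$-action (induced from the $G$-action on $Z = G/P$), the target $Z$ is a homogeneous space, and $\text{ev}$ is $G$-equivariant; moreover $Z$ is proper. So the two things I need to check are: (a) $\Kgnb{0,1}(Z,1)$, or at least the relevant irreducible component, is proper and rationally connected; and (b) $Z$ (as a $G$-variety via $\text{ev}$, i.e., the pair $(\text{ev}, Z)$) has an open orbit with connected linear stabilizer. Point (b) is immediate: $G$ acts transitively on $Z = G/P$, so the whole of $Z$ is a single orbit, and the stabilizer of the base point is $P$, which is connected (a parabolic subgroup of a connected group is connected, e.g.\ by \cite[Proposition 14.22]{Borel}). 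That settles (b) with the open orbit being all of $Z$.

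For (a), recall from the proof of Corollary \ref{cor-GPverytwisting} that $\Kgnb{0,1}(Z,1) = \mc{C}$ is itself a single $G$-orbit, namely $\mc{C} = G/P_K$ for some $K \subset I$, sitting in the cartesian square $\mc{C} \to \Kgnb{0,0}(Z,1)$ with $\Kgnb{0,0}(Z,1) \supset Y = G/P_{K'}$ a projective $G$-orbit. Wait — more precisely, the argument there shows the $G$-orbit $\mc{C}$ of $(L, p)$ is all of $\Kgnb{0,1}(Z,1)$ when $P$ is maximal, because $\text{Pic}(Z) \cong \ZZ$ forces all lines to have degree $1$ with respect to the unique (up to scaling) polarization and the space of lines is a single orbit. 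In any case $\mc{C}$ is a projective homogeneous space $G/P_K$, hence is rationally connected (every $G/Q$ with $Q$ parabolic is rational, in fact a tower of projective bundles by Bruhat, so certainly rationally connected) and is proper. Then Lemma \ref{lem-fibreRC} applies directly: $Y = \Kgnb{0,1}(Z,1)$ is rationally connected, $X = Z$ has open orbit (all of $Z$) with connected stabilizer $P$, both are proper $G$-varieties, and $Y \to X$ is $G$-equivariant. The conclusion is that the geometric generic fibre of $\text{ev}$ is rationally connected.

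Finally, to upgrade "geometric generic fibre" to "every geometric fibre", I use equivariance together with surjectivity of $\text{ev}$ (established in Corollary \ref{cor-GPverytwisting}): since $G$ acts transitively on $Z$ and $\text{ev}$ is $G$-equivariant, all the fibres of $\text{ev}$ over $k$-points are isomorphic to one another (translate by an element of $G(k)$ carrying one point to another), and they are isomorphic to the fibre over the base point; surjectivity guarantees they are all nonempty. Since one fibre — e.g.\ the generic one, which is the base change of a $k$-point fibre under a field extension — is rationally connected and rational connectedness is preserved under field extension and descends along it for proper schemes, every geometric fibre is rationally connected. The only genuinely substantive input is Lemma \ref{lem-fibreRC} (which packages the \cite{GHS}-style argument), and the only point requiring a little care is confirming that $\Kgnb{0,1}(Z,1)$ really is a \emph{single} projective homogeneous orbit under $G$ in the maximal-parabolic case, which is exactly what the proof of Corollary \ref{cor-GPverytwisting} provides; I expect no further obstacle.
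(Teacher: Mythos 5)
Your framework — apply Lemma \ref{lem-fibreRC} to $\text{ev}$, with the open orbit being all of $Z$, and then use $G$-homogeneity of $Z$ to pass from one fibre to every fibre — is the paper's framework too, and your verification of hypothesis (b) is correct: $Z$ is a single $G$-orbit with connected parabolic stabilizer.

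The gap is in your verification of (a). You assert that $\Kgnb{0,1}(Z,1)$ equals the closed orbit $\mc{C} = G/P_K$ produced in the proof of Corollary \ref{cor-GPverytwisting}, i.e.\ that the space of pointed lines is a single $G$-orbit. That proof establishes no such thing: it exhibits \emph{one} closed $G$-orbit $\mc{C}$ and proves the cartesian square $\mc{C} = \Kgnb{0,1}(Z,1) \times_{\Kgnb{0,0}(Z,1)} Y$ — which says precisely that $\mc{C}$ is the restriction of the universal family of pointed lines to the orbit $Y \subset \Kgnb{0,0}(Z,1)$, with no claim that $Y$ is all of $\Kgnb{0,0}(Z,1)$. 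Your hedging justification (``$\text{Pic}(Z) \cong \ZZ$ forces $\ldots$ the space of lines is a single orbit'') is a non sequitur: the Picard computation controls the \emph{degree} of lines, not the $G$-orbit decomposition of their parameter space. In fact the claim is false. Take $G = \text{Sp}_{2n}$ and $Z = \PP^{2n-1} = C_n/P_1$; then $\Kgnb{0,0}(Z,1) = \text{Gr}(2,2n)$ has two $\text{Sp}_{2n}$-orbits (isotropic $2$-planes and non-isotropic ones), $Y$ is the closed orbit, the isotropic Grassmannian, and $\mc{C}$ is a proper closed subvariety of $\Kgnb{0,1}(Z,1)$.

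The paper's proof sidesteps this entirely: it never claims $\Kgnb{0,1}(Z,1)$ is homogeneous. It reduces rational connectedness of $\Kgnb{0,1}(Z,1)$ to that of $\Kgnb{0,0}(Z,1)$ via the forgetful $\PP^1$-bundle, and then proves $\Kgnb{0,0}(Z,1)$ is rationally connected using that a line is determined by any two distinct points it passes through, so that $\Kgnb{0,2}(Z,1) \to Z\times Z$ is birational onto its image; by the remarks preceding the lemma (Bruhat decomposition gives finitely many $G$-orbits in $Z\times Z$), this image is a $G$-orbit closure, which is unirational and hence rationally connected in characteristic $0$. You need this, or some equivalent replacement, to close the hole in step (a).
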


\begin{proof}
Because $Z$ is homogeneous, all fibres are isomorphic and
it suffices to prove one fibre is rationally connected.
By Lemma \ref{lem-fibreRC}, it suffices to show that $\Kgnb{0,1}(Z, 1)$
is rationally connected. Clearly, it suffices to show that
$\Kgnb{0,0}(Z, 1)$ is rationally connected. Since a line is
determined by any pair of points it passes through, this follows
from the remarks about the Bruhat decomposition immediately
preceding the lemma.
\end{proof}

\begin{lem}
\label{lem-GPchainR1C}
\marpar{lem-GPchainR1C}
Let $k$ be an algebraically closed field of characteristic $0$.
Let $Z = G/P$, $\mc{L}$ be as in Corollary \ref{cor-GPverytwisting}
above. There exists a positive integer $n$ and a nonempty open
subset $V$ of $Z\times Z$ such that the geometric fibres of
the evaluation morphism
$$
\text{ev}_{1, n+1} :
\FCh_2(X/k, n)
\longrightarrow
X\times X
$$
over $V$ are nonempty, irreducible and birationally rationally connected.
(Notation as in Section \ref{sec-peace}.)
\end{lem}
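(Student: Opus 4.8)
The strategy is to verify the two conditions of Hypothesis \ref{hyp-peace} for $Z = G/P \to \SP k$, so that Lemma \ref{lem-GPchainR1C} becomes a special case of Lemma \ref{lem-peacechain} (or of the chain-connectivity statement built up in Section \ref{sec-peace}). The first condition of \ref{hyp-peace} was already established: by Corollary \ref{cor-GPverytwisting} the evaluation morphism $\text{ev}:\Kgnb{0,1}(Z,1)\to Z$ is surjective, smooth and projective, and by Lemma \ref{lem-GPevRC} every one of its geometric fibres is nonempty and rationally connected. So we may take $U = Z$ in condition (1). The remaining task is condition (2): to produce a positive integer $m_0$ and a nonempty open $V\subset Z\times Z$ such that the geometric fibres of $\text{ev}_{1,m_0+1}:\FCh_2(Z/k,m_0)\to Z\times Z$ over $V$ are nonempty, irreducible and birationally rationally connected. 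Once (1) and (2) hold, Lemma \ref{lem-peacechain} applied with $U = Z_{f,\pax}$ (which is nonempty and dense by Lemma \ref{lem-peacefuldense}) gives, for every $n$, that $\FCh^U_2(Z/k,n)\to U$ has smooth, nonempty, irreducible, birationally rationally connected geometric fibres, and then the two-sided version $\text{ev}_{1,n+1}:\FCh^U_2(Z/k,n)\to U\times U$ over a suitable $V$ follows from Proposition \ref{prop-peace}; restricting back to the relevant open of $Z\times Z$ finishes the proof with this $n$ and $V$.

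So the real content is establishing condition (2) of \ref{hyp-peace} for $G/P$. First I would observe that $Z$ is homogeneous with globally generated tangent bundle, so every line in $Z$ is free and $\FCh_2(Z/k,n)$ is smooth over $k$ for all $n$ by Lemma \ref{lem-FChevsmooth}. Next, the key geometric input: since $Z$ has an open orbit (all of it) under $G$ and $Z\times Z$ has finitely many $G$-orbits with connected stabilizers (the remarks following Lemma \ref{lem-GPevRC}, via the Bruhat decomposition and \cite[Proposition 14.22]{Borel}), I would try to apply Lemma \ref{lem-fibreRC} with $X = Z\times Z$, $Y = \FCh_2(Z/k,n)$ (suitably compactified if necessary, or rather its relevant component), and the morphism $\text{ev}_{1,n+1}$. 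For this, it suffices to show that for $n\gg 0$ some irreducible component of $\FCh_2(Z/k,n)$ dominating $Z\times Z$ is itself rationally connected, plus a mild irreducibility statement. Rational connectedness of the total space of chains follows because $\Kgnb{0,0}(Z,1)$ is rationally connected (a line is determined by two of its points, Lemma \ref{lem-GPevRC}'s proof), hence $\Kgnb{0,2}(Z,1)$ is rationally connected, and $\FCh_2(Z/k,n)$ is an open subset of an iterated fibre product of copies of $\Kgnb{0,2}(Z,1)$ over evaluation maps; using \cite{GHS} on the tower of fibrations (each fibre being an open in $\PP^1$ or a fibre of $\text{ev}$, all rationally connected) gives that the relevant component is birationally rationally connected. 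Nonemptiness of $\text{ev}_{1,n+1}$ over an open $V$ for $n\gg 0$ follows because the smooth locus of each fibre of $Z$ is rationally chain connected by free lines — indeed, by Lemma \ref{lem-goodfibres}'s mechanism, or more directly: $Z$ being rationally connected and covered by free lines, any two general points are joined by a chain of lines of some bounded length $n$, and this open-ness (that a general pair is joined) combined with generic smoothness of $\text{ev}_{1,n+1}$ on the smooth variety $\FCh_2(Z/k,n)$ pins down $V$.

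The main obstacle I expect is the irreducibility of the geometric generic fibre of $\text{ev}_{1,n+1}$ over $V$, not the rational connectedness. Rational connectedness of the fibres will drop out of Lemma \ref{lem-fibreRC} once we know the total space is rationally connected and $Z\times Z$ has the required orbit structure — this part is essentially formal. But to run Lemma \ref{lem-fibreRC} cleanly, or alternatively to get irreducibility directly, one needs to identify the right irreducible component of $\FCh_2(Z/k,n)$ and show it stays irreducible on the generic fibre. Here I would argue as follows: pick the component $\text{Chain}$ of chains all of whose nodes map into the peaceful locus (which is all of $Z$ up to the open conditions of Definition \ref{defn-newpeace}); by the tower-of-fibrations description, the generic fibre of $\text{ev}_1$ over a point $x\in Z$ is irreducible (each stage adds an irreducible rationally connected fibre, and irreducibility is preserved by \cite[Proposition 4.5.13]{EGA4} as in Lemma \ref{lem-fibreZdirred}), and then the further projection to the second evaluation point is dominant with irreducible generic fibre by the same \cite{GHS}-plus-\cite{EGA4} argument once $n$ is large enough that a general pair $(x,y)$ is actually connected. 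The bookkeeping of ``how large $n$ must be'' and the precise open set $V$ is the place where one must be slightly careful, but there are no new ideas needed beyond those already deployed in Section \ref{sec-peace} and in Lemma \ref{lem-fibreZdirred}; indeed, once conditions (1) and (2) of \ref{hyp-peace} are in hand, the statement of Lemma \ref{lem-GPchainR1C} is literally a special case of Proposition \ref{prop-peace} together with Lemma \ref{lem-peacechain}, so the entire proof reduces to: verify \ref{hyp-peace} for $Z=G/P$, then invoke those two results.
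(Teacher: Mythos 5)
Your overall route agrees with the paper's: establish that $\FCh_2(Z/k,n)$ is rationally connected via the tower-of-fibrations argument, then use the $G$-equivariance of $\text{ev}_{1,n+1}$ together with the finite-orbit / connected-stabilizer structure of $Z\times Z$ and Lemma~\ref{lem-fibreRC} to conclude that the geometric generic fibre is rationally connected. (One framing quibble: once condition~(2) of Hypothesis~\ref{hyp-peace} is established, you are already done, since that condition \emph{is} the statement of Lemma~\ref{lem-GPchainR1C}; the subsequent appeal to Lemma~\ref{lem-peacechain} and Proposition~\ref{prop-peace} is circular and unnecessary, though harmless.)

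The genuine gap is in your justification that $\text{ev}_{1,n+1}:\FCh_2(Z/k,n)\to Z\times Z$ is dominant for some $n$, i.e.\ that two general points of $Z$ are joined by a chain of free lines of bounded length. You assert this on the grounds that ``$Z$ is rationally connected and covered by free lines'' and via ``Lemma~\ref{lem-goodfibres}'s mechanism.'' Neither suffices. The appeal to Lemma~\ref{lem-goodfibres} is circular, since that lemma explicitly assumes Hypothesis~\ref{hyp-peace}, which is exactly what we are trying to verify here. And the abstract statement is false: a rationally connected variety covered by free lines need not be rationally chain connected \emph{by lines} in the fixed degree-one class -- the line-connected locus $T_\infty(z)$ of a general point can be a proper subvariety, giving a nontrivial fibration of $Z$. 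The paper's proof spends its main paragraph precisely on ruling this out. It sets $T_0(z)=\{z\}$, $T_{n+1}(z)=T_n(z)'$, shows the chain stabilizes, and then considers the map $Z\to \text{Hilb}_{Z/k}$, $z\mapsto [T_n(z)]$, whose fibres are the $T_n(z)$. The key input that makes this work is that $P$ is a \emph{maximal} parabolic, so $\text{Pic}(Z)=\ZZ$: this forces the fibration to be either trivial or all of $Z$, and the first possibility is excluded because there exist lines through every point. You do not invoke $\text{Pic}(Z)=\ZZ$ anywhere, yet it is indispensable -- without it (e.g.\ on $G/B$) the statement about chains of degree-one curves filling $Z$ is simply not true. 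This is the one idea your proposal is missing.
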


\begin{proof}
We will use without mention that every line on $Z$ is free.

\medskip\noindent
We rephrase a few arguments of Campana and Koll\'ar-Miyaoka-Mori
in this particularly nice setting. Consider the diagram
$$
\Kgnb{0,0}(Z, 1) \xleftarrow{\Phi} \Kgnb{0,1}(Z, 1) \xrightarrow{\text{ev}} Z
$$
For any closed subset $T \subset Z$, the closed
set $T' = \text{ev}(\Phi^{-1}(\Phi(\text{ev}^{-1}(T))))$
is the set of points which are connected by a line passing through
a point in $T$. Note that if $T$ is irreducible, then so is $T'$ since
by Corollary \ref{cor-GPverytwisting} the morphism $\text{ev}$
is smooth with irreducible fibres according to Lemma \ref{lem-GPevRC}.
For any point $z \in Z(k)$ consider the increasing sequence of
closed subvarieties
$$
T_0(z) = \{z\} \subset T_1(z) = T_0(z)' \subset T_2(z) = T_1(z)'
\subset \ldots
$$
The first is the variety of points that lie on a line passing through
$z$. The second is the set of points which lie on a chain of lines
of length $2$ passing through $z$, etc. Let $n \geq 0$ be the first
integer such that the dimension of $T_n(z)$ is maximal. Then obviously
$T_{n+1}(z) = T_n(z)$. Hence $T_{2n} = T_n(z)$. We conclude that
for every point $z' \in T_n(z)$ we have $T_n(z') = T_n(z)$.
Consider the map $Z \to \text{Hilb}_{Z/k}$, $z \mapsto [T_n(z)]$.
By what we just saw the fibres of this map are exactly
the varieties $T_n(z)$. Since $\text{Pic}(Z) = \ZZ$ we conclude
that either $T_n(z) = \{z\}$, or $T_n(z) = Z$. The first possibility
is clearly excluded and hence $T_n(z) = Z$, in other words every
pair of points on $Z$ can be connected by a chain of lines
of length $n$.

\medskip\noindent
Thus the morphism
$$
\text{ev}: \FCh_2(X/k,n) \rightarrow X\times_k X
$$
is surjective.  This is a $G$-equivariant morphism for the evident
actions of $G$ on the domain and target.  
There exists an open orbit in $Z\times Z$, and all stabilizers are
connected, see the remarks preceding Lemma \ref{lem-GPevRC}.
By Lemma \ref{lem-fibreRC} it suffices to prove that $\FCh_2(X/k,n)$
is rationally connected. For $n = 1$ this was shown in
the proof of Lemma \ref{lem-GPevRC}. For larger $n$ it follows by
induction on $n$ from Lemma \ref{lem-GPevRC} in exactly the same way
as the proof of the corresponding statement of Lemma \ref{lem-peacechain}.
\end{proof}


\section{Families of projective homogeneous varieties}
\label{sec-famGP}
\marpar{sec-famGP}

\noindent
In this section $k$ is an algebraically closed field of any characteristic.
A \emph{projective homogeneous space} over $k$ or an algebraically
closed extension of $k$ is a variety isomorphic to $G/P$ where $G$ is
a connected, semi-simple, simply connected linear algebraic group
and $P \subset G$ is a standard parabolic subgroup. (In
other words we do not allow exotic, nonreduced, parabolic subgroup
schemes.) Let $S$ be a variety over $k$, and let $\ol{\eta} \to S$ be a
geometric generic point. A \emph{split family of homogeneous spaces}
over $S$ will be a proper smooth morphism
$X \to S$ such that $X_{\ol{\eta}}$ is a projective homogeneous space and
that $\text{Pic}(X) \to \text{Pic}(X_{\ol{\eta}})$ is surjective.

\begin{lem}
\label{lem-inductive}
\marpar{lem-inductive}
Let $k$ be an algebraically closed field of any characteristic.
Let $X \to S$ be a split family of homogeneous spaces over $S$.
Then, after possibly shrinking $S$, there exists a factorization
$$
X \to Y \to S
$$
such that $X \to Y$ is a split family of homogeneous spaces
over $Y$, such that $Y \to S$ is a split family of homogeneous spaces
over $S$ and such that $\text{Pic}(Y_{\ol{\eta}}) = \ZZ$.
\end{lem}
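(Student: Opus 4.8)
The plan is to reduce to the geometric generic fibre $X_{\ol\eta} = G/P$ and to exploit the standard fact that a maximal parabolic $Q \supset P$ gives a projection $G/P \to G/Q$ with fibre again a projective homogeneous space, and that $\Pic(G/P) = \ZZ^{I \setminus J}$ is generated by the pullbacks of the ample generators of $\Pic(G/Q_i)$ as $Q_i$ ranges over the maximal parabolics containing $P$. First I would pick a maximal parabolic $Q$ with $P \subsetneq Q$ (if $P$ is already maximal, then $\Pic(X_{\ol\eta}) = \ZZ$ already and we take $Y = S$), so that $X_{\ol\eta} \to (G/Q)_{\ol\eta}$ has fibres $(Q/P)$-homogeneous. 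The task is then to \emph{spread this out}: to produce, after shrinking $S$, a smooth proper $Y \to S$ together with a factorization $X \to Y \to S$ whose geometric generic fibre is the morphism $G/P \to G/Q$, and to check the three ``split family'' conditions.

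The key steps, in order, would be: (1) Construct $Y$ as a relative moduli/Hilbert-type object over $S$. One clean way: the projection $\psi: X_{\ol\eta} \to (G/Q)_{\ol\eta}$ is, up to the $G$-action, the map contracting a distinguished extremal ray; equivalently it is the morphism associated to the semiample, not ample, invertible sheaf $\mc{M}_{\ol\eta} := \psi^*(\text{ample generator of }\Pic(G/Q))$ on $X_{\ol\eta}$. By the surjectivity hypothesis $\Pic(X) \to \Pic(X_{\ol\eta})$, after shrinking $S$ this sheaf extends to an invertible sheaf $\mc{M}$ on $X$; shrinking further (using properness and flatness, and \cite[Section 7]{EGA3} for cohomology and base change) we may assume $\mc{M}$ is $f$-semiample with $f_*\mc{M}^{\otimes n}$ locally free and its formation commuting with base change, so $Y := \underline{\Proj}_S(\bigoplus_n f_*\mc{M}^{\otimes n})$ does the job and $X \to Y$ has geometric generic fibre $\psi$. (2) Check $Y \to S$ is smooth and proper with geometric generic fibre $(G/Q)_{\ol\eta}$ — properness is automatic, and smoothness follows because a smooth proper morphism all of whose geometric fibres near $\ol\eta$ are the smooth variety $G/Q$ is smooth after shrinking $S$ (semicontinuity of fibre dimension plus the fibrewise Jacobian criterion); the geometric generic fibre is $G/Q$ by construction of $\mc{M}$. (3) Check $X \to Y$ is smooth and proper with geometric generic fibre $Q/P$-homogeneous — again properness is free, and smoothness of $X \to Y$ can be deduced from smoothness of $X \to S$ and $Y \to S$ together with the fact that on the geometric generic fibre $\psi$ is a smooth morphism of homogeneous spaces (then spread out). (4) Verify the surjectivity conditions $\Pic(X) \to \Pic(X_{\ol\eta'})$ and $\Pic(Y) \to \Pic(Y_{\ol\eta'})$, where $\ol\eta'$ denotes the relevant geometric generic points: for $Y$ over $S$ this follows because the ample generator of $\Pic((G/Q)_{\ol\eta})$ pulls back from $\Pic(X)$ by hypothesis and descends to $\Pic(Y)$ by construction of $Y$ as a relative Proj; for $X$ over $Y$ one uses that $\Pic(G/P) \to \Pic(Q/P\text{-fibre})$ together with $\Pic(Y)$-classes generate $\Pic(G/P)$, combined with the original surjectivity hypothesis. (5) Observe $\Pic(Y_{\ol\eta}) = \Pic(G/Q) = \ZZ$ since $Q$ is maximal — this is exactly the extra conclusion asked for.

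The main obstacle I expect is Step (1): making precise that, after shrinking $S$, the extended invertible sheaf $\mc{M}$ really is relatively semiample with well-behaved (base-change-compatible, locally free) pushforwards $f_*\mc{M}^{\otimes n}$, so that the relative Proj construction genuinely recovers the geometric fibrewise picture $G/P \to G/Q$ rather than some degeneration of it. This is where one must combine the surjectivity hypothesis (to extend the sheaf at all), generic flatness and cohomology-and-base-change to control the $f_*\mc{M}^{\otimes n}$, and a boundedness/rigidity argument for homogeneous spaces to know that the generic fibre of $X \to Y$ stays homogeneous of the expected type. A secondary subtlety is purely bookkeeping: \emph{which} geometric generic point is meant in each of the three ``split family'' clauses — that of $S$ for $Y\to S$, of $Y$ for $X \to Y$ — and checking that the definition of split family is stable under the base changes implicit in passing between these. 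Everything else (smoothness by spreading out, properness, the computation $\Pic(G/Q)=\ZZ$ for maximal $Q$) is routine once Step (1) is in hand.
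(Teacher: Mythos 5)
Your overall strategy is the same as the paper's: pick a maximal parabolic $P'\supset P$, extend the pullback along $G/P \to G/P'$ of an ample generator of $\Pic(G/P')$ to an invertible sheaf $\mc{M}$ on $X$ (this is exactly where the split hypothesis on $X\to S$ is used), and construct $Y$ from $\mc{M}$ — you via relative Proj, the paper via a space of sections of $\mc{M}^{\otimes N}$ followed by Stein factorization; these are morally the same construction. Your worries about base-change compatibility of the pushforwards in step (1) are legitimate, and the paper handles them in the same spirit you sketch, together with the observation that $H^0(G/P,\pi^*\mc{L}_0^{\otimes N}) = H^0(G/P',\mc{L}_0^{\otimes N})$ (fibres of $\pi$ are connected and proper), which is what guarantees the generic fibre of $Y\to S$ really is $G/P'$ rather than a degeneration.

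The genuine gap is in your step (4), in the clause where you want $X\to Y$ to be split, i.e. that $\Pic(X) \to \Pic(X_{\ol\eta_Y})$ is surjective. You write that this follows because ``$\Pic(G/P) \to \Pic(Q/P\text{-fibre})$ together with $\Pic(Y)$-classes generate $\Pic(G/P)$''; that sentence does not parse, and more importantly it presupposes the very thing that needs proof. What you actually need is the surjectivity of the restriction $\Pic(G/P) \to \Pic(P'/P)$, i.e. that every line bundle on the fibre $P'/P$ extends to $G/P$. This is a real statement, not a formal consequence of anything you have said, and it is the only nontrivial verification in the lemma. The paper proves it by running the Leray spectral sequence for $f:G/P\to G/P'$ with coefficients in $\mathbf{G}_m$: the sheaf $R^1f_*\mathbf{G}_m$ is constant with value $\Pic(P'/P)$ (because $\pi_1(G/P')$ is trivial), so the obstruction to surjectivity lives in $\text{Br}(G/P')$, and $\text{Br}(G/P')=0$ because $k$ is algebraically closed and $G/P'$ is rational. (Alternatively one can appeal to simple connectedness of the derived group of the Levi of $P'$, as the paper mentions.) Without this step your argument does not establish that $X\to Y$ is a split family, which is one of the three conclusions of the lemma.

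A smaller bookkeeping point that you raise but do not settle: the geometric generic point of $Y$ should be chosen compatibly with that of $S$ (the generic point of $Y$ maps to the generic point of $S$), and then the restriction $\Pic(X)\to\Pic(X_{\ol\eta_Y})$ factors through $\Pic(X_{\ol\eta_S})=\Pic(G/P)$; it is only after this reduction that the surjectivity question becomes the purely group-theoretic one above. Making this explicit would tighten your step (4) once the missing ingredient is supplied.
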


\begin{proof}
Choose an isomorphism $X_{\ol{\eta}} = G/P$ for some
$G$, $P$ over $\ol{\eta}$. Choose a maximal parabolic
$P \subset P'$ containing $P$. Let $\mc{L}_0$ be an
ample generator of $\text{Pic}(G/P')$, see Corollary
\ref{cor-GPverytwisting}, its proof and Remark \ref{rmk-veryample}.
Let $\mc{L}$ be an invertible sheaf on $X$ whose restriction
to $X_{\ol{\eta}}$ is isomorphic to the pull back
of $\mc{L}_0$ to $X_{\ol{\eta}} = G/P$ via the morphism
$\pi : G/P \to G/P'$. Since $\mc{L}_0$ is ample, some power
$\mc{L}_0^N$ is very ample and globally generated.
Also, $H^0(G/P, \pi^*\mc{L}_0^N) = H^0(G/P', \mc{L}_0^N)$
as the fibres of $\pi$ are connected projective varieties.
We conclude, by cohomology and base change, that after
shrinking $S$ we may assume that $\mc{L}^N$ is globally
generated on $X$. 

\medskip\noindent
Choose a finite collection of sections $s_0, \ldots, s_M$
which generate the $\kappa(\ol{\eta})$-vector space
$H^0(G/P, \pi^*\mc{L}_0^N) = H^0(G/P', \mc{L}_0^N)$.
Consider the morphism $X \to \PP^M_S$ over $S$.
Note that on the geometric generic fibre the image
is the projective embedding of $G/P'$ via the
sections $s_0,\ldots,s_M$. Hence, after possibly shrinking
$S$ and using Stein factorization, we get a factorization
$X  \to Y \to S$ such that $Y_{\ol{\eta}} \cong G/P'$
with $X_{\ol{\eta}} \to Y_{\ol{\eta}}$ equal to $\pi$.
The family $Y \to S$ is split because the invertible
sheaf $\mathcal{L}$ descends to $Y$ due to the fact that
it does so on the geometric generic fibre. (Hint: just
take the pushforward of $\mc{L}$ under $X \to Y$.)

\medskip\noindent
The geometric generic fibre of $X \to Y$ is isomorphic to $P/P'$
which is a projective homogeneous variety as well (for a possibly
different group). To show that $X \to Y$ is split it suffices to
show that $\text{Pic}(G/P) \to \text{Pic}(P'/P)$ is surjective,
as $X \to S$ is split. This surjectivity can be proved using
the fact that the derived group of the Levi group of $P'$ is
simply connected (since $G$ is simply connected), combined
with for example the results of \cite{popov}. We prefer to
prove it by considering the Leray spectral sequence
for $f : G/P \to G/P'$ and the sheaf $\mathbf{G}_m$. Namely,
$R^1f_*\mathbf{G}_m$ is
a constant sheaf with value $\text{Pic}(P/P')$
(due to the fact that $G/P'$ has trivial fundamental group).
Hence the obstruction to surjectivity is an element
in the Brauer group $Br(G/P')$. Since $k$ is algebraically
closed and $G/P'$ is rational (by Bruhat decomposition) we
see $Br(G/P') = 0$ and we win.
\end{proof}

\noindent
Let $k$ be an algebraically closed field of positive characteristic.
Let $R$ be a Cohen ring for $k$ (for example the Witt ring of $k$).
Let $G$ be a connected, semi-simple, simply connected linear
algebraic group over $k$, and let $P \subset G$ be a standard parabolic
subgroup. According to Chevalley (\cite{Chevalley}, see
also \cite{Demazure} and \cite{Springer}) there exists a
reductive group scheme $G_R$ over $R$ and a parabolic subgroup scheme
$P_R \subset G_R$ whose fibre over $k$ recovers the pair
$(G, P)$. Denote $Z_R = G_R/P_R$ the smooth projective $G_R$-scheme
over $R$ whose special fibre is $Z = G/P$. Since $\text{Pic}(Z_R)$
is isomorphic to the character group of $P_R$, and since by
construction the character group of $P_R$ equals that of
$P$, we see that $\text{Pic}(Z_R) = \text{Pic}(Z)$. Choose
an ample invertible sheaf $\OO(1)$ on $Z_R$.

\begin{lem}
\label{lem-flat}
\marpar{lem-flat}
The automorphism scheme $\text{Aut}_R((Z_R, \OO(1)))$ is smooth over $R$.
\end{lem}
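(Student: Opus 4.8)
The plan is to prove that $\mathcal{A}:=\text{Aut}_R\bigl((Z_R,\OO(1))\bigr)$ is formally smooth over $R$; since it is also of finite presentation this yields smoothness. First recall that $\mathcal{A}$ is representable by a separated $R$-group scheme of finite type: inside the Hilbert scheme of $Z_R\times_R Z_R$ equipped with its product polarization, the graphs of automorphisms preserving the class of $\OO(1)$ up to a line bundle from the base all have one fixed Hilbert polynomial, so the relevant component is projective and $\mathcal{A}$ is quasi-projective over $R$ (cf.\ \cite{FGA}). As $R$ is Noetherian, $\mathcal{A}$ is locally of finite presentation, so it suffices to verify the infinitesimal lifting criterion; and because $\mathcal{A}$ is a group scheme this need only be checked for square-zero extensions $A'\twoheadrightarrow A$ of Artinian local $R$-algebras, which we may further take to satisfy $\mathfrak{m}_{A'}\cdot J=0$ where $J=\ker(A'\to A)$. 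Since $k$ is algebraically closed, $A'$ has residue field $k$ and hence $J$ is a $k$-vector space. Write $Z_A:=Z_R\times_R A$ and $Z_{A'}:=Z_R\times_R A'$, and similarly for $\OO(1)$; because $Z_R/R$ is smooth and projective, $Z_{A'}$ is the tautological lift of $Z_A$ over $A'$ and $\OO(1)_{A'}$ the tautological lift of $\OO(1)_A$.

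So let $\phi$ be an automorphism of $Z_A$ over $A$ with $\phi^{*}\OO(1)_A\cong\OO(1)_A$ up to a line bundle from $\text{Spec}\,A$; I must lift it to such an automorphism of $Z_{A'}$. The inclusion $Z_A\hookrightarrow Z_{A'}$ is the closed immersion defined by the square-zero ideal $J\otimes_A\OO_{Z_A}$, and $Z_{A'}\to\text{Spec}\,A'$ is smooth, so the obstruction to extending $Z_A\xrightarrow{\phi}Z_A\hookrightarrow Z_{A'}$ to an $A'$-morphism $Z_{A'}\to Z_{A'}$ lies in the group
\[
H^1\bigl(Z_A,\,\phi^{*}T_{Z_A/A}\otimes_A J\bigr)\;\cong\;H^1\bigl(Z_k,\,T_{Z_k/k}\bigr)\otimes_k J,
\]
where I have used that $\phi$ is an isomorphism, that the sheaf in question is supported on $Z_k$ (as $\mathfrak{m}_A$ kills $J$), and that $T_{Z_R/R}$ restricts to $T_{Z_k/k}$ on the closed fibre. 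Now $Z_k=G/P$ is the flag variety of a semisimple group over the algebraically closed field $k$, and flag varieties are infinitesimally rigid: $H^1(G/P,T_{G/P})=0$. In characteristic $0$ this is classical (Bott vanishing, or Kodaira--Spencer rigidity); in arbitrary characteristic it is part of the cohomological analysis of $G/P$ underlying the description of $\text{Aut}(G/P)$, see \cite{Demazure} and the references therein. Hence the obstruction vanishes, $\phi$ lifts to an $A'$-morphism $\tilde\phi$, and $\tilde\phi$ is automatically an automorphism: its reduction is, and the endomorphisms of $Z_{A'}$ that reduce to $\text{id}_{Z_A}$ form a group under composition, so $\tilde\phi$ has a two-sided inverse.

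It remains to see $\tilde\phi$ may be chosen so that $\tilde\phi^{*}\OO(1)_{A'}\cong\OO(1)_{A'}$ up to a line bundle from $\text{Spec}\,A'$. The relative Picard functor $\underline{\text{Pic}}_{Z_R/R}$ is representable; since $H^1(Z_k,\OO_{Z_k})=0$ (because $Z_k$ is rational) it is formally unramified over $R$, and since in addition $\text{Pic}(Z_R)\to\text{Pic}(Z_k)$ is bijective (the character group of $P_R$ equals that of $P$) and the fibres of $Z_R/R$ are geometrically connected, $\underline{\text{Pic}}_{Z_R/R}$ is the constant group scheme on the finitely generated free abelian group $\text{Pic}(Z_k)$. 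Therefore the two sections of $\underline{\text{Pic}}_{Z_{A'}/A'}$ determined by $\tilde\phi^{*}\OO(1)_{A'}$ and by $\OO(1)_{A'}$, which agree over $\text{Spec}\,A$, already agree over $\text{Spec}\,A'$. Thus $\tilde\phi$ gives the desired $A'$-point of $\mathcal{A}$ lifting the given $A$-point, which proves the lifting property and hence that $\mathcal{A}\to\text{Spec}\,R$ is smooth.

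The only genuinely non-formal input --- and therefore the step I expect to be the main obstacle --- is the vanishing $H^1(G/P,T_{G/P})=0$ in positive characteristic; the rest is routine deformation theory using that $Z_R/R$ and $\OO(1)$ are tautological lifts and that $G/P$ is rational. If that vanishing is inconvenient to invoke in small (bad) characteristic, the alternative is to use the action of the adjoint group scheme $G_R^{\text{ad}}$ on $(Z_R,\OO(1))$, which gives a homomorphism $G_R^{\text{ad}}\to\mathcal{A}$ onto a smooth open-and-closed subgroup scheme, and then to argue separately that the finite component group of $\mathcal{A}$ is \'etale over $R$ and that $\mathcal{A}$ has no component supported over the closed point; but this relies on the same classification of $\text{Aut}(G/P)$, so I would present the cohomological argument above.
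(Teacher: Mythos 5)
Your proof takes a genuinely different route from the paper's, and the difference matters precisely where you flag a concern. The paper factors the structure morphism $\text{Aut}_R((Z_R,\OO(1)))\to\text{Spec}(R)$ through $\text{Aut}_R(Z_R)$: smoothness of $\text{Aut}_R(Z_R)\to\text{Spec}(R)$ is a citation of Demazure's structure theorem, and smoothness of the forgetful map $\text{Aut}_R((Z_R,\OO(1)))\to\text{Aut}_R(Z_R)$ reduces by deformation theory to $H^1(Z,\OO_Z)=0$, which holds unconditionally by Kempf's theorem. You instead attack formal smoothness of $\text{Aut}_R((Z_R,\OO(1)))$ head-on, which forces you to put the obstruction in $H^1(G/P,T_{G/P})$ and to assert that this vanishes in every characteristic.

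That assertion is the gap, and you are right to be uneasy about it: $H^1(G/P,T_{G/P})=0$ is \emph{not} a characteristic-free fact. Demazure's paper, which you cite for it, is in fact largely devoted to the exceptional low-characteristic cases where tangent-bundle cohomology of $G/P$ behaves badly; his Proposition~4 on smoothness of $\text{Aut}(G/P)$ is proved by an explicit group-theoretic identification of the automorphism scheme (essentially the adjoint group extended by a finite \'etale piece), not by naive deformation theory, and it holds even when the obstruction space $H^1(T_{G/P})$ is nonzero. So your main argument does not establish the lemma in positive characteristic. Your proposed fallback (map $G_R^{\mathrm{ad}}$ into $\text{Aut}_R((Z_R,\OO(1)))$ and control the component group) is pointing at the right mechanism and is morally what Demazure does, but as written it is a sketch resting on the same unproved classification. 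The clean fix is exactly the paper's factorization: cite Demazure for smoothness of $\text{Aut}_R(Z_R)$, and then observe that the fibres of the forgetful map are controlled by $H^1(Z,\OO_Z)$, whose vanishing is characteristic-free; your own Picard-constancy paragraph is already a correct form of that second step and can be kept.
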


\begin{proof}
The scheme of automorphisms $\text{Aut}_R(Z_R)$ is
smooth over $R$ by \cite[Proposition 4]{Demazure-Aut}.
Thus it suffices to show that the morphism
$\text{Aut}_R((Z_R, \OO(1))) \to \text{Aut}_R(Z_R)$
is smooth. By deformation theory it suffices to show that
$H^1(Z, \OO_Z) = 0$. This follows from \cite[Section 6 Theorem 1]{Kempf-GP}.
\end{proof}

\noindent
The following lemma follows from a general ``lifting''
argument that was explained to us by Ofer Gabber,
Jean-Louis Colliot-Th\'el\`ene and Max Lieblich. For a brief
explanation, see Remark \ref{rmk-lift} below.

\begin{lem}
\label{lem-lift}
\marpar{lem-lift}
Let $k, R, G, P, G_R, P_R$ as above. Assume that $P$ is maximal
parabolic in $G$. Let $\Omega = \overline{K}$ be an algebraic closure of
the field of fractions of $R$. Suppose that every split family
of homogeneous spaces over a surface over $\Omega$
whose general fibre is isomorphic to $G_{\Omega}/P_{\Omega}$
has a section. Then the same is true over $k$.
\end{lem}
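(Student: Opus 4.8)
The plan is to lift the problem to the Cohen ring $R$, solve it over $\Omega$ by the hypothesis, and then specialize back to $k$. Let $X \to S$ be a split family of homogeneous spaces over a surface $S$ over $k$ with geometric generic fibre isomorphic to $G_k/P_k$. Since we only ask for a rational section, we may shrink $S$ and, as $S$ is a variety over an algebraically closed field, assume $S$ is smooth and affine. As $P$ is maximal, $\text{Pic}(X_{\ol{\eta}}) = \text{Pic}(G/P) = \ZZ$; hence the surjectivity in the definition of ``split'' provides (after a further shrinking of $S$) an $f$-ample invertible sheaf $\mc{L}$ on $X$ restricting to the ample generator $\OO(1)$ on every geometric fibre. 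Thus $(X,\mc{L})/S$ is a form of $(Z,\OO(1)) = (G/P,\OO(1))$, classified by a torsor under $\mathbf{A} := \text{Aut}_k(Z,\OO(1))$, which is smooth and affine over $k$ by Lemma \ref{lem-flat} applied to its special fibre.

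\emph{Lifting to $R$.} As recalled before Lemma \ref{lem-flat}, Chevalley's lifting gives $Z_R = G_R/P_R$ over $R$ with an ample $\OO(1)$ and with $\text{Pic}(Z_R) = \text{Pic}(Z)$, and $\mathbf{A}_R := \text{Aut}_R(Z_R,\OO(1))$ is smooth over $R$ by Lemma \ref{lem-flat}. Lift the smooth affine $k$-scheme $S$ to a smooth affine scheme $\mc{S}$ of finite type over $R$ with special fibre $S$ (smooth algebras lift). Because $S$ is affine, the obstruction to lifting the $\mathbf{A}$-torsor on $S$ to an $\mathbf{A}_R$-torsor along each infinitesimal thickening lies in the second cohomology of a coherent sheaf on $S$ and therefore vanishes; the resulting formal object is then algebraized and, after passing to an \'etale neighbourhood of $S$ inside $\mc{S}$, spread out to a genuine $\mathbf{A}_R$-torsor over a finite-type smooth affine $R$-scheme. (This is the lifting argument of Gabber, Colliot-Th\'el\`ene and Lieblich alluded to above; see Remark \ref{rmk-lift}.) Shrinking in this way is harmless, since a rational section over the \'etale neighbourhood restricts to a rational section over its dense open overlap with $S$. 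Twisting $(Z_R,\OO(1))\times_R \mc{S}$ by this torsor yields a split family of homogeneous spaces $(\mc{X},\mc{L}) \to \mc{S}$ over $R$ whose special fibre is $(X,\mc{L})/S$, whose geometric generic fibre is a form of $Z$ of the prescribed type, hence isomorphic to $G_\Omega/P_\Omega$, and on which $\mc{L}$ still restricts to the ample generator of the Picard group of that fibre (using $\text{Pic}(Z_R) = \ZZ$).

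\emph{Solving over $\Omega$ and specializing.} Set $K = \text{Frac}(R)$. The base change $\mc{X}_\Omega \to \mc{S}_\Omega$ is a split family of homogeneous spaces over the quasi-projective smooth affine surface $\mc{S}_\Omega$ with general fibre isomorphic to $G_\Omega/P_\Omega$, so by hypothesis it has a rational section. This section is defined over some finite subextension $K'/K$; let $R' \subset K'$ be the integral closure of $R$, a complete discrete valuation ring finite over $R$, with residue field $k$ since $k$ is algebraically closed. Write $\mc{S}_{R'} = \mc{S}\times_R R'$ and $\mc{X}_{R'} = \mc{X}\times_R R'$. Then $\mc{S}_{R'}$ is integral and regular, its special fibre $S$ is a prime divisor, and at the generic point $\eta_S$ of $S$ the local ring $\mc{O}_{\mc{S}_{R'},\eta_S}$ is a discrete valuation ring whose fraction field is the function field of $\mc{S}_{R'}$ (over which the section is defined) and whose residue field is $k(S)$. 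Since $\mc{X}_{R'} \to \mc{S}_{R'}$ is proper, the valuative criterion of properness extends the section to a morphism $\text{Spec}(\mc{O}_{\mc{S}_{R'},\eta_S}) \to \mc{X}_{R'}$ over $\mc{S}_{R'}$; its value at the closed point is a $k(S)$-point of the fibre of $\mc{X}_{R'} \to \mc{S}_{R'}$ over $\eta_S$, namely of $X\times_S \text{Spec}(k(S))$ — that is, a rational section of $X \to S$, as desired.

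\emph{Main difficulty.} The crux is the lifting step: producing a \emph{finite-type} lift $(\mc{X},\mc{L})/\mc{S}$ over $R$, so that $\mc{S}_\Omega$ is an honest surface over $\Omega$ to which the hypothesis applies, rather than only a formal or $\mathfrak{m}$-adically complete one. This is exactly where one uses Lemma \ref{lem-flat} (smoothness of $\mathbf{A}_R$) together with Grothendieck existence and Artin approximation (or Elkik's theorem) to descend the formal lift to an \'etale neighbourhood of $S$, and where one must check that the ``split'' condition and the isomorphism type of the geometric generic fibre are preserved; the maximality of $P$ is convenient here precisely because it reduces all of this bookkeeping to a single ample generator of $\text{Pic}$. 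The remaining steps are formal.
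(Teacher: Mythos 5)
Your proposal follows essentially the same route as the paper: translate the data into a torsor under the automorphism scheme $\mathbf{A}_k = \text{Aut}_k(Z,\OO(1))$ over a smooth affine open of $S$, lift the base and the torsor to characteristic zero using smoothness of $\mathbf{A}_R$ (Lemma \ref{lem-flat}, giving vanishing of obstructions plus algebraization after an \'etale localization), twist to obtain a split family over a surface over $\Omega$, apply the hypothesis, and specialize back along a DVR at the generic point of the special fibre. This is exactly what the paper does, except that the paper packages the lifting step as a citation to \cite[Corollary 2.3.3]{dJS8} (with Remark \ref{rmk-lift} as a sketch), whereas you spell out the deformation-theoretic and specialization details more explicitly. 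The substance is the same.
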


\begin{proof}
Pick $\OO(1)$ on $Z_R$ an ample generator of $\text{Pic}(Z_R)
= \text{Pic}(Z) = \ZZ$. Denote $H_R = \text{Aut}_R((Z_R, \OO(1)))$.
By Lemma \ref{lem-flat} we see that $H_R$ is flat over $R$.
At this point \cite[Corollary 2.3.3]{dJS8} applies with our
$H_R$ playing the role of the group named $G_R$ in 
ibid., and with our $Z_R$ playing the
role of the scheme called $V_R$ in ibid.
\end{proof}

\begin{rmk}
\label{rmk-lift}
\marpar{rmk-lift}
What is involved in the proof of \cite[Corollary 2.3.3]{dJS8}?
Consider a split family $X \to S / k$ of homogeneous spaces whose
general fibre is isomorphic to $Z = G/P$. Pick $\mc{L}$ on $X$
restricting to a generator of the picard group of a general fibre.
Then $\text{Isom}((X,\mc{L}), (Z, \OO_Z(1)))$ is a $H_k$-torsor over
a nonempty affine open $U \subset S$ of the surface $S$. After
shrinking $U$ we may assume $U$ is smooth over $k$.
After this one lifts $U$ to $W \to \text{Spec}(R)$ smooth
using \cite{Elkik}. The ``lifting'' result metioned above
is the statement that a $H_k$-torsor over $U$ is the restriction
of a $H_R$-torsor over $W$ after possibly replacing
$W$ by $W' \to W$ etale and trivial over a nonempty open $U$.
This torsor in turn defines a split family of homogeneous
space $\mathcal{X} \to W$ restricting to $X_U \to U$ over $U$.
It is split because the geometric generic fibre has Picard
group $\ZZ$ and the ample generator exists by virtue of
the fact that $H_R$ acts on $\OO(1)$ on $Z_R$.
At this point, a standard specialization argument 
implies the result. For details see \cite{dJS8}.
\end{rmk}

\begin{thm}
\label{thm-main2}
\marpar{thm-main2}
Let $k$ be an algebraically closed field of any characteristic.
Let $S$ be a quasi-projective surface over $k$. Let
$X \to S$ be a split family of projective homogeneous spaces.
Then $X \to S$ has a rational section.
\end{thm}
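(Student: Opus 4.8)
The plan is to reduce the theorem to Corollary~\ref{cor-main} in characteristic $0$ and to Lemma~\ref{lem-lift} in positive characteristic, after first peeling off one maximal parabolic at a time. Since the existence of a rational section is unaffected by replacing $k$ by a larger algebraically closed field and by shrinking $S$, I would begin by assuming $k$ is uncountable and $S$ is a smooth quasi-projective surface, and then argue by induction on $\dim X_{\ol{\eta}}$. If $X_{\ol{\eta}} \cong G/P$ with $P$ not maximal, Lemma~\ref{lem-inductive} gives (after shrinking $S$) a factorization $X \to Y \to S$ into split families of homogeneous spaces with $\text{Pic}(Y_{\ol{\eta}}) \cong \ZZ$ and with geometric generic fibres $P'/P$ and $G/P'$ of strictly smaller dimension; the family $Y \to S$ is handled by the maximal-parabolic case, and the pullback of $X \to Y$ along a rational section of $Y \to S$ is, over a dense open of $S$, again a split family of homogeneous spaces of smaller fibre dimension, so it has a rational section by induction, which composes to one of $X \to S$. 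This reduces us to $X_{\ol{\eta}} \cong G/P$ with $P$ maximal, so $\text{Pic}(X_{\ol{\eta}}) \cong \ZZ$ with ample generator $\mc{L}_{\ol{\eta}}$; by the splitting hypothesis $\mc{L}_{\ol{\eta}}$ extends, after shrinking $S$, to an $f$-ample invertible sheaf $\mc{L}$ on $X$.

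In characteristic $0$ I would then check the hypotheses of Corollary~\ref{cor-main}. By Corollary~\ref{cor-GPverytwisting}, $G/P$ contains a very twisting scroll, and by Lemmas~\ref{lem-GPevRC} and \ref{lem-GPchainR1C} the morphism $G/P \to \text{Spec}(k)$ satisfies both conditions of Hypothesis~\ref{hyp-peace}; applying these with $\overline{k(S)}$ in place of $k$ shows that $X_{\ol{\eta}}$ satisfies Hypothesis~\ref{hyp-peace} and has a very twisting surface. It then remains to place ourselves in the geometric setting of Corollary~\ref{cor-main}. Note first that since $X \to S$ is smooth and proper with connected geometric generic fibre, all of its geometric fibres are smooth and connected, hence geometrically irreducible (the rank of $f_*\OO_X$ is locally constant). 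I would next compactify $S$ to a smooth projective surface (resolution of singularities in dimension $2$ being available in every characteristic) and extend the flat family $X \to S$ over the boundary, keeping the total space smooth and the fibres geometrically irreducible away from a codimension-$2$ subset: here Steinberg's theorem is the key, since over the function field of a general curve in a pencil on the compactification, which has cohomological dimension $\leq 1$, the form $X$ is trivial, so the family is generically constant and extends across the finitely many boundary points of such a curve; carrying this out in a family over a dense open of $\PP^1$ produces a flat proper family over a projective surface to which Corollary~\ref{cor-main} applies, giving the desired rational section.

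In positive characteristic, after the same reduction to $P$ maximal, I would invoke Lemma~\ref{lem-lift}: its hypothesis, that every split family of homogeneous spaces over a surface over the algebraically closed characteristic-$0$ field $\Omega$ of that lemma, with general fibre $G_{\Omega}/P_{\Omega}$, has a section, is a special case of the characteristic-$0$ statement just proved; hence Lemma~\ref{lem-lift} yields the conclusion over $k$.

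The step I expect to be the main obstacle is precisely this reduction to the projective base demanded by Corollary~\ref{cor-main}. The deep geometric inputs — the existence of a very twisting scroll in $G/P$ and the rational connectedness of the moduli of chains of free lines on $G/P$ — are already in hand from Sections~\ref{sec-He} and \ref{sec-GPR1C}, so the delicate, theorem-specific point is the careful compactification and extension of the family across the boundary of a compactification, done compatibly in a pencil, where the triviality of forms of $G/P$ over function fields of curves (Steinberg's theorem) does the real work.
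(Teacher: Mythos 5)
Your reduction steps follow the paper's proof closely: the reduction to the case of a maximal parabolic via Lemma~\ref{lem-inductive}, the reduction to characteristic $0$ via Lemma~\ref{lem-lift}, and the invocation of Corollary~\ref{cor-main} (via Corollary~\ref{cor-B}) in the case of a projective base are all essentially the argument in the paper, and you have flagged correctly that the passage from a quasi-projective $S$ to a projective one is where the genuine difficulty lies. However, your proposed handling of that last step — compactify $S$ to a smooth projective $\overline{S}$ and extend the family over the boundary using Steinberg's theorem — does not work, and the gap is fundamental, not a matter of detail.

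The family $X \to S$ corresponds, over a dense open, to a $H$-torsor where $H = \text{Aut}(G/P, \OO(1))$. Extending $X$ to a family over a dense open of $\overline{S}$ whose complement has codimension $2$ (as Corollary~\ref{cor-main} requires) forces one to extend this torsor over the generic point of each boundary divisor $D \subset \overline{S}\setminus S$; equivalently, the class in $H^1(k(S), H)$ must be unramified along $D$, i.e., lie in the image of $H^1(\OO_{\overline{S},D}, H) \to H^1(k(S), H)$. This genuinely fails in examples. For $G/P = \PP^{n-1}$, $H = PGL_n$ and $X$ is a Brauer--Severi scheme corresponding to a class $\alpha \in \text{Br}(S)$; such a class extends to $\overline{S}$ exactly when its residues along the boundary divisors vanish, and there are quasi-projective surfaces $S$ carrying Brauer classes that are ramified along every compactification's boundary. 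So the family simply may not extend, and the object one would need to feed into Corollary~\ref{cor-main} does not exist. Steinberg's theorem does not help here: it applies to the function field of a curve over $k$ (cohomological dimension $1$), whereas the obstruction to unramifiedness along $D$ lives at the DVR $\OO_{\overline{S},D}$, whose fraction field $k(S)$ has cohomological dimension $2$ and whose residue field $k(D)$ is a function field of a curve — a different local situation. It is true that over a general curve $\overline{C}$ in a pencil the restricted torsor is trivial at the generic point by Steinberg and hence extends over $\overline{C}$; but those curve-by-curve extensions involve noncanonical choices and do not glue to an extension over a two-dimensional neighborhood of the boundary, and in any case doing this "over a dense open of $\PP^1$" yields again a quasi-projective surface (the complement of finitely many fibres in a pencil is codimension $1$), so the problem recurs.

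The paper handles this step by \emph{discriminant avoidance} (\cite{dJS7}, \cite{dJS8}), which is a genuinely different mechanism from compactification. One does not compactify $S$; one instead produces a flat family $W \to \text{Spec}(k[[t]])$ with a $H$-torsor over it, such that the special fibre $W_k$ contains a dense open of $S$ with the given torsor, and such that the \emph{generic} fibre $W_{k((t))}$ is projective. A rational section over $\overline{k((t))}$, available by the projective case, then specializes to a rational section of $X \to S$. The punchline of \cite{dJS8} that makes this possible is that $B(H)$ admits a morphism from a scheme $X$ surjective on field points with a projective compactification $X \subset \overline{X}$ whose boundary has arbitrarily high codimension; this is precisely the deformation-theoretic input that replaces the (unavailable) direct extension across the boundary. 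You should replace the compactification-plus-Steinberg step with an appeal to this result, as the paper does.

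A small secondary remark: when peeling off parabolics via Lemma~\ref{lem-inductive}, after producing a rational section of $Y \to S$, you should note (as you implicitly do) that the pullback of $X\to Y$ along a rational section, after shrinking $S$, is still a split family of projective homogeneous spaces — this requires that $\text{Pic}(X) \to \text{Pic}(X_{\ol{\eta}})$ remains surjective after the base change, which follows from $X \to Y$ being split; this part of your argument is fine.
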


\begin{proof}
By Lemma \ref{lem-inductive} it suffices to prove the 
theorem in case the general fibre has Picard number $\rho = 1$,
i.e., the parabolic group is maximal. By Lemma \ref{lem-lift}
it suffices to prove the result in the $\rho = 1$
case in characteristic zero. The case $\rho = 1$, $S$ is a
\emph{projective} smooth surface and $X \to S$ is smooth and
projective is Corollary \ref{cor-B} which was proved in the
introduction (as a consequence of the more general
Corollary \ref{cor-main} using the results of Section \ref{sec-GPR1C}).

\medskip\noindent
Thus it remains to deduce the general $\rho = 1$ case in characteristic
$0$ from the case where the base is projective. This is done
by the method of \emph{discriminant avoidance}. Precise references
are \cite[Theorem 1.3]{dJS7}, or \cite[Theorem 1.0.1]{dJS8} 
(either reference contains a complete argument; the first giving
a more down to earth approach than the second).
Please see Remark \ref{rmk-discavoid} below for a brief
synopsis of the argument.
\end{proof}

\begin{rmk}
\label{rmk-discavoid}
\marpar{rmk-discavoid}
Let $k$ be an algebraically closed field (of any characteristic).
As before let $Z = G/P$ with $P$ maximal parabolic and let
$\OO(1)$ be an ample generator of $\text{Pic}(Z)$. Also, as
before let $H = \text{Aut}((Z, \OO(1)))$ be the automorphism
scheme of the pair. According to proof of Lemma \ref{lem-flat}
and the references therein, the group scheme $H$ is reductive.
Let $S$ be a quasi-projective surface over $k$.
Suppose that $X \to S$ is a split family of projective
homogeneous spaces whose general fibre is isomorphic to
$Z$. We will explain the method of \cite{dJS8} reducing
the problem of finding a rational section of $X\to S$
to the problem in a case where the base surface is 
projective.

\medskip\noindent
Choose an invertible sheaf $\mc{L}$ on $X$ whose
restriction to a general fibre is an ample generator.
As in Remark \ref{rmk-lift} there exists an open
$U$ and an $H$-torsor $\mathcal{T}$ over $U$ such
that $(X_U, \mc{L}_U)$ is isomorphic over $U$ to
$(\mc{T}\times_H Z, \OO \boxtensor_H \OO(1))$.
The main result of \cite{dJS8} implies that there
exists a flat morphism $W \to \text{Spec}(k[[t]])$,
an open immersion $j : W_k \to U$, a $H$-torsor $\mc{T}'$ over $W$ 
such that (1) $\mc{T}'_k \cong j^* \mc{T}$ and
(2) the generic fibre $W_{k((t))}$ is projective.
We may use the torsor $\mc{T}'$ to obtain an extension
of the pull back family $j^*X$ to a split family
$\mathcal{X} \to W$
of projective homogeneous spaces over all of $W$.
At this point a standard specialization argument
implies that the existence of a rational section of
$\mathcal{X}_{\overline{k((t))}} \to W_{\overline{k((t))}}$
implies the existence of a rational section of the original
$X \to S$.

\medskip\noindent
The main result of \cite{dJS8} mentioned above is the following:
If $H$ is a reductive algebraic group, and $c \geq 0$ an integer
then the algebraic stack $B(H) = [\text{Spec}(k)/H]$ allows a
morphism $X \to B(H)$ surjective on field points such that
$X$ has a projective compactification $X \subset \overline{X}$
whose boundary has codimension $\geq c$ in $\overline{X}$. This
result easily implies the existence of the family of torsors
as above. For details see the references above.
\end{rmk}

\section{Application to families of complete intersections}
\label{sec-hypersurface}
\marpar{sec-hypersurface}

\noindent
Let $K \supset k$ be the function field of a surface over
an algebraically closed field $k$. Tsen's theorem states
that any complete intersection of type $(d_1,\ldots,d_c)$
in $\PP^n_K$ has a $K$-rational point if $\sum d_i^2 \leq n$.
See \cite{Lang52}. An addendum is that if the inequality is
violated then there are function fields $K$ and complete
intersections without rational points.

\medskip\noindent
In this section we sketch a proof of Tsen's theorem
in characteristic $0$ for families of \emph{hypersurfaces}
using the main results of this paper. As explained in the
introduction this is silly, but hopefully provides
a useful illustration of our methods.

\medskip\noindent
First of all we may replace $k$ by an uncountable algebraically
closed overfield of characteristic zero (by standard limit techniques).
Second we may assume $d_i \geq 2$ for all $i$.
The moduli space $M$ of all complete intersections $X$ of type
$(d_1,\ldots,d_c)$ in $\PP^n$ is an open subset of a product of
projective spaces $M \subset \PP^{n_1}\times \ldots \times \PP^{n_c}$.
Note that we do not require complete intersections to be nonsingular;
it is then easy to see that the complement of $M$ in
$\PP^{n_1}\times \ldots \times \PP^{n_c}$ has codimension $\geq 2$.
It is also straightforward to see that the total space of
the universal family $\mathcal{X} \to M$ is nonsingular.
It follows by standard specialization
techniques that to prove Tsen's theorem for all $K$ and
all complete intersections, it suffices to prove it for
those $X/K$ such that $K$ corresponds to the function field
of a surface $S \subset M$ with the following properties:
\begin{enumerate}
\item $S$ passes through a very general point of $M$,
\item the closure $\overline{S} \subset 
\PP^{n_1}\times \ldots \times \PP^{n_c}$ differs from
$S$ by finitely many points, and
\item the restriction of $\mathcal{X}$ to $S$ has smooth
total space.
\end{enumerate}
At this point Corollary \ref{cor-A} says that it is
enough to show that a very general complete intersection $Y$
of type $(d_1,\ldots,d_c)$ in $\PP^n$ is
rationally simply connected by chains of free
lines, and contains a very twisting surface.

\medskip\noindent
To check that $Y$ is rationally simply connected by chains of free
lines, i.e., that $Y \to \text{Spec}(k)$ satisfies Hypothesis
\ref{hyp-peace} we point out the following:
\begin{enumerate}
\item The space of lines through a general point $y \in Y(k)$
is a nonsingular complete intersection of type
$(1,2,\ldots,d_1,1,2,\ldots,d_2,\ldots,1,2,\ldots,d_c)$
in a $\PP^{n-1}$. Because
$1 + 2 + \ldots + d_1 + 1 + 2 + \ldots
+ d_2 + \ldots + 1 + 2 + \ldots + d_c =
\sum d_i(d_i + 1)/2 \leq \sum (d_i^2 - 1) \leq n - 1$
it is a smooth projective Fano
and hence rationally connected.
\item For a general pair of points $x,y \in Y(k)$
the space of pairs of lines connecting $x$ and $y$
is parametrized by a nonsingular complete intersection
of type
$(1,2,\ldots,d_1-1, 1,2,\ldots,d_1-1, d_1,
\ldots,1,2,\ldots,d_c-1,1,2,\ldots,d_c-1,d_c)$
in $\PP^n$. (To see this consider the equations 
for the locus of the intersection points of the two lines.)
The sum of the degrees is equal to
$\sum d_i^2 \leq n$ and again we see that this is
a smooth projective Fano variety and hence rationally connected.
\end{enumerate}
To prove this argue as in \cite[Chapter V Section 4]{K},
especially the exercises. See also \cite{dJS10}.
This proves Hypothesis \ref{hyp-peace} for $Y/k$.

\medskip\noindent
As usual the hardest thing to check is the existence
of a very twisting surface. A simple parameter count shows
that a very twisting surface should exist on a general
complete intersection with $\sum d_i^2 \leq n$. We
do not know how to prove the existence unless $c = 1$. Namely,
in \cite{HS2} it was show very twisting scrolls exist
if $d_1^2 + d_1 + 1 \leq n$. In \cite[Propositions 5.7 and 10.1]{Sr1c}
it is shown that very twisting scrolls exist in a general
hypersurface if $d_1^2 \leq n$, thereby concluding our sketch
of the proof of Tsen's theorem for familie of hypersurfaces.

\medskip\noindent
Final remark: In the preprint \cite{dJS10}
very twisting surfaces are constructed in any smooth complete
intersection in $\PP^n$ under the assumption that
$n + 1 \geq \sum (2d_i^2 - d_i)$. But note that these surfaces
are not ruled by lines in every case; sometimes they are ruled
by conics -- to which the methods of this paper do not apply.

\bibliography{my}
\bibliographystyle{alpha}

\end{document}